\documentclass[11pt]{article}

\makeatletter
\disable@package@load{mathabx}{}
\disable@package@load{esint}{}
\makeatother

\usepackage[utf8]{inputenc}
\usepackage[T1]{fontenc}

\DeclareFontShape{T1}{cmr}{m}{scit}{<->ssub*cmr/m/sc}{}

\usepackage[margin=1in]{geometry}
\usepackage{amsmath,amsthm,amssymb,amsfonts}
\usepackage{mathrsfs}
\usepackage{mathtools}
\usepackage{esint}
\usepackage{xfrac}
\usepackage{nicefrac}
\usepackage{dsfont}
\usepackage{upgreek}
\usepackage{accents}

\let\olddiamond\diamond
\let\oldsquare\square
\usepackage{mathabx}
\renewcommand{\square}{\oldsquare}
\renewcommand{\diamond}{\olddiamond}

\usepackage{booktabs}
\usepackage{array}
\usepackage{paralist}
\usepackage{enumitem}
\usepackage{verbatim}
\usepackage[normalem]{ulem}
\usepackage{cancel}

\usepackage[dvipsnames]{xcolor}
\usepackage{graphicx}
\usepackage{subcaption}

\usepackage{tikz}
\usepackage{pgf}
\usetikzlibrary{calc}
\usetikzlibrary{external}
\usepackage[
colorlinks=true,
pdfstartview=FitV,
linkcolor=blue,
citecolor=blue,
urlcolor=blue
]{hyperref}

\usepackage{titlesec}

\newcommand{\addperiod}[1]{#1.}
\titleformat{\section}{\centering\normalfont\Large}{\thesection.}{0.5em}{}
\titleformat*{\subsection}{\bfseries}
\titleformat{\subsubsection}[runin]{\normalfont\bfseries}{\thesubsubsection.}{0.5em}{\addperiod}
\titleformat*{\paragraph}{\bfseries}
\titleformat*{\subparagraph}{\large\bfseries}

\usepackage[nottoc,notlot,notlof]{tocbibind}
\usepackage{fancyhdr}
\usepackage{extramarks}

\numberwithin{equation}{section}
\numberwithin{figure}{section}

\newtheorem{theorem}{Theorem}[section]

\newtheorem{proposition}[theorem]{Proposition}
\newtheorem{lemma}[theorem]{Lemma}

\newtheorem{theoremA}{Theorem}

\theoremstyle{definition}

\DeclareMathOperator{\dist}{dist}

\newcommand*{\tr}{\ensuremath{\mathrm{trace\,}}}

\DeclareMathSymbol{\shortminus}{\mathbin}{AMSa}{"39}

\let\originalleft\left
\let\originalright\right
\renewcommand{\left}{\mathopen{}\mathclose\bgroup\originalleft}
\renewcommand{\right}{\aftergroup\egroup\originalright}

\newcommand\thickbar[1]{\accentset{\rule{.45em}{.6pt}}{#1}}
\renewcommand{\bar}{\thickbar}

\makeatletter
\let\oldoint\oint
\renewcommand{\oint}{\oldoint\nolimits}
\makeatother

\newcommand*{\N}{\ensuremath{\mathbb{N}}}
\newcommand*{\Z}{\ensuremath{\mathbb{Z}}}

\newcommand*{\R}{\ensuremath{\mathbb{R}}}

\newcommand*{\Zd}{\ensuremath{\mathbb{Z}^d}}
\newcommand*{\Rd}{\ensuremath{\mathbb{R}^d}}
\newcommand{\E}{\mathbb{E}}
\renewcommand{\P}{\ensuremath{\mathbb{P}}}

\newcommand{\X}{\ensuremath{\mathcal{X}}}

\renewcommand{\S}{\mathcal{S}}

\newcommand{\RSZ}{\mathcal{R}}

\renewcommand{\a}{\mathbf{a}}

\newcommand{\g}{\mathbf{g}}
\newcommand{\h}{\mathbf{h}}
\renewcommand{\k}{\mathbf{k}}
\newcommand{\m}{\mathbf{m}}
\newcommand{\s}{\mathbf{s}}

\newcommand{\qq}{\mathbf{q}}

\newcommand{\bfA}{\mathbf{A}}

\newcommand{\bfE}{\mathbf{E}_0}

\newcommand{\ahom}{\bar{\a}}

\newcommand{\shom}{\bar{\mathbf{s}}}
\newcommand{\khom}{\bar{\mathbf{k}}}

\newcommand{\bfAhom}{\overline{\mathbf{A}}}

\newcommand{\uhom}{u_{\mathrm{hom}}}

\newcommand*{\Id}{\ensuremath{\mathrm{I}_d}}

\newcommand*{\Itwod}{\ensuremath{\mathrm{I}_{2d}}}

\newcommand{\Rskew}{\R^{d\times d}_{\mathrm{skew}}}

\newcommand{\nf}{\nicefrac}
\newcommand{\qand}{\quad \mbox{and} \quad}
\newcommand{\qqand}{\qquad \mbox{and} \qquad}

\makeatletter
\newcommand{\negphantom}{\v@true\h@true\negph@nt}
\newcommand{\neghphantom}{\v@false\h@true\negph@nt}
\newcommand{\negph@nt}{\ifmmode\expandafter\mathpalette
	\expandafter\mathnegph@nt\else\expandafter\makenegph@nt\fi}
\newcommand{\makenegph@nt}[1]{%
	\setbox\z@\hbox{\color@begingroup#1\color@endgroup}\finnegph@nt}
\newcommand{\finnegph@nt}{%
	\setbox\tw@\null
	\ifv@ \ht\tw@\ht\z@\dp\tw@\dp\z@\fi
	\ifh@\wd\tw@-\wd\z@\fi\box\tw@}
\newcommand{\mathnegph@nt}[2]{%
	\setbox\z@\hbox{$\m@th #1{#2}$}\finnegph@nt}
\makeatother

\newcommand{\Besov}[3]{\mathring{\phantom{B}}\negphantom{B}\underline{B}^{#1}_{#2,#3}}

\def\Xint#1{\mathchoice
	{\XXint\displaystyle\textstyle{#1}}%
	{\XXint\textstyle\scriptstyle{#1}}%
	{\XXint\scriptstyle\scriptscriptstyle{#1}}%
	{\XXint\scriptscriptstyle\scriptscriptstyle{#1}}%
	\!\int}
\def\XXint#1#2#3{{\setbox0=\hbox{$#1{#2#3}{\int}$}
		\vcenter{\hbox{$#2#3$}}\kern-.5\wd0}}

\def\fint{\Xint-}

\newcommand{\avsum}{\mathop{\mathpalette\avsuminner\relax}\displaylimits}
\makeatletter
\newcommand\avsuminner[2]{%
	{\sbox0{$\m@th#1\sum$}%
		\vphantom{\usebox0}%
		\ooalign{%
			\hidewidth
			\smash{\,\rule[.23em]{8.8pt}{1.1pt} \relax}%
			\hidewidth\cr
			$\m@th#1\sum$\cr
		}%
	}%
}
\makeatother

\makeatletter
\newcommand\avsuminnerr[2]{%
	{\sbox0{$\m@th#1\sum$}%
		\vphantom{\usebox0}%
		\ooalign{%
			\hidewidth
			\smash{\,\rule[.23em]{6pt}{0.7pt} \relax}%
			\hidewidth\cr
			$\m@th#1\sum$\cr
		}%
	}%
}
\makeatother

\newcommand{\cu}{\square}
\newcommand{\cus}{%
	\mathchoice
	{\mathrel{\rotatebox[origin=c]{45}{$\square$}\!\!}}
	{\mathrel{\rotatebox[origin=c]{45}{$\textstyle\square$}\!\!}}
	{\mathrel{\rotatebox[origin=c]{45}{$\scriptstyle{\square}$}}}
	{\mathrel{\rotatebox[origin=c]{45}{$\scriptscriptstyle{\square}$}}}}

\DeclareMathAlphabet{\mathmybb}{U}{bbold}{m}{n}
\newcommand{\indc}{{\mathmybb{1}}}

\makeatletter
\reenable@package@load{mathabx}
\reenable@package@load{esint}
\makeatother

\renewcommand{\cus}{\mathord{\lozenge}}

\usepackage{microtype}
\definecolor{paperlink}{RGB}{40,127,184}
\hypersetup{colorlinks=true,linktoc=all,linkcolor=paperlink,citecolor=paperlink,urlcolor=paperlink,pdfstartview=Fit,pdftitle={Homogenization at a polynomial scale in high contrast},pdfauthor={Scott Armstrong, Tuomo Kuusi, Am\'elie Loher}}
\tikzexternaldisable

\makeatletter
\renewcommand\section{\@startsection{section}{1}{\z@}%
	{-3.5ex \@plus -1ex \@minus -.2ex}%
	{2.3ex \@plus .2ex}%
	{\normalfont\Large\bfseries}}
\renewcommand\subsection{\@startsection{subsection}{2}{\z@}%
	{-3.25ex \@plus -1ex \@minus -.2ex}%
	{1.5ex \@plus .2ex}%
	{\normalfont\large\bfseries}}
\makeatother

\newcommand*{\history}{\mathscr H}
\newcommand*{\meanpenalty}{\mathfrak h}
\newcommand*{\cmet}{\mathfrak g}
\newcommand*{\Rpos}{\R^{d\times d}_{+}}

\setlist[itemize]{leftmargin=2em,itemsep=0.25em,topsep=0.35em}
\setlist[enumerate]{leftmargin=2.25em,itemsep=0.25em,topsep=0.35em}
\title{Homogenization at a polynomial scale in high contrast}
\author{
  Scott Armstrong\thanks{CNRS and Laboratoire Jacques-Louis Lions, Sorbonne Universit\'e; Courant Institute of Mathematical Sciences, New York University. Email: \texttt{scottnarmstrong@gmail.com}.}
  \and
  Tuomo Kuusi\thanks{Department of Mathematics and Statistics, University of Helsinki. Email: \texttt{tuomo.kuusi@helsinki.fi}.}
  \and
  Am\'elie Loher\thanks{All Souls College, University of Oxford. Email: \texttt{amelie.loher@all-souls.ox.ac.uk}.}
}
\date{23 September 2026}

\begin{document}

\maketitle

\begin{abstract}
We prove an upper bound on the homogenization length scale which is a fixed power of the coarse ellipticity ratio for coefficient fields that satisfy a finite range of dependence assumption. 
\end{abstract}

\setcounter{tocdepth}{1}
\tableofcontents

\section{Introduction}
\label{s.introduction}

We study the large-scale behavior of solutions of the elliptic equation
\begin{equation*}
	-\nabla\cdot\a(x)\nabla u=0\qquad\text{in }U\subseteq\Rd\,,
\end{equation*}
where~$d\geq2$ and~$\a(\cdot)$ is a matrix-valued, elliptic, stationary random coefficient field. Under a suitable decorrelation assumption, the equation homogenizes: on large scales its solutions are well-approximated by those of an equation with a deterministic constant coefficient matrix. The question we consider here is how far we need to zoom out before we see homogenization. Given a fixed error tolerance, the \emph{homogenization length} is the random scale above which the effective equation gives an accurate description. The main result of this paper is an estimate for the homogenization length as a function of the ellipticity contrast, in the case of a coefficient field satisfying a finite range of dependence. We measure lengths in units of the range of dependence, and prove the homogenization length is at most a power of the ellipticity contrast. 

\smallskip

Similar questions arise for electrical conduction in random resistor networks and mixtures of conducting materials; see~\cite{Clerc}. Consider a network whose bonds have conductances~$1$ and~$\lambda$, with~$0<\lambda\ll1$. If~$\lambda=0$, current can flow only along connected paths of bonds with conductance~$1$. A small positive~$\lambda$ also allows current to cross the poorly conducting bonds between these paths. Both the arrangement of the bonds and the ratio of their conductances therefore matter, especially near the percolation threshold, where the well-conducting bonds are close to forming a connected network across the material. The effect of a small positive conductance near this threshold was modeled in~\cite{Straley1976} and studied through random walks and numerical simulations in~\cite{HongEtAl1986}. This leads to our question: how far must we zoom out before the material behaves approximately like a homogeneous conductor, and is a length polynomial in the contrast enough? See~\cite{AK.ICM} for further discussion.

\smallskip

The continuum example from~\cite{AK.HC} makes the question concrete. Take the union of unit balls centered at the points of a homogeneous Poisson process, put~$\a=\Id$ in this union, and put~$\a=\lambda\Id$ outside it. Choose the critical percolation intensity and hold it fixed as~$\lambda$ decreases. The geometry of the conducting phase and the finite dependence range of the coefficient field remain unchanged, while the increasing contrast makes transport more sensitive to the connectivity of that phase. Related models of overlapping disks and spheres, including mixtures in which both phases have finite conductivity, were studied numerically in~\cite{TobochnikLaingWilson1990}. Our main result implies an upper bound of $O(\lambda^{-C})$ on the homogenization length, with~$C$ depending only on the dimension, and thus the estimate is uniform over coefficient laws satisfying the ellipticity and decorrelation assumptions. 

\smallskip

The quantitative theory of stochastic homogenization gives detailed error estimates when the ellipticity ratio is held fixed; see~\cite{GO1,GO2,AKMBook}. The dependence of the homogenization length on that ratio was left implicit in these works, but tracking the constants in their proofs gives bounds at least exponential in a positive power of the contrast. The reason can be seen in the fact that these works all use an iteration-in-the-scales whose gain at each step is an inverse power of the ellipticity ratio. A fixed reduction of the error then requires a number of steps which grows like a power of the contrast. Since the length grows geometrically with the number of steps, this polynomial cost in the iteration count necessarily becomes an exponential cost in the length bound; see the discussion in~\cite[Section~1.2]{AK.HC}.

\smallskip

The first two authors obtained a much better bound in~\cite{AK.HC} by introducing an iteration based on intrinsic coarse-grained coefficients and coarse-grained elliptic estimates. Since the notion of ``coarse ellipticity'' is scale-dependent and therefore changing, improvements feed into subsequent steps of the iteration. For uniformly elliptic coefficients with ellipticity ratio~$\frac\Lambda\lambda$ and unit range of dependence, the  results of~\cite{AK.HC} give an upper bound for the homogenization length scale of 
\begin{equation}
\exp\biggl(C\log^2\left(1+\frac{\Lambda}{\lambda}\right)\biggr)
=
\left(1+\frac{\Lambda}{\lambda}\right)^{C\log\left(1+\frac{\Lambda}{\lambda}\right)}\,,
\label{e.exp.log.squared}
\end{equation}
with stretched-exponential control of the tails. Here~$C$ depends only on the dimension and the requested accuracy. 

\smallskip

In this paper, we replace the logarithmically growing exponent in~\eqref{e.exp.log.squared} by a fixed dimensional constant, obtaining a polynomial upper bound for the length scale,
\begin{equation*}
C\left(2+\frac{\Lambda}{\lambda}\right)^C\,. 
\end{equation*}
This answers the polynomial upper-bound question we raised in~\cite{AK.HC,AK.ICM}. 
The main result, Theorem~\ref{t.polynomial.entry}, is formulated in terms of the (annealed) coarse ellipticity contrast. The quenched, quantitative homogenization estimates are consequence of the main result; these are stated below in Theorem~\ref{t.uniform.homogenization}.

\subsection{Main result: polynomial entry into small contrast}
\label{ss.main.result}

Throughout we fix~$d\geq2$ and consider measurable coefficient fields~$\a:\Rd\to\R^{d\times d}$ whose symmetric parts are positive definite, almost everywhere, and which satisfy the qualitative, local uniform ellipticity condition
\begin{equation}
	\s,\ \s^{-1},\ \k^t\s^{-1}\k\in L^\infty_{\mathrm{loc}}(\Rd;\R^{d\times d})\,,
	\qquad
	\s\coloneqq\frac12(\a+\a^t)\,,\qquad\k\coloneqq\frac12(\a-\a^t)\,.
	\label{e.qualitative.ellipticity}
\end{equation}
The fields need not be symmetric or globally bounded, or globally uniformly elliptic. Since our quantitative estimates do not depend on the implicit bounds in~\eqref{e.qualitative.ellipticity}, the assumption can be relaxed to coefficient fields which are not locally bounded and/or uniformly elliptic under conditions which ensure uniqueness of solutions (by approximation). 

\smallskip

Let~$\Omega$ denote the set of such coefficient fields. For a Borel set~$U\subseteq\Rd$, denote by~$\mathcal F(U)$ the~$\sigma$-algebra on~$\Omega$ generated by the random variables
\begin{equation*}
	\a\mapsto\int_{\Rd}e'\cdot\a(x)e\,\varphi(x)\,dx\,,\qquad e,e'\in\Rd,\ \varphi\in C^\infty_c(U)\,,
\end{equation*}
and set~$\mathcal F\coloneqq\mathcal F(\Rd)$. Translations act on~$\Omega$ by~$T_z\a\coloneqq\a(\cdot+z)$. We write~$\cu_m\coloneqq\big(\!-\frac12 3^m,\frac12 3^m\big)^d$ for~$m\in\Z$.

\smallskip

For a bounded Lipschitz domain~$U\subseteq\Rd$, the coarse-grained matrix~$\bfA(U;\a)$ is defined by the variational problem on~$U$ with gradient and flux data in~\cite[(2.13)--(2.14)]{AK.HC}. It is a symmetric positive definite~$2d \times 2d$ matrix which depends only on the restriction of~$\a$ to~$U$. Given a probability measure~$\P$ on~$(\Omega,\mathcal F)$, its annealed value is~$\bfAhom(U)\coloneqq\E[\bfA(U;\a)]$.

\smallskip

We consider a probability measure~$\P$ on~$(\Omega,\mathcal F)$ satisfying the following three assumptions.
\begin{enumerate}[label=(\textrm{P\arabic*}),leftmargin=3em,itemsep=0.5em]
\item \emph{Stationarity with respect to~$\Zd$-translations:}
\label{a.stationarity}
\begin{equation*}
	\P\circ T_z=\P\qquad(z\in\Zd)\,.
\end{equation*}
\item \emph{Unit range of dependence:}
\label{a.finite.range}
for all Borel sets~$U,V\subseteq\Rd$ with~$\dist(U,V)\geq1$, the~$\sigma$-algebras~$\mathcal F(U)$ and~$\mathcal F(V)$ are independent under~$\P$.
\item \emph{Coarse ellipticity above a random scale:}
\label{a.coarse.ellipticity}
there exist a symmetric positive definite matrix~$\bfE\in\R^{2d\times2d}$, an exponent~$\gamma\in[0,1)$, an increasing function~$\Psi_{\S}:\R_+\to[1,\infty)$ with a constant~$K_{\Psi_{\S}}\in(1,\infty)$, and a nonnegative random variable~$\S$ on~$\Omega$ such that
\begin{equation}
	\P[\S>t]\leq\Psi_{\S}(t)^{-1}\quad(t>0)\qand t\Psi_{\S}(t)\leq\Psi_{\S}(K_{\Psi_{\S}}t)\quad(t\geq1)\,,
	\label{e.source.tail}
\end{equation}
and such that, almost surely, for every~$m\in\Z$,
\begin{equation}
	3^m\geq\S\quad\Longrightarrow\quad\bfA(y+\cu_k;\a)\leq3^{\gamma(m-k)}\bfE\qquad\bigl(k\in\Z,\ k\leq m,\ y\in3^k\Zd\cap\cu_m\bigr)\,.
	\label{e.coarse.ellipticity}
\end{equation}
\end{enumerate}

\smallskip

Assumption~\ref{a.coarse.ellipticity} is the random-scale coarse ellipticity condition of~\cite{AK.HC}. In a cube of side length at least~$\S$, the coarse-grained matrices of its subcubes satisfy the displayed bound, which allows them to grow as the subcube size decreases, including at negative scales. The restriction~$\gamma<1$ makes the fine-scale boundary contributions summable when we compare cube geometries. The tail condition~\eqref{e.source.tail} supplies the moments of~$\S$ needed in the proof; its growth constant~$K_{\Psi_{\S}}$ enters the polynomial length bound.

\smallskip

The reference block measures the size of the coarse ellipticity bound. Write it in the block form
\begin{equation}
	\bfE=\begin{pmatrix}\s_0+\k_0^t\s_{*,0}^{-1}\k_0&-\k_0^t\s_{*,0}^{-1}\\-\s_{*,0}^{-1}\k_0&\s_{*,0}^{-1}\end{pmatrix}\,,\qquad\s_0,\s_{*,0}>0\,.
	\label{e.reference.block}
\end{equation}
The reference ellipticity ratio compares the largest upper bound with the smallest lower bound supplied by~$\bfE$, in the fixed Euclidean coordinates. We define it by
\begin{equation}
	\Pi\coloneqq|\s_{*,0}^{-1}|\min_{\h\in\Rskew}\bigl|\s_0+(\k_0-\h)^t\s_{*,0}^{-1}(\k_0-\h)\bigr|\geq1\,.
	\label{e.reference.aspect.ratio}
\end{equation}
The minimization removes a constant skew-symmetric part~$\h$, which does not affect the equation. The reference-block comparison in~\cite{AK.HC} gives~$\s_{*,0}\leq\s_0$ and hence~$\Pi\geq1$. The ratio~$\Pi$ is a fixed parameter of our assumptions and takes the place of the microscopic ellipticity ratio in the length bound. It can be large even for a constant, anisotropic coefficient.

\smallskip

Write the annealed coarse-grained matrix in the block form
\begin{equation}
	\bfAhom(U)=\begin{pmatrix}\shom+\khom^t\shom_*^{-1}\khom&-\khom^t\shom_*^{-1}\\-\shom_*^{-1}\khom&\shom_*^{-1}\end{pmatrix}(U)\,.
	\label{e.annealed.schur}
\end{equation}
We measure the contrast at a Euclidean scale~$m$ by
\begin{equation}
	\Theta_m\coloneqq\min_{\h\in\Rskew}\bigl|\shom_*^{-\nf12}\bigl(\shom+(\khom-\h)^t\shom_*^{-1}(\khom-\h)\bigr)\shom_*^{-\nf12}\bigr|(\cu_m)\,.
	\label{e.Theta.m}
\end{equation}
The contrast $\Theta_m$ measures how far the annealed coarse-grained matrix at scale $3^m$ is from agreeing with its dual. As homogenization takes place, these two matrices approach the same limit and $\Theta_m$ approaches one. Smallness of $\Theta_m-1$ therefore expresses agreement of the primal and dual matrices, not closeness of the effective coefficient to a scalar matrix. In particular, a constant coefficient has $\Theta_m=1$, however anisotropic it may be.

\smallskip
The following theorem is the main result of the paper.

\begin{theoremA}[Polynomial entry into small contrast]
\label{t.polynomial.entry}
Assume~$\P$ satisfies~\ref{a.stationarity},~\ref{a.finite.range}, and~\ref{a.coarse.ellipticity}. 
For every~$\sigma\in(0,1]$, there exists~$C(\sigma,d,\gamma)<\infty$ such that
\begin{equation}
\Theta_{m}\leq1+\sigma
\,, \qquad 
\mbox{for every} \quad 
m\geq 
m_{\rm ent}\coloneqq\bigl\lceil C\log_3(2+\Pi K_{\Psi_{\S}})\bigr\rceil
\,.
\label{e.polynomial.entry}
\end{equation}
\end{theoremA}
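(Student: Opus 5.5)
We plan to run a renormalization iteration in the spirit of~\cite{AK.HC}, changing it so that the number of scales needed to reduce the contrast by a fixed factor does not grow with $\log\Pi$. The basic quantity is the annealed contrast $\Theta_m$, which is monotone in $m$ up to the $\gamma$-dependent boundary errors permitted by~\ref{a.coarse.ellipticity}. It starts at $\Theta_{m_0}\lesssim \Pi$ at the scale $3^{m_0}\approx$ the typical size of $\S$, and the scale $m_0$ is controlled by $\log_3 K_{\Psi_\S}$ through~\eqref{e.source.tail}. The first step is to reduce to that starting scale. Using subadditivity of $\bfA$ and the tail bound~\eqref{e.source.tail}, we will show that on cubes of size $3^{m_0}$ with $m_0=C\log_3(2+K_{\Psi_\S})$ the annealed matrices are bounded by $C\bfE$, up to an error coming from the event $\{\S>3^{m}\}$. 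The growth condition $t\Psi_\S(t)\le\Psi_\S(K_{\Psi_\S}t)$ gives enough moments to absorb that error. After this reduction, $\Theta_{m_0}\le C\Pi$.

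The core of the argument is a \emph{multiplicative} improvement that does not degenerate with the contrast. The log-squared bound~\eqref{e.exp.log.squared} arises because each contraction of $\log\Theta$ by a fixed factor costs $C\log\Theta$ scales. To avoid this, we aim for a one-step estimate of the form
\begin{equation*}
\log\Theta_{m+\ell}\le (1-c)\log\Theta_m \qquad\text{whenever } \ell\ge C \text{ and } \Theta_m\ge 1+\sigma,
\end{equation*}
with $c,C$ depending only on $(d,\gamma)$. Granting this, $\log\Theta$ falls from $\log(C\Pi)$ to $\log(1+\sigma)$ after $O(\log\log\Pi+\log(1/\sigma))$ blocks of $C$ scales. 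That is in fact better than what is needed. A weaker estimate suffices: a decrement $\log\Theta_{m+\ell}\le\log\Theta_m-c$ with $\ell$ bounded independently of $\Theta$ already gives $m_{\rm ent}-m_0\lesssim\log\Pi$, which is the polynomial claim. This additive version is the one I will aim for.

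To prove the additive decrement, the plan has four parts. First, decompose $\bfAhom(\cu_m)-\bfAhom(\cu_{m+\ell})$ as a sum over intermediate scales of the variance of the coarse-grained matrices (the usual "flatness/additivity defect" identity), so that lack of decay forces near-additivity on every intermediate scale. Second, use finite range of dependence~\ref{a.finite.range} together with a concentration (CLT-type) bound for averages of $3^{d\ell}$ independent-ish subcube matrices. This gives that the empirical averages over subcubes are close to $\bfAhom$ with variance $3^{-d\ell}$ times a constant. The point is that this variance must be measured in the \emph{intrinsic} metric $\shom_*^{-1/2}\cdot\shom_*^{-1/2}$ rather than Euclidean units, so the bound carries a factor $\Theta_m$ and not $\Pi$. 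Third, use a coarse-grained Caccioppoli and Poincaré argument in that intrinsic metric to convert near-additivity into closeness of the primal and dual matrices, i.e.\ into $\Theta_{m+\ell}\le\Theta_m^{1-c}$ or the decrement. Fourth, control the error terms from random fine scales using the $\gamma<1$ summability, which is where the dependence on $\gamma$ enters.

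The main obstacle is the third part: the coarse-grained elliptic estimates must be run with constants depending on $\Theta_m$ at most polynomially, with an exponent that beats the gain, rather than on $\Pi$. In~\cite{AK.HC} the constants degrade by a factor that is effectively a power of the current contrast, which forces $\ell\sim\log\Theta$. My first attempt to overcome this is to renormalize at each step by the linear change of variables $x\mapsto\shom_*^{-1/2}(\cu_m)x$ and to subtract the optimal skew $\h$. After this normalization the coefficient is "isotropic in the annealed sense", and one can hope to show that a fixed number of scales suffices to reduce $\Theta$ by a constant amount, using a \emph{relative} Poincaré inequality whose constant is a dimensional multiple of $\Theta_m$. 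The cost of this renormalization is that subsequent cubes become parallelepipeds of aspect ratio up to $\Pi^{1/2}$. Handling these non-cubical geometries, comparing them back to $\cu_m$ with boundary layers, is where $\gamma<1$ and the reference ratio $\Pi$ reenter. This comparison costs only $C\log\Pi$ additional scales once, rather than at every step.
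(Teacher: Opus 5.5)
Your reduction to a starting scale of order $\log_3 K_{\Psi_{\S}}$ with contrast at most $C\Pi$ is consistent with the paper, and so is a single final comparison with Euclidean cubes costing $C\log(2+\Pi)$ scales. The gap is the engine in between. You assert, but never derive, the decrement $\log\Theta_{m+\ell}\le\log\Theta_m-c$ over a window whose length $\ell$ does not depend on the contrast, and the route you sketch cannot produce it.

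\emph{Fluctuation input.} Any comparison of primal and dual optimizers needs the normalized fluctuations of $\bfA$ on the subcubes, and the change of the mean across the window, to be small in absolute terms. At the start of a fresh window nothing gives this. The quantity $\Theta_m$ depends only on the mean $\bfAhom(\cu_m)$ and does not control the law of $\bfA(\cu_m)$. Already for blocks $\mathrm{diag}(X,X^{-1})$ with $X$ taking a rare large value, the mean contrast stays bounded while the relative variance is arbitrarily large. So the only available moment bound comes from the reference block and is polynomial in $\Pi$, as in~\eqref{e.initial.source.bounds}. Averaging over $3^{d\ell}$ subcubes gains only $3^{-d\ell/2}$, which for fixed $\ell$ does not make a polynomially large input small. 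Your claimed factor $\Theta_m$ in place of $\Pi$ would amount to renormalizing the coarse ellipticity data as in~\cite{AK.HC}. That is exactly where each stage there spends logarithmically many scales, so restarting the estimates window by window reproduces the squared logarithm.

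\emph{Geometry.} Recomputing the stretching $\shom_*^{-1/2}(\cu_m)$ at every step moves the grid by a relative distortion not bounded independently of $\Pi$; the first move alone can be polynomial in $\Pi$. Any estimates you hold on the old parallelepipeds must then be transferred each time, at a cost logarithmic in that distortion. Paying $C\log\Pi$ only once is true only for a final transfer in which no fluctuation estimates are carried.

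\emph{Determinant branch.} A window can lower the mean, hence the determinant, without lowering $\Theta$. Your per-window statement therefore has to be a dichotomy, with the determinant branch charged to a total budget of order $\log(2+\Pi)$.

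The paper never proves a per-window contraction of the duality gap. It separates the fluctuation and mean-change errors from the gap and never restarts them. The error plus drift $\mathcal P_{\qq}(m;n)+D_{\qq,j_*}(m)$ contracts by a fixed factor every $h=2Q$ scales, up to a determinant-loss term (Proposition~\ref{p.fixed.geometry.one.grid.propagation}, from finite-range averaging and the positive-gap Lemma~\ref{l.fixed.geometry.positive.gap}). So the reduction from size $(2+\Pi)^C$ to a fixed tolerance is paid once (Proposition~\ref{p.initial.fixed.grid.scale}). The grid then moves toward the canonical metric $\cmet(\bfAhom)$ in projective steps of size $\varepsilon$ (Lemma~\ref{l.projective.step}). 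This lets the history be transported by Whitney partitions at bounded cost (Proposition~\ref{p.two.grid.transport}). All steps, including those on which the error grows, are counted by the potential~\eqref{e.global.selection.scalar} against the total determinant loss. Only at the end, on a window $[s,t]$ with small history, small determinant loss and canonical geometry, is the gap closed (Proposition~\ref{p.response.transfer}). This uses an optimizer-energy comparison that is linear in the unknown gap: $\kappa_t-1\le\zeta\kappa_s\le\zeta r^2\kappa_t$, where $\kappa_u$ is the gap on the adapted cube at scale $u$ and $r$ is close to one. This rearranges to a small bound however large the gap was. Persistence for all $m\ge m_{\rm ent}$ then follows exactly, because $\bfAhom(\cu_m)$ decreases and its dual increases. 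The carried history and this one-shot linear closing estimate are the ideas your outline is missing.
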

Since scale~$m$ corresponds to length~$3^m$, the theorem gives the polynomial length bound
\begin{equation*}
	3^{m_{\rm ent}}\leq3(2+\Pi K_{\Psi_{\S}})^{C(\sigma,d,\gamma)}\,.
\end{equation*}
The constant depends only on the tolerance~$\sigma$, the dimension~$d$, and the exponent~$\gamma$; the coefficient law and the reference block enter the displayed length only through~$\Pi$ and~$K_{\Psi_{\S}}$. The deterministic scale~$m_{\rm ent}$ is a sufficient scale for reaching small annealed contrast; it need not be the first such scale, and no bound uniform as~$\gamma\uparrow1$ is asserted.

\smallskip

The proof retains estimates for fluctuations and changes of the mean as the scale increases and the averaging geometry is gradually adjusted. By counting error reduction, geometric progress, and determinant loss together, we find an interval on which a comparison of the primal and dual optimizer energies closes the gap. This takes only a logarithmic number of scales; Section~\ref{ss.outline} explains how we avoid multiplying the two logarithmic costs in~\cite{AK.HC}.

\subsection{Consequences of the main result}
\label{ss.consequences}

Theorem~\ref{t.polynomial.entry} supplies the starting scale for the small-contrast iteration of~\cite{AK.HC}. Theorem~\ref{t.algebraic.convergence} gives the resulting algebraic convergence, and Theorems~\ref{t.uniform.homogenization} and~\ref{t.random.homogenization} combine it with the deterministic homogenization estimates of~\cite{AK.HC} to obtain PDE estimates above a random radius.

\paragraph{Algebraic convergence.}
A fixed small-contrast bound does not yet give a rate of convergence to the homogenized matrix. The next theorem supplies this rate and identifies the limit. Its proof uses the moments of~$\S$ to initialize the small-contrast iteration beyond the entry scale of Theorem~\ref{t.polynomial.entry}.

\begin{theoremA}[Algebraic convergence at a polynomial scale]
\label{t.algebraic.convergence}
There exist constants~$C(d,\gamma)<\infty$ and~$\kappa(d,\gamma)>0$ such that, if~$\P$ satisfies~\ref{a.stationarity},~\ref{a.finite.range}, and~\ref{a.coarse.ellipticity}, then there is a deterministic scale~$m_0\in\N$ satisfying
\begin{equation}
	m_0\leq\bigl\lceil C\log_3(2+\Pi K_{\Psi_{\S}})\bigr\rceil\,,
	\label{e.algebraic.entry}
\end{equation}
and
\begin{equation}
	\Theta_{m_0+j}-1\leq3^{-\kappa j}\qquad(j\in\N)\,.
	\label{e.algebraic.contrast.decay}
\end{equation}
There is a deterministic symmetric positive definite matrix~$\bfAhom\in\R^{2d\times2d}$ such that
\begin{equation}
	0\leq\bfAhom(\cu_{m_0+j})-\bfAhom\leq6 \cdot 3^{-\kappa j}\bfAhom\qquad(j\in\N)\,.
	\label{e.algebraic.block.decay}
\end{equation}
The matrices in the block representation of~$\bfAhom$, defined as in~\eqref{e.annealed.schur}, satisfy~$\shom_* = \shom>0$ and~$\khom=-\khom^t$. We define the associated coefficient matrix by~$\ahom\coloneqq\shom+\khom$.
\end{theoremA}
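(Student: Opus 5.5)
The plan is to combine Theorem~\ref{t.polynomial.entry} with the small-contrast iteration of~\cite{AK.HC}. The constant $\sigma$ there is chosen as a fixed $\sigma_0(d,\gamma)\in(0,1]$, namely the threshold below which the perturbative iteration of~\cite{AK.HC} is known to contract.

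\textbf{Step 1: entry and a second starting scale.} Applying Theorem~\ref{t.polynomial.entry} with $\sigma=\sigma_0$ gives $\Theta_m\le1+\sigma_0$ for all $m\ge m_{\rm ent}$. The small-contrast iteration also needs the random-scale effects to be negligible on the cubes where it runs, and this is where the moments of~$\S$ enter. The tail bound~\eqref{e.source.tail}, iterated through $t\Psi_\S(t)\le\Psi_\S(K_{\Psi_\S}t)$, gives $\Psi_\S(K_{\Psi_\S}^n)\ge K_{\Psi_\S}^{\,n}$ up to constants. Hence $\P[\S>3^{m}]\le 3^{-c(m-m_1)}$ once $m\ge m_1:=\lceil C\log_3(2+K_{\Psi_\S})\rceil$, for a fixed positive $c$ after choosing the iteration count appropriately. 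Set $m_0:=\max\{m_{\rm ent},m_1\}+C$. This is of the form~\eqref{e.algebraic.entry}.

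\textbf{Step 2: the contraction step.} For $m\ge m_0$, write $\delta_m:=\Theta_m-1$. The aim is a one-step inequality of the form
\begin{equation*}
\delta_{m+1}\le (1-c)\,\delta_{m}+C\,3^{-c(m-m_0)}\,.
\end{equation*}
The decay comes from the subadditivity/monotonicity of $\bfAhom(\cu_m)$ in $m$ together with a variance bound. Under~\ref{a.finite.range}, $\bfA(\cu_{m+1})$ is dominated by the average of $\bfA$ over the $3^d$ subcubes. Their fluctuations decorrelate, so the variance contributes $3^{-cm}$ relative errors. On the event $\{\S\le3^m\}$ the quantitative control is~\eqref{e.coarse.ellipticity}, and the complementary event is summable by Step~1.

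The key input from~\cite{AK.HC} is the small-contrast estimate. When $\delta_m$ is small, the primal/dual gap at scale $m+1$ is bounded by a fixed fraction of the gap at scale $m$, plus the fluctuation error. Iterating the one-step inequality yields~\eqref{e.algebraic.contrast.decay} after absorbing constants: shift $m_0$ by a fixed amount, possibly lower~$\kappa$, and use $\delta_{m_0}\le\sigma_0\le1$.

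\textbf{Step 3: limit.} $\bfAhom(\cu_m)$ is nonincreasing in $m$, by subadditivity of $\bfA$ and stationarity~\ref{a.stationarity}. It is also bounded below by a dual monotone sequence whose gap is controlled by $\Theta_m-1$. Hence it converges to some $\bfAhom$. The two-sided bound~\eqref{e.algebraic.block.decay} follows because $\bfAhom(\cu_m)$ is sandwiched between $\bfAhom$ and a matrix whose Schur blocks $\shom,\shom_*$ differ by a factor $\Theta_m\le1+3^{-\kappa j}$. The factor~$6$ accounts for converting the bound on the blocks into a bound on the full $2d\times2d$ matrix. In the limit $\Theta=1$, which forces $\shom_*=\shom$. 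Since the minimizer $\h$ in~\eqref{e.Theta.m} is attained at $\khom$ itself, we can arrange $\khom$ skew.

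\textbf{Main obstacle.} I expect the main obstacle to be Step~2. There one must check that the constants of the perturbative iteration of~\cite{AK.HC} are independent of $\Pi$ once $\Theta\le1+\sigma_0$, since $\Pi$ should enter only through $m_0$. I would first normalize by $\shom_*^{-1/2}(\cu_{m_0})$ to remove anisotropy, so that the constant coefficient case becomes isotropic. The variance estimates should then depend only on $d,\gamma$.
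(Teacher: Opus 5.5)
Your architecture matches the paper's: enter small contrast via Theorem~\ref{t.polynomial.entry} at a fixed tolerance $\sigma(d,\gamma)$, run the small-contrast iteration of~\cite{AK.HC}, and pass to the monotone limit. Your Step~3 is essentially the paper's limit argument, with one correction. What forces $\shom_*=\shom$ and $\khom=-\khom^t$ is the self-duality $\bfAhom=\mathbf R\bfAhom^{-1}\mathbf R$, obtained by letting $m\to\infty$ in $\bfAhom(\cu_m)\le(1+6(\Theta_m-1))\mathbf R\bfAhom(\cu_m)^{-1}\mathbf R$. Nothing is ``arranged''.

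The genuine gap is at the obstacle you flag yourself, and normalization does not close it. Small annealed contrast at $m_{\rm ent}$ gives no control of the moments of the random blocks. Dilate so that $m_{\rm ent}$ becomes scale zero and normalize by $F=\bfAhom(\cu_0)$ (or any normalization by the annealed block, yours included). The only available bound is then $\|F^{-1/2}\bfA(z+\cu_r)F^{-1/2}\|_{L^Q(S_Q)}\le H3^{-\gamma r}$ for $r\le0$, with $H\le C(2+\Pi K_{\Psi_{\S}})^C$. This bound comes from three sources:
\begin{itemize}
\item the coarse ellipticity condition \eqref{e.coarse.ellipticity};
\item the moments $\E[\S^p]\le C(p)K_{\Psi_{\S}}^{C(p)}$ (moments, not merely the probability of the bad event you control in Step~1);
\item the comparison $\bfE\le C\Pi K_{\Psi_{\S}}^{C}F$.
\end{itemize}
The variance estimate and the weak-norm estimate of \cite[Lemma~2.16]{AK.HC} both carry factors polynomial in $\Pi K_{\Psi_{\S}}$. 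The latter's input is a maximum of positive excesses over all subcells at all smaller scales, including those below the entry scale. So the iteration cannot start at $m_{\rm ent}+C(d,\gamma)$, and your ``fixed shift'' of $m_0$ is not justified.

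The paper handles this as follows.
\begin{itemize}
\item It works on rounded adapted cubes $\qq\cu_j$, with $\qq=\mathcal Q(S)$ and $S$ the Schur complement in $F$. This balances the geometry while keeping compatibility with $\Zd$-stationarity, which a raw change of variables by $\shom_*^{1/2}$ would lose.
\item It fills these cubes with unit cubes. Lemma~\ref{l.fixed.geometry.matrix.averaging} and the boundary-strip volume bound then give positive-excess moments at most $H3^{-j}$ and $(1+H3^{-j})^{-1}F_*\le F_j\le(1+H3^{-j})F$.
\item Only after $b\le C\log_3(H/\sigma)$ further scales are the moments $O(1)$, and the iteration starts there.
\item At the end it returns to Euclidean cubes via \cite[Lemma~2.15]{AK.HC}, which is needed because $\Theta_m$ is defined on $\cu_m$; you omit this step. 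It costs $CH3^{-(m-n)}$, absorbed by another $C\log_3 H$ scales.
\end{itemize}
This is why $m_0-m_{\rm ent}$ is of order $\log_3(2+\Pi K_{\Psi_{\S}})$.

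Your Step~2 also misdescribes the mechanism. The fluctuation estimate \cite[Lemma~4.4]{AK.HC} with finite-range averaging yields only $v_k\le Ca_l^2+C3^{-d(k-l)}$, where $a_l=\frac1d\tr(S_{*,l}^{-1}S_l)-1$. So the variance is quadratically coupled to an earlier mismatch; it is not an exogenous $3^{-cm}$ forcing. There is also no one-step contraction $\delta_{m+1}\le(1-c)\delta_m$. The optimizer comparison instead gives the multiscale recursion $a_n\le C\sum_{k=b}^{n}3^{-\beta(n-k)}(v_k+a_k-a_n)+C3^{-\beta(n-b)}$. Contraction comes from moving the negative copies of $a_n$ to the left. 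The delayed square $a_{\lceil 3i/4\rceil}^2$ is then absorbed by induction, using the smallness of $\sigma$ supplied by polynomial entry. Citing \cite{AK.HC} for this as a black box is acceptable, but the linear inequality you wrote does not describe what that argument gives.
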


The decay exponent depends only on~$d$ and~$\gamma$; the original contrast affects the starting scale. The moment initialization and iteration are given in Section~\ref{ss.algebraic.convergence}.

\paragraph{Uniform ellipticity.}
The next consequence gives the polynomial improvement over~\eqref{e.exp.log.squared}, in terms of the usual ellipticity ratio~$\nf{\Lambda}{\lambda}$. The rescaled random radius retains the tail~$\exp(-t^d)$ of~\cite[Theorem~A]{AK.HC}, and the Dirichlet error decays algebraically above that radius.

\begin{theoremA}[Quantitative homogenization under uniform ellipticity]
\label{t.uniform.homogenization}
Assume that~$\P$ satisfies~\ref{a.stationarity} and~\ref{a.finite.range}, and that, almost surely, for almost every~$x\in\Rd$ and every~$e\in\Rd$,
\begin{equation}
	\lambda|e|^2\leq e\cdot\a(x)e\qand\Lambda^{-1}|e|^2\leq e\cdot\a(x)^{-1}e\,,\qquad 0<\lambda\leq1\leq\Lambda<\infty\,.
	\label{e.uniform.ellipticity}
\end{equation}
Let~$\ahom$ be the coefficient matrix identified in Theorem~\ref{t.algebraic.convergence}, set~$\overline\lambda\coloneqq\lambda_{\min}(\shom)$, and define
\begin{equation}
	E_r\coloneqq\bigl\{x\in\Rd:x\cdot\shom^{-1}x<\overline\lambda^{-1}r^2\bigr\}\qquad(r>0)\,.
	\label{e.homogenized.ellipsoids}
\end{equation}
There exist constants~$\kappa(d)>0$ and~$C_0(d)<\infty$ such that, for every~$\delta>0$, there exist~$C(\delta,d)\geq1$ and a random variable~$\X\geq1$ satisfying
\begin{equation}
	\P\Bigl[\X\geq C\Big(2+\frac{\Lambda}{\lambda}\Big)^C t\Bigr]\leq\exp\big(\!-t^d\big)\qquad(t\geq1)\,,
	\label{e.uniform.scale.tail}
\end{equation}
such that, almost surely, the following hold.
\begin{itemize}
\item \emph{Dirichlet problem.} For every~$r\geq\X$,~$f\in L^2(E_r)$, and~$g\in H^1(E_r)$, let~$u,\uhom\in g+H^1_0(E_r)$ be the weak solutions of
\begin{equation*}
	-\nabla\cdot\a\nabla u=f\qand-\nabla\cdot\ahom\nabla\uhom=f\qquad\text{in }E_r\,.
\end{equation*}
Then
\begin{equation}
	\|u-\uhom\|_{L^2(E_r)}\leq\delta\left(\frac{r}{\X}\right)^{-\kappa}\bigl(r\|\nabla g\|_{L^2(E_r)}+r^2\|f\|_{L^2(E_r)}\bigr)\,.
	\label{e.uniform.dirichlet}
\end{equation}
\item \emph{Large-scale energy estimate.} For every~$R\geq\X$ and every weak solution~$u\in H^1(E_R)$ of~$-\nabla\cdot\a\nabla u=0$ in~$E_R$,
\begin{equation}
	\sup_{r\in[\X,R]}\fint_{E_r}\nabla u\cdot\a\nabla u\leq C_0\fint_{E_R}\nabla u\cdot\a\nabla u\,.
	\label{e.uniform.energy}
\end{equation}
\end{itemize}
\end{theoremA}

The radius~$\X$ is common to all observation radii and data in the statement, and to the energy estimate. Changing the tolerance changes the starting radius, not the decay exponent~$\kappa(d)$. In the proof, given in Section~\ref{ss.uniform.homogenization}, the scale~$\S$ can be taken deterministic under~\eqref{e.uniform.ellipticity}; it is not imposed on the general coefficient fields considered here.

\paragraph{The random scale.}
For the general coefficient fields of Theorem~\ref{t.polynomial.entry}, the random scale~$\S$ remains visible in the stochastic estimate. The next theorem gives Dirichlet homogenization, correctors, and large-scale regularity above one random radius. Its tail bound~\eqref{e.random.scale.tail} combines a stretched-exponential term with a rescaled term from the tail of~$\S$, while retaining a polynomial deterministic length factor. The coefficient matrix is~$\ahom=\shom+\khom$ from Theorem~\ref{t.algebraic.convergence}, and~$E_r$ denotes the ellipsoid in~\eqref{e.homogenized.ellipsoids} formed with this matrix.

\smallskip

The homogenization and corrector errors will be measured in negative Sobolev norms of gradients and centered fluxes. We use volume-normalized norms so that their scaling is explicit. Write~$\|F\|_{\underline L^2(U)}^2\coloneqq\fint_U|F|^2$. For~$0<s<1$, the volume-normalized fractional norm is
\begin{equation}
	\|F\|_{\underline H^s(U)}^2\coloneqq |U|^{-\nf2d s}\fint_U|F|^2+\fint_U\int_U\frac{|F(x)-F(y)|^2}{|x-y|^{d+2s}}\,dy\,dx\,.
	\label{e.physical.fractional.norm}
\end{equation}
At the endpoint we use
\begin{equation}
	\|F\|_{\underline H^1(U)}^2\coloneqq |U|^{-\nf2d}\fint_U|F|^2+\fint_U|\nabla F|^2\,.
	\label{e.physical.endpoint.norm}
\end{equation}
For~$0<s\leq1$, the negative norm is the compact-test dual with averaged pairing:
\begin{equation}
	\|F\|_{\underline H^{-s}(U)}\coloneqq\sup\biggl\{\fint_U F\cdot\psi:\psi\in C^\infty_c(U;\Rd),\ \|\psi\|_{\underline H^s(U)}\leq1\biggr\}\,.
	\label{e.physical.negative.norm}
\end{equation}
These definitions also apply componentwise. Sobolev space membership without an underline has its usual meaning; in particular, the ordinary~$H^1$ norm in Theorem~\ref{t.uniform.homogenization} is unchanged.

\smallskip

We use the weighted solution spaces of~\cite[Section~2.1]{AK.HC}. The space~$H^1_\s(U)$ is the smooth completion for the norm
\begin{equation*}
	\|u\|_{H^1_\s(U)}^2\coloneqq\|u\|_{L^1(U)}^2+\int_U\nabla u\cdot\s\nabla u\,.
\end{equation*}
The space~$H^1_\a(U)$ is the smooth completion for the graph norm obtained by adding~$\|\nabla\cdot\k\nabla u\|_{H^{-1}_\s(U)}^2$, where~$H^{-1}_\s(U)$ is the dual of~$H^1_{\s,0}(U)$. The zero-boundary spaces are the respective closures of~$C^\infty_c(U)$. Weak equations are understood in distributions; condition~\eqref{e.qualitative.ellipticity} makes the gradients and fluxes locally integrable.

\begin{theoremA}[Homogenization with a random scale]
\label{t.random.homogenization}
Assume that~$\P$ satisfies~\ref{a.stationarity},~\ref{a.finite.range}, and~\ref{a.coarse.ellipticity}. There exist constants~$c(d)>0$,~$C(d,\gamma)\geq1$,~$c_{\rm src}(d,\gamma)>0$, and~$\kappa(d,\gamma)>0$, and a measurable random variable~$\X\geq\max\{1,\S\}$ such that
\begin{equation}
	\P\bigl[\X\geq C(2+\Pi K_{\Psi_{\S}})^C t\bigr]\leq\exp\bigl(-c(d)t^{d-2\gamma}\bigr)+\Psi_{\S}\bigl(c_{\rm src}(d,\gamma)t\bigr)^{-1}\qquad(t\geq1)\,.
	\label{e.random.scale.tail}
\end{equation}
Almost surely, the following hold simultaneously, with the same~$\X$.
\begin{enumerate}[label=\emph{(\alph*)}]
\item \emph{Dirichlet homogenization.} Let~$s\in[\nf{1+\gamma}{4},\nf{1}{2})$ and let~$U\subseteq E_1$ be a bounded Lipschitz domain.
We measure the geometry of the domain in coordinates in which the homogenized equation becomes the Laplace equation. 
There is a constant~$C_0(U,s,d)<\infty$, depending on the size and Lipschitz regularity of the transformed domain $\overline\lambda^{1/2}\overline{\mathbf s}^{-1/2}U$, with the following property. For every~$0<\varepsilon\leq\X^{-1}$ and~$g\in W^{1,\infty}(U)\cap H^{1+s}(U)$, set~$\a^\varepsilon(x)\coloneqq\a(\nf{x}{\varepsilon})$. The unique weak solutions~$u^\varepsilon\in g+H^1_{\a^\varepsilon,0}(U)$ and~$\uhom\in g+H^1_0(U)$ of
\begin{equation*}
	-\nabla\cdot\a^\varepsilon\nabla u^\varepsilon=0\qand-\nabla\cdot\ahom\nabla\uhom=0\qquad\text{in }U
\end{equation*}
satisfy
\begin{multline}
	\big\|\shom^{\nf12}(\nabla u^\varepsilon-\nabla\uhom)\big\|_{\underline H^{-s}(U)}+\big\|\shom^{-\nf12}((\a^\varepsilon-\khom)\nabla u^\varepsilon-\shom\nabla\uhom)\big\|_{\underline H^{-s}(U)}
	\\\leq C_0(\varepsilon\X)^\kappa\|\shom^{\nf12}\nabla g\|_{\underline H^s(U)}\,.
	\label{e.random.dirichlet}
\end{multline}
\item \emph{Correctors and large-scale regularity.} There is a slope-linear family of~$\Zd$-stationary gradient fields~$\{\nabla\phi_e:e\in\Rd\}$, with potentials~$\phi_e\in H^1_{\s,\mathrm{loc}}(\Rd)$ defined modulo constants, satisfying
\begin{equation*}
	-\nabla\cdot\a(e+\nabla\phi_e)=0\qquad\text{in }\Rd\,.
\end{equation*}
For every~$e\in\Rd$ and~$r\geq\X$,
\begin{equation}
	r^{-1}
	\big\|\shom^{\nf12}\nabla\phi_e\|_{\underline H^{-1}(E_r)}+r^{-1}
	\big\|\shom^{-\nf12}((\a-\khom)(e+\nabla\phi_e)-\shom e)\|_{\underline H^{-1}(E_r)}\leq C|\shom^{\nf12}e|\left(\frac{r}{\X}\right)^{-\kappa}.
	\label{e.random.corrector}
\end{equation}
For every~$\theta\in(0,1)$, the entire weak solutions~$v\in H^1_{\s,\mathrm{loc}}(\Rd)$ satisfying
\begin{equation}
	-\nabla\cdot\a\nabla v=0\quad\text{in }\Rd\qand\lim_{r\to\infty}r^{-(1+\theta)}\|v\|_{\underline L^2(B_r)}=0
	\label{e.random.liouville.growth}
\end{equation}
are exactly the functions~$x\mapsto e\cdot x+\phi_e(x)+c$, for~$e\in\Rd$ and~$c\in\R$, up to equality almost everywhere.

\smallskip

For every~$R\geq\X$ and weak solution~$u\in H^1_\a(E_R)$ of~$-\nabla\cdot\a\nabla u=0$ in~$E_R$,
\begin{equation}
	\sup_{r\in[\X,R]}\|\s^{\nf12}\nabla u\|_{\underline L^2(E_r)}\leq C\|\s^{\nf12}\nabla u\|_{\underline L^2(E_R)}\,.
	\label{e.random.energy}
\end{equation}
Moreover, for every~$\theta\in(0,1)$ there exists~$e\in\Rd$ such that, for every~$r\in[\X,R]$,
\begin{equation}
	\|\s^{\nf12}(\nabla u-e-\nabla\phi_e)\|_{\underline L^2(E_r)}\leq C(d,\gamma,\theta)\left(\frac{r}{R}\right)^\theta\|\s^{\nf12}\nabla u\|_{\underline L^2(E_R)}\,.
	\label{e.random.regularity}
\end{equation}
The radius~$\X$ and the corrector family are independent of~$s$ and~$\theta$.
\end{enumerate}
\end{theoremA}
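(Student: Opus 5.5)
The proof will be a reduction: Theorem~\ref{t.algebraic.convergence} supplies the rate, and the quenched, deterministic-given-coarse-ellipticity homogenization machinery of~\cite{AK.HC} turns annealed algebraic convergence into the quenched statements. I begin by setting up the random radius. Fix~$m_0$ from Theorem~\ref{t.algebraic.convergence}, so~$3^{m_0}\leq 3(2+\Pi K_{\Psi_{\S}})^C$ and~$\Theta_{m_0+j}-1\leq 3^{-\kappa j}$. For each scale~$m\geq m_0$ I consider the event~$G_m$ with three properties.
\begin{itemize}
\item[(i)] We have~$3^m\geq\S$, so that~\eqref{e.coarse.ellipticity} holds in~$\cu_m$ and its neighbours.
\item[(ii)] The quenched coarse-grained matrices~$\bfA(y+\cu_k)$ of the subcubes~$y+\cu_k$ of~$\cu_{m}$ with~$k\geq m-\lfloor m/2\rfloor$ lie within~$3^{-\kappa'(k-m_0)}$ of~$\bfAhom$, in the ordering relative to~$\bfAhom$.
\item[(iii)] The same holds on a ball~$3^{m}\cdot$(fixed neighbourhood) after rotating into the coordinates~$\shom^{-1/2}$.
\end{itemize}
I then set~$\X\coloneqq 3^{M}$, where~$M$ is the smallest integer at least~$m_0$ such that~$G_m$ holds for all~$m\geq M$.

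The tail~\eqref{e.random.scale.tail} is obtained by a union bound over~$m$. For (i) I use~$\P[\S>3^m]\leq\Psi_\S(3^m)^{-1}$. The doubling-type property~$t\Psi_\S(t)\leq\Psi_\S(K_{\Psi_\S}t)$ lets me sum over the scales~$3^m\geq 3^{m_0}t$: each step up in scale gains a factor, and rescaling~$t$ by a constant~$c_{\rm src}(d,\gamma)$ absorbs the geometric sum. This produces the term~$\Psi_\S(c_{\rm src}t)^{-1}$; the factor of~$K_{\Psi_\S}$ is already absorbed in~$m_0$. For (ii), I use the subadditivity of~$\bfA$ together with the annealed bound~\eqref{e.algebraic.block.decay}, which give control of the mean. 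Fluctuations are handled by the finite range of dependence~\ref{a.finite.range}, which makes~$\bfA(\cu_m)$ comparable to an average of~$3^{d(m-k)}$ independent copies of~$\bfA(y+\cu_k)$. The summands are bounded by~$3^{\gamma(m-k)}\bfE$ on the event (i), and after normalising by~$\bfAhom$ the factor~$\Pi$ is absorbed into the polynomial prefactor. A Bernstein/Hoeffding-type concentration inequality therefore yields a stretched exponential~$\exp(-c\,3^{(d-2\gamma)(m-m_0)}\cdot(\text{small power}))$. Summing over~$m$ with~$3^{m-m_0}\geq t$ gives~$\exp(-ct^{d-2\gamma})$ after adjusting constants. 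This step loses only polynomial factors because the number of scales between~$m_0$ and the cube is logarithmic and each concentration constant depends on~$\Pi$ polynomially.

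On the event that~$r\geq\X$, the quenched coarse-grained matrices are close to the constant~$\bfAhom$ at all scales between~$r^{1/2}$-type mesoscales and~$r$. I then invoke the deterministic homogenization estimates of~\cite{AK.HC}: the coarse-grained Dirichlet estimate, the corrector construction as limits of finite-volume optimizers, and the excess-decay/Campanato iteration. These convert algebraic smallness of~$|\bfA(z+\cu_k)-\bfAhom|$ over a multiscale family of cubes into the stated results.
\begin{itemize}
\item The $\underline H^{-s}$ bound~\eqref{e.random.dirichlet} follows for~$s\geq(1+\gamma)/4$. The restriction on~$s$ is exactly what makes the fine-scale contributions, bounded by~$3^{\gamma(m-k)}$, summable against the~$H^s$ regularity of~$g$.
\item The corrector bound~\eqref{e.random.corrector} follows in the same way.
\item Large-scale~$C^{0,1}$ and~$C^{1,\theta}$ regularity~\eqref{e.random.energy}--\eqref{e.random.regularity} follow, as does the Liouville statement~\eqref{e.random.liouville.growth} by letting~$R\to\infty$.
\end{itemize}
Ellipsoids~$E_r$ appear because I work in coordinates~$x\mapsto\overline\lambda^{1/2}\shom^{-1/2}x$, in which~$\ahom$ becomes the identity plus a skew part. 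The skew part is removed by subtracting~$\khom$, which accounts for the centered flux~$(\a-\khom)\nabla u$. Stationary correctors are built by a standard limiting argument: the finite-volume optimizers at scale~$3^m$ form a Cauchy sequence by the algebraic decay, uniformly for~$3^m\geq\X$.

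The main obstacle is the concentration step (ii). The coarse-grained matrices are unbounded at small scales (they are only controlled by~$3^{\gamma(m-k)}\bfE$, and only above~$\S$), and the deviation must be measured relative to~$\bfAhom$ rather than~$\bfE$ without losing more than a polynomial in~$\Pi$. To address this, I would first truncate on the event~$\{\S\leq 3^{k}\}$ of each subcube and use independence of well-separated subcubes. I would then control the mismatch between~$\bfE$ and~$\bfAhom$ by the bound~$\bfAhom(\cu_{m_0})\geq c\,\Pi^{-C}\bfE$ implicit in the proof of Theorem~\ref{t.polynomial.entry}. The event where~$\S$ is large is charged to the~$\Psi_\S$ term.
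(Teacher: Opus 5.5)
Your overall architecture matches the paper's: Theorem~\ref{t.algebraic.convergence}, then a quenched random scale that also dominates $\S$, then the deterministic estimates of \cite{AK.HC} in coordinates adapted to $\shom$. The gap is in the step you yourself call the main obstacle, and this step is exactly where the tail \eqref{e.random.scale.tail} is decided. As you set it up, it fails in three ways.

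First, \ref{a.coarse.ellipticity} only supplies some random variable $\S$. Nothing makes $\S(y)$ measurable with respect to $\mathcal F(y+\cu_k)$, so truncating ``on the event $\{\S\le 3^k\}$ of each subcube'' destroys the independence of separated subcubes. You could instead truncate the blocks themselves, which is local. But then the bad event is $\bigcup_y\{\S(y)>3^k\}$ over $3^{d(m-k)}$ subcubes with $k\approx m/2$. This costs about $3^{dm/2}\Psi_{\S}(3^{m/2})^{-1}$, i.e.\ $\Psi_{\S}$ evaluated near $t^{1/2}$. The growth condition in \eqref{e.source.tail} cannot turn this into $\Psi_{\S}(c_{\rm src}t)^{-1}$: take $\Psi_{\S}(t)=\exp\bigl((\log\max\{t,1\})^2\bigr)$, which is admissible with $K_{\Psi_{\S}}=e$.

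Second, suppose you use only the single event $\{3^m\ge\S\}$ of your item (i). The children at scale $j\ge m_0$ of a subcube $y+\cu_k$ (you need $j\ge m_0$ so their mean is already near $\bfAhom$) are then bounded only by $3^{\gamma(m-j)}\bfE\le C\Pi3^{\gamma(m-j)}\bfAhom$. Their average therefore fluctuates at relative size about $\Pi\,3^{\gamma(m-j)-\frac d2(k-j)}$. For $k\approx m/2$ this is not small once $\gamma\ge d/4$ (e.g.\ $d=2$, $\gamma\ge\frac12$). So the unweighted closeness of all subcubes down to half the scale, which (ii) demands, cannot be made likely.

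Third, you pin the accuracy to $3^{-\kappa'(k-m_0)}$ rather than to the random radius, and this loses in the stretched exponent. At the first admissible scale $3^m\approx3^{m_0}t$ you already need accuracy $t^{-\kappa'}$. For the annealed error $3^{-\kappa(j-m_0)}$ to be that small, the child scale must satisfy $j-m_0\gtrsim\frac{\kappa'}{\kappa}(m-m_0)$. This gives $\exp\bigl(-ct^{(d-2\gamma)(1-\kappa'/\kappa)-2\kappa'}\bigr)$, not $\exp(-ct^{d-2\gamma})$, and adjusting constants cannot repair an exponent.

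The missing idea is the weighted row used in the paper, taken over every child scale including negative ones:
\[
\sum_{k\le m}3^{-\rho(m-k)}\max_{z\in3^k\Zd\cap\cu_m}\bigl|\bigl(\bfAhom^{-\nf12}(\bfA(z+\cu_k)-\bfAhom)\bfAhom^{-\nf12}\bigr)_+\bigr|\,,\qquad\gamma<\rho=\tfrac{1+3\gamma}{4}<2s_0=\tfrac{1+\gamma}{2}\,.
\]
On the single event $\{3^m\ge\S(0)\}$, the weight absorbs the trivial bound $C\Pi3^{\gamma(m-k)}$ at all scales far below $m$, because $\rho>\gamma$. So unweighted control of subcubes down to half the scale is never needed.

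The stopping construction of \cite[Corollary~4.3]{AK.HC}, with concentration exponent $\frac d2-\gamma$, then produces $\X_0$ with decay measured relative to $\X_0$ itself. Near that threshold only bounded accuracy is required, so the exponent $d-2\gamma$ is not eroded. Setting $\X=C(2+\Pi)^C\max\{1,\S,\X_0\}$ gives \eqref{e.random.scale.tail} by a plain union bound over $\S=\S(0)$ and $\X_0$.

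This is also where $s\ge\frac{1+\gamma}{4}$ is used. For $s\ge s_0$ one has $\mathcal E_{s,2}\le C\mathcal E_{s_0,2}$, and $2s_0>\rho$ lets the row control $\mathcal E_{s_0,2}$ with summable decay in $m$. It has nothing to do with the regularity of $g$.

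Two smaller points. Stretched cubes are not $\Zd$-translates, so your item (iii) cannot use stationarity. The paper works on the rounded grid $\qq=\mathcal Q(\shom)$ and transfers the Euclidean row to adapted cubes by a boundary-layer subdivision, at cost $\Pi^{\frac12(1+2s_0)}$ absorbed in the prefactor of $\X$. Also, $\Zd$-stationarity of $\nabla\phi_e$ does not follow from Cauchy convergence above $\X$, since each translate has its own radius. The paper derives it from uniqueness of the corrector with prescribed slope, on a translation-invariant event.
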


The correctors describe the affine functions appropriate to the heterogeneous equation. They give both the entire solutions in the Liouville statement and the approximating functions in the last estimate. The proof is given in Sections~\ref{ss.random.dirichlet} and~\ref{ss.random.correctors}.

\subsection{Proof strategy}
\label{ss.outline}

Theorem~\ref{t.polynomial.entry} asserts that the annealed coarse-grained matrix and its dual become close in relative terms. We call their relative separation the \emph{duality gap}, following the variational approach of comparing a subadditive energy with its superadditive dual in~\cite{AS,AKM1,AKMBook}. Small random fluctuations alone do not close this gap: the random coarse-grained matrices can concentrate around their mean without that mean being close to its dual.

Our proof first finds a range of scales on which the coarse matrices fluctuate little, their means change little, and the cube shapes balance the estimates for gradients and fluxes. We then compare the energies of the primal and dual optimizers to show that the gap is small. This last comparison is made only once. The preceding iteration prepares the estimates and geometry it needs; it does not require the duality gap to contract at every step. The main task is to find these conditions within a logarithmic number of scales.

\smallskip

The squared logarithm in~\eqref{e.exp.log.squared} comes from two costs in~\cite{AK.HC}. At fixed accuracy and stochastic parameters, each improvement stage occupies a number of scales logarithmic in the reference ellipticity ratio. On these scales, the proof renormalizes the coarse ellipticity data, controls fluctuations and changes of the mean in a suitable geometry, and compares the primal and dual optimizer energies. The stage gives either an improvement of the contrast or a definite decrease of the determinant, and logarithmically many such stages suffice. Multiplying the number of stages by their length gives the squared logarithm.

As in~\cite{AK.HC}, our method is based on the variational coarse-grained matrices in adapted cubes but here we carry the fluctuation and mean estimates of these quantities forward as the scale and geometry change. The initial reduction of these errors is made once. Thereafter, each step advances a number of scales independent of the reference ellipticity ratio~$\Pi$, with the accuracy fixed. A single count includes all these steps, even those on which the errors grow and have to be reduced again. Once the required estimates and geometry have been found, the final energy comparison closes the gap. The point is to avoid repeating the logarithmic preparation at every improvement stage.

\paragraph{Averaging.}
Consider a large cube partitioned into smaller cubes on a fixed grid. Each smaller-cube matrix depends only on the coefficients there. Matrices from sufficiently separated cubes are therefore independent, so their average fluctuates less than an individual matrix. The large-cube matrix need not equal this average, but subadditivity places it below the average. Their difference is positive semidefinite, and its expectation is exactly the decrease of the mean matrix between the two scales. This sign, together with the moment bound on the average, controls the discrepancy between the large-cube matrix and the average. Averaging therefore reduces fluctuations up to an error measured by the decrease of the mean.

We record that decrease through the loss of the logarithm of the determinant, called the \emph{determinant loss}. On a fixed grid the mean matrices are ordered, so a small determinant loss makes them close in relative terms. It also controls the cost of changing the mean matrix used to normalize the fluctuations. The losses add across scales, and their total on the Euclidean grid is at most~$C\log(2+\Pi)$. Starting from bounds polynomial in~$\Pi$, the fixed-grid estimates therefore reduce the fluctuations and recent mean changes to a fixed small tolerance within a logarithmic number of scales. This initializes the iteration; the duality gap may still be large.

We must retain more than the estimates at the current scale. A change of grid cuts across cubes of many sizes, and the final energy comparison uses averages of gradients and fluxes on smaller cubes. We keep the earlier fluctuation and mean estimates with decreasing weights. As the scale increases, these weights reduce the contribution from older scales, while averaging and mean comparisons control the new contributions. Thus fluctuations are estimated at every step, using the bounds already obtained, rather than only on the steps at which contraction succeeds. Section~\ref{s.fixed.geometry} develops these estimates.

\paragraph{Changing the grid.}
In an anisotropic medium, gradients of the same Euclidean size can have very different energy costs in different directions. Estimates on Euclidean cubes can consequently have large constants. Even for a constant-coefficient equation, regularity estimates will be sub-optimal if the ``wrong geometry'' is used---for instance, imagine iterating estimates for the Laplacian in very eccentric ellipsoids instead of balls. Therefore, as in~\cite{AK.HC}, we deform the Euclidean cubes by an affine change of variables in order minimize these errors and to balance the gradient and flux estimates. The appropriate stretching is determined by the annealed matrix at the reference scale. Since this matrix changes with scale, cubes suited to an earlier mean need not be suited to a later one, and their shape must be adjusted during the iteration.

\smallskip

We change the shape gradually so that the estimates on the old grid can be used on the new one. For nearby grids, most of a new cube can be filled by large old-grid cubes, with successively smaller cubes near its boundary. Their volume fractions decrease fast enough to compensate for the deterioration of coarse ellipticity at small scales, because the coarse ellipticity exponent in assumption~\ref{a.coarse.ellipticity} satisfies~$\gamma<1$. The retained estimates control the contributions from all these sizes. Changing the grid may increase the errors, so we make the change only when the incoming errors and the variation of the mean are small enough to leave a bounded error on the new grid. A fixed number of startup scales then makes the fixed-grid contraction estimate available again. Section~\ref{s.geometry.transport} proves the required comparisons. Neither the change of grid nor startup creates smallness; both preserve enough of the preceding control to continue the iteration.

\paragraph{Counting scales.}
We count both reductions in the errors and improvements in the grid geometry. For the errors, we use the logarithmic size of the error plus drift: an increase by a fixed factor then has a bounded cost. For the geometry, we measure the distance from the current grid to the cube shape that balances gradient and flux estimates for the annealed coarse-grained matrix at the grid's initialization scale. This reference shape stays fixed while we advance on the same grid. Averaging reduces the logarithmic error up to determinant loss. A partial geometry update brings the grid closer to the preferred shape for a more recent mean, and this progress compensates for the possible error increase during the update and startup. Determinant loss controls the difference between the old and new preferred shapes. By combining the logarithmic error and the geometric distance with suitable weights, we can therefore count both kinds of progress. If the determinant tests fail, the errors remain controlled by the fixed-grid estimates, and the definite determinant decrease is included in the same count.

\smallskip

The determinant is monotone only while the grid is fixed. Comparisons between nearby grids bound its possible upward jumps; the progress in adapting the shape also compensates for these jumps. The overlapping determinant losses count each decrease only a bounded number of times. After summing, the bound on the number of steps involves only the initial logarithmic error, the initial mismatch of shapes, and the initial logarithmic determinant, each at most~$C\log(2+\Pi)$. This count includes steps on which the errors grow and the further averaging needed to reduce them again. It forces the iteration to find the required geometry and small errors, together with small mean variation over the subsequent interval, within logarithmically many steps. Proposition~\ref{p.global.selection} proves this by iterating Proposition~\ref{p.scale.selection}. We do not begin a new logarithmic count each time the grid changes.

\paragraph{Closing the gap.}
On the selected interval, the small change of the mean makes an optimizer on the large cube close in energy to the optimizers on its smaller cubes. The fluctuation estimates control averages of their centered gradients and fluxes. The equation converts this control of averages into a bound on the centered optimizer energies that determine the duality gap. The remaining comparison errors are controlled by the change of the mean and by the ratio of the small-cube size to the large-cube size, which decreases as the selected interval becomes longer. The adapted cube shapes and the choice of primal and dual data balance the gradient and flux estimates, keeping the constants independent of the anisotropy. This gives a bound linear in the unknown contrast. Comparing the means at the two ends of the interval then bounds the terminal duality gap by a small multiple of one plus that same gap. Rearranging makes the gap small, even if it was initially large. Proposition~\ref{p.response.transfer} gives this argument; its required accuracy and interval length are chosen before the selection begins.

Once the gap is small, only a comparison of annealed matrices remains: we transfer the bound from the adapted cubes to larger Euclidean cubes, without transporting fluctuation estimates again. The geometric eccentricity of the selected grid is polynomially bounded in~$\Pi$, so this comparison costs another logarithmic number of scales. The relative primal--dual bound then persists at all larger Euclidean scales, since the annealed coarse-grained matrices decrease and their duals increase.

The tail bound~\eqref{e.source.tail} for~$\S$ determines the initial scale at which the coarse ellipticity bounds have the moments needed throughout the construction. Including this initial scale, the selection and final comparison require at most~$C\log_3(2+\Pi K_{\Psi_{\S}})$ scales. Since scale~$m$ corresponds to length~$3^m$, this gives the polynomial length bound in Theorem~\ref{t.polynomial.entry}. Section~\ref{s.convergence.homogenization} then starts from this fixed small-contrast threshold and applies the distinct small-contrast iteration of~\cite{AK.HC} to obtain the decay rate in Theorem~\ref{t.algebraic.convergence}. The deterministic homogenization estimates of~\cite{AK.HC} yield Theorems~\ref{t.uniform.homogenization} and~\ref{t.random.homogenization}.

\subsection{Lean Formalization}
\label{ss.lean.formalization}

A formalization of this paper in the Lean~4 proof assistant is available at
\begin{center}
\url{https://github.com/scottnarmstrong/HighContrastHomogenization}\,.
\end{center}
It includes Theorems~\ref{t.polynomial.entry} and~\ref{t.algebraic.convergence} and the supporting lemmas and propositions. The formal Dirichlet estimate corresponding to Theorem~\ref{t.uniform.homogenization} treats~$f=0$ and uses the negative-Sobolev formulation of Theorem~\ref{t.random.homogenization}; the formal version of the latter is stated for cubes in adapted coordinates rather than general bounded Lipschitz domains. The formal proofs follow the arguments given here and build on \texttt{mathlib} and the \texttt{CoarseGraining} library. They contain no unproved placeholders or custom axioms. The main formal statements are also restated using only \texttt{mathlib} definitions, with separate Lean-checked proofs connecting them to the development.

The formalization was written by AI agents under the close supervision of the authors. The development was orchestrated by Claude Fable 5.1, with lean code written by subagents - primarily Opus 5 and GPT 5.6-Sol (low effort).

\subsection{Use of artificial intelligence}
\label{ss.AI}

The authors developed the main conceptual ideas of this work and directed its mathematical development. Throughout the project, we used large language models as research assistants, guiding them toward specific questions arising in the proof.

\smallskip

Our work on this project goes back several years. We initially pursued an approach based on a cluster expansion, using the BKAR forest formula. Attempts to formalize this approach in Lean brought several difficulties into focus and helped clarify the obstacles to completing the argument. The resolution of these difficulties allowed us to remove the use of cluster expansions completely, leading to the present method which is actually quite close to~\cite{AK.HC} at a high level. The authors believed, from the beginning of the project, that control of the fluctuations of the quantities should be separated from control of the annealed coarse-graining defect. Eventually, with the help of GPT~5.6-Sol, we found an iteration scheme, more intricate than the one of~\cite{AK.HC}, which was based on this idea. The subsequent Lean formalization, together with further work with LLMs, helped us refine and simplify the proof. The authors take responsibility for the mathematical content and presentation of the paper.

\section{Scale selection}
\label{s.scale.selection}

The proof of Theorem~\ref{t.polynomial.entry} is based on an iteration over scales. The iteration controls fluctuations and changes of the mean while adapting the grid, until we can compare the primal and dual optimizer energies and conclude that the duality gap is small. This section sets up the objects the iteration tracks and states the proposition describing one of its steps.

\paragraph{Errors and geometry.}
We keep track of both the random fluctuations of the coarse-grained matrices and the changes in their means as the scale increases. Estimates from earlier scales are retained with decreasing weights, because changing the grid and closing the duality gap both require information from smaller cubes. These bounds are organized into the error~$\mathcal P_{\qq}$ and the drift of the means. On a fixed grid, their sum contracts, apart from terms controlled by the decrease of the logarithmic determinant of the mean matrix. To keep the estimates effective in an anisotropic medium, we also adapt the shapes of the cubes to the mean matrix, balancing the primal and dual energy estimates as in~\cite{AK.HC}. We compare shapes independently of their overall size and change them gradually, so that the estimates already obtained can be transferred to the new grid. Rounding the defining matrices ensures compatibility with the integer translations under which the law is stationary: we snap the grid to the lattice.

\paragraph{Iteration.}
Proposition~\ref{p.scale.selection}, stated in Subsection~\ref{ss.scale.selection}, gives the rule for continuing this procedure. On a newly adopted grid, we first advance a few scales while keeping its shape fixed. This startup carries the incoming error bound to a scale where contraction becomes available; it need not reduce the error, and its cost is controlled by the incoming bound and the change of the mean. After startup, we keep averaging on the same grid while the errors are too large to permit a change of shape. This gives a contraction estimate with an additional error measured by determinant loss. When the errors are sufficiently small and the mean varies little, we can instead move toward a shape better adapted to the mean matrix, carrying the earlier estimates with us. If the mean changes too much for either of these steps, we still advance the scale on the same grid and record a definite determinant loss. Such a loss does not stop the procedure: its accumulated size will bound how often these steps can occur. Throughout, the grid's geometric eccentricity remains controlled, and each step preserves the initialization conditions~\eqref{e.renormalization.input} needed to take the next one.

The small-cube estimates throughout this procedure use a single random variable, supplied by the adapted cube bound~\eqref{e.source.adapted.bound}, even as the grid changes. We prove Proposition~\ref{p.scale.selection} in Subsection~\ref{s.scale.selection.proof} and carry out the iteration in Section~\ref{s.polynomial.entry.proof}.

\subsection{Notation}

Assumptions~\ref{a.stationarity}--\ref{a.coarse.ellipticity} are in force throughout Sections~\ref{s.scale.selection}--\ref{s.polynomial.entry.proof}. We use the stationary extension~$\S(z;\a)\coloneqq\S(T_z\a)$ for~$z\in\Zd$, suppressing~$\a$ and writing~$\S(z)$, so that~$\S(0)=\S$; by~\ref{a.stationarity}, the condition~\eqref{e.coarse.ellipticity} holds almost surely at every~$z\in\Zd$ with~$\S(z)$ in place of~$\S$ and~$z+y+\cu_k$ in place of~$y+\cu_k$.

\paragraph{Adapted cubes.}
The law is stationary under integer translations, whereas the geometry selected by a coarse block need not generate an integer grid. We therefore round the geometry. Fix~$j_*\in\N$ with~$3^{j_*}\geq2d$. We write~$\Rpos$ for the set of symmetric positive definite~$d\times d$ matrices. For every~$\m\in\Rpos$, define
\begin{equation*}
	\bigl(\mathcal Q(\m)\bigr)_{ab}
	\coloneqq3^{-j_*}\bigl\lceil3^{j_*}|\m^{-1}|^{\nf12}(\m^{\nf12})_{ab}\bigr\rceil
	\qquad(1\leq a,b\leq d)\,.
\end{equation*}
The matrix~$\qq=\mathcal Q(\m)$ is symmetric, and the rounding error has operator norm at most~$d3^{-j_*}\leq\frac12$. The unrounded matrix has smallest eigenvalue one and largest eigenvalue~$(|\m|\,|\m^{-1}|)^{\nf12}$, so
\begin{equation}
	\qq\geq\frac12\Id\,,\quad |\qq^{-1}|\leq2\qand |\qq|\leq2\bigl(|\m|\,|\m^{-1}|\bigr)^{\nf12}\,.
	\label{e.rounded.grid.bounds}
\end{equation}
Also,~$3^j\qq\Zd\subseteq\Zd$ whenever~$j\geq j_*$.

\paragraph{Normalized blocks.}
We measure the means and fluctuations relative to the annealed block at a later scale. For~$j\in\Z$, set
\begin{equation*}
	\cus_j^{\qq}\coloneqq\qq\cu_j
	\qquad\text{and}\qquad
	\bfAhom_{j,\qq}\coloneqq\bfAhom(\cus_j^{\qq})=\E\bigl[\bfA(\cus_j^{\qq})\bigr]\,.
\end{equation*}
For~$j\leq k$ and~$z\in3^j\qq\Zd$, define the normalized mean and fluctuation by
\begin{equation}
	P_{j,k}^{\qq}
	\coloneqq\bfAhom_{k,\qq}^{-\nf12}\bfAhom_{j,\qq}\bfAhom_{k,\qq}^{-\nf12}
	\qquad\text{and}\qquad
	V_{j,k}^{\qq}(z)
	\coloneqq\bfAhom_{k,\qq}^{-\nf12}
	\bigl(\bfA(z+\cus_j^{\qq})-\bfAhom_{j,\qq}\bigr)
	\bfAhom_{k,\qq}^{-\nf12}\,.
	\label{e.scale.selection.normalized.mean.fluctuation}
\end{equation}
Then
\begin{equation*}
	\bfAhom_{k,\qq}^{-\nf12}\bfA(z+\cus_j^{\qq})\bfAhom_{k,\qq}^{-\nf12}
	=P_{j,k}^{\qq}+V_{j,k}^{\qq}(z)\,.
\end{equation*}
For~$j\geq j_*$, stationarity makes~$V_{j,k}^{\qq}(z)$ centered at every grid point. The decomposition thus separates random fluctuations from the deterministic change of the mean. Write~$V_j^{\qq}\coloneqq V_{j,j}^{\qq}(0)$.

At these scales, aligned subdivision and stationarity make the annealed blocks decrease with the scale; we measure this decrease by the determinant loss. For~$j\leq k$, set
\begin{equation}
	\Delta_{j,k}^{\qq}\coloneqq\log\det\bfAhom_{j,\qq}-\log\det\bfAhom_{k,\qq}\geq0\,.
	\label{e.scale.selection.logdet.loss}
\end{equation}
For an integer~$h\geq1$, set
\begin{equation}
	\widehat\Delta_h^{\qq}(m)\coloneqq\sum_{a=m+1}^{m+h}\Delta_{a-h,a}^{\qq}\,.
	\label{e.scale.selection.synchronized.loss}
\end{equation}
Then
\begin{equation}
	\Delta_{m,m+h}^{\qq}\leq\widehat\Delta_h^{\qq}(m)\,.
	\label{e.scale.selection.synchronized.loss.lower}
\end{equation}
\paragraph{The error.}
A change of grid cuts across cubes at many scales. Controlling only the current scale would therefore lose the information needed after an update. We retain the earlier errors with decaying weights. Using the coarse ellipticity exponent~$\gamma$ from assumption~\ref{a.coarse.ellipticity}, choose the even moment exponent~$Q$ and the decay exponent~$\rho_{\max}$ by
\begin{equation}
	Q\coloneqq2\biggl\lceil\frac{2(d+1)}{1-\gamma}\biggr\rceil
	\qquad\text{and}\qquad
	\rho_{\max}\coloneqq\gamma+\frac1Q\biggl(d+\frac14(1-\gamma)\biggr)<1\,.
	\label{e.scale.selection.Q.choice}
\end{equation}
We write~$|M|$ for the operator norm of a matrix~$M$. For~$N\geq1$ and a deterministic symmetric~$2d \times 2d$ matrix~$M$ with eigenvalues~$\lambda_1(M),\ldots,\lambda_{2d}(M)$, set
\begin{equation*}
	|M|_{S_N}\coloneqq\biggl(\sum_{j=1}^{2d}|\lambda_j(M)|^N\biggr)^{\nf1N}\,.
\end{equation*}
For a random symmetric~$2d \times 2d$ matrix~$H$, set
\begin{equation*}
	\|H\|_{L^N(S_N)}\coloneqq\bigl(\E[|H|_{S_N}^N]\bigr)^{\nf1N}\,.
\end{equation*}
For~$P\geq\Itwod$, set
\begin{equation*}
	\meanpenalty_Q(P)\coloneqq\bigl(1+\tr(P-\Itwod)\bigr)^Q-1\,.
\end{equation*}
For~$m\geq j_*$, define the fluctuation history by
\begin{equation}
	\history_{\qq}^{\rm fluc}(m)
	\coloneqq\E\Biggl[
	\sup_{j_*\leq j\leq m}3^{-Q\rho_{\max}(m-j)}
	\max_{z\in3^j\qq\Zd\cap\cus_m^{\qq}}
	\bigl|V_{j,m}^{\qq}(z)\bigr|^Q
	\Biggr]\,.
	\label{e.scale.selection.fluctuation.history}
\end{equation}
For~$j_*\leq n\leq m$, define the mean history by
\begin{equation}
	\history_{\qq}^{\rm mean}(m;n)
	\coloneqq\sum_{j=n}^{m-1}
	3^{-\frac14(1-\gamma)(m-1-j)}
	\meanpenalty_Q(P_{j,m}^{\qq})\,.
	\label{e.scale.selection.mean.history}
\end{equation}
Set~$\history_{\qq}^{\rm mean}(m)\coloneqq\history_{\qq}^{\rm mean}(m;j_*)$ and~$\history_{\qq}(m)\coloneqq\history_{\qq}^{\rm fluc}(m)+\history_{\qq}^{\rm mean}(m)$. We also track the successive increments of the mean, rather than their cumulative difference from the final mean, through the drift
\begin{equation}
	D_{\qq,j_*}(m)
	\coloneqq\sum_{j=j_*+1}^{m}
	3^{-\frac18(1-\gamma)(m-j)}
	\tr\bigl(P_{j-1,m}^{\qq}-P_{j,m}^{\qq}\bigr)\,.
	\label{e.scale.selection.determinant.drift}
\end{equation}

The determinant loss~$\Delta_{j,k}^{\qq}$ adds exactly across scale intervals. The mean history compares each earlier normalized mean~$P_{j,m}^{\qq}$ with~$\Itwod$, using the nonlinear penalty~$\meanpenalty_Q$ needed for the fluctuation estimates. The drift instead sums the one-scale decreases~$\tr(P_{j-1,m}^{\qq}-P_{j,m}^{\qq})$ linearly, with weights that decay more slowly than those of the mean history. Advancing the current scale discounts the old increments and adds the new ones; determinant loss controls the change of normalization. We can therefore estimate the new drift directly in terms of the old drift and the intervening determinant loss.

For a change of grid, the useful quantities are instead the cumulative differences~$\tr(P_{j,m}^{\qq}-\Itwod)$ between an earlier normalized mean and the current one. Each such difference is the sum of its intervening increments. Summation by parts expresses the drift as a weighted sum of these differences; see~\eqref{e.two.grid.drift.abel}. We can then estimate its terms using Whitney partitions to fill new-grid cubes with old-grid cubes and compare the corresponding coarse-grained matrices.

To restart the estimates at a scale~$n$, we carry a weighted copy of the full history up to~$n$ and add the mean and fluctuation errors from the subsequent scales. For~$j_*\leq n\leq m$, set
\begin{align}
	\mathcal P_{\qq}(m;n) &
	\coloneqq3^{-\frac14(1-\gamma)(m-n)}
	\bigl(1+\meanpenalty_Q(P_{n,m}^{\qq})\bigr)
	\history_{\qq}(n)
	+\history_{\qq}^{\rm mean}(m;n)
	\notag \\ & \qquad
	+\sum_{j=n+1}^{m}
	3^{-\frac14(1-\gamma)(m-j)}
	e^{Q\Delta_{j,m}^{\qq}}\E\bigl[|V_j^{\qq}|_{S_Q}^Q\bigr]\,.
	\label{e.scale.selection.complete.profile}
\end{align}
In particular,~$\mathcal P_{\qq}(n;n)=\history_{\qq}(n)$.

\paragraph{Canonical geometry.}
To choose a new grid, we associate a metric with each positive coarse block and compare metrics up to positive rescaling. Set
\begin{equation*}
	\mathbf R\coloneqq\begin{pmatrix}0&\Id\\ \Id&0\end{pmatrix}\,.
\end{equation*}
For~$\m_0,\m_1\in\Rpos$, define
\begin{equation}
	d_{\rm pr}([\m_0],[\m_1])
	\coloneqq\frac12\log
	\frac{\lambda_{\max}(\m_0^{-\nf12}\m_1\m_0^{-\nf12})}
	{\lambda_{\min}(\m_0^{-\nf12}\m_1\m_0^{-\nf12})}\,.
	\label{e.scale.selection.projective.distance}
\end{equation}
This distance measures the anisotropy of one metric relative to the other; an overall dilation has no effect. It satisfies the triangle inequality. Indeed, if~$a_{ij}\m_i\leq\m_j\leq b_{ij}\m_i$ with the sharp constants, then~$a_{01}a_{12}\m_0\leq\m_2\leq b_{01}b_{12}\m_0$. Taking half the logarithm of the ratio of the upper and lower constants gives~$d_{\rm pr}([\m_0],[\m_2])\leq d_{\rm pr}([\m_0],[\m_1])+d_{\rm pr}([\m_1],[\m_2])$.

\smallskip

For a positive~$2d \times 2d$ block~$F$, define
\begin{equation}
	\cmet(F)\coloneqq\Bigl(F^{\nf12}\bigl(F^{-\nf12}\mathbf RF^{-1}\mathbf RF^{-\nf12}\bigr)^{\nf12}F^{\nf12}\Bigr)_{22}^{-1}>0\,.
	\label{e.scale.selection.canonical.metric}
\end{equation}
The matrix inside the outer parentheses in the previous display can also be written as
\begin{equation*}
M(F) \coloneqq F\#(\mathbf RF^{-1}\mathbf R)\,,
\end{equation*}
where the geometric mean~$A\#B$ of two positive matrices~$A$ and~$B$ is defined by 
\begin{equation*}
	A\#B=A^{\nf12}(A^{-\nf12}BA^{-\nf12})^{\nf12}A^{\nf12} \,.
\end{equation*}
It is the unique positive solution of~$XA^{-1}X=B$, since conjugation by~$A^{-\nf12}$ reduces this equation to the positive square-root equation. This characterization gives symmetry in~$A,B$, covariance under congruence, and~$(aA)\#(bB)=\sqrt{ab}(A\#B)$ for~$a,b>0$. It also gives~$(A\#B)^{-1}=A^{-1}\#B^{-1}$ by inversion. In particular,~$M(cF)=M(F)$ for~$c>0$.

The mean~$M(F)$ is self-dual, meaning~$\mathbf RM(F)^{-1}\mathbf R=M(F)$. Its lower-right block is~$\cmet(F)^{-1}$, with Schur complement~$\cmet(F)$, as verified in the proof of Proposition~\ref{p.response.transfer}. This common metric balances the gradient and flux estimates for the primal and dual optimizers.

\smallskip

The mean is increasing in both arguments. To see this without any commutativity assumption, spectral calculus gives
\begin{equation*}
	H^{\nf12}=\frac1\pi\int_0^\infty r^{-\nf12}H(rI+H)^{-1}\,dr\qquad(H>0)\,.
\end{equation*}
Inversion reverses order, so~$H(rI+H)^{-1}=I-r(rI+H)^{-1}$ increases with~$H$. Integration proves monotonicity of the square root. The defining formula for~$A\#B$ therefore makes it increasing in~$B$, and symmetry gives the same conclusion for~$A$. In particular,
\begin{equation}
	\mathbf RF^{-1}\mathbf R\leq F
	\quad\Longrightarrow\quad
	\mathbf RF^{-1}\mathbf R\leq M(F)\leq F\,.
	\label{e.matrix.canonical.order}
\end{equation}

\subsection{Scale selection}
\label{ss.scale.selection}

To control all adapted cubes in~$\cu_{2j_*}$ with one random variable, define
\begin{equation}
	\RSZ\coloneqq\min\Bigl\{3^{\gamma r}:r\in\Z_{\geq0},\ \max_{z\in3^{j_*+r}\Zd\cap\cu_{2j_*+r}}\S(z)\leq3^{j_*+r}\Bigr\}\,.
	\label{e.source.multiplier}
\end{equation}
For each~$r$, the centers~$z$ in the maximum give the partition of~$\cu_{2j_*+r}$ into its~$3^{dj_*}$ standard subcubes~$z+\cu_{j_*+r}$. The minimum is attained almost surely, as proved below.

\smallskip

There is a constant~$C_{\rm src}(d,\gamma)<\infty$ such that, whenever
\begin{equation}
	j_*\geq\bigl\lceil C_{\rm src}(d,\gamma)\log_3(2K_{\Psi_{\S}})\bigr\rceil\,,
	\label{e.source.lower.scale}
\end{equation}
the random variable~$\RSZ$ satisfies~$\|\RSZ\|_{L^Q}\leq2$ and
\begin{equation}
	\bfA(z+\cus_j^{\qq})\leq C(d,\gamma)\bigl(|\m|\,|\m^{-1}|\bigr)^{\nf12}\RSZ3^{\gamma(j_*-j)_+}\bfE\qquad\bigl(z+\cus_j^{\qq}\subseteq\cu_{2j_*}\bigr)\,.
	\label{e.source.adapted.bound}
\end{equation}
The same~$\RSZ$ works for every adapted cube in this region, including scales below~$j_*$. For standard aligned cubes the factor~$C(d,\gamma)(|\m|\,|\m^{-1}|)^{\nf12}$ can be omitted. We prove these facts in Section~\ref{ss.source.estimate}, below. From now on~$j_*$ satisfies~\eqref{e.source.lower.scale}; the global argument will choose it large enough to contain all the cubes used in the iteration.

Fix the grid~$\qq=\mathcal Q(\m)$, and let~$k$ be the scale at which its error was initialized. The contraction estimate compares coarse-grained matrices on cubes separated by~$h$ scales, all later than~$k$. We therefore first advance~$h$ scales on the same grid. This startup step begins at the current scale~$n=k$ and requires~$\mathcal P_{\qq}(n;n)+D_{\qq,j_*}(n)\leq1$. Subsequent steps in the proposition start at~$n\geq k+h$, but do not require the error plus drift to remain at most one.

\begin{proposition}[Scale-selection alternatives]
\label{p.scale.selection}
There exist~$h(d,\gamma)\in\N$ and~$\varepsilon_0(d,\gamma),c(d,\gamma)\in(0,1)$ such that, for every~$\varepsilon\in(0,\varepsilon_0]$ and~$\sigma\in(0,\varepsilon]$, there exist~$L(\varepsilon,\sigma,d,\gamma)\in\N$ and~$B_0(\varepsilon,\sigma,d,\gamma)\geq1$ such that the following holds for every~$B\geq B_0$. Set~$\eta\coloneqq c\varepsilon\sigma$. Let
\begin{equation*}
	\m\in\Rpos\,,
	\qquad
	\qq=\mathcal Q(\m)\,,
	\qquad
	k,n\in\Z\,,
	\qquad
	j_*\leq k\leq n\,,
\end{equation*}
and assume
\begin{equation}
	\frac12\log\bigl(|\m|\,|\m^{-1}|\bigr)
	\leq\frac{\varepsilon}{L}
	\big(k-j_*-\left\lceil B\log_3(2+\Pi)\right\rceil\big)\,.
	\label{e.renormalization.eccentricity}
\end{equation}
Assume also one of the following two conditions:
\begin{equation}
\left\{ 
	\begin{aligned}
	&k=n\quad\text{and}\quad\mathcal P_{\qq}(n;n)+D_{\qq,j_*}(n)\leq1\,, \quad \mbox{or}\\
	&n\geq k+h\,.
	\end{aligned}
\right.
	\label{e.renormalization.input}
\end{equation}
For the following geometry update, write~$\m_*\coloneqq \cmet\left(\bfAhom_{n+2L,\qq}\right)$, 
and, with the convention~$\theta\coloneqq1$ when~$[\m]=[\m_*]$, set
\begin{equation}
	\m_+\coloneqq
	\m^{\nf12}\bigl(\m^{-\nf12}\m_*\m^{-\nf12}\bigr)^{\theta}\m^{\nf12}
	\,, \qquad \theta \coloneqq \min\Big\{ 1, \frac{\varepsilon}{d_{\rm pr}([\m],[\m_*])}\Big\}\,.
	\label{e.renormalization.geometry.update}
\end{equation}
Set~$\qq_+\coloneqq\mathcal Q(\m_+)$. Assume also that
\begin{equation}
	\cus_{n+2L}^{\qq}\cup\cus_{n+L}^{\qq_+}\subseteq\cu_{2j_*}\,.
	\label{e.renormalization.containment}
\end{equation}
Exactly one of the following five cases occurs:
\begin{itemize}
\item \emph{Case 1 (Startup).}
\begin{equation*}
k=n.
\end{equation*}

\item \emph{Case 2 (Contraction).}
\begin{equation*}
k<n, \quad 
\mathcal P_{\qq}(n;k)+D_{\qq,j_*}(n)>\eta,
\quad \mbox{and} \quad 
d^{-1}\widehat\Delta_h^{\qq}(n)\leq\sigma.
\end{equation*}

\item \emph{Case 3 (Geometry update).}
\begin{equation*}
k<n, \quad 
\mathcal P_{\qq}(n;k)+D_{\qq,j_*}(n)\leq\eta,
\quad \mbox{and} \quad 
d^{-1}\Delta_{n,n+2L}^{\qq}\leq\varepsilon\sigma.
\end{equation*}

\item \emph{Case 4 (Determinant obstruction while seeking contraction).}
\begin{equation*}
k<n, \quad \mathcal P_{\qq}(n;k)+D_{\qq,j_*}(n)>\eta, \quad \mbox{and} \quad d^{-1}\widehat\Delta_h^{\qq}(n)>\sigma.
\end{equation*}
\item \emph{Case 5 (Determinant obstruction while seeking geometry update).}
\begin{equation*}
k < n, \quad \mathcal P_{\qq}(n;k)+D_{\qq,j_*}(n)\leq\eta, \quad \mbox{and} \quad 
d^{-1}\Delta_{n,n+2L}^{\qq}>\varepsilon\sigma
\end{equation*}
\end{itemize}
In the startup case, we have 
\begin{equation}
	\mathcal P_{\qq}(n+h;k)+D_{\qq,j_*}(n+h)
	\leq C(d,\gamma)
	\left(
	\mathcal P_{\qq}(n;k)+D_{\qq,j_*}(n)
	+e^{Q\Delta_{n,n+h}^{\qq}}-1
	\right)\,.
	\label{e.renormalization.startup}
\end{equation}
In the contraction case, we have
\begin{equation}
	\mathcal P_{\qq}(n+h;k)+D_{\qq,j_*}(n+h)
	\leq\frac14\bigl(\mathcal P_{\qq}(n;k)+D_{\qq,j_*}(n)\bigr)
	+C(d,\gamma)\widehat\Delta_h^{\qq}(n)\,.
	\label{e.renormalization.service}
\end{equation}
In the geometry update case, we have
\begin{equation}
	\mathcal P_{\qq_+}(n+L;n+L)+D_{\qq_+,j_*}(n+L)
	\leq C(d,\gamma)\sigma^{\frac18(1-\gamma)}\,.
	\label{e.renormalization.transport.output}
\end{equation}
Moreover, if we define
\begin{equation}
	(\m',\qq',k',n')\coloneqq
	\begin{cases}
		(\m,\qq,k,n+h)&\text{in cases 1, 2, and 4,}\\
		(\m_+,\qq_+,n+L,n+L)&\text{in case 3,}\\
		(\m,\qq,k,n+2L)&\text{in case 5,}
	\end{cases}
	\label{e.renormalization.output.data}
\end{equation}
then we have 
\begin{equation}
	\frac12\log\bigl(|\m'|\,|(\m')^{-1}|\bigr)
	\leq\frac{\varepsilon}{L}
	\left(k'-j_*-\left\lceil B\log_3(2+\Pi)\right\rceil\right)\,,
	\label{e.renormalization.eccentricity.output}
\end{equation}
and
\begin{equation}
\left\{ 
	\begin{aligned}
	&k'=n'\quad\text{and}\quad\mathcal P_{\qq'}(n';n')+D_{\qq',j_*}(n')\leq1\,, \quad \mbox{or}\\
	&n'\geq k'+h\,.
	\end{aligned}
\right.
	\label{e.renormalization.output}
\end{equation}
\end{proposition}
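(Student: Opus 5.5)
The case split itself is immediate: if $k=n$ we are in Case~1, and if $k<n$ the two dichotomies ($\mathcal P_{\qq}(n;k)+D_{\qq,j_*}(n)$ versus $\eta$, then the relevant determinant test) partition the remaining possibilities into Cases~2--5. So the content is the three quantitative estimates and the propagation of \eqref{e.renormalization.eccentricity.output} and \eqref{e.renormalization.output}. I would prove these from three lemmas, which I expect the subsequent sections to supply: a fixed-grid one-step recursion (Section~\ref{s.fixed.geometry}), a two-grid transport estimate (Section~\ref{s.geometry.transport}), and the source bound \eqref{e.source.adapted.bound}, which gives the a priori moments used whenever an error term is raised to a power.

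\textbf{Fixed-grid cases.} For Case~1, the plan is to unfold the definition \eqref{e.scale.selection.complete.profile}. Advancing from $n$ to $n+h$ multiplies the old history by weights $3^{-\frac14(1-\gamma)h}(1+\meanpenalty_Q(P_{n,n+h}^{\qq}))$. It then adds new terms $e^{Q\Delta_{j,n+h}}\E|V_j^{\qq}|_{S_Q}^Q$ for $j\in(n,n+h]$. Using the fact that $V_j$ has $Q$-th moment bounded by $C$ times the retained fluctuation history at scale $j$, each new term is bounded in terms of $\mathcal P_{\qq}(n;k)$ plus $e^{Q\Delta}-1$. Renormalizing by $\bfAhom_{n+h,\qq}$ instead of $\bfAhom_{n,\qq}$ costs a factor $e^{Q\Delta_{n,n+h}}$, which yields \eqref{e.renormalization.startup}. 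The drift is handled the same way: discount the old increments and add the new ones, controlled by $\tr(P-\Itwod)\le C(e^{\Delta}-1)$ on the ordered means.

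For Case~2, the key step is the averaging argument. Partition $z+\cus_{j+h}^{\qq}$ into $3^{dh}$ subcubes. Subadditivity gives $\bfA(\text{large})\le$ average of the subcube matrices. By unit range of dependence, a Rosenthal/Marcinkiewicz inequality makes the average's fluctuation gain a factor $3^{-dh/2}$ in $L^Q$. The positive defect has expectation $\bfAhom_{j,\qq}-\bfAhom_{j+h,\qq}$, whose normalized trace is bounded by $C\Delta_{j,j+h}$. Summing over $j$ with the weights $3^{-Q\rho_{\max}(m-j)}$ produces $\widehat\Delta_h$. Choosing $h$ so that $C3^{-c h}\le\frac14$, where $\rho_{\max}<1$ and $Q$ is large enough to make the maximum over $3^{d(m-j)}$ grid points affordable, gives \eqref{e.renormalization.service}. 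The condition $d^{-1}\widehat\Delta_h\le\sigma$ keeps the $e^{Q\Delta}$ factors bounded.

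Cases~4 and~5 need no estimate. The output \eqref{e.renormalization.output} holds because $n'\ge k+h$, and \eqref{e.renormalization.eccentricity.output} holds trivially since $\m$ and $k$ are unchanged.

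\textbf{Case~3 (main obstacle).} Here I would use Whitney partitions of each $\qq_+$-cube into $\qq$-cubes of scales down to $j_*$. The $\qq$-cubes of scale $n+L-i$ occupy a volume fraction of order $3^{-i}$ relative to the eccentricity, and coarse ellipticity grows at most like $3^{\gamma i}$. Since $\gamma<1$ (through $\rho_{\max}$), the resulting series converges. Old fluctuations and means thus transfer with a loss $C$, times an eccentricity factor $e^{C d_{\rm pr}([\m],[\m_+])}\le e^{C\varepsilon}$. The mean mismatch is controlled by the determinant loss $d^{-1}\Delta_{n,n+2L}\le\varepsilon\sigma$ together with $\eta=c\varepsilon\sigma$. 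The exponent $\sigma^{\frac18(1-\gamma)}$ comes from optimizing the Whitney cutoff depth against the slowest weight $3^{-\frac18(1-\gamma)}$ in the drift, using the summation-by-parts formula \eqref{e.two.grid.drift.abel}.

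For the eccentricity bound \eqref{e.renormalization.eccentricity.output}, the update moves $[\m]$ by at most $\varepsilon$ in $d_{\rm pr}$, since it is a geodesic step of length $\theta d_{\rm pr}\le\varepsilon$. Measuring $\frac12\log(|\m|\,|\m^{-1}|)=d_{\rm pr}([\Id],[\m])$, this quantity grows by at most $\varepsilon$. Meanwhile $k'-k\ge L$, so the right side grows by at least $\varepsilon$, which preserves the bound. Finally, taking $B_0$ large ensures $C\sigma^{\frac18(1-\gamma)}\le1$, which gives \eqref{e.renormalization.output} with $k'=n'$. The hardest step is the transport estimate, specifically keeping the constants independent of $\Pi$ by relying on the adapted normalization.
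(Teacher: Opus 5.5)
\textbf{Overall.} Your architecture matches the paper's. The case split is trivially exhaustive. Cases~1--2 come from fixed-grid propagation, Case~3 from an annealed comparison plus transport of the error, and Cases~4--5 need no estimate. Your geodesic-step argument for \eqref{e.renormalization.eccentricity.output} is exactly right. Two parts, however, do not work as written.

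\textbf{Case~1 is circular.} You bound the new moments $\E[|V_j^{\qq}|_{S_Q}^Q]$, for $n<j\le n+h$, by the fluctuation history \emph{at scale $j$}. For $j>n$ that history is precisely the unknown, since $\mathcal P_{\qq}(n;n)=\history_{\qq}(n)$ only sees cells inside $\cus_n^{\qq}$. What is missing is your Case~2 averaging/positive-gap argument, run directly from the initialization scale with every span $r=1,\dots,h$. This gives $\E[|V_{n+r}^{\qq}|_{S_Q}^Q]\le Ce^{Q\Delta_{n,n+r}^{\qq}}\E[|V_n^{\qq}|_{S_Q}^Q]+C(e^{Q\Delta_{n,n+r}^{\qq}}-1)$. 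You then need the hypothesis $\mathcal P_{\qq}(n;n)+D_{\qq,j_*}(n)\le1$, via $e^xa\le a+e^x-1$ for $0\le a\le1$, to turn the factor $e^{Q\Delta}$ multiplying the input into the additive form of \eqref{e.renormalization.startup}. Your sketch never uses this hypothesis, and without it one only gets $e^{Q\Delta}$ times the input.

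\textbf{Case~3 misidentifies the mechanism.}

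(i) Restarting on $\qq_+$ means normalizing by $\bfAhom_{n+L,\qq_+}$. So you need the two-sided comparison $(1-\delta)\bfAhom_{n+2L,\qq}\le\bfAhom_{n+L,\qq_+}\le(1+\delta)\bfAhom_{n+2L,\qq}$. Filling new cubes with old ones gives only the upper bound, against $\bfAhom_{n,\qq}$. The lower bound needs the reverse packing of scale-$(n+L)$ new cubes into a scale-$(n+2L)$ old cube. This is why the determinant test is on $[n,n+2L]$ and the restart is at the intermediate scale.

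(ii) The transfer does not cost only a constant. The new history ends at $n+L$, while the bulk old cubes at scale $j-L$ are weighted relative to $n+2L$, so the weights inflate by up to $3^{\frac12(1-\gamma)L}$. One must therefore first propagate the small input from $n$ to $n+2L$, at cost $CL$, using $\Delta_{n,n+2L}^{\qq}\le d\varepsilon\sigma$. One then needs $CL\varepsilon\sigma\,3^{\frac12(1-\gamma)L}\le\rho$, together with $3^{-L}\le\rho$ for the boundary layers. The choices $L=L_0+\lceil\frac32\log_3\sigma^{-1}\rceil$, $\rho=\sigma^{\frac18(1-\gamma)}$ and $\delta=\varepsilon^{1/2}\sigma$ are what produce \eqref{e.renormalization.transport.output}. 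The exponent $\frac18(1-\gamma)$ does not come from optimizing the cutoff depth against the drift weight.

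(iii) The Whitney partitions must continue below $j_*$, where only $\RSZ\bfE$ is available. Normalizing $\bfE$ by an annealed block costs a factor $\Pi\,\mathfrak e(\m)^2$. These terms are killed only because \eqref{e.renormalization.eccentricity} forces $n-j_*\ge B\log_3(2+\Pi)+\frac{L}{\varepsilon}\log\mathfrak e(\m)$. That is the real role of $B_0$ and of the eccentricity hypothesis; your sketch uses that hypothesis only to propagate itself.

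\textbf{A misattributed parameter.} The bound $C\sigma^{\frac18(1-\gamma)}\le1$, which gives \eqref{e.renormalization.output} in Case~3, cannot come from taking $B_0$ large, since $B_0$ does not appear there. It requires $\varepsilon_0$ small, using $\sigma\le\varepsilon\le\varepsilon_0$.
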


\paragraph{The five cases.}
Case~1 prepares a newly initialized error for the contraction estimate. Starting from~$n=k$, we advance to~$n+h$ without changing~$\m$,~$\qq$, or~$k$. The startup bound~\eqref{e.renormalization.startup} controls the error at this later scale, allowing growth from the determinant loss; it does not assert contraction.

For~$k<n$, we either continue reducing the error on the current grid or use its smallness to change geometry. If~$\mathcal P_{\qq}(n;k)+D_{\qq,j_*}(n)>\eta$, we seek contraction. The relevant determinant quantity is~$\widehat\Delta_h^{\qq}(n)=\sum_{a=n+1}^{n+h}\Delta_{a-h,a}^{\qq}$: it collects the losses in the parent--child comparisons used to reach the next~$h$ scales. When~$\widehat\Delta_h^{\qq}(n)\leq\sigma d$, Case~2 advances to~$n+h$, keeping~$\m$,~$\qq$, and~$k$ fixed. Estimate~\eqref{e.renormalization.service} reduces the error plus drift by a factor~$\nf14$, up to an additive term controlled by this determinant loss.

If~$\mathcal P_{\qq}(n;k)+D_{\qq,j_*}(n)\leq\eta$, the error is small enough to attempt a geometry update. Here the comparisons between grids use the next~$2L$ scales, so we require~$\Delta_{n,n+2L}^{\qq}\leq d\varepsilon\sigma$. Under this condition, Case~3 moves~$\m$ toward~$\m_*=\cmet(\bfAhom_{n+2L,\qq})$ and transfers the error to~$\qq_+$. We restart at the intermediate scale~$k'=n'=n+L$. The transferred bound~\eqref{e.renormalization.transport.output} is at most one, though it need not remain below~$\eta$, and thus permits another startup step. The error is reinitialized even if rounding leaves~$\qq_+=\qq$.

Cases~4 and~5 apply when the respective determinant tests fail. We retain~$\m$,~$\qq$, and~$k$ and advance by~$h$ or~$2L$, respectively. No contraction or geometry improvement is asserted: the error may grow, but the fixed-grid estimates control this growth in terms of the determinant loss, whose accumulated contribution is bounded in the global argument. Equality in the tests belongs to Case~2 or Case~3, so the five cases are disjoint and exhaustive. In every case, the resulting grid and scales satisfy the eccentricity bound~\eqref{e.renormalization.eccentricity.output} and initialization conditions~\eqref{e.renormalization.output}. Before applying the proposition again, we must also check that the cubes used in the next step lie inside~$\cu_{2j_*}$.

\paragraph{Bounded geometry changes.}
We approach~$\m_*$ gradually because a large change in the shapes of the cubes makes it costly to compare the two grids. The Whitney partitions, the comparison of annealed matrices, and the transfer of errors in Section~\ref{s.geometry.transport} all depend on the distortion between the grids. The first target can be a projective distance of order~$\log(2+\Pi)$ from the Euclidean metric. Moving there in one step could therefore give a distortion polynomial in~$\Pi$, requiring a scale separation of order~$\log(2+\Pi)$ to control the boundary layers. Instead, we impose~$d_{\rm pr}([\m],[\m_+])\leq\varepsilon$; Lemma~\ref{l.projective.step} then bounds the distortion independently of~$\Pi$.

The interpolation in~\eqref{e.renormalization.geometry.update} makes this bounded move explicit. It follows the affine-invariant geodesic from~$\m$ to~$\m_*$: raising the eigenvalues of~$\m^{-\nf12}\m_*\m^{-\nf12}$ to the power~$\theta$ multiplies their logarithmic spread by~$\theta$. Hence~$d_{\rm pr}([\m],[\m_+])=\theta\,d_{\rm pr}([\m],[\m_*])$. The chosen~$\theta$ moves the metric exactly~$\varepsilon$ toward~$\m_*$, unless the target is already within that distance, in which case~$\m_+=\m_*$.

The logarithmic grid eccentricity increases by at most~$\varepsilon$, while the initialization scale advances by at least~$L$, from~$k$ to~$n+L$. Thus~\eqref{e.renormalization.eccentricity} remains valid. Unless the update reaches~$\m_*$, the distance to this target decreases by~$\varepsilon$; the global argument uses this decrease to compensate for the possible growth of the error when it is reinitialized and passes through startup again.

The proof of the proposition is given in Subsection~\ref{s.scale.selection.proof}, after Section~\ref{s.fixed.geometry} establishes the fixed-grid estimates and Section~\ref{s.geometry.transport} proves the estimates for changing geometry.

\section{Fixed-grid estimates}
\label{s.fixed.geometry}

In this section, we prove the fixed-grid estimates needed for Proposition~\ref{p.scale.selection}. These give contraction up to determinant loss, together with bounds for startup and for the advance preceding a change of geometry.

\paragraph{Averaging and mean changes.}
We fix an adapted grid~$\qq=\mathcal Q(\m)$ and ask why passing to larger cubes should reduce the errors. A large cube contains many smaller cubes, which we call its children. The coarse matrix of each child depends only on the coefficients inside it. Although neighboring children need not be independent, they can be grouped into a fixed number of separated families within which finite-range dependence gives independence. Averaging therefore reduces their fluctuations by the square root of the number of children, uniformly in the adapted geometry. This is the probabilistic gain established in Lemma~\ref{l.fixed.geometry.matrix.averaging}.

\smallskip

The difficulty is that the parent matrix need not equal the average of its children. Subadditivity nevertheless puts it below that average, so the difference is positive semidefinite. Its expectation cannot be small through cancellation: in every direction, the difference is nonnegative. Together with control of the child average, this sign allows us to bound the discrepancy by the decrease of the mean (Lemma~\ref{l.fixed.geometry.positive.gap}). Determinant loss measures this decrease because the two means are ordered: after normalizing the parent mean to the identity, every eigenvalue of the child mean is at least one, so a determinant close to one forces all of them to be close to one. Thus averaging controls one part of the parent fluctuation, and determinant loss controls the remaining discrepancy and the change of normalization. Proposition~\ref{p.fixed.geometry.parent.child.recurrence} makes this precise, without assuming contraction in advance.

\paragraph{Retaining earlier estimates.}
To contract the error plus drift~$\mathcal P_{\qq}(m;n)+D_{\qq,j_*}(m)$, we must also carry along the earlier scales. These do not need to improve again: their weights decrease as the current scale advances. The new fluctuations are controlled by averaging, while determinant loss controls the new mean errors, the new drift increments, and the cost of measuring old errors against the current mean. Over a sufficiently long fixed span, these two gains outweigh the constants in the estimates. When the determinant loss is small, Proposition~\ref{p.fixed.geometry.one.grid.propagation} therefore gives a contraction factor for the error plus drift, together with an additive determinant error. The histories remain controlled throughout; this is essential for the later change of grid and the comparison of the annealed coarse-grained matrix with its dual.

\paragraph{Startup and propagation.}
This argument explains the distinction between startup and contraction in Proposition~\ref{p.scale.selection}. Contraction compares each new fluctuation with one a fixed number of scales earlier. At startup, there is not yet a full span of scales after initialization to compare with, so we bound the new fluctuations directly from the initialization scale. This gives a bound by a fixed multiple of the small input and a term controlled by determinant loss, but not necessarily a decrease. It is the startup in Case~1. After~$h$ scales, the comparisons needed for contraction are available. If the error plus drift still exceeds the threshold and the determinant test passes, we obtain Case~2. Even then, the additive determinant error remains: the estimate need not decrease the error at every step.

\smallskip

When the error plus drift is already small and the means vary little, we use these estimates to preserve smallness over the advance needed for Case~3. Section~\ref{s.geometry.transport} then transfers it to the new grid. If either determinant test fails, Cases~4 and~5 still advance on the old grid. The estimates allow growth, but this growth is controlled by determinant loss, whose total is bounded in the global argument. The comparisons in successive contraction steps overlap, but Proposition~\ref{p.fixed.geometry.one.grid.propagation} bounds how often they count each one-scale decrease. The local losses can therefore be summed without losing control.

\smallskip

The three subsections establish the adapted cube bound~\eqref{e.source.adapted.bound}, the parent--child recurrence, and propagation of the error and drift. The first supplies the moment control needed throughout; the last also gives the mean comparisons used in the change of grid. Subsection~\ref{s.scale.selection.proof} assembles the scale-selection alternatives.

\subsection{The adapted cube bound}
\label{ss.source.estimate}

The coarse ellipticity assumption~\ref{a.coarse.ellipticity} controls the coarse-grained matrices on standard (Euclidean) cubes. Its exponent~$\gamma\in[0,1)$ measures the permitted deterioration of this bound as the cubes shrink. To obtain a bound on an adapted cube, we partition it into standard cubes and apply subadditivity, as in~\cite{AK.HC}. The difficulty is that cubes near the boundary can be arbitrarily small. Their total volume compensates for the weaker matrix bounds: inside a scale-$j$ adapted cube, the relative volume of the scale-$r$ pieces is at most~$C3^{r-j}$. Each decrease of one scale multiplies this volume bound by~$\nf13$, while the matrix bound grows by at most~$3^\gamma$. Since~$\gamma<1$, the volume-weighted sum converges. The next lemma supplies the partition and its volume estimate.

\begin{samepage}
\begin{lemma}[Standard cubes inside an adapted cube]
\label{l.source.whitney}
Let~$\m\in\Rpos$, set~$\qq=\mathcal Q(\m)$, and let~$y\in\Rd$ and~$j\in\Z$. The maximal standard aligned cubes contained in~$y+\cus_j^{\qq}$ with scales at most~$j$ partition~$y+\cus_j^{\qq}$ up to a null set. If~$\mathcal Z_r(y+\cus_j^{\qq})$ denotes their centers at scale~$r$, then
\begin{equation}
	\sum_{z\in\mathcal Z_r(y+\cus_j^{\qq})}\frac{|\cu_r|}{|\cus_j^{\qq}|}\leq12d^{\nf32}3^{r-j}\qquad(r\leq j)\,.
	\label{e.source.whitney.volumes}
\end{equation}
\end{lemma}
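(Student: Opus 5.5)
We plan to prove Lemma~\ref{l.source.whitney} as a Whitney-type decomposition of the open parallelepiped $U\coloneqq y+\cus_j^{\qq}=y+\qq\cu_j$. The two parts of the statement are handled separately: first the partition, then the volume bound at each scale.

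\emph{Partition.} Standard aligned triadic cubes $z+\cu_r$ with $z\in 3^r\Zd$ are nested: two such cubes are either disjoint or one contains the other. Consider the family of aligned cubes of scale $r\leq j$ contained in $U$. Every cube in this family has side at most $3^j$, so it lies in a unique maximal element of the family. Distinct maximal elements are therefore disjoint. They cover $U$: if $x\in U$ does not lie on the (null) union of grid hyperplanes, then the aligned cube of scale $r$ containing $x$ has diameter $\sqrt d\,3^r$. Since $U$ is open, this cube lies in $U$ once $\sqrt d\,3^r<\dist(x,\partial U)$. Hence the maximal cubes cover $U$ up to a null set.

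\emph{Volume bound.} This is the main step. Fix $r\leq j$ and let $z+\cu_r$ be a maximal cube of scale $r$. We split into two cases.

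\begin{itemize}
\item If $r<j$, maximality means the parent cube of scale $r+1$ is not contained in $U$. That parent has diameter $\sqrt d\,3^{r+1}$, so every point of $z+\cu_r$ lies within distance $\sqrt d\,3^{r+1}$ of $\partial U$. The cubes at scale $r$ are disjoint, so their total volume is at most $|\{x\in U:\dist(x,\partial U)\leq t\}|$ with $t=\sqrt d\,3^{r+1}$.
\item If $r=j$, the ratio $|\cu_j|/|\cus_j^\qq|=1/\det\qq$ summed over at most $\det\qq$ disjoint cubes gives at most $1\leq 12d^{3/2}$. So this case is trivial.
\end{itemize}

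It remains to bound the boundary layer of the parallelepiped. The boundary of $U$ consists of $2d$ faces. Consider the face opposite the direction $\qq e_a$. Its $(d-1)$-volume is $|U|/w_a$, where $w_a$ is the width of $U$ in the normal direction to that face. We then use
\begin{equation*}
|\{x\in U:\dist(x,\partial U)\leq t\}|\leq t\,\mathcal H^{d-1}(\partial U)=2t\,|U|\sum_a w_a^{-1}.
\end{equation*}
To bound the widths, note that $\qq\geq\frac12\Id$ from~\eqref{e.rounded.grid.bounds}, so $|\qq^{-1}|\leq2$. The width of $\qq\cu_j$ in a unit direction $\nu$ equals $3^j/|\qq^{-1}\nu|\geq 3^j/2$, since the width is $3^j$ divided by the $\ell^1$-type norm of $\qq^{-1}\nu$ relative to the face normals. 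Hence $w_a\geq 3^j/2$. Combining the pieces,
\begin{equation*}
\sum_{z}\frac{|\cu_r|}{|U|}\leq 2\sqrt d\,3^{r+1}\cdot d\cdot\frac{2}{3^j}=12d^{3/2}3^{r-j}.
\end{equation*}
This matches the constant in~\eqref{e.source.whitney.volumes} exactly, which suggests this is the intended route.

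We expect the main obstacle to be justifying the tube-volume inequality and the width lower bound cleanly. For the tube volume, we would write the layer as a union over faces $F$ of sets of points within distance $t$ of $F$ on the inside. Each such set is contained in $\{x\in U:0\leq (x-F)\cdot\nu_F\leq t\}$, a slab section of volume at most $t\,\mathcal H^{d-1}(F)$ by Cavalieri, because the cross-sections of a convex parallelepiped parallel to a face have area at most that of the face. For the width in the direction of the face normal $\nu_a$, we would use that the normal to a face of $\qq\cu_j$ is proportional to $\qq^{-1}e_a$. The width is then $3^j/|\qq^{-1}e_a|\geq 3^j/2$.
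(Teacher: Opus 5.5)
Your proof is correct and follows the paper's argument: it uses the same nested-triadic partition, the same observation that each parent meets the complement (so every point of a selected scale-$r$ cube is within $\sqrt d\,3^{r+1}$ of the boundary), and the same face-strip bound $4d\,s\,3^{-j}$ for the boundary layer. You obtain that bound from the face areas $|U|/w_a$ and $w_a=3^j/|\qq^{-1}e_a|\geq 3^j/2$, whereas the paper pulls each strip back to a coordinate strip of width at most $s|\qq^{-1}|$ in $\cu_j$; one small slip is that the width of $\qq\cu_j$ in a general unit direction $\nu$ is $3^j\|\qq\nu\|_1$, not $3^j/|\qq^{-1}\nu|$, but you only use the face-normal width, which your final paragraph states correctly.
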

\end{samepage}

\begin{proof}
Aligned triadic cubes are nested or disjoint, so the selected cubes are disjoint. Every interior point of~$y+\cus_j^{\qq}$ outside the grid boundaries lies in a sufficiently small aligned cube contained in~$y+\cus_j^{\qq}$, and hence in a maximal one of scale at most~$j$. This proves the partition property. At scale~$j$, disjointness bounds the relative volume by one.

\smallskip

For~$s>0$, the part of~$y+\cus_j^{\qq}$ within Euclidean distance~$s$ of its boundary is contained in the union of the strips of width~$s$ along its~$2d$ faces. Under~$x=y+\qq v$, each strip becomes a coordinate strip in~$\cu_j$ of width at most~$s|\qq^{-1}|$. Volume ratios are preserved, so
\begin{equation*}
	\frac{\bigl|\{x\in y+\cus_j^{\qq}:\dist(x,\partial(y+\cus_j^{\qq}))\leq s\}\bigr|}{|\cus_j^{\qq}|}\leq2d|\qq^{-1}|s3^{-j}\leq4ds3^{-j}\,.
\end{equation*}
If~$r<j$, the parent of each selected scale-$r$ cube meets the complement of~$y+\cus_j^{\qq}$. Every point of that cube is therefore within distance~$\sqrt d\,3^{r+1}$ of~$\partial(y+\cus_j^{\qq})$. Apply the preceding estimate with~$s=\sqrt d\,3^{r+1}$ and use disjointness to obtain~\eqref{e.source.whitney.volumes}.
\end{proof}

\begin{proof}[Proof of the adapted cube bound~\eqref{e.source.adapted.bound}]
We adapt the subdivision argument of~\cite[Lemma~2.8]{AK.HC}, using a moment bound rather than retaining the full tail estimate. Recall the moment exponent~$Q$ from~\eqref{e.scale.selection.Q.choice} and the tail growth constant~$K_{\Psi_{\S}}$ from assumption~\ref{a.coarse.ellipticity}. Fix~$p=2(d+Q)$ and put~$\overline K=\max\{2,K_{\Psi_{\S}}\}$. The growth condition in~\eqref{e.source.tail} remains valid with~$\overline K$, and~$\overline K\leq2K_{\Psi_{\S}}$. Thus~\cite[Lemma~C.1]{AK.HC} and~\eqref{e.source.tail} give~$\E[\S(0)^p]\leq C(p)K_{\Psi_{\S}}^{C(p)}$.

Let~$\ell_*$ be the first subdivision depth at which every value of~$\S$ appearing in~\eqref{e.source.multiplier} is at most the corresponding cube size. At a fixed depth~$r$, there are~$3^{dj_*}$ integer centers. The variables~$\S(z)$ have the same law as~$\S(0)$, so the union bound and Markov's inequality give
\begin{equation}
	\P[\ell_*>r]\leq3^{dj_*}\P[\S(0)>3^{j_*+r}]\leq C(p)K_{\Psi_{\S}}^{C(p)}3^{-(p-d)j_*}3^{-pr}\qquad(r\geq0)\,.
	\label{e.source.subdivision.tail}
\end{equation}
In particular,~$\ell_*$ is finite almost surely, so the minimum in~\eqref{e.source.multiplier} is attained and~$\RSZ=3^{\gamma\ell_*}$. If~$\gamma>0$, summing~\eqref{e.source.subdivision.tail} yields
\begin{equation*}
	\E\bigl[ \RSZ^Q \bigr]
	\leq
	1+C(d,Q)K_{\Psi_{\S}}^{C(d,Q)}3^{-(p-d)j_*}\sum_{r=0}^{\infty}3^{-(p-Q\gamma)r}\leq2^Q\,.
\end{equation*}
The last inequality follows from~\eqref{e.source.lower.scale} after choosing~$C_{\rm src}$ large enough. If~$\gamma=0$, then~$\RSZ=1$ and the same conclusion holds.

\smallskip

At every center~$z$ in the subdivision with~$r=\ell_*$, we have~$\S(z)\leq3^{j_*+\ell_*}$. Apply~\eqref{e.coarse.ellipticity} there with~$m=j_*+\ell_*$. Every finer standard aligned cube belongs to one of these subcubes; coarser cubes are unions of them. Subadditivity therefore gives, simultaneously for all standard aligned cubes contained in~$\cu_{2j_*}$,
\begin{equation*}
	\bfA(z+\cu_j)\leq3^{\gamma(j_*+\ell_*-j)_+}\bfE\leq\RSZ3^{\gamma(j_*-j)_+}\bfE\,.
\end{equation*}
For an adapted cube, apply Lemma~\ref{l.source.whitney}. Its relative-volume bound is independent of~$\m$. Subadditivity weights each standard-cube bound by its relative volume in the partition. The resulting volume decay dominates the permitted growth at small scales because~$\gamma<1$: indeed,
\begin{equation*}
	\sum_{r\leq j}3^{r-j}3^{\gamma(j_*-r)_+}\leq\frac{3^{\gamma(j_*-j)_+}}{1-3^{-(1-\gamma)}}\,,
\end{equation*}
the standard-cube bound proves~\eqref{e.source.adapted.bound}, using~$(|\m|\,|\m^{-1}|)^{\nf12}\geq1$. This also proves convergence in~$L^Q(S_Q)$ of these volume-weighted sums. Finite exhaustion and local integrability of the coefficient block justify subadditivity, as in~\cite{AK.HC}.
\end{proof}

\subsection{The parent--child recurrence}

We compare the normalized fluctuation~$V_{j+h}^{\qq}$ on a parent cube with~$V_j^{\qq}$ on its children, the aligned cubes~$h$ scales below. Averaging over the~$3^{dh}$ children reduces fluctuations, but the parent matrix is not their average, and the two scales use different normalizations. Proposition~\ref{p.fixed.geometry.parent.child.recurrence} controls both discrepancies through the determinant loss~$\Delta_{j,j+h}^{\qq}$. Summing this estimate over scales in Subsection~\ref{ss.fixed.geometry.contraction} gives the fixed-grid estimates behind Cases~1--3 of Proposition~\ref{p.scale.selection}.

\smallskip

Finite-range dependence gives the averaging gain: the children can be sorted into a bounded number of classes of well separated cubes, within which the matrices are independent. Their centered average gains the square root of the number of children, giving the factor~$3^{-\nf d2h}$ in Lemma~\ref{l.fixed.geometry.matrix.averaging}. Subadditivity gives the other ingredient. The difference between the child average and the parent matrix is positive semidefinite, and its expectation is the decrease of the annealed matrix between the two scales. Positivity, together with the moment bound on the child average, controls this random difference through its mean, as made precise in Lemma~\ref{l.fixed.geometry.positive.gap}. Thus the averaging estimate transfers to the parent, at a cost determined by the decrease of the mean.

\smallskip

This decrease is what the determinant loss measures. After normalization by the annealed matrix at the coarser scale, the mean~$P_{j,j+h}^{\qq}$ has eigenvalues at least one whose product is~$e^{\Delta_{j,j+h}^{\qq}}$, so its trace exceeds that of the identity by at most~$e^{\Delta_{j,j+h}^{\qq}}-1$ and its operator norm is at most~$e^{\Delta_{j,j+h}^{\qq}}$. The first bound controls the gap, the second the change of normalization. Both are therefore charged to the logarithmic determinant, whose total decrease along the iteration is bounded as described in Section~\ref{ss.outline}.

\smallskip

Fix~$\qq=\mathcal Q(\m)$ and~$j\geq j_*$. The cell~$\cus_{j+h}^{\qq}$ is the union of~$3^{hd}$ aligned scale-$j$ cells. Stationarity and subadditivity~\cite[(2.24)]{AK.HC} give
\begin{equation}
	0\leq\bfA(\cus_{j+h}^{\qq})
	\leq\avsum_{z\in3^j\qq\Zd\cap\cus_{j+h}^{\qq}}
	\bfA(z+\cus_j^{\qq})\,.
	\label{e.fixed.geometry.parent.child}
\end{equation}

For~$N=2$, the following estimate is the usual variance reduction for an average. We need higher moments because the fluctuation history involves maxima over many cells and scales; these moments control the cost of taking a maximum. The bound on the number of classes of independent matrices, and hence the averaging gain, is uniform in the adapted geometry.

\begin{lemma}[Finite-range matrix averaging]
\label{l.fixed.geometry.matrix.averaging}
Let~$N\geq2$ be even, let~$\qq=\mathcal Q(\m)$, and let~$j\geq j_*$. Let~$Z$ be a nonempty finite subset of~$3^j\qq\Zd$, and let~$R$ be a deterministic positive definite~$2d \times 2d$ matrix. Then
\begin{equation}
	\biggl\|\avsum_{z\in Z}
	R^{-\nf12} \bigl(\bfA(z+\cus_j^{\qq})-\bfAhom_{j,\qq}\bigr)
	R^{-\nf12}\biggr\|_{L^N(S_N)}
 \leq \frac{N 3^{\nf d2}}{(\# Z)^{\nf12}}
	\biggl\|R^{-\nf12}
	\bigl(\bfA(\cus_j^{\qq})-\bfAhom_{j,\qq}\bigr)
	R^{-\nf12}\biggr\|_{L^N(S_N)}\,.
	\label{e.fixed.geometry.matrix.averaging}
\end{equation}
\end{lemma}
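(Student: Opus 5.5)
The plan is to split $Z$ into $3^d$ families of mutually independent cells, bound the centered sum within each family by a martingale (Rosenthal-type) inequality in the Schatten class, and recombine the families with the triangle inequality. Write
\begin{equation*}
X_z\coloneqq R^{-\nf12}\bigl(\bfA(z+\cus_j^{\qq})-\bfAhom_{j,\qq}\bigr)R^{-\nf12}\qquad(z\in Z)\,.
\end{equation*}

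\textbf{Step 1: each $X_z$ has the law of $X_0$.} Since $j\geq j_*$, we have $3^j\qq\Zd\subseteq\Zd$. Assumption~\ref{a.stationarity} then shows that $X_z$ has the same law as $X_0$. In particular $\E[X_z]=0$ and $\|X_z\|_{L^N(S_N)}=\|X_0\|_{L^N(S_N)}$, which is the quantity on the right of~\eqref{e.fixed.geometry.matrix.averaging}.

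\textbf{Step 2: the residue classes are independent families.} Write $z=3^j\qq v$ with $v\in\Zd$, and sort $Z$ into the $3^d$ classes $Z_c\coloneqq\{z: v\equiv c \bmod 3\}$, for $c\in\{0,1,2\}^d$. Two distinct cells in the same class differ by at least $3$ in some coordinate of $v$. Hence between them lie two full slabs of the parallelepiped grid in that direction.

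The Euclidean width of one slab $\{x: (\qq^{-1}x)_a\in[t,t+3^j]\}$ is $3^j/|e_a^t\qq^{-1}|$. By~\eqref{e.rounded.grid.bounds} this is at least $3^j/|\qq^{-1}|\geq 3^j/2\geq 3^{j_*}/2\geq d\geq1$. So the cells in a class are pairwise at distance at least $1$.

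Each $\bfA(z+\cus_j^{\qq})$ is $\mathcal F(z+\cus_j^{\qq})$-measurable. By~\ref{a.finite.range}, applied to one cell against the union of the others in the class, then inductively, the family $\{X_z\}_{z\in Z_c}$ is independent for each $c$. This step needs only the mutual independence of finitely many separated regions, which follows by iterating~\ref{a.finite.range}; I will state this explicitly.

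\textbf{Step 3: a Rosenthal-type bound within one class (the main obstacle).} I need a bound for a sum of independent centered random symmetric matrices in $L^N(S_N)$, with a constant of at most $N$, not merely some $C(N,d)$.

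The approach I would try first is the $2$-uniform smoothness of noncommutative $L^N$. The space $L^N(\Omega;S_N)$ is a noncommutative $L^N$ space, and for a conditionally centered increment $y$ (that is, $\E[y\mid x]=0$) one has
\begin{equation*}
\|x+y\|_{L^N(S_N)}^2\leq\|x\|_{L^N(S_N)}^2+(N-1)\|y\|_{L^N(S_N)}^2\,.
\end{equation*}
This is the optimal martingale-type-$2$ inequality of Ricard--Xu. Apply it successively to the partial sums of an enumeration of $Z_c$, with each new $X_z$ independent of the previous ones. This gives
\begin{equation*}
\Bigl\|\sum_{z\in Z_c}X_z\Bigr\|_{L^N(S_N)}\leq\sqrt{N-1}\,(\#Z_c)^{\nf12}\|X_0\|_{L^N(S_N)}\,.
\end{equation*}

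If I prefer to avoid citing that inequality, the fallback is symmetrization. Introducing Rademacher signs costs a factor $2$, and the noncommutative Khintchine inequality for even $N$ in the trace form $\E\tr(\cdot)^N$ gives a constant of order $\sqrt N$. One then takes $\E$ over the matrices and uses $\bigl\|(\sum X_z^2)^{\nf12}\bigr\|_{S_N}\leq(\sum|X_z|_{S_N}^2)^{\nf12}$, which holds since $N/2\geq1$, followed by Minkowski in $L^{N/2}$. The constant must be checked to stay below $N$ in this route, and that is the delicate point.

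\textbf{Step 4: recombine the classes.} By the triangle inequality over the $3^d$ classes and Cauchy--Schwarz,
\begin{equation*}
\sum_c(\#Z_c)^{\nf12}\leq 3^{\nf d2}(\#Z)^{\nf12}\,.
\end{equation*}
Dividing by $\#Z$ and using $\sqrt{N-1}\leq N$ yields~\eqref{e.fixed.geometry.matrix.averaging}.
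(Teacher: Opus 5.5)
Your proposal is correct. Steps~1, 2 and~4 are the same as in the paper: stationarity via $3^j\qq\Zd\subseteq\Zd$, the mod-$3$ coloring with separation at least $3^j\ge1$, joint independence by iterating \ref{a.finite.range}, and the recombination $\sum_c(\#Z_c)^{1/2}\le3^{d/2}(\#Z)^{1/2}$. The two routes differ only in Step~3.

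The paper proves the within-class bound $\|\sum_iY_i\|_{L^N(S_N)}\le N\bigl(\sum_i\|Y_i\|_{L^N(S_N)}^2\bigr)^{1/2}$ by an elementary moment expansion. It expands $\tr\E[(\sum_iY_i)^N]$ into words and discards every word in which some index appears exactly once, using independence and centering. It bounds each remaining word by trace H\"older followed by H\"older in probability. For each set partition of $\{1,\dots,N\}$ without singletons, it sums over index assignments using $|B|\ge2$. Counting at most $N^N$ such partitions is where the constant $N$ comes from. Evenness of $N$ is used so that $\tr(Y^N)=|Y|_{S_N}^N$.

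You instead import the sharp noncommutative martingale type-$2$ inequality of Ricard--Xu. You apply it to $2d\times2d$ random matrices with trace $\E\circ\tr$ and conditional expectation $\E[\,\cdot\,|\,\mathcal G]$ taken entrywise. Your $N\ge2$ form is exactly the trace-duality dual of their convexity inequality for $1<p\le2$, so the citation is legitimate. This route gives the better constant $\sqrt{N-1}$. It also works for every real $N\ge2$ and for martingale differences rather than independent summands. The price is reliance on a deep external theorem, whereas the paper's argument is self-contained and checkable in a few lines.

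Your fallback also closes without difficulty. Buchholz's sharp even-moment noncommutative Khintchine constant is $((N-1)!!)^{1/N}\le\sqrt{N-1}$. With symmetrization this gives the constant $2\sqrt{N-1}$, and $2\sqrt{N-1}\le N$ because $N^2-4(N-1)=(N-2)^2\ge0$.
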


\begin{proof}
Write~$z=3^j\qq w$ and color the adapted cubes by the residue class of~$w$ modulo three. There are at most~$3^d$ classes. Before applying~$3^j\qq$, distinct cubes in one class are separated by Euclidean distance at least two. By~\eqref{e.rounded.grid.bounds}, their images have Euclidean separation at least~$3^j$, and hence~$\ell^\infty$-separation at least~$\frac{3^j}{\sqrt d}\geq1$. The range-of-dependence assumption~\ref{a.finite.range} therefore makes the cubes in each class jointly independent: the union of any preceding cubes in the class remains at distance at least one from the next cube. Applying the two-set independence property successively proves joint independence.

\smallskip

Let~$(Y_i)_{i\in I}$ be a finite family of independent centered symmetric random matrices with finite~$N$-th moments. Since~$N$ is even, the trace of the~$N$-th power of a symmetric matrix is nonnegative. Expanding the power gives
\begin{equation*}
	\tr\E\Biggl[ \biggl(\sum_{i\in I}Y_i\biggr)^{\! N} \Biggr]
	=\sum_{i_1,\ldots,i_N\in I}\tr\E[Y_{i_1}\cdots Y_{i_N}]\,.
\end{equation*}
If one index occurs exactly once, conditioning on all the other matrices and using centering shows that the corresponding term vanishes. For every remaining term, trace H\"older followed by H\"older in probability gives
\begin{equation*}
	\bigl|\tr\E[Y_{i_1}\cdots Y_{i_N}]\bigr|
	\leq\prod_{k=1}^N\left(\tr\E[Y_{i_k}^N]\right)^{\nf1N}\,.
\end{equation*}
Fix an equality partition~$\pi$ of~$\{1,\ldots,N\}$ with no singleton blocks. Since every block~$B\in\pi$ has at least two elements, summing the preceding estimate over the terms inducing~$\pi$ gives
\begin{align*}
	\sum_{\text{terms inducing }\pi}
	\bigl|\tr\E[Y_{i_1}\cdots Y_{i_N}]\bigr|
\leq\prod_{B\in\pi}\left(\sum_{i\in I}\left(\tr\E[Y_i^N]\right)^{\nf2N}\right)^{\! \nf{|B|}2}
=\left(\sum_{i\in I}\left(\tr\E[Y_i^N]\right)^{\nf2N}\right)^{\! \nf N2}\,.
\end{align*}
There are at most~$N^N$ equality partitions. Summing over them gives
\begin{equation*}
	\tr\E\Biggl[\biggl(\sum_{i\in I}Y_i\biggr)^{\!N}\Biggr]
	\leq N^N\Biggl(\sum_{i\in I}\left(\tr\E[Y_i^N]\right)^{\nf2N}\Biggr)^{\!\nf N2}\,.
\end{equation*}
Since~$N$ is even,~$\tr(Y^N)=|Y|_{S_N}^N$ for every symmetric matrix~$Y$. Taking the~$N$-th root therefore gives
\begin{equation*}
	\biggl\|\sum_{i\in I}Y_i\biggr\|_{L^N(S_N)}
	\leq N\biggl(\sum_{i\in I}\|Y_i\|_{L^N(S_N)}^2\biggr)^{\!\nf12}\,.
\end{equation*}
Apply this estimate to the normalized centered matrices in each color class~$\mathcal C$. Stationarity gives
\begin{equation*}
	\biggl\|R^{-\nf12}\avsum_{z\in\mathcal C}
	\bigl(\bfA(z+\cus_j^{\qq})-\bfAhom_{j,\qq}\bigr)
	R^{-\nf12}\biggr\|_{L^N(S_N)}
	\leq \frac{N}{(\#\mathcal C)^{\nf12}}
	\biggl\|R^{-\nf12}
	\bigl(\bfA(\cus_j^{\qq})-\bfAhom_{j,\qq}\bigr)
	R^{-\nf12}\biggr\|_{L^N(S_N)}\,.
\end{equation*}
The class sizes satisfy
\begin{equation*}
	\sum_{\mathcal C}(\#\mathcal C)^{\nf12}
	\leq3^{\nf d2}(\# Z)^{\nf12}\,.
\end{equation*}
The average over~$Z$ is the convex combination of these class averages with weights~$\nf{\#\mathcal C}{\# Z}$. Minkowski's inequality and the preceding bound prove~\eqref{e.fixed.geometry.matrix.averaging}.
\end{proof}

Averaging controls the upper matrix in~\eqref{e.fixed.geometry.parent.child}. The next lemma transfers that control to the smaller matrix, with an error measured by the difference of their means.

\begin{lemma}[Positive gap]
\label{l.fixed.geometry.positive.gap}
Let~$N\geq2$, and let~$F$ and~$G$ be positive semidefinite random~$2d \times 2d$ matrices in~$L^N(S_N)$ such that~$F\leq G$. Then
\begin{align}
\lefteqn{ 
	\bigl\|F-\E[F]\bigr\|_{L^N(S_N)}
} \quad & 
\notag \\ & 
\leq\bigl(1+(2d)^{\nf1N}\bigr)
\bigl\|G-\E[G]\bigr\|_{L^N(S_N)}
+2\bigl(1+d^{1-\nf1N}\bigr)
|\E[G]|^{1-\nf1N}
\bigl(\tr(\E[G]-\E[F])\bigr)^{\nf1N}\,.
\label{e.fixed.geometry.positive.gap}
\end{align}
\end{lemma}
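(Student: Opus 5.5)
The plan is to pass to the positive gap $D\coloneqq G-F\geq0$ and estimate its fluctuation separately. Since
\begin{equation*}
F-\E[F]=\bigl(G-\E[G]\bigr)-\bigl(D-\E[D]\bigr),
\end{equation*}
Minkowski's inequality in~$L^N(S_N)$ reduces~\eqref{e.fixed.geometry.positive.gap} to the bound
\begin{equation*}
\bigl\|D-\E[D]\bigr\|_{L^N(S_N)}\leq(2d)^{\nf1N}\bigl\|G-\E[G]\bigr\|_{L^N(S_N)}+2\bigl(1+d^{1-\nf1N}\bigr)|\E[G]|^{1-\nf1N}\bigl(\tr\E[D]\bigr)^{\nf1N}.
\end{equation*}
The underlying point is that a positive semidefinite matrix is controlled in~$S_N$ by its trace and its operator norm. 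For $X\geq0$,
\begin{equation*}
|X|_{S_N}^N\leq|X|^{N-1}\tr X .
\end{equation*}
Moreover, $0\leq D\leq G$ gives $|D|\leq|G|$, and $F\geq0$ gives $\E[D]\leq\E[G]$.

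The deterministic part is handled first. Split $\|D-\E[D]\|_{L^N(S_N)}\leq\|D\|_{L^N(S_N)}+|\E[D]|_{S_N}$. Applying the inequality above to $X=\E[D]$ gives
\begin{equation*}
|\E[D]|_{S_N}\leq|\E[D]|^{1-\nf1N}\bigl(\tr\E[D]\bigr)^{\nf1N}\leq|\E[G]|^{1-\nf1N}\bigl(\tr\E[D]\bigr)^{\nf1N}.
\end{equation*}
This is one of the trace terms. (If the sharper splitting centred at $\E[D]$ is not needed, the factor $2$ absorbs it.)

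The random part $\E\bigl[|D|_{S_N}^N\bigr]$ is treated by splitting the probability space according to whether $|G-\E[G]|\leq|\E[G]|$.
\begin{itemize}
\item On the event $\{|G-\E[G]|\leq|\E[G]|\}$, we have $|D|\leq|G|\leq2|\E[G]|$. Hence $|D|_{S_N}^N\leq(2|\E[G]|)^{N-1}\tr D$, and taking expectations gives a contribution at most $2^{N-1}|\E[G]|^{N-1}\tr\E[D]$. After the $N$-th root this is bounded by $2|\E[G]|^{1-\nf1N}(\tr\E[D])^{\nf1N}$.
\item On the complementary event, $|D|\leq|G|\leq2|G-\E[G]|$. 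Since $D$ has at most $2d$ nonzero eigenvalues, $|D|_{S_N}^N\leq2d|D|^N$, which is at most a multiple of $2d\,|G-\E[G]|_{S_N}^N$. This produces the factor $(2d)^{\nf1N}$ in front of $\|G-\E[G]\|_{L^N(S_N)}$.
\end{itemize}

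To obtain the stated constants rather than extra powers of~$2$, I would refine the complementary event using Weyl monotonicity. The order $0\leq D\leq G$ gives $\lambda_i(D)\leq\lambda_i(G)$ for each~$i$. I would then compare $\lambda_i(G)$ with $\lambda_i(\E[G])+|G-\E[G]|$, or alternatively split the eigenvalues of~$D$ themselves at the threshold $|\E[G]|$ rather than splitting the event. I expect the factor $d^{1-\nf1N}$ to come from bounding $\tr\E[D]$ and $|\E[D]|$ against a $d$-fold trace, via Hölder on the eigenvalues: $\sum_i\lambda_i\leq(2d)^{1-\nf1N}\bigl(\sum_i\lambda_i^N\bigr)^{\nf1N}$.

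The main obstacle is this bookkeeping of constants in the case split, so that the coefficient of $\|G-\E[G]\|_{L^N(S_N)}$ is exactly $(2d)^{\nf1N}$. The structure of the argument is otherwise routine: Minkowski, the positivity inequality $|X|_{S_N}^N\leq|X|^{N-1}\tr X$, and the order relations $D\leq G$ and $\E[D]\leq\E[G]$. If the sharp constants resist, any dimensional constants suffice for the later uses of this lemma, and I would record the lemma with those.
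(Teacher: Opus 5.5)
Your reduction to $D=G-F$, the inequality $|X|_{S_N}^N\le|X|^{N-1}\tr X$, and your bound $|\E[D]|_{S_N}\le|\E[G]|^{1-\nf1N}(\tr\E[D])^{\nf1N}$ are all correct. The last is sharper than the paper's bound $|\E[D]|_{S_N}\le\tr\E[D]\le(2d)^{1-\nf1N}|\E[G]|^{1-\nf1N}(\tr\E[D])^{\nf1N}$, which is where the factor $d^{1-\nf1N}$ comes from.

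The gap is in the random part. Carried out as written, your event split gives
\[
\|D\|_{L^N(S_N)}\le2^{1-\nf1N}|\E[G]|^{1-\nf1N}\bigl(\tr\E[D]\bigr)^{\nf1N}+2(2d)^{\nf1N}\|G-\E[G]\|_{L^N(S_N)}\,.
\]
So you prove \eqref{e.fixed.geometry.positive.gap} with $1+2(2d)^{\nf1N}$ in place of $1+(2d)^{\nf1N}$, which is a different inequality.

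The refinements you suggest do not close this. Splitting at $|G-\E[G]|\le t|\E[G]|$ gives the coefficient $(1+t^{-1})(2d)^{\nf1N}$ on the fluctuation against $(1+t)^{1-\nf1N}$ on the trace term. Splitting eigenvalue by eigenvalue via Weyl's bound $\lambda_i(G)\le\lambda_i(\E[G])+|G-\E[G]|$ gives the same trade-off. In both cases the sharp constant forces $t=\infty$. More generally, reaching exactly $(2d)^{\nf1N}$ by any split through the operator norm $|G-\E[G]|$ would need a pointwise bound $|D|_{S_N}^N\le K|\E[G]|^{N-1}\tr D+2d\,|G-\E[G]|^N$. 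For $N>2$ this fails for every finite $K$, and the lemma is used with $N=Q\ge12$. To see the failure, take $F=0$ and a realization $G=x\Itwod$ with $\E[G]=a\Itwod$. The bound then reads $x^N-(x-a)^N\le Ka^{N-1}x$, which is false as $x\to\infty$.

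The missing idea is to absorb rather than split. The paper keeps $|D|_{S_N}\le|G|^{1-\nf1N}(\tr D)^{\nf1N}$ on the whole probability space and uses $|G|^{1-\nf1N}\le|\E[G]|^{1-\nf1N}+|G-\E[G]|^{1-\nf1N}$. It then applies H\"older with exponents $\frac N{N-1}$ and $N$, together with $\tr D\le(2d)^{1-\nf1N}|D|_{S_N}$, to get
\[
\|D\|_{L^N(S_N)}\le|\E[G]|^{1-\nf1N}\bigl(\tr\E[D]\bigr)^{\nf1N}+(2d)^{\frac{N-1}{N^2}}\|G-\E[G]\|_{L^N(S_N)}^{1-\nf1N}\|D\|_{L^N(S_N)}^{\nf1N}\,.
\]
Young's inequality bounds the last term by $\frac{N-1}N(2d)^{\nf1N}\|G-\E[G]\|_{L^N(S_N)}+\frac1N\|D\|_{L^N(S_N)}$. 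Absorb $\frac1N\|D\|_{L^N(S_N)}$ into the left side; this is legitimate because it is finite, since $0\le D\le G$. What remains is exactly $(2d)^{\nf1N}$ on the fluctuation and $\frac N{N-1}\le2$ on the trace term. Combined with your deterministic bound, this proves the lemma, even with trace coefficient $3\le2(1+d^{1-\nf1N})$.

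Your fallback of recording weaker constants is harmless in substance, but not verbatim. With $(2d)^{\nf1Q}$ doubled, the check $Q3^{\nf d2}(1+(2d)^{\nf1Q})\le3^dQ$ in the proof of Proposition~\ref{p.fixed.geometry.one.grid.propagation} fails for $d=2$. That constant would then have to be enlarged, which costs nothing since $h\ge2Q$ leaves ample room.
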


\begin{proof}
Set~$D\coloneqq G-F$, so that~$0\leq D\leq G$ and~$\E[D]=\E[G]-\E[F]$. Every eigenvalue of~$D$ is at most~$|G|$, hence~$\tr(D^N)\leq |G|^{N-1}\tr D$. Taking the~$N$-th root gives
\begin{equation*}
	|D|_{S_N}\leq|G|^{1-\nf1N}(\tr D)^{\nf1N}\,.
\end{equation*}
Since the power~$1-\frac1N$ is subadditive, H\"older's inequality and~$\tr D\leq(2d)^{1-\nf1N}|D|_{S_N}$ give
\begin{equation*}
	\|D\|_{L^N(S_N)}
	\leq|\E[G]|^{1-\nf1N}
	\bigl(\tr(\E[G]-\E[F])\bigr)^{\nf1N}
	+(2d)^{\frac{N-1}{N^2}}
	\bigl\|G-\E[G]\bigr\|_{L^N(S_N)}^{1-\nf1N}
	\|D\|_{L^N(S_N)}^{\nf1N}\,.
\end{equation*}
Young's inequality with exponents~$\frac{N}{N-1}$ and~$N$ lets us absorb the resulting multiple of~$\|D\|_{L^N(S_N)}$ into the left side, giving
\begin{equation*}
	\|D\|_{L^N(S_N)}
	\leq(2d)^{\nf1N}\bigl\|G-\E[G]\bigr\|_{L^N(S_N)}
	+\frac{N}{N-1}|\E[G]|^{1-\nf1N}
	\bigl(\tr(\E[G]-\E[F])\bigr)^{\nf1N}\,.
\end{equation*}
Also,
\begin{equation*}
	F-\E[F]
	=G-\E[G]-\bigl(D-(\E[G]-\E[F])\bigr)\,.
\end{equation*}
Since~$\E[G]-\E[F]\geq0$, its Schatten norm is at most its trace. Since also~$\tr(\E[G]-\E[F])\leq2d\,|\E[G]|$,
\begin{equation*}
	\tr(\E[G]-\E[F])
	\leq(2d)^{1-\nf1N}|\E[G]|^{1-\nf1N}
	\bigl(\tr(\E[G]-\E[F])\bigr)^{\nf1N}\,.
\end{equation*}
The triangle inequality, the preceding two estimates, and the bounds~$\frac{N}{N-1}\leq2$ and~$(2d)^{1-\nf1N}\leq2d^{1-\frac1N}$ give~\eqref{e.fixed.geometry.positive.gap}.
\end{proof}

Combining the two lemmas gives a recurrence whose error is measured by the determinant loss. Recall~$V_{j,k}^{\qq}$ from~\eqref{e.scale.selection.normalized.mean.fluctuation} and~$\Delta_{j,k}^{\qq}$ from~\eqref{e.scale.selection.logdet.loss}; in particular,~$V_j^{\qq}=V_{j,j}^{\qq}(0)$.

\begin{proposition}[Parent--child recurrence]
\label{p.fixed.geometry.parent.child.recurrence}
Let~$N\geq2$ be even, let~$\m\in\Rpos$, set~$\qq=\mathcal Q(\m)$, and let~$j\geq j_*$ and~$h\geq1$ be integers. Then we have
\begin{align}
\lefteqn{ 
	\|V_{j+h}^{\qq}\|_{L^N(S_N)}
} \quad & 
\notag \\ &
\leq N3^{\nf d2}\bigl(1+(2d)^{\nf1N}\bigr)
	3^{-\frac{d}{2}h}e^{\Delta_{j,j+h}^{\qq}}
	\|V_j^{\qq}\|_{L^N(S_N)}
	+2\bigl(1+d^{1-\nf1N}\bigr)
	e^{(1-\nf1N)\Delta_{j,j+h}^{\qq}}
	\bigl(e^{\Delta_{j,j+h}^{\qq}}-1\bigr)^{\nf1N}\,.
	\label{e.fixed.geometry.parent.child.recurrence}
\end{align}
\end{proposition}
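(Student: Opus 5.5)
We apply Lemma~\ref{l.fixed.geometry.positive.gap} with the parent matrix as the smaller variable and the child average as the larger one, both normalized at the parent scale. Write $R\coloneqq\bfAhom_{j+h,\qq}$. Set
\begin{equation*}
	F\coloneqq R^{-\nf12}\bfA(\cus_{j+h}^{\qq})R^{-\nf12}
	\qquad\text{and}\qquad
	G\coloneqq R^{-\nf12}\biggl(\avsum_{z\in3^j\qq\Zd\cap\cus_{j+h}^{\qq}}\bfA(z+\cus_j^{\qq})\biggr)R^{-\nf12}\,.
\end{equation*}
By~\eqref{e.fixed.geometry.parent.child}, we have $0\leq F\leq G$. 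By stationarity, which applies since $j\geq j_*$ makes the grid points integer, $\E[F]=\Itwod$ and $\E[G]=P_{j,j+h}^{\qq}$. Also $F-\E[F]=V_{j+h}^{\qq}$. The lemma then bounds $\|V_{j+h}^{\qq}\|_{L^N(S_N)}$ by $(1+(2d)^{\nf1N})\|G-\E[G]\|_{L^N(S_N)}$ plus $2(1+d^{1-\nf1N})|P_{j,j+h}^{\qq}|^{1-\nf1N}(\tr(P_{j,j+h}^{\qq}-\Itwod))^{\nf1N}$.

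The next step bounds the mean terms by determinant loss. The matrix $P\coloneqq P_{j,j+h}^{\qq}$ satisfies $P\geq\Itwod$, because $\bfAhom_{j,\qq}\geq\bfAhom_{j+h,\qq}$ after taking expectations in~\eqref{e.fixed.geometry.parent.child}. Its determinant is $e^{\Delta_{j,j+h}^{\qq}}$. Write its eigenvalues as $1+\mu_i$ with $\mu_i\geq0$. Then $\prod(1+\mu_i)\geq 1+\sum\mu_i$, so $\tr(P-\Itwod)\leq e^{\Delta}-1$. In the same way, $\max_i(1+\mu_i)\leq\prod_i(1+\mu_i)$ gives $|P|\leq e^{\Delta}$. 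These two bounds produce exactly the second term of~\eqref{e.fixed.geometry.parent.child.recurrence}.

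For the fluctuation term, apply Lemma~\ref{l.fixed.geometry.matrix.averaging} with the normalization $R$ and the set $Z$ of $3^{dh}$ children. This gives the factor $N3^{\nf d2}3^{-\frac d2h}$ times $\|R^{-\nf12}(\bfA(\cus_j^{\qq})-\bfAhom_{j,\qq})R^{-\nf12}\|_{L^N(S_N)}$. It remains to renormalize from $R$ to $\bfAhom_{j,\qq}$. Write $R^{-\nf12}XR^{-\nf12}=T\,(\bfAhom_{j,\qq}^{-\nf12}X\bfAhom_{j,\qq}^{-\nf12})\,T^t$ with $T\coloneqq R^{-\nf12}\bfAhom_{j,\qq}^{\nf12}$. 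The Schatten norm inequality $|TYT^t|_{S_N}\leq|T|^2|Y|_{S_N}$ then applies, and $|T|^2=|T^tT|=|\bfAhom_{j,\qq}^{\nf12}R^{-1}\bfAhom_{j,\qq}^{\nf12}|=|P|\leq e^{\Delta}$, since this matrix has the same spectrum as $P$. This yields $e^{\Delta_{j,j+h}^{\qq}}\|V_j^{\qq}\|_{L^N(S_N)}$ and completes the first term.

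The main point needing care is this renormalization step: keeping the factor at $e^{\Delta}$ rather than a worse power, and checking that the congruence bound holds in the Schatten norm. The rest is assembling the constants exactly as displayed. I would also confirm that the integrability hypotheses of the lemmas hold, using the moment bound~\eqref{e.source.adapted.bound}.
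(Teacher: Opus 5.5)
Your proposal is correct and follows the paper's proof essentially step for step: subadditivity and stationarity give $0\leq F\leq G$ with means $\Itwod$ and $P_{j,j+h}^{\qq}$, then Lemma~\ref{l.fixed.geometry.positive.gap}, the eigenvalue bounds $\tr(P-\Itwod)\leq e^{\Delta}-1$ and $|P|\leq e^{\Delta}$, and Lemma~\ref{l.fixed.geometry.matrix.averaging} with a congruence costing $|P_{j,j+h}^{\qq}|$. The only cosmetic difference is that you apply the averaging lemma with the parent normalization and renormalize a single cell, while the paper takes $R=\bfAhom_{j,\qq}$ and renormalizes the average; this gives the same factor. On integrability, no extra moment input is needed: if $\|V_j^{\qq}\|_{L^N(S_N)}=\infty$ the estimate is trivial, and otherwise $G\in L^N(S_N)$ and $0\leq F\leq G$ put both matrices in $L^N(S_N)$.
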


The first term is the averaging gain, with the factor~$e^{\Delta_{j,j+h}^{\qq}}$ accounting for the change of normalization. The second term controls the positive gap between the child average and the parent through the decrease of their means. If the determinant loss vanishes, that positive gap has zero expectation and hence vanishes almost surely; only the averaging term remains. A nonzero loss permits both a normalization cost and an additive error. The recurrence bounds the fluctuations in either case, without assuming that they already contract.

\begin{proof}
The aligned scale-$j$ cells partition~$\cus_{j+h}^{\qq}$ into~$3^{hd}$ equal pieces. Set
\begin{equation*}
	G\coloneqq \avsum_{z\in3^j\qq\Zd\cap\cus_{j+h}^{\qq}}
	\bfA(z+\cus_j^{\qq})\,.
\end{equation*}
Subadditivity and stationarity give
\begin{equation*}
	0\leq\bfA(\cus_{j+h}^{\qq})\leq G\,,
	\quad \E[G]=\bfAhom_{j,\qq}
	\qqand \E\bigl[\bfA(\cus_{j+h}^{\qq})\bigr]=\bfAhom_{j+h,\qq}\,.
\end{equation*}
Taking expectations gives~$\bfAhom_{j+h,\qq}\leq\bfAhom_{j,\qq}$. Conjugating by~$\bfAhom_{j+h,\qq}^{-\nf12}$ gives
\begin{equation*}
	P_{j,j+h}^{\qq}\geq\Itwod\,,
	\quad \det P_{j,j+h}^{\qq}=e^{\Delta_{j,j+h}^{\qq}}\,,
	\quad \Delta_{j,j+h}^{\qq}\geq0
	\qqand |P_{j,j+h}^{\qq}|\leq e^{\Delta_{j,j+h}^{\qq}}\,.
\end{equation*}
The summands defining~$G$ are stationary copies of~$\bfA(\cus_j^{\qq})$. Lemma~\ref{l.fixed.geometry.matrix.averaging}, with~$R=\bfAhom_{j,\qq}$, gives
\begin{equation*}
	\bigl\|\bfAhom_{j,\qq}^{-\nf12}
	\bigl(G-\bfAhom_{j,\qq}\bigr)
	\bfAhom_{j,\qq}^{-\nf12}\bigr\|_{L^N(S_N)}
	\leq N\,3^{\nf d2}3^{-\frac{d}{2}h}\|V_j^{\qq}\|_{L^N(S_N)}\,.
\end{equation*}
Changing normalization conjugates the matrix in this norm by~$\bfAhom_{j,\qq}^{\nf12}\bfAhom_{j+h,\qq}^{-\nf12}$. Since the square of its operator norm is~$|P_{j,j+h}^{\qq}|$, the Schatten ideal property gives
\begin{equation*}
	\bigl\|\bfAhom_{j+h,\qq}^{-\nf12}
	\bigl(G-\bfAhom_{j,\qq}\bigr)
	\bfAhom_{j+h,\qq}^{-\nf12}\bigr\|_{L^N(S_N)}
	\leq N\,3^{\nf d2}3^{-\frac{d}{2}h}e^{\Delta_{j,j+h}^{\qq}}
	\|V_j^{\qq}\|_{L^N(S_N)}\,.
\end{equation*}
The bounded-window estimate gives the required~$L^N(S_N)$-integrability.

Apply Lemma~\ref{l.fixed.geometry.positive.gap} to~\mbox{$\bfA(\cus_{j+h}^{\qq})$} and~$G$, after conjugating both by~$\bfAhom_{j+h,\qq}^{-\nf12}$. Their expectations are~$\Itwod$ and~$P_{j,j+h}^{\qq}$. The preceding estimate gives
\begin{align*}
	\|V_{j+h}^{\qq}\|_{L^N(S_N)}
	&\leq N\,3^{\nf d2}\bigl(1+(2d)^{\nf1N}\bigr)
	3^{-\frac{d}{2}h}e^{\Delta_{j,j+h}^{\qq}}
	\|V_j^{\qq}\|_{L^N(S_N)}
	\notag \\ & \qquad
	+2\bigl(1+d^{1-\nf1N}\bigr)
	|P_{j,j+h}^{\qq}|^{1-\nf1N}
	\bigl(\tr(P_{j,j+h}^{\qq}-\Itwod)\bigr)^{\nf1N}\,.
\end{align*}
Let~$\lambda_1,\ldots,\lambda_{2d}$ be the eigenvalues of~$P_{j,j+h}^{\qq}$. Since~$\lambda_i\geq1$,
\begin{equation*}
	\tr(P_{j,j+h}^{\qq}-\Itwod)
	=\sum_{i=1}^{2d}(\lambda_i-1)
	\leq\prod_{i=1}^{2d}\lambda_i-1
	=e^{\Delta_{j,j+h}^{\qq}}-1\,.
\end{equation*}
The preceding estimates prove~\eqref{e.fixed.geometry.parent.child.recurrence}.
\end{proof}

\subsection{Contraction of the error on a fixed grid}
\label{ss.fixed.geometry.contraction}

To reduce~$\mathcal P_{\qq}(m;n)+D_{\qq,j_*}(m)$, we separate the errors already recorded at~$m$ from those entering between~$m$ and~$m+h$. The old errors receive smaller weights as the scale advances. For each new scale~$m<j\leq m+h$, we compare the coarse matrix on a scale-$j$ cube with the average over its scale-$(j-h)$ subcubes. Finite-range dependence reduces the fluctuations of this average. Subadditivity makes the average minus the large-cube matrix nonnegative, with expectation~$\bfAhom_{j-h,\qq}-\bfAhom_{j,\qq}$. Thus averaging controls the moments of~$V_j^{\qq}$ by those of~$V_{j-h}^{\qq}$, up to an error from the decrease of the mean. Proposition~\ref{p.fixed.geometry.parent.child.recurrence} quantifies this comparison using the determinant loss~$\Delta_{j-h,j}^{\qq}$.

\smallskip

The normalizing mean also changes with scale. Passing from~$\bfAhom_{m,\qq}$ to~$\bfAhom_{m+h,\qq}$, and adding the new mean errors and drift increments, incurs a cost controlled by~$\Delta_{m,m+h}^{\qq}$. Some fluctuation comparisons reach back to~$j-h<m$, however, so this forward loss alone is insufficient. All these contributions are controlled by the sum from~\eqref{e.scale.selection.synchronized.loss},
\begin{equation*}
	\widehat\Delta_h^{\qq}(m)=\sum_{j=m+1}^{m+h}\Delta_{j-h,j}^{\qq}\,.
\end{equation*}
Along successive length-$h$ steps, each one-scale loss~$\Delta_{r,r+1}^{\qq}$ is counted at most~$h$ times, which gives~\eqref{e.fixed.geometry.synchronized.multiplicity}.

\smallskip

When~$m\geq n+h$, the earlier scales satisfy~$n<j-h\leq m$, so their fluctuation moments are already included in~$\mathcal P_{\qq}(m;n)$. With~$Q$ chosen as in~\eqref{e.scale.selection.Q.choice}, taking~$h\geq2Q$ makes the decay of the old weights and the averaging gain strong enough to absorb the constants. This yields contraction up to determinant loss, as in~\eqref{e.fixed.geometry.synchronized.propagation}, and supplies Case~2 of Proposition~\ref{p.scale.selection}.

\smallskip

At startup,~$m=n$, the past is still retained in~$\mathcal P_{\qq}(n;n)=\history_{\qq}(n)$, but there are no post-initialization scales to use in these comparisons. Starting from~$\mathcal P_{\qq}(n;n)+D_{\qq,j_*}(n)\leq1$, we compare~$V_{n+r}^{\qq}$ directly with~$V_n^{\qq}$ for~$1\leq r\leq h$. These short spans need not give contraction, but~\eqref{e.fixed.geometry.fixed.span.propagation} with~$L=h$ carries the initial bound to~$n+h$, up to a fixed factor and determinant loss. Startup therefore preserves control until contraction is available; it does not create smallness. The estimate for general~$L$ also controls the old-grid advance before a geometry update.

\begin{proposition}[Propagation on one adapted geometry]
\label{p.fixed.geometry.one.grid.propagation}
There exists~$C=C(d,\gamma)<\infty$ such that the following holds. Let~$h\in\N$ satisfy~$h\geq2Q$, let~$L\geq1$ be an integer, let~$\m\in\Rpos$, set~$\qq=\mathcal Q(\m)$, and let~$j_*\leq n\leq m$ be integers.

\smallskip

\noindent\emph{(i) Control of the histories.} We have
\begin{equation}
	\history_{\qq}(m)\leq C\mathcal P_{\qq}(m;n)\,.
	\label{e.fixed.geometry.profile.majorization}
\end{equation}
\noindent\emph{(ii) Contraction.} If~$m\geq n+h$, then
\begin{equation}
	\mathcal P_{\qq}(m+h;n)+D_{\qq,j_*}(m+h)\leq\frac18e^{Q\widehat\Delta_h^{\qq}(m)}\Bigl(\mathcal P_{\qq}(m;n)+D_{\qq,j_*}(m)\Bigr)+C\Bigl(e^{Q\widehat\Delta_h^{\qq}(m)}-1\Bigr)\,.
	\label{e.fixed.geometry.synchronized.propagation}
\end{equation}
\noindent\emph{(iii) Propagation from a small input.} Suppose that~$\mathcal P_{\qq}(m;n)+D_{\qq,j_*}(m)\leq1$. If either~$m=n$ or~$m\geq n+h$, then
\begin{equation}
	\mathcal P_{\qq}(m+L;n)+D_{\qq,j_*}(m+L)\leq C L\Bigl(\mathcal P_{\qq}(m;n)+D_{\qq,j_*}(m)+e^{Q\Delta_{m,m+L}^{\qq}}-1\Bigr)\,.
	\label{e.fixed.geometry.fixed.span.propagation}
\end{equation}
\noindent\emph{(iv) Summation of determinant losses.} If~$m_0\geq j_*+h$, then, for every~$K\geq1$,
\begin{equation}
	\sum_{k=0}^{K-1}\widehat\Delta_h^{\qq}(m_0+kh)
	\leq h\Delta_{m_0+1-h,m_0+Kh}^{\qq}\,.
	\label{e.fixed.geometry.synchronized.multiplicity}
\end{equation}
\end{proposition}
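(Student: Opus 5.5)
I would prove the four parts in the order (iv), (i), (ii), (iii), since (iv) is pure bookkeeping and (i) is used in the others.

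\emph{Part (iv): the sum of determinant losses.} Expand $\widehat\Delta_h^{\qq}(m_0+kh)=\sum_{a}\Delta_{a-h,a}^{\qq}$ with $a$ running over $m_0+kh+1,\ldots,m_0+(k+1)h$. Write each $\Delta_{a-h,a}^{\qq}=\sum_{r=a-h}^{a-1}\Delta_{r,r+1}^{\qq}$, which is exact additivity of $\log\det$. As $k$ ranges over $0,\ldots,K-1$, the indices $a$ run once over $m_0+1,\ldots,m_0+Kh$. Each one-scale loss $\Delta_{r,r+1}^{\qq}$ therefore appears for at most $h$ values of $a$, and all such $r$ lie in $[m_0+1-h,\,m_0+Kh-1]$. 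Nonnegativity of the one-scale losses (here we need $m_0+1-h\ge j_*$) then gives~\eqref{e.fixed.geometry.synchronized.multiplicity}.

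\emph{Part (i): control of the histories.} I compare $\history_{\qq}(m)$ term by term with $\mathcal P_{\qq}(m;n)$.

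The mean history splits into two ranges. Terms with $j\ge n$ appear verbatim in $\history^{\rm mean}_{\qq}(m;n)$. For $j<n$, write $P_{j,m}^{\qq}=(P_{n,m}^{\qq})^{1/2}\tilde P(P_{n,m}^{\qq})^{1/2}$, with $\tilde P$ a rotated copy of $P_{j,n}^{\qq}$. Then $\tr(P_{j,m}-I)\le |P_{n,m}|\tr(P_{j,n}-I)+\tr(P_{n,m}-I)$. Since $|P_{n,m}|\le 1+\tr(P_{n,m}-I)$, we get
\[
1+\tr(P_{j,m}-I)\le\bigl(1+\tr(P_{n,m}-I)\bigr)\bigl(1+\tr(P_{j,n}-I)\bigr).
\]
Raising this to the power $Q$ gives $1+\meanpenalty_Q(P_{j,m})\le(1+\meanpenalty_Q(P_{n,m}))(1+\meanpenalty_Q(P_{j,n}))$. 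The weight $3^{-\frac14(1-\gamma)(m-1-j)}$ factors as the $\mathcal P$ prefactor times the weight in $\history^{\rm mean}(n)$. I expect the leftover ``$1+$'' cross terms to be handled by adding the corresponding $j\ge n$ terms, up to a constant.

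For the fluctuation history, scales $j\le n$ renormalize by $\bfAhom_{n}^{1/2}\bfAhom_{m}^{-1/2}$. This costs $|P_{n,m}|^{Q}$, which is at most $1+\meanpenalty_Q(P_{n,m})$. The spatial maximum over $\cus_m$ versus $\cus_n$ is harmless: by stationarity the supremum over the larger cube costs at most $3^{d(m-n)}$ copies. That loss is absorbed because $Q\rho_{\max}-d\ge Q\gamma+\frac14(1-\gamma)$, so the weights still beat the prefactor $3^{-\frac14(1-\gamma)(m-n)}$.

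Scales $j>n$ are bounded by a union bound, the maximum being replaced by a sum over $3^{d(m-j)}$ cells. Stationarity gives the moment $\E|V_{j,m}|^Q\le e^{Q\Delta_{j,m}}\E|V_j|_{S_Q}^Q$. The weight $3^{-Q\rho_{\max}(m-j)}3^{d(m-j)}\le3^{-\frac14(1-\gamma)(m-j)}$, which is exactly how $\rho_{\max}$ was chosen.

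\emph{Part (ii): contraction.} Split $\mathcal P_{\qq}(m+h;n)$ into two groups.

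\textbf{Old terms.} The history at $n$ and the mean and fluctuation terms up to $m$ gain a factor $3^{-\frac14(1-\gamma)h}$ from the weights. Renormalizing from $\bfAhom_m$ to $\bfAhom_{m+h}$ multiplies them by at most $e^{Q\Delta_{m,m+h}}\le e^{Q\widehat\Delta_h(m)}$, using~\eqref{e.scale.selection.synchronized.loss.lower}. The mean penalty factors as in (i). With $h\ge 2Q$ and $Q\ge 4(d+1)/(1-\gamma)$, the discount $3^{-\frac14(1-\gamma)h}$ is small, but I would need to verify it beats the constant $C$ from the factorization. If it does not, I would enlarge the admissible $h$; this is allowed since the statement says $h\ge2Q$, presumably with room.

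\textbf{New terms.} The fluctuation terms for $j\in(m,m+h]$ come from Proposition~\ref{p.fixed.geometry.parent.child.recurrence} with $N=Q$, comparing $V_j$ to $V_{j-h}$. Since $j-h\in(n,m]$ when $m\ge n+h$, the moment $\E|V_{j-h}|^Q$ is already a summand of $\mathcal P_{\qq}(m;n)$. Its weight there is $3^{-\frac14(1-\gamma)(m-j+h)}$, while the recurrence gives the factor $3^{-\frac d2hQ}$, which is much smaller. The additive term $(e^{\Delta}-1)$ is controlled by $e^{Q\widehat\Delta_h}-1$.

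New mean terms and new drift increments are bounded by $\tr(P_{j,m+h}-I)\le e^{\Delta_{j,m+h}}-1$. The old drift is discounted by $3^{-\frac18(1-\gamma)h}$ after renormalization.

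\emph{Part (iii) and the main obstacle.} Part (iii) repeats the same bookkeeping without using the discount. For $m\ge n+h$ I iterate the recurrence. For $m=n$ the moments $\E|V_{n+r}|^Q$ are bounded by comparing directly with $V_n$ via the recurrence with $h=r$, and $\E|V_n|^Q\le\history_{\qq}(n)$ by (i). The smallness hypothesis $\le1$ lets me linearize $(1+x)^Q-1\le C x$. Each of the $L$ new scales contributes one term, which produces the factor $L$.

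I expect the main obstacle to be the mean-penalty factorization under renormalization. This is the step where the old and new normalizations must combine multiplicatively without losing more than a constant.
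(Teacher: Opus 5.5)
Your parts (i) and (iv) follow the paper. Your polar-decomposition bound is exactly its trace inequality, and the leftover cross term in (i) is the $j=n$ summand of $\history^{\rm mean}_{\qq}(m;n)$. Part (ii) is the paper's split into carried and new terms. The gap is in (iii) when $m\geq n+h$.

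The right side of \eqref{e.fixed.geometry.fixed.span.propagation} contains only the forward loss $\Delta^{\qq}_{m,m+L}$. If you iterate the recurrence with span $h$, as in (ii), each new scale $s\in(m,m+L]$ produces the additive error $C\bigl(e^{Q\Delta^{\qq}_{s-h,s}}-1\bigr)$, which reaches back before $m$. In (ii) this was paid for by $\widehat\Delta^{\qq}_h(m)$; in (iii) it is not. You would have to absorb $e^{Q\Delta^{\qq}_{s-h,m}}-1$ into $\mathcal P_{\qq}(m;n)$ through the mean-history term $\meanpenalty_Q(P^{\qq}_{s-h,m})$, whose weight can be as small as $3^{-\frac14(1-\gamma)(h-2)}$. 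The averaging gain $3^{-\frac{Qd}{2}h}$ offsets the analogous loss on the fluctuation term, but not on this additive term. So your constant would depend on $h$, contrary to the statement.

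The paper uses span $2Q$ instead. For $r=s-2Q\in[m+1-2Q,m]$ one has $r>n$. Then both $e^{Q\Delta^{\qq}_{r,m}}\E[|V^{\qq}_r|^Q_{S_Q}]$ and $\meanpenalty_Q(P^{\qq}_{r,m})$ are at most $C\mathcal P_{\qq}(m;n)$. Together with $\mathcal P_{\qq}(m;n)\leq1$ and $\Delta^{\qq}_{r,m}\leq\tr(P^{\qq}_{r,m}-\Itwod)$, this gives $e^{Q\Delta^{\qq}_{r,m}}-1\leq C\mathcal P_{\qq}(m;n)$. Scales $s>m+2Q$ then follow by induction along residue classes mod $2Q$, since $2^{Q-1}(3^dQ)^Q3^{-dQ^2}\leq\frac18$.

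A shorter repair is to reuse your own $m=n$ argument. Compare $V^{\qq}_{m+r}$ directly with $V^{\qq}_m$ by the recurrence of span $r$, using that $\E[|V^{\qq}_m|^Q_{S_Q}]$ is the $j=m$ summand of $\mathcal P_{\qq}(m;n)$. Then only forward losses $\Delta^{\qq}_{m,m+r}\leq\Delta^{\qq}_{m,m+L}$ occur. For $m=n$, note that $\E[|V^{\qq}_n|^Q_{S_Q}]\leq2d\,\history^{\rm fluc}_{\qq}(n)$ comes from the definition, not from (i).

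In either case, the essential use of the smallness hypothesis is $e^xa\leq a+e^x-1$ for $0\leq a\leq1$, not linearizing $(1+x)^Q-1$. This removes the factor $e^{Q\Delta^{\qq}_{m,m+L}}$ from the carried terms and gives $D_{\qq,j_*}(m+L)\leq D_{\qq,j_*}(m)+2\bigl(e^{\Delta^{\qq}_{m,m+L}}-1\bigr)$.

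In (ii), your fallback of enlarging $h$ is not allowed: the bound must hold for every $h\geq2Q$ with $C=C(d,\gamma)$, and it is used later with $h=2Q$. It is also unnecessary, because the factorization puts constants only on the additive terms. Indeed, $1+\tr(P^{\qq}_{m,m+h}-\Itwod)\leq\det P^{\qq}_{m,m+h}$ gives $\meanpenalty_Q(P^{\qq}_{j,m+h})\leq e^{Q\Delta^{\qq}_{m,m+h}}\meanpenalty_Q(P^{\qq}_{j,m})+e^{Q\Delta^{\qq}_{m,m+h}}-1$. So each carried term is multiplied by exactly $3^{-\frac14(1-\gamma)h}e^{Q\Delta^{\qq}_{m,m+h}}$. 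The new fluctuation weights match the old ones exactly, since $m+h-j=m-(j-h)$, and $\Delta^{\qq}_{j-h,m+h}\leq\widehat\Delta^{\qq}_h(m)$ for $m<j\leq m+h$. Since $(1-\gamma)h\geq2(1-\gamma)Q\geq8(d+1)$ and $Q\geq12$, you get $3^{-\frac14(1-\gamma)h}+2^{Q-1}(3^dQ)^Q3^{-\frac{Qd}{2}h}\leq3^{-6}+\frac12 3^{-12}<\frac18$. For the drift, $3^{-\frac18(1-\gamma)h}\leq3^{-3}<\frac18$. This is exactly the check the paper carries out.
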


By definition,~$\E[|V_m^{\qq}|^Q]\leq\mathcal P_{\qq}(m;n)$, and thus the error bounds the current fluctuation as well as its history. The retained history supplies the earlier-scale input to the parent--child recurrence, which controls the new fluctuations whether the determinant loss is small or large. The contraction estimate requires no smallness assumption on this input or on the determinant loss; the condition~$m\geq n+h$ required for~\eqref{e.fixed.geometry.synchronized.propagation} ensures that the earlier scales needed for comparison are already included in the error. When the determinant test in Case~2 of Proposition~\ref{p.scale.selection} passes, the estimate gives a contraction factor plus an additive determinant error, not necessarily a decrease at every step. When a test fails, its intended conclusion is no longer guaranteed, but the estimates still bound possible growth of the fluctuations. The iteration can continue with a larger error; Section~\ref{s.global.selection} sums its logarithmic cost. The propagation estimate instead assumes a small input, but also applies when~$m=n$; it supplies both the startup bound and the advance used before a geometry update.

\begin{proof}
We first verify that the error controls the histories. For contraction and propagation, we separate the errors already recorded from those entering at new scales. The old contributions decay through their weights, while the new fluctuations are controlled by the parent--child recurrence. Determinant losses control the changes of normalization, the new mean errors, and the new drift increments. We first estimate the error alone and then add the drift; treating the error on a fresh grid separately gives the startup bound. Finally, we count the overlaps between determinant losses in successive contraction steps. Throughout the proof,~$C$ may change from line to line and depends only on~$d$ and~$\gamma$.

\smallskip

\emph{Step 1: Control of the histories.} We estimate the fluctuation and mean histories separately. A change of normalization gives
\begin{equation*}
	V_{j,m}^{\qq}(z)=\bigl(\bfAhom_{n,\qq}^{\nf12}\bfAhom_{m,\qq}^{-\nf12}\bigr)^t V_{j,n}^{\qq}(z)\bigl(\bfAhom_{n,\qq}^{\nf12}\bfAhom_{m,\qq}^{-\nf12}\bigr)\,
	\qquad j\leq n\,.
\end{equation*}
The square of the conjugating matrix's norm is~$|P_{n,m}^{\qq}|$. Thus the first term of the error controls fluctuations from scales up to~$n$, while its last sum controls the remaining scales. More precisely, for~$j\leq n$, partition~$\cus_m^{\qq}$ into scale-$n$ cells, bound the maximum by a sum, and use stationarity and the change of normalization. For~$n<j\leq m$, bound the maximum by a sum over the scale-$j$ cells. Since~$Q\rho_{\max}-d\geq\frac14(1-\gamma)$,
\begin{align*}
	\history_{\qq}^{\rm fluc}(m)
	&\leq3^{-\frac14(1-\gamma)(m-n)}
	\bigl(1+\meanpenalty_Q(P_{n,m}^{\qq})\bigr)
	\history_{\qq}^{\rm fluc}(n)
	+\sum_{j=n+1}^{m}
	3^{-\frac14(1-\gamma)(m-j)}e^{Q\Delta_{j,m}^{\qq}}
	\E\bigl[|V_j^{\qq}|_{S_Q}^Q\bigr]
	\notag \\ &
	\leq\mathcal P_{\qq}(m;n)\,.
\end{align*}

\smallskip

To obtain the analogous mean estimate, we use a trace inequality that will also control changes of normalization in Step~2. If~$P,K\geq\Itwod$, cyclicity gives
\begin{align*}
	1+\tr\bigl(K^{\nf12}PK^{\nf12}-\Itwod\bigr)
	&=1+\tr(P-\Itwod)+\tr(K-\Itwod)
	+\tr\bigl((P-\Itwod)(K-\Itwod)\bigr) \\
	&\leq\bigl(1+\tr(P-\Itwod)\bigr)
	\bigl(1+\tr(K-\Itwod)\bigr)\,.
\end{align*}
The inequality follows from positivity and~$\tr\bigl((P-\Itwod)(K-\Itwod)\bigr)\leq\tr(P-\Itwod)\tr(K-\Itwod)$.

\smallskip

Split the mean history at~$n$. The terms with~$j\geq n$ give~$\history_{\qq}^{\rm mean}(m;n)$. For~$j_*\leq j<n$, apply the trace inequality after changing normalization, raise it to the~$Q$-th power, and subtract one. This gives
\begin{equation*}
	\meanpenalty_Q(P_{j,m}^{\qq})\leq\bigl(1+\meanpenalty_Q(P_{n,m}^{\qq})\bigr)\bigl(1+\meanpenalty_Q(P_{j,n}^{\qq})\bigr)-1=\bigl(1+\meanpenalty_Q(P_{n,m}^{\qq})\bigr)\meanpenalty_Q(P_{j,n}^{\qq})+\meanpenalty_Q(P_{n,m}^{\qq})\,.
\end{equation*}
Multiplying by~$3^{-\frac14(1-\gamma)(m-1-j)}$ and summing over~$j_*\leq j<n$ gives
\begin{align*}
	\sum_{j=j_*}^{n-1}3^{-\frac14(1-\gamma)(m-1-j)}
	\meanpenalty_Q(P_{j,m}^{\qq})
	&
	\leq3^{-\frac14(1-\gamma)(m-n)}
	\bigl(1+\meanpenalty_Q(P_{n,m}^{\qq})\bigr)
	\history_{\qq}^{\rm mean}(n)
	\notag \\ & \qquad
	+3^{-\frac14(1-\gamma)(m-n)}
	\meanpenalty_Q(P_{n,m}^{\qq})
	\sum_{j=j_*}^{n-1}3^{-\frac14(1-\gamma)(n-1-j)}\,.
\end{align*}
The geometric sum is bounded by~$C$. The first term is bounded by the first term in~$\mathcal P_{\qq}(m;n)$. If~$m=n$, the second vanishes; if~$m>n$, it is bounded by the~$j=n$ term in~$\history_{\qq}^{\rm mean}(m;n)$ up to~$C$. Hence
\begin{equation*}
	\history_{\qq}^{\rm mean}(m)
	\leq C\mathcal P_{\qq}(m;n)\,.
\end{equation*}
Combining the two history estimates proves~\eqref{e.fixed.geometry.profile.majorization}. Adding~$D_{\qq,j_*}(m)$ gives the useful consequence
\begin{equation}
	\history_{\qq}^{\rm fluc}(m)+\history_{\qq}^{\rm mean}(m)+D_{\qq,j_*}(m)
	\leq C\bigl(\mathcal P_{\qq}(m;n)+D_{\qq,j_*}(m)\bigr)\,.
	\label{e.fixed.geometry.carried.majorization}
\end{equation}

\smallskip

\emph{Step 2: Contraction.} Assume~$m\geq n+h$. We first prove the error estimate
\begin{equation}
	\mathcal P_{\qq}(m+h;n)
	\leq\frac18e^{Q\widehat\Delta_h^{\qq}(m)}\mathcal P_{\qq}(m;n)
	+C\bigl(e^{Q\widehat\Delta_h^{\qq}(m)}-1\bigr)\,.
	\label{e.fixed.geometry.profile.contraction}
\end{equation}
The terms already present at scale~$m$ gain the factor~$3^{-\frac14(1-\gamma)h}$, up to the change of normalization. The new fluctuations are controlled by the parent--child recurrence. These two contributions give
\begin{align}
	\mathcal P_{\qq}(m+h;n)
	&\leq\biggl(3^{-\frac14(1-\gamma)h}
	+2^{Q-1}(3^dQ)^Q3^{-\frac{Qd}{2}h}\biggr)
	e^{Q\widehat\Delta_h^{\qq}(m)}\mathcal P_{\qq}(m;n)
	+C
	\bigl(e^{Q\widehat\Delta_h^{\qq}(m)}-1\bigr) \notag \\
	&\leq\frac18e^{Q\widehat\Delta_h^{\qq}(m)}\mathcal P_{\qq}(m;n)
	+C\bigl(e^{Q\widehat\Delta_h^{\qq}(m)}-1\bigr)\,.
	\label{e.fixed.geometry.profile.contraction.reduction}
\end{align}
We first check the coefficient bound in the second inequality, then estimate the terms of the error to prove the first.

\smallskip

Since~$d\geq2$ and~$Q\geq2$,
\begin{equation*}
	Q\,3^{\nf d2}\bigl(1+(2d)^{\nf1Q}\bigr)\leq3^dQ\,.
\end{equation*}
Since~$e^{(Q-1)t}(e^t-1)\leq e^{Qt}-1$ for~$t\geq0$, applying~\eqref{e.fixed.geometry.parent.child.recurrence} with~$N=Q$ and raising to the~$Q$-th power gives
\begin{equation}
	\E\bigl[|V_{j+h}^{\qq}|_{S_Q}^Q\bigr]\leq2^{Q-1}(3^dQ)^Q3^{-\frac{Qd}{2}h}e^{Q\Delta_{j,j+h}^{\qq}}\E\bigl[|V_j^{\qq}|_{S_Q}^Q\bigr]+2^{2Q-1}\bigl(1+d^{1-\nf1Q}\bigr)^Q\bigl(e^{Q\Delta_{j,j+h}^{\qq}}-1\bigr)\,.
	\label{e.fixed.geometry.parent.child.powered}
\end{equation}
By~\eqref{e.scale.selection.Q.choice},
\begin{equation*}
	(1-\gamma)h\geq2(1-\gamma)Q\geq8(d+1)\,.
\end{equation*}
Moreover,~$d\geq2$,~$Q\geq12$, and
\begin{equation*}
	2Q\,3^{-d(Q-1)}\leq\frac13\,.
\end{equation*}
Using~$h\geq2Q$, the second term in the following display is at most~$\frac12\bigl(2Q\,3^{-d(Q-1)}\bigr)^Q\leq\frac12 3^{-12}$. Therefore
\begin{equation*}
	3^{-\frac18(1-\gamma)h}\leq3^{-3}<\frac18
	\qand
	3^{-\frac14(1-\gamma)h}
	+2^{Q-1}(3^dQ)^Q3^{-\frac{Qd}{2}h}
	\leq3^{-6}+\frac12 3^{-12}<\frac18\,.
\end{equation*}
These inequalities give the required coefficient bounds; it remains to estimate the error terms.

\smallskip

We begin with the new fluctuation terms. Each new scale~$m<j\leq m+h$ is compared with~$j-h$. The assumption~$m\geq n+h$ required for~\eqref{e.fixed.geometry.synchronized.propagation} places~$j-h$ in~$[n+1,m]$, so its fluctuation moment is already represented in the error. These comparisons reach back before~$m$: the forward loss~$\Delta_{m,m+h}^{\qq}$ alone does not control their determinant errors. This is why we use the synchronized loss~$\widehat\Delta_h^{\qq}(m)$, which sums the losses~$\Delta_{j-h,j}^{\qq}$ over the new scales.

\smallskip

Apply~\eqref{e.fixed.geometry.parent.child.powered} at~$j-h$ and multiply by the weight in~$\mathcal P_{\qq}(m+h;n)$. Additivity and the definition of~$\widehat\Delta_h^{\qq}(m)$ give~$\Delta_{j,m+h}^{\qq}+\Delta_{j-h,j}^{\qq}=\Delta_{j-h,m+h}^{\qq}\leq\widehat\Delta_h^{\qq}(m)$. Moreover,
\begin{equation*}
e^{Q\Delta_{j,m+h}^{\qq}}\bigl(e^{Q\Delta_{j-h,j}^{\qq}}-1\bigr)\leq e^{Q\widehat\Delta_h^{\qq}(m)}-1
\,.
\end{equation*}
Summing over~$j$ gives
\begin{align*}
	\lefteqn{
	\sum_{j=m+1}^{m+h}3^{-\frac14(1-\gamma)(m+h-j)}
	e^{Q\Delta_{j,m+h}^{\qq}}\E\bigl[|V_j^{\qq}|_{S_Q}^Q\bigr]
	} \qquad &
	\notag \\ &
	\leq2^{Q-1}(3^dQ)^Q3^{-\frac{Qd}{2}h}
	e^{Q\widehat\Delta_h^{\qq}(m)}\mathcal P_{\qq}(m;n)
	+C\bigl(e^{Q\widehat\Delta_h^{\qq}(m)}-1\bigr)\,.
\end{align*}
We next estimate the mean terms and the history from scale~$n$. Applying the trace inequality from Step~1 after changing the normalization, and using~$1+\tr(P_{m,m+h}^{\qq}-\Itwod)\leq e^{\Delta_{m,m+h}^{\qq}}$, gives
\begin{equation*}
	\meanpenalty_Q(P_{j,m+h}^{\qq})
	\leq e^{Q\Delta_{m,m+h}^{\qq}}\meanpenalty_Q(P_{j,m}^{\qq})
	+e^{Q\Delta_{m,m+h}^{\qq}}-1\,.
\end{equation*}
Thus
\begin{equation*}
	\sum_{j=n}^{m-1}3^{-\frac14(1-\gamma)(m+h-1-j)}\meanpenalty_Q(P_{j,m+h}^{\qq})\leq3^{-\frac14(1-\gamma)h}e^{Q\Delta_{m,m+h}^{\qq}}\history_{\qq}^{\rm mean}(m;n)+C\bigl(e^{Q\Delta_{m,m+h}^{\qq}}-1\bigr)\,.
\end{equation*}
The same inequality gives
\begin{equation*}
	1+\meanpenalty_Q(P_{n,m+h}^{\qq})
	\leq e^{Q\Delta_{m,m+h}^{\qq}}
	\bigl(1+\meanpenalty_Q(P_{n,m}^{\qq})\bigr)\,.
\end{equation*}
After multiplying by the weight, the first term in~$\mathcal P_{\qq}(m+h;n)$ is at most~$3^{-\frac14(1-\gamma)h}e^{Q\Delta_{m,m+h}^{\qq}}$ times the first term in~$\mathcal P_{\qq}(m;n)$. For~$m\leq j<m+h$, the eigenvalues of~$P_{j,m+h}^{\qq}$ are at least one, and hence
\begin{equation*}
	1+\tr(P_{j,m+h}^{\qq}-\Itwod)
	\leq\det P_{j,m+h}^{\qq}
	=e^{\Delta_{j,m+h}^{\qq}}
	\leq e^{\widehat\Delta_h^{\qq}(m)}\,.
\end{equation*}
Therefore
\begin{equation*}
	\sum_{j=m}^{m+h-1}3^{-\frac14(1-\gamma)(m+h-1-j)}
	\meanpenalty_Q(P_{j,m+h}^{\qq})
	\leq C\bigl(e^{Q\widehat\Delta_h^{\qq}(m)}-1\bigr)\,.
\end{equation*}
For~$n<j\leq m$, additivity gives the factor~$3^{-\frac14(1-\gamma)h}e^{Q\Delta_{m,m+h}^{\qq}}$. Adding the new fluctuations, the mean terms, and the initial-history term proves the first inequality in~\eqref{e.fixed.geometry.profile.contraction.reduction}. The coefficient bounds proved above give its second inequality, and hence~\eqref{e.fixed.geometry.profile.contraction}.

\smallskip

We now include the drift. We prove the following bound for every~$m\geq j_*$ and integer~$L\geq1$, so that it will also apply to the error on a fresh grid in Step~3:
\begin{equation}
	D_{\qq,j_*}(m+L)\leq3^{-\frac18(1-\gamma)L}e^{\Delta_{m,m+L}^{\qq}}D_{\qq,j_*}(m)+e^{\Delta_{m,m+L}^{\qq}}-1\,.
	\label{e.fixed.geometry.drift.advance}
\end{equation}
We split the increments at~$m$. Changing normalization multiplies the trace of each positive increment with~$j\leq m$ by at most~$e^{\Delta_{m,m+L}^{\qq}}$, while the increments above~$m$ telescope. Thus
\begin{equation*}
	\sum_{j=j_*+1}^{m}3^{-\frac18(1-\gamma)(m+L-j)}\tr(P_{j-1,m+L}^{\qq}-P_{j,m+L}^{\qq})\leq3^{-\frac18(1-\gamma)L}e^{\Delta_{m,m+L}^{\qq}}D_{\qq,j_*}(m)\,,
\end{equation*}
and
\begin{equation*}
	\sum_{j=m+1}^{m+L}3^{-\frac18(1-\gamma)(m+L-j)}\tr(P_{j-1,m+L}^{\qq}-P_{j,m+L}^{\qq})\leq\tr(P_{m,m+L}^{\qq}-\Itwod)\leq e^{\Delta_{m,m+L}^{\qq}}-1\,.
\end{equation*}
Adding these two estimates proves~\eqref{e.fixed.geometry.drift.advance}. With~$L=h$, the bound~$3^{-\frac18(1-\gamma)h}<\frac18$ proved above and~$\Delta_{m,m+h}^{\qq}\leq\widehat\Delta_h^{\qq}(m)$ give
\begin{equation*}
	D_{\qq,j_*}(m+h)\leq\frac18e^{Q\widehat\Delta_h^{\qq}(m)}D_{\qq,j_*}(m)+C\bigl(e^{Q\widehat\Delta_h^{\qq}(m)}-1\bigr)\,.
\end{equation*}
Adding this to~\eqref{e.fixed.geometry.profile.contraction} proves~\eqref{e.fixed.geometry.synchronized.propagation}.

\smallskip

\emph{Step 3: Propagation from a small input.} We prove separate error estimates for continuing and fresh inputs, then add the drift. Fix~$L\geq1$. For a continuing input, assume~$m\geq n+h$ and~$\mathcal P_{\qq}(m;n)\leq1$. We will show
\begin{equation}
	\mathcal P_{\qq}(m+L;n)
	\leq C L\bigl(\mathcal P_{\qq}(m;n)+e^{Q\Delta_{m,m+L}^{\qq}}-1\bigr)\,.
	\label{e.fixed.geometry.profile.fixed.span}
\end{equation}
We again separate the scales up to~$m$ from the new mean and fluctuation terms. The initial-history term in~$\mathcal P_{\qq}(m+L;n)$ is at most~$3^{-\frac14(1-\gamma)L}e^{Q\Delta_{m,m+L}^{\qq}}$ times the corresponding term in~$\mathcal P_{\qq}(m;n)$. The fluctuation sum over~$n<j\leq m$ is exactly the same factor times the corresponding sum in~$\mathcal P_{\qq}(m;n)$. The mean-history contribution satisfies
\begin{equation*}
	\sum_{j=n}^{m-1}3^{-\frac14(1-\gamma)(m+L-1-j)}\meanpenalty_Q(P_{j,m+L}^{\qq})\leq3^{-\frac14(1-\gamma)L}e^{Q\Delta_{m,m+L}^{\qq}}\history_{\qq}^{\rm mean}(m;n)+C\bigl(e^{Q\Delta_{m,m+L}^{\qq}}-1\bigr)\,.
\end{equation*}
Since~$\mathcal P_{\qq}(m;n)\leq1$, we have
\begin{equation*}
	e^{Q\Delta_{m,m+L}^{\qq}}\mathcal P_{\qq}(m;n)
	\leq\mathcal P_{\qq}(m;n)+e^{Q\Delta_{m,m+L}^{\qq}}-1\,.
\end{equation*}
Thus these terms are bounded by~$\mathcal P_{\qq}(m;n)+C\bigl(e^{Q\Delta_{m,m+L}^{\qq}}-1\bigr)$. For~$m\leq j<m+L$,
\begin{equation*}
	\meanpenalty_Q(P_{j,m+L}^{\qq})
	\leq e^{Q\Delta_{j,m+L}^{\qq}}-1
	\leq e^{Q\Delta_{m,m+L}^{\qq}}-1\,.
\end{equation*}
Since their weights are at most one and there are~$L$ such indices,
\begin{equation*}
	\sum_{j=m}^{m+L-1}3^{-\frac14(1-\gamma)(m+L-1-j)}
	\meanpenalty_Q(P_{j,m+L}^{\qq})
	\leq C L\bigl(e^{Q\Delta_{m,m+L}^{\qq}}-1\bigr)\,.
\end{equation*}
It remains to bound the new fluctuation terms. We will prove
\begin{equation}
	\sum_{s=m+1}^{m+L}3^{-\frac14(1-\gamma)(m+L-s)}e^{Q\Delta_{s,m+L}^{\qq}}\E\bigl[|V_s^{\qq}|_{S_Q}^Q\bigr]\leq C L\bigl(\mathcal P_{\qq}(m;n)+e^{Q\Delta_{m,m+L}^{\qq}}-1\bigr)\,.
	\label{e.fixed.geometry.new.fluctuations}
\end{equation}
We use the parent--child recurrence with span~$2Q$, so that the constants are independent of the possibly larger~$h$. If~$m+1-2Q\leq r\leq m$, then~$r\geq n+1$ and~$m-r<2Q$. The scale-$r$ term in~$\mathcal P_{\qq}(m;n)$ therefore gives
\begin{equation*}
	e^{Q\Delta_{r,m}^{\qq}}\E\bigl[|V_r^{\qq}|_{S_Q}^Q\bigr]
	\leq C\mathcal P_{\qq}(m;n)\,.
\end{equation*}
For~$r<m$, the scale-$r$ term in~$\history_{\qq}^{\rm mean}(m;n)$ gives
\begin{equation*}
	\meanpenalty_Q(P_{r,m}^{\qq})
	\leq C\mathcal P_{\qq}(m;n)\,.
\end{equation*}
Since~$\mathcal P_{\qq}(m;n)\leq1$, the inequalities~$\Delta_{r,m}^{\qq}\leq\tr(P_{r,m}^{\qq}-\Itwod)$ and comparison of~$e^{Qt}-1$ with~$(1+t)^Q-1$ give
\begin{equation*}
	e^{Q\Delta_{r,m}^{\qq}}-1
	\leq C\mathcal P_{\qq}(m;n)\qquad(r<m)\,.
\end{equation*}
At~$r=m$, the first estimate follows from the scale-$m$ term, and the second left side is zero. For~$m<s\leq m+L$, apply~\eqref{e.fixed.geometry.parent.child.powered} with span~$2Q$ and~$j=s-2Q$, and multiply by~$e^{Q\Delta_{s,m+L}^{\qq}}$:
\begin{equation*}
	e^{Q\Delta_{s,m+L}^{\qq}}\E\bigl[|V_s^{\qq}|_{S_Q}^Q\bigr]\leq2^{Q-1}(3^dQ)^Q3^{-dQ^2}e^{Q\Delta_{s-2Q,m+L}^{\qq}}\E\bigl[|V_{s-2Q}^{\qq}|_{S_Q}^Q\bigr]+C\bigl(e^{Q\Delta_{s-2Q,m+L}^{\qq}}-1\bigr)\,.
\end{equation*}
If~$s-2Q\leq m$, additivity gives
\begin{equation*}
	e^{Q\Delta_{s-2Q,m+L}^{\qq}}\E\bigl[|V_{s-2Q}^{\qq}|_{S_Q}^Q\bigr]=e^{Q\Delta_{m,m+L}^{\qq}}e^{Q\Delta_{s-2Q,m}^{\qq}}\E\bigl[|V_{s-2Q}^{\qq}|_{S_Q}^Q\bigr]\,.
\end{equation*}
Moreover,
\begin{equation*}
	e^{Q\Delta_{s-2Q,m+L}^{\qq}}-1=e^{Q\Delta_{m,m+L}^{\qq}}\bigl(e^{Q\Delta_{s-2Q,m}^{\qq}}-1\bigr)+e^{Q\Delta_{m,m+L}^{\qq}}-1\,.
\end{equation*}
The preceding two bounds and~$e^{Q\Delta_{m,m+L}^{\qq}}\mathcal P_{\qq}(m;n)\leq\mathcal P_{\qq}(m;n)+e^{Q\Delta_{m,m+L}^{\qq}}-1$ yield
\begin{equation*}
	e^{Q\Delta_{s,m+L}^{\qq}}\E\bigl[|V_s^{\qq}|_{S_Q}^Q\bigr]
	\leq C\mathcal P_{\qq}(m;n)
	+C\bigl(e^{Q\Delta_{m,m+L}^{\qq}}-1\bigr)\,.
\end{equation*}
If~$s-2Q>m$, then~$e^{Q\Delta_{s-2Q,m+L}^{\qq}}-1\leq e^{Q\Delta_{m,m+L}^{\qq}}-1$, and the second contraction inequality gives
\begin{equation*}
	2^{Q-1}(3^dQ)^Q3^{-dQ^2}\leq\frac18\,.
\end{equation*}
Induction along each residue class modulo~$2Q$ gives the same bound for every~$m<s\leq m+L$. Summing over~$m<s\leq m+L$ proves~\eqref{e.fixed.geometry.new.fluctuations}. Combining this with the bounds for the mean terms and the scales up to~$m$ proves~\eqref{e.fixed.geometry.profile.fixed.span}.

\smallskip

For a fresh input, let~$m=n$ and~$\history_{\qq}(n)\leq1$. Since~$\mathcal P_{\qq}(n;n)=\history_{\qq}(n)$, the corresponding estimate is
\begin{equation}
	\mathcal P_{\qq}(n+L;n)
	\leq C L\bigl(\history_{\qq}(n)+e^{Q\Delta_{n,n+L}^{\qq}}-1\bigr)\,.
	\label{e.fixed.geometry.profile.startup}
\end{equation}
The mean comparison in Step~2 and~$e^x a\leq a+e^x-1$ for~$0\leq a\leq1$ give
\begin{equation*}
	3^{-\frac14(1-\gamma)L}\bigl(1+\meanpenalty_Q(P_{n,n+L}^{\qq})\bigr)\history_{\qq}(n)\leq e^{Q\Delta_{n,n+L}^{\qq}}\history_{\qq}(n)\leq\history_{\qq}(n)+e^{Q\Delta_{n,n+L}^{\qq}}-1\,.
\end{equation*}
For~$n\leq j<n+L$,
\begin{equation*}
	\meanpenalty_Q(P_{j,n+L}^{\qq})
	\leq e^{Q\Delta_{j,n+L}^{\qq}}-1
	\leq e^{Q\Delta_{n,n+L}^{\qq}}-1\,.
\end{equation*}
Their weights are at most one, so their sum is bounded by~$L\bigl(e^{Q\Delta_{n,n+L}^{\qq}}-1\bigr)$. For~$1\leq r\leq L$, apply~\eqref{e.fixed.geometry.parent.child.powered} with span~$r$ and multiply by~$e^{Q\Delta_{n+r,n+L}^{\qq}}$. Additivity gives
\begin{equation*}
	e^{Q\Delta_{n+r,n+L}^{\qq}}\E\bigl[|V_{n+r}^{\qq}|_{S_Q}^Q\bigr]\leq C L\biggl(e^{Q\Delta_{n,n+L}^{\qq}}\E\bigl[|V_n^{\qq}|_{S_Q}^Q\bigr]+e^{Q\Delta_{n,n+L}^{\qq}}-1\biggr)\,.
\end{equation*}
At scale~$n$, comparison of the Schatten and operator norms gives
\begin{equation*}
	\E\bigl[|V_n^{\qq}|_{S_Q}^Q\bigr]\leq2d\history_{\qq}^{\rm fluc}(n)\leq2d\history_{\qq}(n)\,.
\end{equation*}
Applying~$e^x a\leq a+e^x-1$ for~$0\leq a\leq1$ to~$a=\history_{\qq}(n)$ therefore yields
\begin{equation*}
	e^{Q\Delta_{n+r,n+L}^{\qq}}
	\E\bigl[|V_{n+r}^{\qq}|_{S_Q}^Q\bigr]
	\leq C L\bigl(\history_{\qq}(n)+e^{Q\Delta_{n,n+L}^{\qq}}-1\bigr)
	\qquad(1\leq r\leq L)\,.
\end{equation*}
The constant~$C$ is uniform in~$1\leq r\leq L$. Summing gives
\begin{equation*}
	\sum_{r=1}^{L}3^{-\frac14(1-\gamma)(L-r)}e^{Q\Delta_{n+r,n+L}^{\qq}}\E\bigl[|V_{n+r}^{\qq}|_{S_Q}^Q\bigr]\leq C L\bigl(\history_{\qq}(n)+e^{Q\Delta_{n,n+L}^{\qq}}-1\bigr)\,.
\end{equation*}
Together with the preceding estimates, this proves~\eqref{e.fixed.geometry.profile.startup}.

\smallskip

It remains to include the drift in these bounds. If~$\mathcal P_{\qq}(m;n)+D_{\qq,j_*}(m)\leq1$, then~$D_{\qq,j_*}(m)\leq1$. Applying~$e^x a\leq a+e^x-1$ in~\eqref{e.fixed.geometry.drift.advance} gives
\begin{equation*}
	D_{\qq,j_*}(m+L)\leq D_{\qq,j_*}(m)+2\bigl(e^{\Delta_{m,m+L}^{\qq}}-1\bigr)\,.
\end{equation*}
For~$m\geq n+h$, add this estimate to~\eqref{e.fixed.geometry.profile.fixed.span}; for~$m=n$, add it to~\eqref{e.fixed.geometry.profile.startup} and use~$\mathcal P_{\qq}(n;n)=\history_{\qq}(n)$. Since~$e^{\Delta_{m,m+L}^{\qq}}-1\leq e^{Q\Delta_{m,m+L}^{\qq}}-1$, both cases give~\eqref{e.fixed.geometry.fixed.span.propagation}.

\smallskip

\emph{Step 4: Summation of determinant losses.} Let~$m_0\geq j_*+h$ and~$K\geq1$. We expand each loss into one-scale increments:
\begin{equation*}
	\sum_{k=0}^{K-1}\widehat\Delta_h^{\qq}(m_0+kh)
	=\sum_{a=m_0+1}^{m_0+Kh}\sum_{r=a-h}^{a-1}\Delta_{r,r+1}^{\qq}\,.
\end{equation*}
The smallest index is at least~$j_*$, and each~$\Delta_{r,r+1}^{\qq}$ occurs at most~$h$ times. Hence
\begin{equation*}
	\sum_{k=0}^{K-1}\widehat\Delta_h^{\qq}(m_0+kh)
	\leq h\sum_{r=m_0+1-h}^{m_0+Kh-1}\Delta_{r,r+1}^{\qq}
	=h\Delta_{m_0+1-h,m_0+Kh}^{\qq}\,.
\end{equation*}
This proves~\eqref{e.fixed.geometry.synchronized.multiplicity}.
\end{proof}

\section{Geometry update}
\label{s.geometry.transport}
Section~\ref{s.fixed.geometry} shows how averaging reduces the errors on a fixed grid. We now ask whether these gains survive when we change the shapes of the cubes to better suit the medium. The cubes of the two grids do not fit together, so their estimates cannot be compared directly. We show that the transfer is possible when the change of shape is bounded, the old-grid errors are sufficiently small, and the annealed matrix changes little over the scales used in the comparison. Changing the grid may enlarge both the error and the drift. The point is to keep their sum small enough to start on the new grid and resume the fixed-grid argument, as required in Case~3 of Proposition~\ref{p.scale.selection}.

\paragraph{Comparing the means.}
The geometric idea is to fill a new-grid cube with old-grid cubes, using large cubes in the interior and progressively smaller ones near the boundary. Subadditivity bounds the coarse-grained matrix of the whole cube by the volume-weighted average of the matrices on these pieces. Reversing the roles of the grids gives the other side of the comparison. Together, these constructions place the new-grid mean between old-grid means at nearby scales, up to boundary errors. Small determinant loss ensures that those means are close. The smaller boundary cubes require estimates from earlier scales: the drift controls the changes of their means, and at the finest scales their shrinking volume outweighs the deterioration of the coarse ellipticity bound. As in Subsection~\ref{ss.source.estimate}, this uses the restriction~$\gamma<1$ on the small-scale growth exponent in assumption~\ref{a.coarse.ellipticity}. Lemma~\ref{l.two.grid.whitney} constructs the partitions, and Proposition~\ref{p.successful.short.bridge} proves the resulting comparison of the annealed matrices. Only the relative change of shape enters the partition constants; Lemma~\ref{l.projective.step} keeps these bounded even when the old grid has large geometric eccentricity.

\paragraph{Transferring the errors.}
Closeness of the means lets us change the matrix against which the errors are measured, but we must still control the fluctuations and earlier errors on the new grid. Here we use the same principle as in Section~\ref{s.fixed.geometry}: compare the coarse matrix of a cube with the average of the matrices on its pieces. The fluctuations of this average and the difference of the two means together control the new-grid error. Finite-range dependence reduces the fluctuations in averages over scales above the old initialization scale; at and below that scale, we use the estimates retained in the history. The mean history and drift account for the remaining discrepancies. Proposition~\ref{p.two.grid.transport} puts these estimates together. The transfer has a cost, depending on how far we advance in scale, so we begin with a smaller error than we need on the new grid. Restarting does not create smallness or erase the past: it carries the earlier estimates to the new geometry. Subsection~\ref{s.scale.selection.proof} combines this transfer with fixed-grid propagation and checks the grid and error conditions needed to continue the iteration.

\smallskip

For~$\m\in\Rpos$, set
\begin{equation*}
	\mathfrak e(\m)\coloneqq\bigl(|\m|\,|\m^{-1}|\bigr)^{\nf12}\,.
\end{equation*}
For two rounded geometries, set
\begin{equation*}
	K(\qq,\qq')\coloneqq\left(1+|\qq^{-1}\qq'|+|(\qq')^{-1}\qq|\right)^{2d}\,.
\end{equation*}

The geometric eccentricity of the current grid is measured by~$\mathfrak e(\m)$, which describes its distortion relative to Euclidean coordinates up to the fixed rounding constants. This depends on the chosen grid, whereas the reference ellipticity ratio~$\Pi$ is fixed by the coefficient bound. In contrast,~$K$ measures the distortion between two rounded grids and remains bounded for a bounded projective change, even when both grids are highly eccentric.

\subsection{Whitney partitions}

Subadditivity bounds the coarse-grained matrix of a cube by the volume-weighted average of the matrices on its partition elements. To obtain comparisons in both directions, the first construction below fills a new-grid cube with old-grid cubes, and the second packs new-grid cubes into an old-grid cube and fills the remaining boundary region. The largest permitted cubes fill the bulk; smaller cubes are needed only near the boundary. Their decreasing volume fractions will make the sums over smaller scales converge.

\begin{lemma}[Whitney partitions between adapted grids]
\label{l.two.grid.whitney}
For every~$K_0\geq1$, there exists~$C(d,K_0)<\infty$ with the following property. Let~$\m,\m'\in\Rpos$, let~$\qq=\mathcal Q(\m)$ and~$\qq'=\mathcal Q(\m')$ satisfy~$K(\qq,\qq')\leq K_0$, and let~$j\in\Z$ and~$\ell\in\N$ with~$\ell\geq1$. Then the following constructions and estimates hold.

\smallskip

\noindent (i) Let~$y\in3^j\qq'\Zd$. Select the maximal adapted cubes of the~$\qq$-grid contained in~$y+\cus_j^{\qq'}$ whose scales are at most~$j-\ell$. Denote their centers at scale~$r$ by~$\mathcal Z_r(y+\cus_j^{\qq'})$. These cubes partition~$y+\cus_j^{\qq'}$ up to a null set, and for $r<j-\ell$, 
\begin{align}
	\frac{|\cus_r^{\qq}|}{|\cus_j^{\qq'}|}\leq C3^{-d(j-r)}, \qquad
	\#\mathcal Z_{j-\ell}(y+\cus_j^{\qq'})\leq C3^{d\ell}\,,\qquad \#\mathcal Z_r(y+\cus_j^{\qq'})\leq C3^{(d-1)(j-r)}\,.
	\label{e.two.grid.whitney.counts}
\end{align}
In particular,
\begin{equation}
	\sum_{r\leq j-\ell}\sum_{z\in\mathcal Z_r(y+\cus_j^{\qq'})}\frac{|\cus_r^{\qq}|}{|\cus_j^{\qq'}|}=1\qand \sum_{z\in\mathcal Z_r(y+\cus_j^{\qq'})}\frac{|\cus_r^{\qq}|}{|\cus_j^{\qq'}|}\leq C3^{r-j}\quad(r<j-\ell)\,.
	\label{e.two.grid.whitney.volumes}
\end{equation}

\smallskip

\noindent (ii) Let~$y\in3^j\qq\Zd$. First take all the adapted cubes~$z+\cus_{j-\ell}^{\qq'}$, with~$z\in3^{j-\ell}\qq'\Zd$, contained in~$y+\cus_j^{\qq}$. In the uncovered part, select the maximal adapted cubes of the~$\qq$-grid whose scales are at most~$j-\ell$, and again denote their centers by~$\mathcal Z_r(y+\cus_j^{\qq})$. Together the two families partition~$y+\cus_j^{\qq}$ up to a null set, and
\begin{equation}
	\sum_{z\in\mathcal Z_r(y+\cus_j^{\qq})}\frac{| \cus_r^{\qq}|}{|\cus_j^{\qq}|}\leq C3^{r-j}\qquad(r\leq j-\ell)\,.
	\label{e.two.grid.whitney.reverse.volumes}
\end{equation}
\end{lemma}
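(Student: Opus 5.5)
The plan is to follow the proof of Lemma~\ref{l.source.whitney}, replacing standard cubes by the $\qq$-grid and doing all geometry in $\qq$-coordinates. Apply the change of variables $x=\qq v$. The $\qq$-grid becomes the standard triadic grid. The cube $y+\cus_j^{\qq'}$ becomes the parallelepiped $\qq^{-1}y+\qq^{-1}\qq'\cu_j$. Volume ratios are preserved, and the distortion between the two pictures is governed by $|\qq^{-1}\qq'|$ and $|(\qq')^{-1}\qq|$, hence by $K_0$.

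\emph{Part (i).} The partition property follows exactly as in Lemma~\ref{l.source.whitney}. Aligned triadic cubes are nested or disjoint, and every interior point off the grid hyperplanes lies in a small aligned cube inside the open parallelepiped, hence in a maximal one of scale at most $j-\ell$.

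For the first bound in \eqref{e.two.grid.whitney.counts}, note that $|\cus_r^{\qq}|/|\cus_j^{\qq'}|=3^{-d(j-r)}\det\qq/\det\qq'$, and $\det(\qq^{-1}\qq')^{\pm1}\leq K_0$. The count at scale $j-\ell$ follows from disjointness and this volume ratio.

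For $r<j-\ell$, each selected scale-$r$ cube has a parent (in $\qq$-coordinates) that meets the complement of the parallelepiped. So the cube lies within $\qq$-coordinate distance $\sqrt d\,3^{r+1}$ of the boundary of $\qq^{-1}\qq'\cu_j$. This parallelepiped has $2d$ faces of $(d-1)$-area at most $C(K_0)3^{(d-1)j}$, and its inradius is at least $c(K_0)3^j$. Hence the $s$-neighborhood of its boundary has volume at most $C(K_0)3^{(d-1)j}s$. Dividing by the volume of a scale-$r$ cube gives $\#\mathcal Z_r\leq C3^{(d-1)(j-r)}$. Multiplying by the volume ratio gives the second bound in \eqref{e.two.grid.whitney.volumes}. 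The first identity in \eqref{e.two.grid.whitney.volumes} is just the partition property.

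\emph{Part (ii).} Work in $\qq$-coordinates, where $y+\cus_j^{\qq}$ is a standard cube $\cu_j$ (translated). The packed $\qq'$-cubes are images of a lattice tiling by parallelepipeds $\qq^{-1}\qq'\cu_{j-\ell}$, each of diameter at most $C(K_0)3^{j-\ell}$. Any point of $\cu_j$ at distance greater than $C(K_0)3^{j-\ell}$ from $\partial\cu_j$ therefore lies in a tile contained in $\cu_j$, i.e.\ in a packed cube. So the uncovered region $U$ is contained in the boundary layer of width $C3^{j-\ell}$, and its boundary consists of $\partial\cu_j$ together with faces of packed tiles.

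Select maximal aligned $\qq$-cubes of scale at most $j-\ell$ inside $U$. The partition property holds as before. A selected cube at scale $r<j-\ell$ has a parent meeting the complement of $U$, so it lies within $\sqrt d\,3^{r+1}$ of $\partial U$. At scale $r=j-\ell$ the bound follows from $|U|\leq C3^{dj}3^{-\ell}$.

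\emph{Main obstacle.} The step I expect to be the main obstacle is bounding the volume of the $s$-neighborhood of $\partial U$ by $C3^{(d-1)j}s$, since $\partial U$ includes the jagged faces of the packed tiles. I would count the tile faces lying in the boundary layer. There are at most $C3^{(d-1)\ell}$ boundary tiles, each with $2d$ faces of area at most $C3^{(d-1)(j-\ell)}$, so the total area of $\partial U$ is at most $C3^{(d-1)j}$. The $s$-neighborhood of a union of flat faces has volume at most (total area)$\cdot 2s$ plus edge corrections of order $s^2\cdot$(number of faces)$\cdot 3^{(d-2)(j-\ell)}$. These corrections are dominated by the main term since $s\leq C3^{j-\ell}$. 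This gives \eqref{e.two.grid.whitney.reverse.volumes} with $C=C(d,K_0)$.
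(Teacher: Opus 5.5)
Your proposal is correct and takes essentially the same route as the paper. You work in $\qq$-coordinates and use the fact that a non-maximal parent meets the complement, so that the selected scale-$r$ cubes lie in a width-$C3^r$ neighborhood of the boundary, whose volume is bounded by face area times width; in (ii) you confine the uncovered part to a layer of width $C3^{j-\ell}$, count the at most $C3^{(d-1)\ell}$ packed cubes with exposed faces, and bound the width-$C3^r$ neighborhoods of their faces, where your explicit control of the edge corrections via $s\leq C3^{j-\ell}$ spells out a step the paper states directly.
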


\begin{proof}
We work in~$\qq$-coordinates, where the old-grid cubes are triadic cubes. All volume ratios are unchanged, and~$K(\qq,\qq')\leq K_0$ bounds the distortion of the other grid. Constants denoted by~$C$ depend only on~$d$ and~$K_0$.

\smallskip

For~(i), select all cubes of scale~$j-\ell$ contained in~$y+\cus_j^{\qq'}$, then continue at smaller scales in the uncovered part. Disjointness gives the bound for~$\#\mathcal Z_{j-\ell}(y+\cus_j^{\qq'})$. A selected cube of scale~$r<j-\ell$ has a parent meeting the complement of~$y+\cus_j^{\qq'}$, so it lies within distance~$C3^r$ of the boundary. In these coordinates the target cube has~$2d$ flat faces and diameter at most~$C3^j$, so this neighborhood has volume at most~$C3^r3^{(d-1)j}$. Dividing by the volume of a scale-$r$ cube gives~\eqref{e.two.grid.whitney.counts}, and dividing by the volume of the target cube gives the second estimate in~\eqref{e.two.grid.whitney.volumes}. Every interior point outside the grid boundaries belongs to a sufficiently small cube contained in~$y+\cus_j^{\qq'}$. Thus the selected cubes cover~$y+\cus_j^{\qq'}$ up to a null set, proving the first equality in~\eqref{e.two.grid.whitney.volumes}.

\smallskip

For~(ii), every point of the uncovered part lies within distance~$C3^{j-\ell}$ of the outer boundary: otherwise the new-grid cube containing it would be contained in~$y+\cus_j^{\qq}$ and would have been taken. Hence the uncovered volume is at most~$C3^{j-\ell}3^{(d-1)j}$, which proves~\eqref{e.two.grid.whitney.reverse.volumes} at scale~$j-\ell$.

\smallskip

We also need the boundary of the uncovered part at smaller scales. A packed cube with an exposed face lies in a strip of width~$C3^{j-\ell}$ along the outer boundary. The packed cubes are disjoint and each has volume at least~$C^{-1}3^{d(j-\ell)}$, so at most~$C3^{(d-1)\ell}$ of them have exposed faces. For~$r<j-\ell$, the width-$C3^r$ neighborhood of each such face has volume at most~$C3^r3^{(d-1)(j-\ell)}$. Summing over the exposed faces and the outer boundary gives at most~$C3^r3^{(d-1)j}$. Every selected old-grid cube of scale~$r<j-\ell$ has a parent meeting this boundary, so division by~$|\cus_j^{\qq}|$ proves~\eqref{e.two.grid.whitney.reverse.volumes}. If no new-grid cube is contained in~$y+\cus_j^{\qq}$, the old-grid cubes of scale~$j-\ell$ already partition~$y+\cus_j^{\qq}$, and there are no smaller selected cubes. In either case, the construction covers the uncovered part up to a null set.
\end{proof}

\smallskip

We record how the adapted cube bound applies to these partitions. In construction~(i), assume that~$y+\cus_j^{\qq'}\subseteq\cu_{2j_*}$ and~$j-\ell\geq j_*$. Then~\eqref{e.source.adapted.bound} and the volume bound give
\begin{equation}
	\sum_{r<j_*}\sum_{z\in\mathcal Z_r(y+\cus_j^{\qq'})}\frac{|\cus_r^{\qq}|}{|\cus_j^{\qq'}|}\bfA(z+\cus_r^{\qq})\leq C(d,\gamma,K_0)\mathfrak e(\m)\RSZ3^{-(j-j_*)}\bfE\,.
	\label{e.two.grid.whitney.source}
\end{equation}
The same estimate holds for the old-grid cubes in construction~(ii), with~$\mathcal Z_r(y+\cus_j^{\qq})$ and denominator~$|\cus_j^{\qq}|$, provided~$y+\cus_j^{\qq}\subseteq\cu_{2j_*}$ and~$j-\ell\geq j_*$. Below~$j_*$, the adapted cube bound grows as the cubes shrink, but their relative volume decreases faster. In both constructions the resulting series is
\begin{equation}
	\sum_{r<j_*}3^{r-j}3^{\gamma(j_*-r)}=3^{-(j-j_*)}\sum_{s=1}^{\infty}3^{-(1-\gamma)s}\leq C(d,\gamma)3^{-(j-j_*)}\,.
	\label{e.two.grid.whitney.fine.sum}
\end{equation}
Since~$\gamma<1$ and~$\|\RSZ\|_{L^Q}\leq2$, the matrix sum converges in~$L^Q(S_Q)$. The same argument gives convergence for any fixed upper scale, by treating the finitely many larger scales separately. Subadditivity on these countable partitions follows by finite exhaustion: the remaining set shrinks to a null set, and its contribution tends to zero by the pointwise-average bound for~$\bfA$ and local integrability of the coefficient block. The stronger adapted cube bound~\eqref{e.two.grid.whitney.source} controls the weighted sums used below.

\smallskip

\subsection{Comparison of the annealed matrices}

The comparison uses the old grid at scales~$n$ and~$n+2L$, with the new grid at the intermediate scale~$n+L$. Filling a new-grid cube with old-grid cubes starting at scale~$n$ gives an upper bound by~$\bfAhom_{n,\qq}$, up to boundary errors. Packing new-grid cubes into an old-grid cube at scale~$n+2L$ gives the reverse bound against~$\bfAhom_{n+2L,\qq}$. Small determinant loss makes these two old-grid means close. The two comparisons then place the new mean close to~$\bfAhom_{n+2L,\qq}$, as stated in Proposition~\ref{p.successful.short.bridge}. The intermediate scale leaves room for both partitions.

\smallskip

The contribution of the finest boundary cubes is bounded in~\eqref{e.two.grid.whitney.source} by a scalar multiple of the reference matrix~$\bfE$ from assumption~\ref{a.coarse.ellipticity}. We need a bound by a small multiple of the annealed matrix instead, because this is the matrix against which we measure the errors. Thus we must bound~$\bfE$ from above by a multiple of~$\bfAhom_{s,\qq}$, as in~\eqref{e.two.grid.source.normalization} below. The adapted cube bound gives the opposite inequality. Comparing a matrix with its dual will allow us to reverse this comparison, with a factor depending on~$\Pi$ and the grid geometry.

For a symmetric positive definite~$2d\times2d$ matrix~$F$, its dual is~$F_*\coloneqq\mathbf RF^{-1}\mathbf R$, where~$\mathbf R$ exchanges the two~$d$-dimensional components. Inversion reverses matrix order, so an upper bound for~$F$ gives a lower bound for~$F_*$. For coarse-grained matrices we also have~$\bfA_*(U)\leq\bfA(U)$ for each coefficient field, by~\cite[(2.22)]{AK.HC}, with~$\bfA_*(U)$ the dual coarse-grained matrix of~\cite{AK.HC}. A lower bound for the dual is therefore a lower bound for the original matrix as well.

The same argument applies to annealed matrices, but we must distinguish the dual of the mean,~$\bfAhom_*(U)=\mathbf R\bfAhom(U)^{-1}\mathbf R$, from the mean of the dual,~$\E[\bfA_*(U)]$. They need not agree. The variational formula~$v\cdot F^{-1}v=\sup_w(2v\cdot w-w\cdot Fw)$ gives~$(\E[F])^{-1}\leq\E[F^{-1}]$: taking the supremum after expectation is no larger than taking it before expectation. Combining this inequality with~$\bfA_*(U)\leq\bfA(U)$ gives
\begin{equation}
	\bfAhom_*(U)\leq\E[\bfA_*(U)]\leq\bfAhom(U)\,.
	\label{e.matrix.annealed.sharp.order}
\end{equation}
The reference matrix has the same order property. Write~$\mathbf{E}_{*,0}\coloneqq\mathbf R\bfE^{-1}\mathbf R$. Almost surely, we can choose an integer~$m$ with~$3^m\geq\S(0)$, so that~\eqref{e.coarse.ellipticity} gives~$\bfA(\cu_m)\leq\bfE$. Taking duals reverses this inequality, and hence~$\mathbf{E}_{*,0}\leq\bfA_*(\cu_m)\leq\bfA(\cu_m)\leq\bfE$. In particular,~$\mathbf{E}_{*,0}\leq\bfE$.

\smallskip

We next bound~$F$ from above by a multiple of~$F_*$. We keep a factor tending to one as the contrast tends to one, since this estimate will also be used to close the duality gap in Subsection~\ref{s.response.transfer}. Write~$F$ in the block form~\eqref{e.annealed.schur}, with matrices~$S,S_*,K$. Assume~$F_*\leq F$, and fix a skew matrix~$h$. Set
\begin{equation*}
	\theta=\bigl|S_*^{-\nf12}\bigl(S+(K-h)^tS_*^{-1}(K-h)\bigr)S_*^{-\nf12}\bigr|\,.
\end{equation*}
We claim that
\begin{equation}
	F\leq\bigl(1+6(\theta-1)\bigr)F_*\,.
	\label{e.matrix.block.comparison}
\end{equation}
Write~$T=S_*^{-1}$,~$L=K+K^t$, and~$|v|_T^2=v\cdot Tv$. The lower-right blocks in~$F_*\leq F$ give~$S^{-1}\leq S_*^{-1}$, hence~$S_*\leq S$. The definition of~$\theta$ then gives~$1\leq\theta$,~$S\leq\theta S_*$, and~$(K-h)^tT(K-h)\leq(\theta-1)S_*$. The last inequality also holds with~$K-h$ replaced by its transpose: after conjugation by~$S_*^{-\nf12}$, the two products have the same operator norm. Thus
\begin{equation*}
	|Lx|_T^2\leq4(\theta-1)x\cdot S_*x
	\qand |w|_T^2\leq\theta|w|_{S^{-1}}^2\,.
\end{equation*}
For the quadratic forms~$q_F(x,y)=\binom{x}{y}\cdot F\binom{x}{y}$ and~$q_{F_*}$, put~$w=y+K^tx$. Expanding the block quadratic forms gives
\begin{equation*}
	q_F(x,y)=x\cdot Sx+|w-Lx|_T^2
	\qand q_{F_*}(x,y)=x\cdot S_*x+|w|_{S^{-1}}^2\,.
\end{equation*}
To control the cross term without a fixed multiplicative loss, use the order~$F_*\leq F$ once more, now at~$(x,-5w-K^tx)$. This gives
\begin{equation*}
	x\cdot S_*x+25|w|_{S^{-1}}^2
	\leq x\cdot Sx+25|w|_T^2+10w\cdot TLx+|Lx|_T^2\,.
\end{equation*}
This controls the cross term in~$q_F$ without a loss at~$\theta=1$:
\begin{equation*}
	q_F(x,y)
	\leq\tfrac65x\cdot Sx-\tfrac15x\cdot S_*x
	+6|w|_T^2-5|w|_{S^{-1}}^2+\tfrac65|Lx|_T^2\leq(6\theta-5)\bigl(x\cdot S_*x+|w|_{S^{-1}}^2\bigr)\,.
\end{equation*}
This proves~\eqref{e.matrix.block.comparison}. Applying it to~$\bfE$ and minimizing over~$h$ gives
\begin{equation}
	\bfE\leq6\Pi\mathbf{E}_{*,0}\,,
	\label{e.matrix.reference.comparison}
\end{equation}
because~$\theta\leq|S_*^{-1}|\,|S+(K-h)^tS_*^{-1}(K-h)|$ for each~$h$. The minimum exists since the quadratic expression is coercive on the finite-dimensional space of skew matrices.

\smallskip

We will normalize~\eqref{e.two.grid.whitney.source} by annealed matrices on either grid. If~$s\geq j_*$ and~$\cus_s^{\qq}\subseteq\cu_{2j_*}$, then
\begin{equation}
	\bfE\leq C(d,\gamma)\Pi\mathfrak e(\m)\bfAhom_{s,\qq}\,.
	\label{e.two.grid.source.normalization}
\end{equation}
To prove this, take expectation in~\eqref{e.source.adapted.bound} and use~$\E[\RSZ]\leq2$. The order~$\bfAhom_*(U)\leq\bfAhom(U)$ then gives
\begin{equation*}
	\bfAhom_{s,\qq}\leq C(d,\gamma)\mathfrak e(\m)\bfE\qand
	\bfAhom_{s,\qq}\geq\bigl(C(d,\gamma)\mathfrak e(\m)\bigr)^{-1}\mathbf R\bfE^{-1}\mathbf R\,.
\end{equation*}
The reference-block comparison~$\bfE\leq6\Pi\mathbf R\bfE^{-1}\mathbf R$ proves~\eqref{e.two.grid.source.normalization}. Replacing~$(\m,\qq)$ by~$(\m',\qq')$ gives the same estimate on the other grid.

The interpolated update must also control the grid eccentricity and the relative distortion between the two rounded grids. The next lemma gives these geometric bounds.

\begin{lemma}[The projective step]
\label{l.projective.step}
Let~$\m,\m_*\in\Rpos$, let~$\varepsilon>0$, define~$\m_+$ by~\eqref{e.renormalization.geometry.update}, and set~$\qq=\mathcal Q(\m)$ and~$\qq_+=\mathcal Q(\m_+)$. Then
\begin{equation}
	d_{\rm pr}([\m],[\m_+])\leq\varepsilon\,.
	\label{e.projective.step}
\end{equation}
Moreover,
\begin{equation}
	\frac12\log\bigl(|\m_+|\,|\m_+^{-1}|\bigr)
	\leq\frac12\log\bigl(|\m|\,|\m^{-1}|\bigr)+\varepsilon\,.
	\label{e.projective.eccentricity}
\end{equation}
If~$0<\varepsilon\leq1$, then
\begin{equation}
	K(\qq,\qq_+)\leq C(d)\,.
	\label{e.projective.rounded.hop}
\end{equation}
\end{lemma}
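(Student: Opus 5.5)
Reduce everything to the matrix $X\coloneqq\m^{-\nf12}\m_*\m^{-\nf12}$, whose eigenvalues are $\mu_1\le\dots\le\mu_d$. By the definition of $d_{\rm pr}$, $d_{\rm pr}([\m],[\m_*])=\frac12\log(\mu_d/\mu_1)$. Since $\m^{-\nf12}\m_+\m^{-\nf12}=X^\theta$ has eigenvalues $\mu_i^\theta$, we get $d_{\rm pr}([\m],[\m_+])=\theta\,d_{\rm pr}([\m],[\m_*])$. The choice of $\theta$ in~\eqref{e.renormalization.geometry.update} makes this at most~$\varepsilon$. This covers both subcases: $\theta=\varepsilon/d_{\rm pr}$, and $\theta=1$ when $d_{\rm pr}\le\varepsilon$. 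It also covers the convention case $[\m]=[\m_*]$, where the distance is zero. This proves~\eqref{e.projective.step}.

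For~\eqref{e.projective.eccentricity}, write $a=\lambda_{\min}(X^\theta)$ and $b=\lambda_{\max}(X^\theta)$, so that $a\m\le\m_+\le b\m$. Then $|\m_+|\le b|\m|$ and $|\m_+^{-1}|\le a^{-1}|\m^{-1}|$. Multiplying and taking half logarithms gives
\[
\tfrac12\log\bigl(|\m_+||\m_+^{-1}|\bigr)\le\tfrac12\log\bigl(|\m||\m^{-1}|\bigr)+\tfrac12\log(b/a),
\]
and $\tfrac12\log(b/a)=d_{\rm pr}([\m],[\m_+])\le\varepsilon$. Equivalently, $\frac12\log(|\m||\m^{-1}|)=d_{\rm pr}([\Id],[\m])$, and the bound is the triangle inequality for $d_{\rm pr}$ proved after~\eqref{e.scale.selection.projective.distance}.

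The rounded-hop bound~\eqref{e.projective.rounded.hop} is the main point, since the rounding in $\mathcal Q$ is additive, not multiplicative. Write $\qq=\tilde\qq+E$ with $\tilde\qq\coloneqq|\m^{-1}|^{\nf12}\m^{\nf12}$ and $|E|\le d3^{-j_*}\le\frac12$. Similarly $\qq_+=\tilde\qq_++E_+$. By~\eqref{e.rounded.grid.bounds}, $\lambda_{\min}(\tilde\qq),\lambda_{\min}(\tilde\qq_+)\ge1$ and $\qq,\qq_+\ge\frac12\Id$. The key observation is that each rounding error is dominated by the corresponding unrounded matrix, $|E v|\le\frac12|v|\le\frac12|\tilde\qq v|$. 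Therefore $|\qq v|$ and $|\tilde\qq v|$ agree up to factors in $[\frac12,\frac32]$, and the same holds for $\qq_+$ and $\tilde\qq_+$. Hence
\[
|\qq^{-1}\qq_+|=\sup_v\frac{|\qq_+ v|}{|\qq\,\qq^{-1}\ldots|}\text{-type ratios}\le 3\,\sup_{v\neq0}\frac{|\tilde\qq_+v|}{|\tilde\qq v|}=3\,|\tilde\qq_+\tilde\qq^{-1}|,
\]
and the reverse quotient is bounded in the same way.

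It then remains to bound $|\tilde\qq_+\tilde\qq^{-1}|$ without any dependence on the eccentricity of $\m$. The quantity
\[
\sup_{v\neq0}\frac{|\tilde\qq_+v|^2}{|\tilde\qq v|^2}=\frac{|\m^{-1}|}{|\m_+^{-1}|}\,\sup_{v\neq0}\frac{v\cdot\m_+v}{v\cdot\m v}\le\frac{|\m^{-1}|}{|\m_+^{-1}|}\,b
\]
uses $\m_+\le b\m$. Meanwhile $a\m\le\m_+$ gives $\m_+^{-1}\le a^{-1}\m^{-1}$, and the opposite inequality $\m_+^{-1}\ge b^{-1}\m^{-1}$ yields $|\m^{-1}|\le b|\m_+^{-1}|$. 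The factor is therefore at most $b^2$.

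The same bound does not follow from $b$ alone, because $d_{\rm pr}$ ignores overall dilation. This is resolved by exploiting that the normalization $|\m^{-1}|^{\nf12}$ built into $\mathcal Q$ is itself dilation-invariant: replace $\m_+$ by $c\m_+$ with $c$ chosen so that $ab=1$ after rescaling, which is allowed since $\tilde\qq_+$ is unchanged. Then $b=e^{d_{\rm pr}}\le e^{\varepsilon}\le e$ and $a\ge e^{-1}$. Redoing the estimate with $|\m^{-1}|\le b|\m_+^{-1}|$ and the sup bound $b$ gives $|\tilde\qq_+\tilde\qq^{-1}|^2\le b^2\le e^2$, and symmetrically for the reverse quotient. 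Consequently $|\qq^{-1}\qq_+|+|\qq_+^{-1}\qq|\le C$, so $K(\qq,\qq_+)\le C(d)$ for $0<\varepsilon\le1$. The step I would check most carefully is exactly this chain showing that the additive rounding error costs only an absolute factor, independent of $\mathfrak e(\m)$.
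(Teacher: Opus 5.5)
Your proof is correct and follows the paper's route. The bounds~\eqref{e.projective.step} and~\eqref{e.projective.eccentricity} are exactly the eigenvalue-scaling and triangle-inequality arguments. For~\eqref{e.projective.rounded.hop} the paper likewise removes dilations: it rescales $\m,\m_+$ to least eigenvalue one, gets $e^{-2}\m\le\m_+\le e^{2}\m$ and hence $|\m^{-\nf12}\m_+^{\nf12}|\le e$. It then uses that the rounding error, of norm at most $d3^{-j_*}\le\frac12$, is dominated by unrounded matrices with least singular value one.

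Two local slips need fixing.

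First, your display for $|\qq^{-1}\qq_+|$ is garbled. The correct chain uses the symmetry of $\qq,\qq_+$:
\[
|\qq^{-1}\qq_+|=|\qq_+\qq^{-1}|=\sup_{v\neq0}\frac{|\qq_+v|}{|\qq v|}\le3\,|\tilde\qq_+\tilde\qq^{-1}|\,.
\]

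Second, since $|\tilde\qq_+v|^2=|\m_+^{-1}|\,v\cdot\m_+v$, the prefactor in your identity is $|\m_+^{-1}|/|\m^{-1}|$, not its reciprocal. As written, your unnormalized claim that the ratio is at most $b^2$ is false. For example, take $\m_*=c\m$ with $c<1$, so that $\m_+=c\m$; the true ratio is $1$ while $b^2=c^2$. With the correct prefactor, $\m_+^{-1}\le a^{-1}\m^{-1}$ gives directly
\[
|\tilde\qq_+\tilde\qq^{-1}|^2\le\frac ba=e^{2d_{\rm pr}([\m],[\m_+])}\le e^{2\varepsilon}\,,
\]
and the same bound holds for the reverse quotient. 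This bound is already dilation-invariant, so the rescaling to $ab=1$ is unnecessary. It is harmless, though: after it $a^{-1}=b$, so your final bound $b^2\le e^2$ is still valid.
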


\begin{proof}
Diagonalize~$\m^{-\nf12}\m_*\m^{-\nf12}$. The exponent~$\theta$ in~\eqref{e.renormalization.geometry.update} multiplies the logarithmic spread of its eigenvalues by~$\theta$, so~$d_{\rm pr}([\m],[\m_+])=\theta\,d_{\rm pr}([\m],[\m_*])\leq\varepsilon$ by the definition of~$\theta$. This proves~\eqref{e.projective.step}. The triangle inequality for~$d_{\rm pr}$ gives~\eqref{e.projective.eccentricity}. The rounded-grid comparison at scale~$j_*$, verified directly at the start of the proof of Proposition~\ref{p.two.grid.transport}, and~\eqref{e.projective.step} give~\eqref{e.projective.rounded.hop} when~$\varepsilon\leq1$.
\end{proof}

We now combine the geometric bounds with the adapted cube bound. Small error, drift, and determinant variation on the old grid then make the two annealed blocks close, as required to change the normalization of the errors.

\begin{proposition}[Comparison after a change of geometry]
\label{p.successful.short.bridge}
There exist~$L_0(d,\gamma)\in\N$ and~$c_0(d,\gamma)\in(0,1)$ such that, for every~$\sigma\in(0,1]$ and~$L\in\N$ satisfying
\begin{equation}
	L\geq L_0+\lceil\log_3(\sigma^{-1})\rceil\,,
	\label{e.bridge.length.input}
\end{equation}
there exists~$B_0(\sigma,L,d,\gamma)\geq1$ with the following property.

\smallskip

Let~$\m,\m_+\in\Rpos$, set~$\qq=\mathcal Q(\m)$ and~$\qq_+=\mathcal Q(\m_+)$, and let~$k,n\in\Z$ satisfy~$j_*\leq k\leq n$. Assume that
\begin{equation}
	\cus_{n+2L}^{\qq}\cup\cus_{n+L}^{\qq_+}\subseteq\cu_{2j_*}\,,
	\label{e.bridge.cubes.input}
\end{equation}
and
\begin{equation}
	\log\bigl(|\m|\,|\m^{-1}|\bigr)\leq\frac{c_0}{L}\bigl(k-j_*-\lceil B_0\log_3(2+\Pi)\rceil\bigr)\,.
	\label{e.bridge.eccentricity.input}
\end{equation}
Assume also that~$n\geq k+2Q$,
\begin{equation}
	d_{\rm pr}([\m],[\m_+])\leq1\,,
	\label{e.bridge.projective.input}
\end{equation}
and
\begin{equation}
	\mathcal P_{\qq}(n;k)+D_{\qq,j_*}(n)+\Delta_{n,n+2L}^{\qq}\leq c_0\sigma\,.
	\label{e.bridge.smallness.input}
\end{equation}
Then
\begin{equation}
	(1-\sigma)\bfAhom_{n+2L,\qq}\leq\bfAhom_{n+L,\qq_+}\leq(1+\sigma)\bfAhom_{n+2L,\qq}\,.
	\label{e.successful.short.near}
\end{equation}
\end{proposition}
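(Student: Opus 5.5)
We prove the two inequalities in~\eqref{e.successful.short.near} separately, each by a Whitney partition from Lemma~\ref{l.two.grid.whitney}, subadditivity, and expectation. The distortion between the grids is controlled by Lemma~\ref{l.projective.step}: the bound~\eqref{e.bridge.projective.input}, together with the rounded-grid comparison, gives $K(\qq,\qq_+)\leq C(d)$. All partition constants therefore depend only on $d$.

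\emph{Upper bound.} Partition $\cus_{n+L}^{\qq_+}$ by construction~(i) with $j=n+L$ and $\ell=L$. This fills it with $\qq$-cubes of scale $n$ in the bulk and smaller $\qq$-cubes near the boundary. Subadditivity and stationarity give
\begin{equation*}
\bfAhom_{n+L,\qq_+}\leq\sum_{r\leq n}w_r\bfAhom_{r,\qq}\,,
\end{equation*}
where $w_r$ are the relative volumes, with $\sum_r w_r=1$ and $w_r\leq C3^{r-n-L}$ for $r<n$. Normalizing by $\bfAhom_{n+2L,\qq}$ turns each term with $r\geq j_*$ into $P_{r,n+2L}^{\qq}$. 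We write $P_{r,n+2L}^{\qq}\leq|P_{n,n+2L}^{\qq}|\,P_{r,n}^{\qq}$ in the Loewner sense, and note $|P_{n,n+2L}^{\qq}|\leq e^{\Delta_{n,n+2L}^{\qq}}\leq 1+Cc_0\sigma$. The scale-$n$ term then contributes at most $1+Cc_0\sigma$. For $j_*\leq r<n$ we bound $\tr(P^{\qq}_{r,n}-\Itwod)$ either by the drift (summation by parts: $P_{r,n}-\Itwod$ is a sum of one-step increments, each weighted by at most $3^{\frac18(1-\gamma)(n-j)}$ in $D_{\qq,j_*}(n)$) or by the mean history $\history^{\rm mean}_\qq(n)\leq C\mathcal P_\qq(n;k)$ from~\eqref{e.fixed.geometry.profile.majorization}. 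This gives $\tr(P_{r,n}^{\qq}-\Itwod)\leq 3^{\frac18(1-\gamma)(n-r)}c_0\sigma$ up to constants. Against $w_r\leq C3^{-L}3^{-(n-r)}$ the series sums to $C3^{-L}c_0\sigma$. The scales $r<j_*$ are handled by~\eqref{e.two.grid.whitney.source} and~\eqref{e.two.grid.source.normalization}, giving a contribution
\begin{equation*}
C\,\Pi\,\mathfrak e(\m)\,\mathfrak e(\m_+)\,3^{-(n+L-j_*)}\,.
\end{equation*}
By~\eqref{e.bridge.eccentricity.input} and $k\leq n$, we have $n-j_*\geq \lceil B_0\log_3(2+\Pi)\rceil$ and $\mathfrak e(\m)\leq 3^{(c_0/L)(n-j_*)}$. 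Lemma~\ref{l.projective.step} gives $\mathfrak e(\m_+)\leq e\,\mathfrak e(\m)$. Taking $B_0$ large makes this contribution at most $\sigma/4$. Altogether $\bfAhom_{n+L,\qq_+}\leq(1+\sigma)\bfAhom_{n+2L,\qq}$ once $c_0$ is small.

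\emph{Lower bound.} Apply construction~(ii) with $j=n+2L$ and $\ell=L$. This packs new-grid cubes of scale $n+L$ into $\cus_{n+2L}^{\qq}$ and fills the remainder with $\qq$-cubes of scales $\leq n+L$, whose total relative volume is at most $C3^{-L}$. Subadditivity and expectation give
\begin{equation*}
\bfAhom_{n+2L,\qq}\leq(1-\beta)\bfAhom_{n+L,\qq_+}+\sum_{r\leq n+L}w'_r\bfAhom_{r,\qq}\,,
\end{equation*}
where $\beta\leq C3^{-L}$ is the boundary volume fraction and $w'_r\leq C3^{r-n-2L}$. Normalizing as before, the remainder terms at scales $n\leq r\leq n+L$ are bounded by $e^{\Delta_{n,n+2L}^{\qq}}$ times their weight. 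Scales $j_*\leq r<n$ are bounded by the drift and history exactly as above, and scales $r<j_*$ by the source bound. Since $P\geq\Itwod$ after normalization, this yields
\begin{equation*}
\Itwod\leq(1-\beta)\bfAhom_{n+2L,\qq}^{-\nf12}\bfAhom_{n+L,\qq_+}\bfAhom_{n+2L,\qq}^{-\nf12}+\beta(1+Cc_0\sigma)+\tfrac{\sigma}{4}\,.
\end{equation*}
Rearranging, with $C3^{-L}\leq C3^{-L_0}\sigma$ by~\eqref{e.bridge.length.input}, gives the lower bound $1-\sigma$.

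\emph{Main obstacle.} The hard step is controlling the intermediate old-grid scales $j_*\leq r<n$ in the normalized form. The relevant weights decay only like $3^{-(n-r)}$, while the drift permits growth $3^{\frac18(1-\gamma)(n-r)}$. We must also verify that~\eqref{e.bridge.eccentricity.input} defeats the factor $\Pi\,\mathfrak e(\m)^2$ in the finest-scale term. I expect the summation by parts of the drift, to express $\tr(P_{r,n}-\Itwod)$ through increments, to be the delicate point. The condition $n\geq k+2Q$ is used only so that the history bound~\eqref{e.fixed.geometry.profile.majorization} applies with the scales near $n$ already represented.
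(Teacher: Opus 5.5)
Your proposal is correct and follows essentially the paper's proof. You use construction~(i) of Lemma~\ref{l.two.grid.whitney} for the upper comparison and construction~(ii) for the lower one, the drift to control the intermediate old-grid scales, and the adapted cube bound together with~\eqref{e.bridge.eccentricity.input} to absorb the factor~$\Pi\mathfrak e(\m)^2$ below~$j_*$. The one difference is minor: you normalize at~$n$ and then pay~$e^{\Delta_{n,n+2L}^{\qq}}$, whereas the paper advances the drift to~$n+L$ and~$n+2L$ by~\eqref{e.fixed.geometry.drift.advance}.

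One step needs a one-line repair. The Loewner inequality~$P_{r,n+2L}^{\qq}\leq|P_{n,n+2L}^{\qq}|\,P_{r,n}^{\qq}$ is false for noncommuting matrices. The congruence bound~$P_{r,n+2L}^{\qq}\leq|P_{r,n}^{\qq}|\,P_{n,n+2L}^{\qq}\leq\bigl(1+\tr(P_{r,n}^{\qq}-\Itwod)\bigr)e^{\Delta_{n,n+2L}^{\qq}}\Itwod$ does hold, and it is all you actually use. Also, neither your drift route nor~\eqref{e.fixed.geometry.profile.majorization} requires~$n\geq k+2Q$, so your remark about where that hypothesis enters is inaccurate.
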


\begin{proof}
We first compare the two annealed matrices by the Whitney partitions, then estimate the resulting sums. We keep~$c_0\in(0,1)$ unspecified until the end, when we choose~$c_0$ small,~$L_0$ large, and then~$B_0$ large. Constants denoted by~$C$ depend only on~$d$ and~$\gamma$, uniformly for~$0<c_0\leq1$.

\smallskip

\emph{Step 1: We prove~\eqref{e.bridge.upper.comparison} and~\eqref{e.bridge.lower.comparison}.} All the adapted cubes constructed below lie in~$\cu_{2j_*}$ by~\eqref{e.bridge.cubes.input}. We apply~\eqref{e.source.adapted.bound} with the same~$\RSZ$, taking expectations before estimating the annealed blocks.

\smallskip

The eccentricity bound~\eqref{e.bridge.eccentricity.input}, the distance bound~\eqref{e.bridge.projective.input}, and the triangle inequality for~$d_{\rm pr}$ give
\begin{equation}
	\mathfrak e(\m)\leq\exp\biggl(\frac{c_0}{L}\bigl(k-j_*-\lceil B_0\log_3(2+\Pi)\rceil\bigr)\biggr)\qand\mathfrak e(\m_+)\leq e\,\mathfrak e(\m)\,.
	\label{e.local.eccentricity.bounds}
\end{equation}
The rounded-grid comparison used in Lemma~\ref{l.projective.step} gives~$K(\qq,\qq_+)\leq K_0(d)$ from~\eqref{e.bridge.projective.input}. In Step~2, the upper and lower comparisons proved here will give
\begin{equation}
	\bigl|\bfAhom_{n+2L,\qq}^{-\nf12}\bfAhom_{n+L,\qq_+}\bfAhom_{n+2L,\qq}^{-\nf12}-\Itwod\bigr|\leq Cc_0\sigma+CK_0 3^{-L}+C(1+L)(2+\Pi)^{1-\frac1{16}(1-\gamma)B_0}\,.
	\label{e.successful.short.preliminary.comparison}
\end{equation}
The three terms account for the old-grid drift and determinant variation, the boundary volume, and the contribution of the scales below~$j_*$. At the end of the proof, we choose~$c_0$, then~$L_0$, and finally~$B_0$ to make each term at most~$\frac13\sigma$.

\smallskip

By~\eqref{e.bridge.smallness.input}, the determinant loss is at most~$c_0\sigma$. Set, only for this comparison,
\begin{equation*}
	\delta_0\coloneqq
	\exp\bigl(d^{-1}\Delta_{n,n+2L}^{\qq}\bigr)-1
	\leq Cc_0\sigma\,.
\end{equation*}
Since~$c_0\sigma\leq1$ and~$d\geq2$, we have~$\delta_0\leq1$. Applying~\eqref{e.fixed.geometry.drift.advance} with lengths~$L$ and~$2L$, and using~$D_{\qq,j_*}(n)\leq c_0\sigma$ and~$(1+x)^d-1\leq Cx$ for~$0\leq x\leq1$, gives
\begin{equation*}
	D_{\qq,j_*}(n+L)+D_{\qq,j_*}(n+2L)\leq Cc_0\sigma\,.
\end{equation*}

\smallskip

We first prove the upper comparison
\begin{align}
\lefteqn{ 
	\bfAhom_{n+L,\qq_+}-\bfAhom_{n,\qq}
} \qquad & 
\notag \\ & 
\leq CK_0\sum_{r=j_*}^{n}3^{r-n-L}\bfAhom_{r,\qq}
	+C\bigl(1+\Pi\bigl(\mathfrak e(\m)+\mathfrak e(\m_+)\bigr)^2\bigr)3^{-(1-\gamma)(n+L-j_*)}\bfAhom_{n+2L,\qq}\,.
	\label{e.bridge.upper.comparison}
\end{align}
Partition~$\cus_{n+L}^{\qq_+}$ into old-grid adapted cubes by Lemma~\ref{l.two.grid.whitney}(i), starting at scale~$n$. The scale-$n$ cubes have total relative volume at most one, so their expected contribution is at most~$\bfAhom_{n,\qq}$. By~\eqref{e.two.grid.whitney.volumes}, the smaller cubes at scale~$r\geq j_*$ have total relative volume at most~$CK_0 3^{r-n-L}$. Subadditivity and stationarity give the sum on the right side of~\eqref{e.bridge.upper.comparison}.

\smallskip

For~$r<j_*$, take expectation in~\eqref{e.two.grid.whitney.source} and use~$\E[\RSZ]\leq2$. By~\eqref{e.two.grid.source.normalization},~$\bfE\leq C\Pi\mathfrak e(\m)\bfAhom_{n+2L,\qq}$. The resulting bound is at most the last term in~\eqref{e.bridge.upper.comparison}. This proves the upper comparison.

\smallskip

For the lower comparison, we prove
\begin{align}
\lefteqn{
	\bfAhom_{n+2L,\qq}-\bfAhom_{n+L,\qq_+}
} \qquad & 
\notag \\ & 
	\leq CK_0\sum_{r=j_*}^{n+L}3^{r-n-2L}\bfAhom_{r,\qq}
	+C\bigl( 1+\Pi\bigl(\mathfrak e(\m)+\mathfrak e(\m_+)\bigr)^2\bigr)3^{-(1-\gamma)(n+2L-j_*)}\bfAhom_{n+2L,\qq}\,.
	\label{e.bridge.lower.comparison}
\end{align}
Apply Lemma~\ref{l.two.grid.whitney}(ii) to~$\cus_{n+2L}^{\qq}$ with upper scale~$n+L$. Thus we pack the contained new-grid cubes of scale~$n+L$ and partition the uncovered part by old-grid cubes. The bound~\eqref{e.two.grid.whitney.reverse.volumes} applies to every selected old-grid scale, including~$n+L$.

\smallskip

The packed new-grid cubes have total relative volume at most one, so their expected contribution is at most~$\bfAhom_{n+L,\qq_+}$. Subadditivity, stationarity for~$r\geq j_*$, and the same adapted cube bound for~$r<j_*$ give~\eqref{e.bridge.lower.comparison}.

\smallskip

\emph{Step 2: We estimate the right sides of~\eqref{e.bridge.upper.comparison} and~\eqref{e.bridge.lower.comparison} to prove~\eqref{e.successful.short.near}.} The boundary sums involve means at every earlier scale, not just the endpoints of the determinant test. Expanding these means into successive positive increments makes the sums accessible to the drift. For~$m=n+L$ and~$m=n+2L$, we have
\begin{equation}
	\sum_{r=j_*}^{m-L}3^{r-m}\bfAhom_{r,\qq}\leq C\bigl(3^{-L}+3^{-\left(1-\frac18(1-\gamma)\right)L}D_{\qq,j_*}(m)\bigr)\bfAhom_{m,\qq}\,.
	\label{e.bridge.boundary.sum}
\end{equation}
To see this, write~$\bfAhom_{r,\qq}=\bfAhom_{m,\qq}+\sum_{j=r+1}^{m}(\bfAhom_{j-1,\qq}-\bfAhom_{j,\qq})$. The constant part contributes at most~$C3^{-L}\bfAhom_{m,\qq}$. Each positive increment is bounded by~$\tr(P_{j-1,m}^{\qq}-P_{j,m}^{\qq})\bfAhom_{m,\qq}$, and its coefficient satisfies
\begin{equation*}
	\sum_{r=j_*}^{\min\{m-L,j-1\}}3^{r-m}\leq C3^{-\max\{L,m-j\}}\leq C3^{-\left(1-\frac18(1-\gamma)\right)L}3^{-\frac18(1-\gamma)(m-j)}\,.
\end{equation*}
Summing over~$j$ gives~\eqref{e.bridge.boundary.sum} by the definition of~$D_{\qq,j_*}(m)$.

\smallskip

The last terms in~\eqref{e.bridge.upper.comparison} and~\eqref{e.bridge.lower.comparison} are bounded by~$C\bfAhom_{n+2L,\qq}$ times the left side below. By~\eqref{e.local.eccentricity.bounds},
\begin{equation}
\begin{aligned}
	\lefteqn{
	\bigl(1+\Pi\bigl(\mathfrak e(\m)+\mathfrak e(\m_+)\bigr)^2\bigr) (1+n+L-j_*)3^{-\frac18(1-\gamma)(n-j_*)}
	} \quad &
	  \\  &
	\leq C(1+L)(2+\Pi)^{1-\frac1{16}(1-\gamma)B_0}\exp  \Bigl(-\bigl(k{-}j_*{-}\lceil B_0\log_3(2{+}\Pi)\rceil\bigr)\Bigl(\frac{1{-}\gamma} {16}\log3-\frac{2c_0}{L}\Bigr)\Bigr)\,.
\end{aligned}
		\label{e.bridge.source.bounds}
\end{equation}
Indeed,~$1+n+L-j_*\leq(1+L)(1+n-j_*)$, and weakening the decay exponent from~$\frac18(1-\gamma)$ to~$\frac1{16}(1-\gamma)$ absorbs~$1+n-j_*$. The squared grid eccentricities give the factor~$\nf{2c_0}{L}$ in the exponent.

\smallskip

Mean monotonicity and the determinant bound give
\begin{equation*}
	\bfAhom_{n+2L,\qq}\leq\bfAhom_{n+L,\qq}\leq\bfAhom_{n,\qq}\leq(1+\delta_0)^d\bfAhom_{n+2L,\qq}\,.
\end{equation*}
Thus both matrix inequalities can be normalized by~$\bfAhom_{n+2L,\qq}$. Apply~\eqref{e.bridge.boundary.sum} with the drift bounds above, and use~$(1+\delta_0)^d-1\leq Cc_0\sigma$. The eccentricity bound~\eqref{e.bridge.eccentricity.input} gives~$k-j_*-\lceil B_0\log_3(2+\Pi)\rceil\geq0$, so the exponential factor in~\eqref{e.bridge.source.bounds} is at most one provided that~$\nf{2c_0}{L_0}\leq\frac1{16}(1-\gamma)\log3$. Inserting these bounds into~\eqref{e.bridge.upper.comparison} and~\eqref{e.bridge.lower.comparison} proves~\eqref{e.successful.short.preliminary.comparison}.

\smallskip

We now choose the parameters in~\eqref{e.successful.short.preliminary.comparison}. First fix~$c_0(d,\gamma)\in(0,1)$ so that~$Cc_0\leq\frac13$. Next choose~$L_0(d,\gamma)$ so that~$\nf{2c_0}{L_0}\leq\frac1{16}(1-\gamma)\log3$ and~$CK_0 3^{-L_0}\leq\frac13$. The lower bound~\eqref{e.bridge.length.input} on~$L$ gives
\begin{equation*}
	3^{-L}\leq3^{-L_0}\sigma\,.
\end{equation*}
Thus the first two terms in~\eqref{e.successful.short.preliminary.comparison} are each at most~$\frac13\sigma$. Since~$2+\Pi\geq3$, choose~$B_0(\sigma,L,d,\gamma)\geq1$ so that
\begin{equation*}
	C(1+L)(2+\Pi)^{1-\frac1{16}(1-\gamma)B_0}\leq\frac13\sigma\,.
\end{equation*}
This choice is independent of~$\Pi$ and of the coefficient law. The three terms in~\eqref{e.successful.short.preliminary.comparison} now sum to at most~$\sigma$, proving the proposition.

\end{proof}

\subsection{Transport of the error and drift}

Closeness of the final annealed matrices controls the change of normalization, but does not by itself control the random coarse-grained matrices on the new grid. We must estimate the fluctuations and mean errors in every new-grid subcube, with the weights defining the histories. The next proposition performs this transfer and bounds the drift as well. Its input is measured on the old grid at~$n+2L$, while its output is a fresh error on the new grid at~$n+L$. Freshness does not discard earlier errors:~$\mathcal P_{\qq_+}(n+L;n+L)=\history_{\qq_+}(n+L)$ retains the full new-grid history.

\begin{proposition}[Transport of the error]
\label{p.two.grid.transport}
There exists~$C(d,\gamma)\geq1$ with the following property. Let~$\rho\in(0,1]$ and~$\delta\in[0,\nf14]$. Let~$\m,\m_+\in\Rpos$, set~$\qq=\mathcal Q(\m)$ and~$\qq_+=\mathcal Q(\m_+)$, and let~$k,n\in\Z$ satisfy~$j_*\leq k\leq n$. Let~$L\in\N$, and assume that
\begin{equation}
	\cus_{n+2L}^{\qq}\cup\cus_{n+L}^{\qq_+}\subseteq\cu_{2j_*}\,,
	\label{e.two.grid.cubes.input}
\end{equation}
\begin{equation}
	n-j_*\geq C\bigl( L+\log_3\bigl((2+\Pi)|\m|\,|\m^{-1}|\bigr)\bigr)\,.
	\label{e.two.grid.separation.input}
\end{equation}
\begin{equation}
	d_{\rm pr}([\m],[\m_+])\leq1\,,\qquad L\geq C+\log_3(\rho^{-1})\,,
	\label{e.two.grid.geometry.length.input}
\end{equation}
\begin{equation}
	\mathcal P_{\qq}(n+2L;k)+D_{\qq,j_*}(n+2L)\leq3^{-\frac12(1-\gamma)L}\rho\,,
	\label{e.two.grid.smallness.input}
\end{equation}
\begin{equation}
	(1-\delta)\bfAhom_{n+2L,\qq}\leq\bfAhom_{n+L,\qq_+}\leq(1+\delta)\bfAhom_{n+2L,\qq}\,.
	\label{e.two.grid.near}
\end{equation}
Then
\begin{equation}
	\mathcal P_{\qq_+}(n+L;n+L)+D_{\qq_+,j_*}(n+L)\leq C(\delta+\rho)\,.
	\label{e.successful.short.bridge}
\end{equation}
\end{proposition}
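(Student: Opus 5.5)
We bound the fresh new-grid error $\history_{\qq_+}(n+L)=\history^{\rm fluc}_{\qq_+}(n+L)+\history^{\rm mean}_{\qq_+}(n+L)$ and the drift $D_{\qq_+,j_*}(n+L)$ term by term, always normalizing by $\bfAhom_{n+L,\qq_+}$. By \eqref{e.two.grid.near} with $\delta\leq\nf14$, this normalization is comparable to $\bfAhom_{n+2L,\qq}$ up to factors $1+O(\delta)$. First we record the rounded-grid comparison $K(\qq,\qq_+)\leq K_0(d)$, which follows from $d_{\rm pr}([\m],[\m_+])\leq1$ as in Lemma~\ref{l.projective.step}. This makes Lemma~\ref{l.two.grid.whitney} available with constants depending only on $d$.

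\emph{Step 1: new-grid cubes at every scale.} Every new-grid cube $z+\cus_j^{\qq_+}$ with $j_*\leq j\leq n+L$ is partitioned by construction~(i) of Lemma~\ref{l.two.grid.whitney}. Its top scale is $j-\ell$, where we take $\ell$ to be a fixed constant. Subadditivity gives $\bfA(z+\cus_j^{\qq_+})\leq G_j(z)$, where $G_j(z)$ is the volume-weighted average of old-grid matrices. We split $G_j(z)$ into three parts:
\begin{itemize}
\item the contributions at old scales $r\geq j_*$, which are old-grid fluctuations $V_{r,n+2L}^{\qq}$ plus old means $P_{r,n+2L}^{\qq}$;
\item the finest scales $r<j_*$, which are handled by \eqref{e.two.grid.whitney.source} and \eqref{e.two.grid.source.normalization};
\item for the lower side, the reverse construction~(ii). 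It packs new-grid cubes into old-grid cubes and sandwiches $\bfA$ from the other direction.
\end{itemize}
The fine-scale factor $\Pi\,\mathfrak e(\m)\,3^{-(1-\gamma)(n-j_*)}$ is at most $\rho$ by the separation hypothesis \eqref{e.two.grid.separation.input}.

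Next we apply Lemma~\ref{l.fixed.geometry.positive.gap} to $F=\bfA(z+\cus_j^{\qq_+})$ and $G=G_j(z)$, both normalized. The centered fluctuation of $G$ is controlled by the fluctuation history. Scales above $k$ additionally gain averaging by Lemma~\ref{l.fixed.geometry.matrix.averaging}, but this gain is not needed. Since only one family of old cubes enters per piece, the retained maxima in $\history^{\rm fluc}_{\qq}(n+2L)$ and the weight comparison suffice. The gap term is $\tr(\E G-\E F)$. It is bounded by the mean penalties $\meanpenalty_Q(P_{r,n+2L}^{\qq})$ and the drift, together with the upper bound on $\E F$ from the two-sided mean comparisons of Step~2.

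\emph{Step 2: the mean history and the drift.} We redo the comparison of Proposition~\ref{p.successful.short.bridge} at every scale $j\leq n+L$, not just at the top scale. The outcome is that $\bfAhom_{j,\qq_+}$ lies between old means at scales $j-\ell$ and $j+\ell$ up to boundary sums, and \eqref{e.bridge.boundary.sum} bounds those sums through the old drift. Summation by parts then writes $D_{\qq_+,j_*}(n+L)$ as a weighted sum of $\tr(P^{\qq_+}_{j,n+L}-\Itwod)$. Each such term is bounded by $\tr(P^{\qq}_{j-\ell,n+2L}-\Itwod)$, plus $O(\delta)$, plus the boundary sums. The weights $3^{-\frac18(1-\gamma)(n+L-j)}$ against the old weights at $n+2L$ lose a factor at most $3^{\frac18(1-\gamma)(L+\ell)}$. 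This factor is absorbed by the prefactor $3^{-\frac12(1-\gamma)L}$ in \eqref{e.two.grid.smallness.input}. The mean history $\history^{\rm mean}_{\qq_+}$ is handled the same way: a small argument makes $\meanpenalty_Q$ comparable to $Q$ times the trace, and the reindexing costs the same factors.

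The fluctuation history has weights $3^{-Q\rho_{\max}(n+L-j)}$ and maxima over the $3^{d(n+L-j)}$ cells of $\cus_{n+L}^{\qq_+}$. When re-expressed in old cells of $\cus_{n+2L}^{\qq}$, the number of cells grows by at most $C3^{dL}$, and the weight shift costs another $3^{Q\rho_{\max}L}$ at most. Here the choice of $Q$ and $\rho_{\max}$ matters. We use the slack $Q\rho_{\max}-d\geq\frac14(1-\gamma)$ and bound maxima by sums at the mismatched scales. This should cost only $3^{\frac12(1-\gamma)L}$ overall, which again cancels against \eqref{e.two.grid.smallness.input}. A residual $3^{-cL}$ from boundary volume is made $\leq\rho$ by $L\geq C+\log_3\rho^{-1}$.

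\emph{Main obstacle.} The hard part will be this bookkeeping of weights and maxima across the grid change in Step~2. We must show that the fluctuation history transfers with a loss of at most $3^{\frac12(1-\gamma)L}$, and not $3^{dL}$ from the maxima. We must also show that boundary pieces at all small scales are summable against the history weights, using $\rho_{\max}<1$ and the volume bound $C3^{r-j}$ from \eqref{e.two.grid.whitney.volumes}. The first approach we will try is to handle only the new cells that lie entirely within single old cells at scale $n$ or above by direct inclusion. We will then estimate the rest through the Whitney volume decay raised to the $Q$th power via Jensen.
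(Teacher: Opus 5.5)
There is a genuine gap, and it sits exactly at the point you flag as the main obstacle. The mechanism you name for transferring the fluctuation history cannot give the required loss.

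You propose to use the maxima retained in $\history^{\rm fluc}_{\qq}(n+2L)$, which is indeed at most $C\mathcal P_{\qq}(n+2L;k)$ by \eqref{e.fixed.geometry.profile.majorization}. But this quantity only controls the expectation of a maximum over \emph{all} scale-$r$ cells of $\cus_{n+2L}^{\qq}$, with weight $3^{-Q\rho_{\max}(n+2L-r)}$.
\begin{itemize}
\item Restricting to the fewer cells under $\cus_{n+L}^{\qq_+}$ gives no improvement.
\item The only per-cell moment it yields carries that same weight, so bounding maxima by sums cannot help.
\item The new history at target scale $j=r+\ell$ has weight $3^{-Q\rho_{\max}(n+L-j)}$, so this route loses $3^{Q\rho_{\max}(L+\ell)}\geq3^{dL}$.
\item The hypothesis \eqref{e.two.grid.smallness.input} supplies only $3^{-\frac12(1-\gamma)L}$.
\end{itemize}
Your assertion that the transfer ``should cost only $3^{\frac12(1-\gamma)L}$'' therefore has no support on this route.

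The missing idea is to use the structure of $\mathcal P_{\qq}(n+2L;k)$ in \eqref{e.scale.selection.complete.profile}, split at the initialization scale $k$.

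For old scales $r>k$, its last sum records the individual moments $e^{Q\Delta_{r,n+2L}^{\qq}}\E[|V_r^{\qq}|_{S_Q}^Q]$ with the mild weight $3^{-\frac14(1-\gamma)(n+2L-r)}$, not a maximum. Bound the maximum over the $3^{d(n+L-j)}$ new target cells by a sum. This factor is absorbed by $3^{-Q\rho_{\max}(n+L-j)}$, since $Q\rho_{\max}-d=Q\gamma+\frac14(1-\gamma)$. Inside each target, use convexity (or Lemma~\ref{l.fixed.geometry.matrix.averaging}) together with stationarity. The reindexing $n+L-j=(n+2L-r)-(L+\ell)$ then costs only $3^{\frac14(1-\gamma)(L+\ell)}$.

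For $r\leq k$, use the term $3^{-\frac14(1-\gamma)(n+2L-k)}\bigl(1+\meanpenalty_Q(P_{k,n+2L}^{\qq})\bigr)\history_{\qq}(k)$:
\begin{itemize}
\item group the old cells by their scale-$k$ parents, and use the maximum in $\history^{\rm fluc}_{\qq}(k)$ inside each parent;
\item bound only the maximum over the at most $C3^{d(n+L-k)}$ parents by a sum;
\item keep the supremum over the target scales $j\leq k+1$ inside the expectation. Each such $j$ gives the same bound, so summing over them would cost a factor $k-j_*$.
\end{itemize}

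Three smaller points.
\begin{itemize}
\item Construction~(ii) is not needed for a ``lower side.'' Subadditivity on the new grid gives $P_{j,n+L}^{\qq_+}\geq\Itwod$, so Lemma~\ref{l.fixed.geometry.positive.gap} needs only $F\leq G$, and the mean gap is at most $\tr(\E[G]-\Itwod)$. What enters is a lower bound on $\E[F]$, not an upper bound. Construction~(ii) also gives no pathwise bound on a random new-grid block.
\item The fine-scale term for a target at scale $j$ is $\Pi\,\mathfrak e(\m)\mathfrak e(\m_+)3^{-(1-\gamma)(j-j_*)}$, not $\Pi\,\mathfrak e(\m)3^{-(1-\gamma)(n-j_*)}$. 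It is not small for $j$ near $j_*$, and becomes small only after multiplication by the history weights; it also enters to the $Q$-th power. Targets with $j<j_*+\ell$ need the adapted cube bound applied directly.
\item With $\ell$ fixed, rounding the bulk weight up to one as in \eqref{e.bridge.upper.comparison}--\eqref{e.bridge.boundary.sum} leaves an additive $C3^{-\ell}$ in the drift, which is not small. Your ``residual $3^{-cL}$'' does not match a fixed $\ell$. Either keep the exact convex weights, or take $\ell=L$ for the drift as the paper does.
\end{itemize}
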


The old-grid error is measured at~$n+2L$, but the new history ends at the earlier scale~$n+L$. Earlier errors therefore receive larger weights in the new history. More precisely, a new-grid cube at scale~$j$ is filled in its bulk by old-grid cubes at scale~$j-\ell$, with~$\ell\in\{1,L\}$. The corresponding mean-history weight increases by~$3^{\frac14(1-\gamma)(L+\ell)}\leq3^{\frac12(1-\gamma)L}$; see~\eqref{e.two.grid.mean.weight.shift}. The factor~$3^{-\frac12(1-\gamma)L}$ in~\eqref{e.two.grid.smallness.input} compensates for this increase, leaving an error of order~$\rho$ after transfer. The parameter~$\delta$ in~\eqref{e.two.grid.near} controls the additional error from changing the normalizing matrix. The lower bounds in~\eqref{e.two.grid.separation.input} and~\eqref{e.two.grid.geometry.length.input} make the boundary terms and the contributions below~$j_*$ small enough to be included in~$C\rho$.

\begin{proof}
We first estimate the error and the drift separately, then use~\eqref{e.two.grid.smallness.input} to control the old-grid contributions and~\eqref{e.two.grid.separation.input}--\eqref{e.two.grid.geometry.length.input} to control the boundary and fine-scale contributions. Constants denoted by~$C$ depend only on~$d$ and~$\gamma$.

\smallskip

The bound~$d_{\rm pr}([\m],[\m_+])\leq1$ in~\eqref{e.two.grid.geometry.length.input} gives
\begin{equation*}
	K(\qq,\qq_+)\leq C(d)\qand\mathfrak e(\m_+)\leq e\,\mathfrak e(\m)\,.
\end{equation*}
For the first bound, rescale~$\m$ and~$\m_+$ to have least eigenvalue one; this leaves the rounded geometries unchanged. The first bound in~\eqref{e.two.grid.geometry.length.input} then implies~$e^{-2}\m\leq\m_+\leq e^2\m$. Hence~\mbox{$|\m^{-\nf12}\m_+^{\nf12}|\leq e$} and~\mbox{$|\m_+^{-\nf12}\m^{\nf12}|\leq e$}. Rounding perturbs each square root by a matrix of norm at most~$d3^{-j_*}$, while their least singular values are one. The rounding error bound~$d3^{-j_*}\leq\nf12$ used in~\eqref{e.rounded.grid.bounds} therefore gives the first estimate. The triangle inequality for~$d_{\rm pr}$ gives the second. Fix~$K_0(d)\geq K(\qq,\qq_+)$; the second bound in~\eqref{e.two.grid.geometry.length.input} ensures~$CK_0 3^{-L}\leq\frac12$.

\smallskip

Using the comparison of annealed matrices in~\eqref{e.two.grid.near}, we will prove that
\begin{align}
	\mathcal P_{\qq_+}(n+L;n+L)
	&\leq C3^{\frac12(1-\gamma)L}
	\mathcal P_{\qq}(n+2L;k)+C\delta
	\notag \\ & \qquad
	+C
	\left[1+\Pi\bigl(K_0\mathfrak e(\m)\mathfrak e(\m_+)
	+\mathfrak e(\m_+)^2\bigr)\right]^Q
	3^{-\frac14(1-\gamma)(n-j_*)}\,,
	\label{e.two.grid.profile}
\end{align}
and
\begin{align}
	D_{\qq_+,j_*}(n+L)
	&\leq C
	\bigl(\delta+K_0 3^{-L}
	+3^{\frac14(1-\gamma)L}D_{\qq,j_*}(n+2L)\bigr)
	\notag \\ & \qquad
	+C
	\left[1+\Pi\bigl(\mathfrak e(\m)+\mathfrak e(\m_+)\bigr)^2\right]
	(1+n+L-j_*)3^{-\frac18(1-\gamma)(n-j_*)}\,.
	\label{e.two.grid.drift}
\end{align}

\smallskip

The Whitney partition separates the bulk cubes, the smaller boundary cubes, and the cubes below~$j_*$. After estimating the contribution of the scales below~$j_*$ and reducing the error to these terms in Steps~1--2, we use averaging above~$k$ and the retained history at and below~$k$ in Steps~3--4. The decreasing boundary volumes make the additional scale sums converge. Step~5 combines these estimates; Step~6 treats the drift, and Step~7 uses~\eqref{e.two.grid.smallness.input} and~\eqref{e.two.grid.separation.input}--\eqref{e.two.grid.geometry.length.input} to bound the old-grid, boundary, and fine-scale terms.

\smallskip

\emph{Step 1: We prove~\eqref{e.two.grid.whitney.fine.bound} for~$j\geq j_*+L$, for use in Step~2.} Apply Lemma~\ref{l.two.grid.whitney}(i) to~$y+\cus_j^{\qq_+}$, where~$j_*\leq j\leq n+L$ and~$y\in3^j\qq_+\Zd\cap\cus_{n+L}^{\qq_+}$. Take~$\ell=1$ if~$j\leq k+L$ and~$\ell=L$ otherwise. We call the selected cubes of scale~$j-\ell$ the bulk cubes and the smaller ones the boundary cubes. Thus we use the length-$L$ separation only when the bulk scale~$j-L$ lies above~$k$.

\smallskip

Normalize the matrices by~$\bfAhom_{n+L,\qq_+}$ and set
\begin{equation}
	G_j(y)\coloneqq\bfAhom_{n+L,\qq_+}^{-\nf12}\biggl(\sum_{r\leq j-\ell}\sum_{z\in\mathcal Z_r(y+\cus_j^{\qq_+})}\frac{|\cus_r^{\qq}|}{|\cus_j^{\qq_+}|}\bfA(z+\cus_r^{\qq})\biggr)\bfAhom_{n+L,\qq_+}^{-\nf12}\,.
	\label{e.two.grid.whitney.average}
\end{equation}
All these adapted cubes lie in~$\cu_{2j_*}$ by~\eqref{e.two.grid.cubes.input}. For~$j\geq j_*+L$, we have~$j-\ell\geq j_*$, so we may apply~\eqref{e.two.grid.whitney.source} and normalize by~\eqref{e.two.grid.source.normalization} on the new grid. Weakening the decay gives
\begin{multline}
	\bfAhom_{n+L,\qq_+}^{-\nf12}\biggl(\sum_{r<j_*}\sum_{z\in\mathcal Z_r(y+\cus_j^{\qq_+})}\frac{|\cus_r^{\qq}|}{|\cus_j^{\qq_+}|}\bfA(z+\cus_r^{\qq})\biggr)\bfAhom_{n+L,\qq_+}^{-\nf12}
\\
	\leq C\Pi K_0\mathfrak e(\m)\mathfrak e(\m_+)\RSZ3^{-(1-\gamma)(j-j_*)}\Itwod\,.
	\label{e.two.grid.whitney.fine.bound}
\end{multline}

\smallskip

\emph{Step 2: Reduction of the error estimate.} In this step, we derive~\eqref{e.two.grid.whitney.maximal.gap},~\eqref{e.two.grid.whitney.centered.split}, and~\eqref{e.two.grid.whitney.mean.bound}. These inequalities identify the bulk, boundary, and below-$j_*$ terms that remain to be estimated. The quantity to bound is
\begin{equation}
	\mathcal P_{\qq_+}(n+L;n+L)=\history_{\qq_+}^{\rm fluc}(n+L)+\history_{\qq_+}^{\rm mean}(n+L)\,.
	\label{e.two.grid.profile.decomposition}
\end{equation}

\smallskip

We first consider~$j\geq j_*+L$; the target scales below~$j_*+L$ are treated in Step~5. For convenience, write
\begin{equation*}
	F_j(y)\coloneqq\bfAhom_{n+L,\qq_+}^{-\nf12}\bfA(y+\cus_j^{\qq_+})\bfAhom_{n+L,\qq_+}^{-\nf12}\,.
\end{equation*}
Subadditivity and stationarity give
\begin{equation*}
	0\leq F_j(y)\leq G_j(y)\qand \E[F_j(y)]=P_{j,n+L}^{\qq_+}\geq\Itwod\,.
\end{equation*}
As in the parent--child recurrence, this order does not identify the coarse-grained matrix with its partition average. Lemma~\ref{l.fixed.geometry.positive.gap} controls the discrepancy using the fluctuations of that average and its mean excess. Here~$\E[F_j(y)]\geq\Itwod$, so the positive mean gap is bounded by~$\E[G_j(y)]-\Itwod$. The positive-gap estimate~\eqref{e.fixed.geometry.positive.gap}, raised to the~$Q$-th power, gives
\begin{align}
	\bigl\|F_j(y)-P_{j,n+L}^{\qq_+}\bigr\|_{L^Q(S_Q)}^Q
	&\leq2^{Q-1}\bigl(1+(2d)^{\nf1Q}\bigr)^Q\bigl\|G_j(y)-\E[G_j(y)]\bigr\|_{L^Q(S_Q)}^Q
	\notag \\ & \qquad
	+2^{2Q-1}\bigl(1+d^{1-\nf1Q}\bigr)^Q|\E[G_j(y)]|^{Q-1}\tr\bigl(\E[G_j(y)]-\Itwod\bigr)\,.
	\label{e.two.grid.local.mean.decomposition}
\end{align}
Since~$\Itwod\leq P_{j,n+L}^{\qq_+}\leq\E[G_j(y)]$, the definition of~$\meanpenalty_Q$ gives
\begin{equation*}
	\meanpenalty_Q(P_{j,n+L}^{\qq_+})\leq\meanpenalty_Q(\E[G_j(y)])\qqand |\E[G_j(y)]|^{Q-1}\tr(\E[G_j(y)]-\Itwod)\leq\meanpenalty_Q(\E[G_j(y)])\,.
\end{equation*}

For the fluctuation history we retain the joint maximum over the scales and target cubes in the centered part. The proof of Lemma~\ref{l.fixed.geometry.positive.gap} also gives this form of the estimate: take the weighted maximum in its pointwise gap bound, use the sum only for the terms containing the deterministic means, and apply the same Hölder and Young inequalities. Writing~$\max_y$ for the maximum over~$y\in3^j\qq_+\Zd\cap\cus_{n+L}^{\qq_+}$ throughout this proof, we obtain
\begin{align}
	\lefteqn{
	\E\biggl[\max_{j_*+L\leq j\leq n+L}3^{-Q\rho_{\max}(n+L-j)}\max_y|F_j(y)-\E[F_j(y)]|^Q\biggr]
	} \qquad &
	\notag \\ &
	\leq C\E\biggl[\max_{j_*+L\leq j\leq n+L}3^{-Q\rho_{\max}(n+L-j)}\max_y|G_j(y)-\E[G_j(y)]|_{S_Q}^Q\biggr]
	\notag \\ & \qquad
	+C\sum_{j=j_*+L}^{n+L}3^{-(Q\rho_{\max}-d)(n+L-j)}\max_y\meanpenalty_Q(\E[G_j(y)])\,.
	\label{e.two.grid.whitney.maximal.gap}
\end{align}
In Steps~3 and~4, we keep the centered terms with~$r\leq k$ under this joint maximum and bound it by a sum only for~$r>k$. This avoids a factor for the number of scales in the initial-history term.

\smallskip

The coefficient of the mean penalty in~\eqref{e.two.grid.whitney.maximal.gap} satisfies
\begin{equation*}
	3^{-(Q\rho_{\max}-d)(n+L-j)}=3^{-\left(Q\gamma+\frac14(1-\gamma)\right)(n+L-j)}\leq3^{-\frac14(1-\gamma)(n+L-j)}\,.
\end{equation*}
Thus the same weighted mean bound controls both the mean history and the last term in~\eqref{e.two.grid.whitney.maximal.gap}, including the endpoint~$j=n+L$.

\smallskip

The change of normalization is controlled by~\eqref{e.two.grid.near}:
\begin{equation}
	\frac1{1+\delta}\Itwod\leq\bfAhom_{n+2L,\qq}^{\nf12}\bfAhom_{n+L,\qq_+}^{-1}\bfAhom_{n+2L,\qq}^{\nf12}\leq\frac1{1-\delta}\Itwod\,.
	\label{e.two.grid.inverse.comparison}
\end{equation}
Taking determinants also gives
\begin{equation}
	(1-\delta)^2\det(\bfAhom_{n+2L,\qq})^{\nf1d}\leq\det(\bfAhom_{n+L,\qq_+})^{\nf1d}\leq(1+\delta)^2\det(\bfAhom_{n+2L,\qq})^{\nf1d}\,.
	\label{e.two.grid.determinant}
\end{equation}

\smallskip

Split the centered Whitney sum into the scales~$r=j-\ell$,~$j_*\leq r<j-\ell$, and~$r<j_*$. In the first two parts, stationarity identifies the centered blocks with~$V_{r,n+2L}^{\qq}(z)$ after changing the normalization by~\eqref{e.two.grid.inverse.comparison}. For the last part, apply~\eqref{e.two.grid.whitney.fine.bound} to the sum and take expectation to bound its mean. The triangle inequality gives
\begin{align}
	|G_j(y)-\E[G_j(y)]|_{S_Q}
	&\leq C\biggl|\sum_{z\in\mathcal Z_{j-\ell}(y+\cus_j^{\qq_+})}\frac{|\cus_{j-\ell}^{\qq}|}{|\cus_j^{\qq_+}|}V_{j-\ell,n+2L}^{\qq}(z)\biggr|_{S_Q}
	\notag \\ & \qquad
	+C\sum_{r=j_*}^{j-\ell-1}\biggl|\sum_{z\in\mathcal Z_r(y+\cus_j^{\qq_+})}\frac{|\cus_r^{\qq}|}{|\cus_j^{\qq_+}|}V_{r,n+2L}^{\qq}(z)\biggr|_{S_Q}
	\notag \\ & \qquad
	+C\Pi K_0\mathfrak e(\m)\mathfrak e(\m_+)\bigl(\RSZ+\E[\RSZ]\bigr)3^{-(1-\gamma)(j-j_*)}\,.
	\label{e.two.grid.whitney.centered.split}
\end{align}
The same~$\RSZ$ bounds every target cube. Taking the~$L^Q$ norm of~$\RSZ+\E[\RSZ]$ costs at most~$2\|\RSZ\|_{L^Q}\leq4$. This bound remains valid under the weighted joint maximum over targets and scales, without a factor for their number.

\smallskip

For the mean penalty, take expectation in~\eqref{e.two.grid.whitney.fine.bound} and use~$\E[\RSZ]\leq2$. Replace the fine-scale part of~$\E[G_j(y)]$ by this deterministic upper bound. Insert~$\Itwod$ with the missing volume weight into the mean of the remaining sum. Its old-grid normalization is a convex combination of the matrices~$P_{r,n+2L}^{\qq}\geq\Itwod$ and~$\Itwod$. The comparison~\eqref{e.two.grid.inverse.comparison} changes its trace excess by at most a factor~$(1-\delta)^{-1}$ and an additive~$C\delta$. Applying convexity of~$x\mapsto(1+x)^Q-1$ on~$[0,\infty)$ and using~\eqref{e.two.grid.whitney.volumes}, with~$3^{r-j}\leq3^{-(1-\gamma)(j-r)}$ for the boundary cubes, gives
\begin{align}
	\meanpenalty_Q(\E[G_j(y)])
	&\leq C\meanpenalty_Q(P_{j-\ell,n+2L}^{\qq})+C\sum_{r=j_*}^{j-\ell-1}3^{-(1-\gamma)(j-r)}\meanpenalty_Q(P_{r,n+2L}^{\qq})+C\delta
	\notag \\ & \qquad
	+C\bigl[\Pi K_0\mathfrak e(\m)\mathfrak e(\m_+)\bigr]^Q\bigl(3^{-(1-\gamma)(j-j_*)}+3^{-Q(1-\gamma)(j-j_*)}\bigr)\,.
	\label{e.two.grid.whitney.mean.bound}
\end{align}
The last term retains both the first and the~$Q$-th power of the fine-scale bound. Together with~\eqref{e.two.grid.whitney.maximal.gap}, the inequalities~\eqref{e.two.grid.whitney.centered.split} and~\eqref{e.two.grid.whitney.mean.bound} give the required reduction. Step~3 estimates the first term on each right side, and Step~4 estimates the sums over~$j_*\leq r<j-\ell$. Their contributions to the error, together with the below-$j_*$ and comparison terms already bounded here, are combined in Step~5.

\smallskip

\begin{samepage}
\emph{Step 3: Bulk cubes.} In this step, we prove the following bounds for the first terms on the right sides of~\eqref{e.two.grid.whitney.centered.split} and~\eqref{e.two.grid.whitney.mean.bound}:
\begin{multline}
	\E\Biggl[\max_{j_*+L\leq j\leq n+L}3^{-Q\rho_{\max}(n+L-j)}\max_y\biggl|\sum_{z\in\mathcal Z_{j-\ell}(y+\cus_j^{\qq_+})}\frac{|\cus_{j-\ell}^{\qq}|}{|\cus_j^{\qq_+}|}V_{j-\ell,n+2L}^{\qq}(z)\biggr|_{S_Q}^Q\Biggr]\\
	\leq C3^{\frac14(1-\gamma)L}\mathcal P_{\qq}(n+2L;k)\,,
	\label{e.two.grid.bulk.fluctuations}
\end{multline}
and
\begin{equation}
	\sum_{j=j_*+L}^{n+L}3^{-\frac14(1-\gamma)(n+L-1-j)}\meanpenalty_Q(P_{j-\ell,n+2L}^{\qq})\leq C3^{\frac12(1-\gamma)L}\mathcal P_{\qq}(n+2L;k)\,.
	\label{e.two.grid.bulk.mean}
\end{equation}
\end{samepage}

\smallskip

For the fluctuations, the split at the initialization scale~$k$ matches the two kinds of information retained in the error. Above~$k$, the error records individual fluctuation moments, to which we apply finite-range averaging. At and below~$k$, it retains a weighted maximum over scales and cells; we use that maximum directly. By~\eqref{e.two.grid.whitney.counts},
\begin{equation*}
	\biggl(\sum_{z\in\mathcal Z_{j-\ell}(y+\cus_j^{\qq_+})}\frac{|\cus_{j-\ell}^{\qq}|^2}{|\cus_j^{\qq_+}|^2}\biggr)^{\nf12}\leq C3^{-\frac d2\ell}\,.
\end{equation*}
Apply Lemma~\ref{l.fixed.geometry.matrix.averaging} to these equal-volume adapted cubes, which are aligned integer translates. There are~$3^{d(n+L-j)}$ possible target cubes~$y+\cus_j^{\qq_+}$ at scale~$j$. Bounding their maximum by their sum and taking the~$Q$-th root changes the fluctuation-history weight to
\begin{equation*}
	3^{-\rho_{\max}(n+L-j)}3^{\frac dQ(n+L-j)}=3^{-\left(\gamma+\frac{1-\gamma}{4Q}\right)(n+L-j)}\,.
\end{equation*}
For~$r=j-\ell>k$, averaging and~\eqref{e.two.grid.inverse.comparison} bound the~$Q$-th root of the scale-$j$ contribution in~\eqref{e.two.grid.bulk.fluctuations} by
\begin{equation*}
	C3^{\frac{1-\gamma}{4Q}L}3^{-\gamma(n+L-j)}3^{-\left(\frac d2-\frac{1-\gamma}{4Q}\right)\ell}3^{-\frac{1-\gamma}{4Q}(n+2L-r)}e^{\Delta_{r,n+2L}^{\qq}}\E\bigl[|V_r^{\qq}|_{S_Q}^Q\bigr]^{\nf1Q}\,.
\end{equation*}
Here~$\frac d2-\frac{1-\gamma}{4Q}>0$, and each~$r$ occurs for at most two values of~$j$, namely~$r+1$ and~$r+L$. After taking the~$Q$-th power and summing over~$j$, the remaining old-grid sum satisfies
\begin{equation*}
	\sum_{r=k+1}^{n+2L}3^{-\frac14(1-\gamma)(n+2L-r)}e^{Q\Delta_{r,n+2L}^{\qq}}\E\bigl[|V_r^{\qq}|_{S_Q}^Q\bigr]\leq\mathcal P_{\qq}(n+2L;k)\,.
\end{equation*}
The contribution from these bulk cubes is therefore at most~$C3^{\frac14(1-\gamma)L}\mathcal P_{\qq}(n+2L;k)$.

\smallskip

For bulk scales~$r\leq k$, necessarily~$\ell=1$ and~$j\leq k+1$. Group the adapted cubes by their scale-$k$ parents. Within each such parent,~\eqref{e.scale.selection.fluctuation.history} controls the maximum at scale~$r$ with factor~$3^{\rho_{\max}(k-r)}$. The total relative volume is at most one, so the bulk contribution has factor at most~$C3^{\rho_{\max}(k-j)}$. At most~$C3^{d(n+L-k)}$ scale-$k$ parents meet~$\cus_{n+L}^{\qq_+}$. Taking their maximum, using stationarity, and changing the normalization at scale~$k$ gives the factor
\begin{equation}
	3^{-\left(\rho_{\max}-\frac dQ\right)(n+L-k)}|P_{k,n+2L}^{\qq}|\leq3^{\frac{1-\gamma}{4Q}L}3^{-\frac{1-\gamma}{4Q}(n+2L-k)}\bigl(1+\meanpenalty_Q(P_{k,n+2L}^{\qq})\bigr)^{\nf1Q}\,.
	\label{e.two.grid.old.history.factor}
\end{equation}
We used~$|P|\leq1+\tr(P-\Itwod)$ and~$\rho_{\max}-\nf{d}{Q}=\gamma+\nf{1-\gamma}{4Q}$. Taking the~$Q$-th power and using
\begin{equation*}
	3^{-\frac14(1-\gamma)(n+2L-k)}\bigl(1+\meanpenalty_Q(P_{k,n+2L}^{\qq})\bigr)\history_{\qq}^{\rm fluc}(k)\leq\mathcal P_{\qq}(n+2L;k)
\end{equation*}
proves the same bound for the remaining bulk fluctuations, and hence~\eqref{e.two.grid.bulk.fluctuations}.

\smallskip

To prove~\eqref{e.two.grid.bulk.mean}, put~$r=j-\ell$. The ratio of the new and old mean-history weights satisfies
\begin{equation}
	3^{-\frac14(1-\gamma)(n+L-1-j)}=3^{\frac14(1-\gamma)(L+\ell)}3^{-\frac14(1-\gamma)(n+2L-1-r)}\leq3^{\frac12(1-\gamma)L}3^{-\frac14(1-\gamma)(n+2L-1-r)}\,.
	\label{e.two.grid.mean.weight.shift}
\end{equation}
For~$r\geq k$, the resulting terms belong to~$\history_{\qq}^{\rm mean}(n+2L;k)$. For~$r<k$, the splitting of the mean penalty in Step~1 of the proof of Proposition~\ref{p.fixed.geometry.one.grid.propagation} gives
\begin{equation}
	\meanpenalty_Q(P_{r,n+2L}^{\qq})\leq\bigl(1+\meanpenalty_Q(P_{k,n+2L}^{\qq})\bigr)\meanpenalty_Q(P_{r,k}^{\qq})+\meanpenalty_Q(P_{k,n+2L}^{\qq})\,.
	\label{e.two.grid.mean.split}
\end{equation}
The first term gives the mean part of the initial-history term in~$\mathcal P_{\qq}(n+2L;k)$. Summing the geometric weights of the second gives a constant times the~$r=k$ term of~$\history_{\qq}^{\rm mean}(n+2L;k)$. This proves~\eqref{e.two.grid.bulk.mean}.

\smallskip

\emph{Step 4: Boundary cubes.} In this step, we prove the corresponding bounds for the second terms on the right sides of~\eqref{e.two.grid.whitney.centered.split} and~\eqref{e.two.grid.whitney.mean.bound}:
\begin{multline}
	\E\Biggl[\max_{j_*+L\leq j\leq n+L}3^{-Q\rho_{\max}(n+L-j)}\max_y\Biggl(\sum_{r=j_*}^{j-\ell-1}\biggl|\sum_{z\in\mathcal Z_r(y+\cus_j^{\qq_+})}\frac{|\cus_r^{\qq}|}{|\cus_j^{\qq_+}|}V_{r,n+2L}^{\qq}(z)\biggr|_{S_Q}\Biggr)^Q\Biggr]\\
	\leq C3^{\frac14(1-\gamma)L}\mathcal P_{\qq}(n+2L;k)\,,
	\label{e.two.grid.boundary.fluctuations}
\end{multline}
and
\begin{equation}
	\sum_{j=j_*+L}^{n+L}3^{-\frac14(1-\gamma)(n+L-1-j)}\sum_{r=j_*}^{j-\ell-1}3^{-(1-\gamma)(j-r)}\meanpenalty_Q(P_{r,n+2L}^{\qq})\leq C3^{\frac14(1-\gamma)L}\mathcal P_{\qq}(n+2L;k)\,.
	\label{e.two.grid.boundary.mean}
\end{equation}

\smallskip

As in Step~3, we split the fluctuations at scale~$k$. The additional sum over the smaller scales is controlled by their decreasing relative volumes. At a scale~$j_*\leq r<j-\ell$, the counting estimate gives
\begin{equation*}
	\biggl(\sum_{z\in\mathcal Z_r(y+\cus_j^{\qq_+})}\frac{|\cus_r^{\qq}|^2}{|\cus_j^{\qq_+}|^2}\biggr)^{\nf12}\leq C3^{-\frac{d+1}{2}(j-r)}\,.
\end{equation*}
For~$r>k$, apply Lemma~\ref{l.fixed.geometry.matrix.averaging} as in Step~3. For each pair~$(j,r)$ in~\eqref{e.two.grid.boundary.fluctuations}, the~$L^Q$ norm of the corresponding weighted maximum is at most
\begin{equation*}
	C3^{\frac{1-\gamma}{4Q}L}3^{-\gamma(n+L-j)}3^{-\left(\frac{d+1}{2}-\frac{1-\gamma}{4Q}\right)(j-r)}3^{-\frac{1-\gamma}{4Q}(n+2L-r)}e^{\Delta_{r,n+2L}^{\qq}}\E\bigl[|V_r^{\qq}|_{S_Q}^Q\bigr]^{\nf1Q}\,.
\end{equation*}
The coefficient in~$j-r$ is summable, since~$\frac{d+1}{2}>\frac{1-\gamma}{4Q}$. The triangle inequality over~$r$, followed by the weighted Hölder inequality and summation over~$j$, bounds these boundary fluctuations by~$C3^{\frac14(1-\gamma)L}\mathcal P_{\qq}(n+2L;k)$, using the last sum in the error.

\smallskip

For~$j_*\leq r\leq k$, group the boundary cubes by their scale-$k$ parents and use the maximum in~$\history_{\qq}^{\rm fluc}(k)$. The total relative volume at scale~$r$ is at most~$C3^{r-j}$. If~$j\geq k$, the resulting coefficient is bounded by
\begin{equation*}
	\sum_{r=j_*}^{k}3^{r-j}3^{\rho_{\max}(k-r)}=3^{k-j}\sum_{s=0}^{k-j_*}3^{-(1-\rho_{\max})s}\leq C3^{k-j}\,.
\end{equation*}
If~$j<k$, then~$\ell=1$ and~$r<j-1$, so the same sum, restricted to these scales, is at most~$C3^{\rho_{\max}(k-j)}$. Thus in both cases it is at most~$C3^{\rho_{\max}(k-j)}$, because~$\rho_{\max}<1$. Taking the maximum over the scale-$k$ parents and applying~\eqref{e.two.grid.old.history.factor} gives the same bound as for the bulk fluctuations. Combining the two ranges of~$r$ proves~\eqref{e.two.grid.boundary.fluctuations}.

\smallskip

To prove~\eqref{e.two.grid.boundary.mean}, interchange the sums over~$r$ and~$j$ and use
\begin{equation*}
	\sum_{j=r+1}^{n+L}3^{-\frac14(1-\gamma)(n+L-1-j)}3^{-(1-\gamma)(j-r)}\leq C3^{\frac14(1-\gamma)L}3^{-\frac14(1-\gamma)(n+2L-1-r)}\,.
\end{equation*}
The series is summable because~$1-\gamma>\frac14(1-\gamma)$. As in Step~3, the terms with~$r\geq k$ belong to the later mean history, and~\eqref{e.two.grid.mean.split} bounds those with~$r<k$ by the initial mean history and the~$r=k$ term. This proves~\eqref{e.two.grid.boundary.mean}.

\smallskip

\emph{Step 5: Combination of the error estimates.} In this step, we prove~\eqref{e.two.grid.profile}. Insert the bulk bounds~\eqref{e.two.grid.bulk.fluctuations}--\eqref{e.two.grid.bulk.mean} and the boundary bounds~\eqref{e.two.grid.boundary.fluctuations}--\eqref{e.two.grid.boundary.mean} into~\eqref{e.two.grid.whitney.centered.split} and~\eqref{e.two.grid.whitney.mean.bound}, and then use~\eqref{e.two.grid.whitney.maximal.gap}. The bulk and boundary terms above~$j_*$ contribute at most~$C3^{\frac12(1-\gamma)L}\mathcal P_{\qq}(n+2L;k)$. Summing the comparison error against the geometric weights contributes at most~$C\delta$. It remains to estimate the partition pieces below~$j_*$ and the target scales~$j_*\leq j<j_*+L$.

\smallskip

The fine-scale terms in~\eqref{e.two.grid.whitney.fine.bound} and~\eqref{e.two.grid.whitney.mean.bound} have weights satisfying
\begin{equation*}
	\sum_{j=j_*+L}^{n+L}3^{-\frac14(1-\gamma)(n+L-j)}\bigl(3^{-(1-\gamma)(j-j_*)}+3^{-Q(1-\gamma)(j-j_*)}\bigr)\leq C3^{-\frac14(1-\gamma)(n+L-j_*)}\,.
\end{equation*}
For the remaining target scales~$j_*\leq j<j_*+L$, apply~\eqref{e.source.adapted.bound} to~$y+\cus_j^{\qq_+}$ itself. It gives~$F_j(y)\leq C\Pi\mathfrak e(\m_+)^2\RSZ\Itwod$ simultaneously for all these adapted cubes. Taking expectation controls the mean penalty, and~$\|\RSZ\|_{L^Q}\leq2$ controls the centered contribution under the joint maximum. Their total weight satisfies
\begin{equation*}
	\sum_{j=j_*}^{j_*+L-1}3^{-\frac14(1-\gamma)(n+L-j)}\leq C3^{-\frac14(1-\gamma)(n-j_*)}\,.
\end{equation*}
Since the bracket below is at least one, both remaining contributions are bounded by
\begin{equation*}
	C\left[1+\Pi\bigl(K_0\mathfrak e(\m)\mathfrak e(\m_+)+\mathfrak e(\m_+)^2\bigr)\right]^Q3^{-\frac14(1-\gamma)(n-j_*)}\,.
\end{equation*}
This proves~\eqref{e.two.grid.profile}.

\smallskip

\emph{Step 6: Determinant drift.} The Whitney comparison controls coarse blocks, whereas the drift is defined through consecutive increments. To prove~\eqref{e.two.grid.drift}, we first use Abel summation to express the drift in terms of block excesses:
\begin{align}
	D_{\qq_+,j_*}(n+L) &
	=3^{-\frac18(1-\gamma)(n+L-j_*-1)}\tr\bigl(P_{j_*,n+L}^{\qq_+}-\Itwod\bigr)
	\notag \\ & \qquad
	+\bigl(1-3^{-\frac18(1-\gamma)}\bigr)\sum_{j=j_*+1}^{n+L-1}3^{-\frac18(1-\gamma)(n+L-j-1)}\tr\bigl(P_{j,n+L}^{\qq_+}-\Itwod\bigr)\,.
	\label{e.two.grid.drift.abel}
\end{align}
All these trace excesses are nonnegative. Replacing the new normalizing matrix by the old one using~\eqref{e.two.grid.inverse.comparison} multiplies the nonnegative terms by at most~$(1-\delta)^{-1}$. Subtracting the new annealed block contributes at most~$\nf{2d\delta}{1-\delta}$ in each trace. The Abel coefficients sum to one, so their total comparison error is at most~$C\delta$.

\smallskip

We estimate the traces in~\eqref{e.two.grid.drift.abel} by applying Lemma~\ref{l.two.grid.whitney}(i) with~$\ell=L$ for every~$j\geq j_*+L$. The expectation of the bulk sum is bounded by~$\bfAhom_{j-L,\qq}$, since its total relative volume is at most one. Taking expectation, using~\eqref{e.two.grid.whitney.volumes} and~\eqref{e.two.grid.whitney.source}, and normalizing by~\eqref{e.two.grid.source.normalization}, gives
\begin{equation}
	\bfAhom_{j,\qq_+}\leq\bfAhom_{j-L,\qq}+CK_0\sum_{r=j_*}^{j-L-1}3^{r-j}\bfAhom_{r,\qq}+C\left[1+\Pi\bigl(\mathfrak e(\m)+\mathfrak e(\m_+)\bigr)^2\right]3^{-(1-\gamma)(j-j_*)}\bfAhom_{n+2L,\qq}\,.
	\label{e.two.grid.whitney.drift.comparison}
\end{equation}
Here the last term uses~$\bfAhom_{n+2L,\qq}$ as the normalizing matrix. For~$j_*\leq j<j_*+L$, the adapted cube bound instead bounds the whole matrix~$\bfAhom_{j,\qq_+}$ by~$C\left[1+\Pi\bigl(\mathfrak e(\m)+\mathfrak e(\m_+)\bigr)^2\right]\bfAhom_{n+2L,\qq}$.

\smallskip

For the bulk term in~\eqref{e.two.grid.whitney.drift.comparison}, expand
\begin{equation}
	P_{j-L,n+2L}^{\qq}-\Itwod=\sum_{r=j-L+1}^{n+2L}\bigl(P_{r-1,n+2L}^{\qq}-P_{r,n+2L}^{\qq}\bigr)\geq0\,.
	\label{e.two.grid.drift.increment.expansion}
\end{equation}
A fixed increment with index~$r$ occurs only for~$j\leq r+L-1$. Interchanging the finite sums and summing the geometric weights gives
\begin{align}
	\lefteqn{
	\sum_{j=j_*+L}^{n+L-1}3^{-\frac18(1-\gamma)(n+L-j-1)}\tr\bigl(P_{j-L,n+2L}^{\qq}-\Itwod\bigr)
	} \qquad &
	\notag \\ &
	\leq C3^{\frac14(1-\gamma)L}\sum_{r=j_*+1}^{n+2L}3^{-\frac18(1-\gamma)(n+2L-r)}\tr\bigl(P_{r-1,n+2L}^{\qq}-P_{r,n+2L}^{\qq}\bigr)
	\notag \\ &
	=C3^{\frac14(1-\gamma)L}D_{\qq,j_*}(n+2L)\,.
	\label{e.two.grid.drift.old.grid}
\end{align}

\smallskip

For the boundary term, first write~$P_{r,n+2L}^{\qq}=\Itwod+(P_{r,n+2L}^{\qq}-\Itwod)$. The identity part contributes at most~$CK_0 3^{-L}$, by~$\sum_{r<j-L}3^{r-j}\leq C3^{-L}$. Expand each remaining difference into the positive increments as in~\eqref{e.two.grid.drift.increment.expansion}. For an increment with index~$a$, sum~$3^{r-j}$ over~$r<a$ and then sum the Abel weight over~$j$. Since~$1>\frac18(1-\gamma)$, both geometric sums converge, and we obtain
\begin{equation}
	K_0\sum_{j=j_*+L}^{n+L-1}3^{-\frac18(1-\gamma)(n+L-j-1)}\sum_{r=j_*}^{j-L-1}3^{r-j}\tr\bigl(P_{r,n+2L}^{\qq}-\Itwod\bigr)\leq C3^{\frac14(1-\gamma)L}D_{\qq,j_*}(n+2L)\,.
	\label{e.two.grid.whitney.drift.boundary}
\end{equation}
Thus the smaller boundary cubes contribute a part controlled by the old drift as well as the term~$CK_0 3^{-L}$.

\smallskip

For the lower endpoint in~\eqref{e.two.grid.drift.abel}, the early scales~$j_*,\ldots,j_*+L-1$, and the contribution of the scales below~$j_*$ in~\eqref{e.two.grid.whitney.drift.comparison}, use
\begin{equation}
	3^{-\frac18(1-\gamma)(n-j_*)}+\sum_{j=j_*+L}^{n+L-1}3^{-\frac18(1-\gamma)(n+L-j-1)}3^{-(1-\gamma)(j-j_*)}\leq C(1+n+L-j_*)3^{-\frac18(1-\gamma)(n-j_*)}\,.
	\label{e.two.grid.drift.source.convolution}
\end{equation}
The first term bounds the sum of the early geometric weights; the convolution is summable because~$1-\gamma>\frac18(1-\gamma)$. Multiply by~$C\left[1+\Pi\bigl(\mathfrak e(\m)+\mathfrak e(\m_+)\bigr)^2\right]$. Together with~\eqref{e.two.grid.drift.old.grid},~\eqref{e.two.grid.whitney.drift.boundary}, and the errors~$C\delta+CK_0 3^{-L}$, this proves~\eqref{e.two.grid.drift}.

\smallskip

\emph{Step 7: We deduce~\eqref{e.successful.short.bridge}.} The smallness bound~\eqref{e.two.grid.smallness.input} gives
\begin{equation*}
	3^{\frac12(1-\gamma)L}\mathcal P_{\qq}(n+2L;k)+3^{\frac14(1-\gamma)L}D_{\qq,j_*}(n+2L)\leq\rho\,.
\end{equation*}
Thus the old-grid terms in~\eqref{e.two.grid.profile} and~\eqref{e.two.grid.drift} contribute at most~$C\rho$.

\smallskip

Since~$\mathfrak e(\m_+)\leq e\,\mathfrak e(\m)$ and~$K_0$ depends only on~$d$, we can bound the last terms in~\eqref{e.two.grid.profile} and~\eqref{e.two.grid.drift}, which include both the partition pieces below~$j_*$ and the target scales~$j_*\leq j<j_*+L$. Taking the constant in~\eqref{e.two.grid.separation.input} sufficiently large gives
\begin{equation}
	\bigl((2+\Pi)\mathfrak e(\m)^2\bigr)^Q3^{-\frac14(1-\gamma)(n-j_*)}\leq3^{-L}\,,
	\label{e.bridge.profile.source.bound}
\end{equation}
and
\begin{equation}
	(2+\Pi)\mathfrak e(\m)^2(1+n+L-j_*)3^{-\frac18(1-\gamma)(n-j_*)}\leq C3^{-L}\,.
	\label{e.bridge.source.slopes}
\end{equation}
Indeed, use part of the decay in~$n-j_*$ to absorb the power of~$(2+\Pi)\mathfrak e(\m)^2$ and, in the second estimate, the factor~$1+n+L-j_*$. The remaining decay is at most~$3^{-L}$, since~$n-j_*\geq CL$.

\smallskip

Finally,~$3^{-L}\leq\rho$ by~\eqref{e.two.grid.geometry.length.input}. These last terms and the boundary term therefore contribute at most~$C\rho$, and the comparison terms at most~$C\delta$. Adding~\eqref{e.two.grid.profile} and~\eqref{e.two.grid.drift} proves~\eqref{e.successful.short.bridge}.
\end{proof}

\subsection{Proof of the scale-selection alternatives}
\label{s.scale.selection.proof}

The fixed-grid propagation estimate gives the startup and contraction estimates. For the geometry update, we first advance the small old-grid input from~$n$ to~$n+2L$, compare the old and new means, and then transfer the errors to the new grid at~$n+L$. The transported error need not remain below the update threshold~$\eta$; it must be at most one, so that startup and subsequent contraction can begin again. We now choose the parameters to meet these requirements and check the conditions needed to repeat the step.

\begin{proof}[Proof of Proposition~\ref{p.scale.selection}]
We first treat equal error arguments, then the two tests for unequal error arguments, and finally verify that the output satisfies the input conditions for the next application. Take~$h=2Q$, which is admissible in Proposition~\ref{p.fixed.geometry.one.grid.propagation}. Take~$c_0$ and~$L_0$ from Proposition~\ref{p.successful.short.bridge}, and choose~$c\in(0,c_0]$. Increase~$L_0(d,\gamma)$ if necessary for Proposition~\ref{p.two.grid.transport}, and choose~$\varepsilon_0(d,\gamma)\in(0,(\nf{c_0}{d+1})^2]$ so that
\begin{equation}
	Qd\varepsilon_0\leq\log2
	\qquad\text{and}\qquad
	C(d,\gamma)\varepsilon_0^{\frac18(1-\gamma)}\leq1\,.
	\label{e.local.parameter.choice}
\end{equation}
Fix~$0<\sigma\leq\varepsilon\leq\varepsilon_0$ and choose
\begin{equation}
	L\coloneqq L_0+\biggl\lceil\frac32\log_3(\sigma^{-1})\biggr\rceil\,.
	\label{e.bridge.length.choice}
\end{equation}
Set~$\eta\coloneqq c\varepsilon\sigma$. Take~$B_0$ from Proposition~\ref{p.successful.short.bridge} for this length and comparison tolerance~$\varepsilon^{\nf12}\sigma$; we verify~\eqref{e.bridge.length.input} below. We may decrease~$\varepsilon_0(d,\gamma)$ and increase~$B_0(\varepsilon,\sigma,d,\gamma)$ further when checking the transport hypotheses.

\smallskip

\emph{Step 1: Case~1.} Suppose that~$k=n$. By~\eqref{e.renormalization.input},~$\mathcal P_{\qq}(n;k)+D_{\qq,j_*}(n)\leq1$. Apply~\eqref{e.fixed.geometry.fixed.span.propagation} with~$m=n$ and advance~$h=2Q$. Since~$h$ depends only on~$d$ and~$\gamma$, its factor is absorbed into~$C(d,\gamma)$, giving~\eqref{e.renormalization.startup}.

\smallskip

\emph{Step 2: The cases with~$k<n$.} We prove~\eqref{e.renormalization.service} in Case~2 and~\eqref{e.renormalization.transport.output} in Case~3; Cases~4 and~5 assert no estimate. Assume first that~$k<n$ and
\begin{equation*}
	\mathcal P_{\qq}(n;k)+D_{\qq,j_*}(n)>\eta\,.
\end{equation*}
If~$d^{-1}\widehat\Delta_h^{\qq}(n)>\sigma$, we are in Case~4. Otherwise, we are in Case~2, and
\begin{equation*}
	Q\widehat\Delta_h^{\qq}(n)\leq Qd\sigma\leq Qd\varepsilon_0\leq\log2\,.
\end{equation*}
Consequently,~\eqref{e.fixed.geometry.synchronized.propagation} and~\eqref{e.local.parameter.choice} give
\begin{align*}
	\mathcal P_{\qq}(n+h;k)+D_{\qq,j_*}(n+h)
	&\leq\frac18e^{Q\widehat\Delta_h^{\qq}(n)}
	\bigl(\mathcal P_{\qq}(n;k)+D_{\qq,j_*}(n)\bigr)
	+C(d,\gamma)
	\bigl(e^{Q\widehat\Delta_h^{\qq}(n)}-1\bigr)
	\\
	&\leq\frac14
	\bigl(\mathcal P_{\qq}(n;k)+D_{\qq,j_*}(n)\bigr)
	{}+C(d,\gamma)\widehat\Delta_h^{\qq}(n)\,.
\end{align*}
This is~\eqref{e.renormalization.service}.

\smallskip

Assume
\begin{equation*}
	k<n\,,
	\qquad
	\mathcal P_{\qq}(n;k)+D_{\qq,j_*}(n)\leq\eta\,.
\end{equation*}
If~$d^{-1}\Delta_{n,n+2L}^{\qq}>\varepsilon\sigma$, we are in Case~5. Otherwise, we are in Case~3, and
\begin{equation*}
	\mathcal P_{\qq}(n;k)+D_{\qq,j_*}(n)+\Delta_{n,n+2L}^{\qq}\leq(c+d)\varepsilon\sigma\leq c_0\varepsilon^{\nf12}\sigma\,.
\end{equation*}
Moreover,~$n\geq k+h=k+2Q$ by~\eqref{e.renormalization.input}. Since~$2\varepsilon\leq c_0$ and~$B\geq B_0$, the eccentricity bound~\eqref{e.bridge.eccentricity.input} follows from~\eqref{e.renormalization.eccentricity}. Lemma~\ref{l.projective.step} gives~$d_{\rm pr}([\m],[\m_+])\leq\varepsilon\leq1$, and~\eqref{e.renormalization.containment} places both cubes inside~$\cu_{2j_*}$.

\smallskip

Since~$\sigma\leq\varepsilon$, the chosen length satisfies
\begin{equation*}
	L-L_0\geq\frac32\log_3(\sigma^{-1})\geq\log_3\bigl((\varepsilon^{\nf12}\sigma)^{-1}\bigr)\,.
\end{equation*}
We may therefore apply Proposition~\ref{p.successful.short.bridge} with~$\sigma$ replaced by~$\varepsilon^{\nf12}\sigma$, without changing~$L$. We obtain~\eqref{e.successful.short.near} with this smaller tolerance. We apply Proposition~\ref{p.two.grid.transport} with~$\delta=\varepsilon^{\nf12}\sigma$ and~$\rho=\sigma^{\frac18(1-\gamma)}$.

\smallskip

It remains to verify that the small input in Case~3 pays for the transfer of the history. The fixed-span estimate~\eqref{e.fixed.geometry.fixed.span.propagation} and the determinant bound give
\begin{equation*}
	\mathcal P_{\qq}(n+2L;k)+D_{\qq,j_*}(n+2L)\leq C(d,\gamma)(1+L)\varepsilon\sigma\,.
\end{equation*}
By~\eqref{e.bridge.length.choice},
\begin{equation*}
	(1+L)\varepsilon\sigma3^{\frac12(1-\gamma)L}\leq C(d,\gamma)\varepsilon\bigl(1+\log(\sigma^{-1})\bigr)\sigma^{\frac18(1-\gamma)+\frac18(1+7\gamma)}\,.
\end{equation*}
The logarithmic choice of~$L$ turns the transport cost into a power of~$\sigma^{-1}$, leaving the positive power~$\sigma^{\frac18(1+7\gamma)}$ beyond the required~$\rho=\sigma^{\frac18(1-\gamma)}$. Since~$\bigl(1+\log(\sigma^{-1})\bigr)\sigma^{\frac18(1+7\gamma)}$ is bounded for~$0<\sigma\leq1$, decreasing~$\varepsilon_0(d,\gamma)$ ensures the required old-grid smallness.

\smallskip

The choice of~$L_0$ gives~$L\geq C+\log_3(\rho^{-1})$, with~$C$ from Proposition~\ref{p.two.grid.transport}. The eccentricity bound~\eqref{e.renormalization.eccentricity} gives
\begin{equation*}
	n-j_*\geq B\log_3(2+\Pi)+\frac L\varepsilon\log\mathfrak e(\m)\,.
\end{equation*}
Taking~$B_0\geq C(L+1)$ and decreasing~$\varepsilon_0$ so that~$\nf{L_0}{\varepsilon_0}\geq\nf{2C}{\log3}$ verifies~\eqref{e.two.grid.separation.input}. We have already checked the distance bound in~\eqref{e.two.grid.geometry.length.input} and that both cubes lie in~$\cu_{2j_*}$ as required by~\eqref{e.two.grid.cubes.input}. The preceding application of~\eqref{e.successful.short.near} gives the matrix comparison with~$\delta=\varepsilon^{\nf12}\sigma$. Thus Proposition~\ref{p.two.grid.transport} applies. Its conclusion and~$\delta\leq\rho$ prove~\eqref{e.renormalization.transport.output}. Since~$\sigma\leq\varepsilon_0$, the second inequality in~\eqref{e.local.parameter.choice} yields
\begin{equation*}
	\mathcal P_{\qq_+}(n+L;n+L)+D_{\qq_+,j_*}(n+L)
	\leq C(d,\gamma)\sigma^{\frac18(1-\gamma)}\leq1\,.
\end{equation*}
This proves the first alternative in~\eqref{e.renormalization.output}.

\smallskip

\emph{Step 3: Conditions on the output.} We prove~\eqref{e.renormalization.eccentricity.output} and~\eqref{e.renormalization.output} in every case. Consider first the cases in which the geometry is unchanged. In each of them,~$\m'=\m$ and~$k'=k$, so~\eqref{e.renormalization.eccentricity.output} is exactly~\eqref{e.renormalization.eccentricity}. In Cases~1,~2, and~4,~$n'=n+h$ by~\eqref{e.renormalization.output.data}. If~$k=n$, then~$n'=k'+h$; if~$k<n$, then the input condition~\eqref{e.renormalization.input} gives~$n'=n+h\geq k'+h$. In Case~5,~$k<n$ and~$n'=n+2L\geq n\geq k'+h$ by~\eqref{e.renormalization.input}. Thus~$k'<n'$ and~$n'\geq k'+h$ in every unchanged-geometry case, which is the second alternative in~\eqref{e.renormalization.output}.

\smallskip

In Case~3,~$k'=n'=n+L$. Lemma~\ref{l.projective.step}, the inequality~$k\leq n$, and~\eqref{e.renormalization.eccentricity} give
\begin{equation*}
	\frac12\log\bigl(|\m_+|\,|\m_+^{-1}|\bigr)\leq\frac{\varepsilon}{L}\left(k-j_*-\left\lceil B\log_3(2+\Pi)\right\rceil\right)+\varepsilon\leq\frac{\varepsilon}{L}\left(n+L-j_*-\left\lceil B\log_3(2+\Pi)\right\rceil\right)\,.
\end{equation*}
This is~\eqref{e.renormalization.eccentricity.output} with~$k'=n+L$. The bound~\eqref{e.renormalization.transport.output} proves the first alternative in~\eqref{e.renormalization.output}.

\smallskip

The split between~$k=n$ and~$k<n$, the split at~$\mathcal P_{\qq}(n;k)+D_{\qq,j_*}(n)=\eta$, and the two strict-versus-nonstrict determinant thresholds are disjoint and exhaustive. This completes the proof.
\end{proof}

\section{Closing the duality gap}
\label{s.polynomial.entry.proof}

The proof of Theorem~\ref{t.polynomial.entry} has two parts. We first find cubes on which the coarse matrices fluctuate little, their means change little with scale, and the shape of the cubes is adapted to those means. We then use the equation to show that the annealed coarse-grained matrix nearly agrees with its dual. Proposition~\ref{p.global.selection} carries out the first part by iterating Proposition~\ref{p.scale.selection}; Proposition~\ref{p.response.transfer} gives the second in a single application. The distinction matters: the errors reduced during the iteration are not the duality gap that we ultimately need to close.

\paragraph{Fluctuations and means.}
On a fixed grid~$\qq$, the fluctuation~$V_{j,m}^{\qq}$ measures the difference between a random coarse block and its expectation, normalized by~$\bfAhom_{m,\qq}$. The mean excess~$P_{j,m}^{\qq}-\Itwod$ compares the means at scales~$j$ and~$m$. The histories and drift retain these fluctuations and changes of the mean at earlier scales, with weights that decrease as those scales become more distant. Thus smallness of~$\mathcal P_{\qq}(m;m)+D_{\qq,j_*}(m)$ controls both the random fluctuations and the recent changes of the mean. These estimates concern past and present scales: they do not yet compare~$\bfAhom_{m,\qq}$ with its homogenized limit, or control its future changes.

The duality gap instead compares the annealed coarse-grained matrix with its dual. To measure it, let~$\kappa_m\geq1$ be the least number such that
\begin{equation*}
	\bfAhom_{m,\qq}\leq\kappa_m\mathbf R\bfAhom_{m,\qq}^{-1}\mathbf R\,.
\end{equation*}
The quantity~$\kappa_m-1$ measures their relative separation and controls the coarse contrast through~\eqref{e.response.euclidean.contrast} on the Euclidean grid. Concentration around the mean does not compare these two matrices, and they may remain far apart even if each changes little with scale. The equation supplies the additional relation needed to show that they are close to one another.

\paragraph{Initialization.}
Proposition~\ref{p.initial.fixed.grid.scale} first obtains small fluctuation and mean errors on the Euclidean grid. Averaging makes the mean of many small-cube blocks fluctuate less than an individual block. Subadditivity places the large-cube block below this average, and the expectation of their difference is the decrease of the annealed block. Thus either averaging improves the error or a change of the mean contributes to the determinant loss. The fixed-grid estimate reduces the error and drift, apart from a cost measured by this loss. The initial error is polynomially bounded in~$\Pi$, while the total determinant loss on this grid is at most~$C\log(2+\Pi)$. Taking the logarithm makes both effects additive: a fixed-factor contraction gives a fixed decrease, and any growth is bounded in terms of determinant loss. Summing these changes yields a scale~$n_0$ within~$C\log(2+\Pi)$ further scales at which~$\mathcal P_{\Id}(n_0;n_0)+D_{\Id,j_*}(n_0)\leq\eta_{\rm init}$, for any fixed tolerance~$\eta_{\rm init}>0$.

This smallness is obtained at the selected scale~$n_0$. The earlier errors remain in its history with their decaying weights; they need not themselves have been small. Nor have we yet bounded the duality gap at~$n_0$. We have obtained the initial error bound needed to begin adjusting the grid.

\paragraph{Choosing the geometry.}
To compare the primal and dual coarse-grained matrices, we must estimate gradients and fluxes on the same cubes. The difficulty is directional: a gradient of a given size can cost very different amounts of energy in different directions. The canonical metric describes how to stretch the cubes so that the gradient and flux estimates are balanced. The comparison of optimizer energies in Proposition~\ref{p.response.transfer} makes the benefit explicit: it contains the factor
\begin{equation*}
	|\qq\m^{-\nf12}|\,|\m^{\nf12}\qq^{-1}|\,.
\end{equation*}
This measures the mismatch between the cube shape and the energy metric~$\m$. It is at most three on the adapted grid~$\qq=\mathcal Q(\m)$. On a Euclidean grid it is~$(|\m|\,|\m^{-1}|)^{\nf12}$, which can grow with the reference ellipticity ratio~$\Pi$. Thus an error that is small on Euclidean cubes can still be multiplied by a large factor when we use it to estimate the duality gap.

Keeping Euclidean cubes would therefore require a tolerance that decreases with~$\Pi$. But reaching a smaller tolerance makes the cost of each determinant loss larger in the initialization estimate. Requesting a tolerance that is a negative power of~$\Pi$ would lose the bound of~$C\log(2+\Pi)$ on the number of scales. By adapting the cube shapes instead, we can work with a fixed small tolerance, however large~$\Pi$ is.

\paragraph{Scale selection.}
Proposition~\ref{p.global.selection} repeatedly applies Proposition~\ref{p.scale.selection} to reduce the errors on a fixed grid and change its shape when those errors are small enough and the mean changes little. The estimates on the old grid transfer to the new one: the errors may grow, but the small input keeps the new initial error bounded. The mean coarse matrix may still change as the scale increases, so the relative energy costs in different directions may change as well. Cubes adapted to an earlier mean need not balance the estimates at a later scale. We therefore adjust their shape during the iteration, using the mean matrices at the new scales.

To count the steps, the proof combines the logarithmic size of the error with the distance from the grid metric to the canonical metric. A fixed-grid step reduces the error up to determinant loss; a change of shape reduces the geometric mismatch, while allowing the error to increase. Summing the determinant losses and accounting for comparisons between grids bounds the number of steps by~$C\log(2+\Pi)$. We stop on an interval~$[s,t]$, with~$t=s+H$, on which the histories are small, the mean changes little, and the grid is canonical for a block close to~$\bfAhom_{s,\qq}$. These conditions allow us to use the equation to close the duality gap. We have not required~$\kappa_m-1$ to contract at each selection step.

\paragraph{Closing the gap.}
Proposition~\ref{p.response.transfer} now relates the duality gap to the gradients and fluxes of the primal and dual optimizers. Since the mean changes little between~$s$ and~$t$, the optimizer on the large cube is close in energy to the optimizers on its smaller cubes. The histories control the averages of their centered gradients and fluxes over those cubes and their descendants. Using the equation, we turn this control of averages into a bound on the centered optimizer energies, which gives
\begin{equation*}
	\kappa_t-1\leq\zeta\kappa_s\,.
\end{equation*}
Here~$\zeta$ is small when~$H$ is large and the selected errors and determinant loss are small. Crucially,~$\kappa_s$ appears only to the first power, and~$\zeta$ can be chosen small even when~$\kappa_s-1$ is large. Adapting the cubes and choosing the primal and dual loads together make this possible without a further loss depending on~$\Pi$.

Set~$r=\nf{\det\bfAhom_{s,\qq}}{\det\bfAhom_{t,\qq}}$. The small determinant loss on~$[s,t]$ gives~$\bfAhom_{t,\qq}\leq\bfAhom_{s,\qq}\leq r\bfAhom_{t,\qq}$, with~$r$ close to one. Comparing the dual blocks as well gives~$\kappa_s\leq r^2\kappa_t$. Hence, writing~$\omega=\zeta r^2$, we obtain
\begin{equation*}
	\kappa_t-1\leq\omega\kappa_t
	\qquad\Longrightarrow\qquad
	\kappa_t-1\leq\frac{\omega}{1-\omega}\qquad(\omega<1)\,.
\end{equation*}
This is the step that makes the duality gap small, regardless of how large it was initially: the same unknown~$\kappa_t$ occurs on both sides, and we rearrange the inequality to bound it. There is no further iteration of the gap across successive windows. We first choose~$H$ and the error tolerances for the desired accuracy, then find an interval satisfying those requirements, and apply this argument once.

Finally, we compare the annealed coarse-grained matrices on the selected adapted cube with those on a larger Euclidean cube. The geometric eccentricity of the selected grid is polynomially bounded in~$\Pi$, so this comparison costs another~$C\log(2+\Pi)$ scales. On the Euclidean grid the annealed matrices decrease with scale and their duals increase, so the resulting relative primal--dual bound persists at all larger scales. This completes the proof of Theorem~\ref{t.polynomial.entry}. The subsequent small-contrast iteration in Subsection~\ref{ss.algebraic.convergence} has a different purpose: starting from this small gap, it proves a rate of decay of the coarse-graining defect and yields Theorem~\ref{t.algebraic.convergence}.

\smallskip

Throughout this section, we use the choices of parameters for Proposition~\ref{p.scale.selection} made in Subsection~\ref{s.scale.selection.proof}; in particular,~$h=2Q$.

\subsection{Initialization}
\label{s.initialization}

Initialization is distinct from the startup step in Proposition~\ref{p.scale.selection}: here smallness must be proved, whereas startup assumes a bounded error on a fresh grid. The adapted cube bound gives only a bound polynomial in~$\Pi$. On the Euclidean grid, the total logarithmic determinant loss is at most~$C\log(2+\Pi)$, so the fixed-grid recurrence reduces that initial bound to a prescribed tolerance within a logarithmic number of scales. This reduction is made once; subsequent geometry updates transport the acquired control instead of estimating the errors anew from the adapted cube bound.

\begin{proposition}[Euclidean initialization]
\label{p.initial.fixed.grid.scale}
Fix~$h(d,\gamma)\in\N$ and~$\varepsilon_0(d,\gamma)\in(0,1)$ from Proposition~\ref{p.scale.selection}. Let~$0<\sigma\leq\varepsilon\leq\varepsilon_0$, and fix~$L(\varepsilon,\sigma,d,\gamma)\in\N$ and~$B_0(\varepsilon,\sigma,d,\gamma)\geq1$ furnished there. Let~$j_*$ satisfy~\eqref{e.source.lower.scale}. For every~$\eta_{\rm init}\in(0,1]$, there exists~$C(d,\gamma,\eta_{\rm init})<\infty$ such that the following holds for every~$B\geq B_0$. Set
\begin{equation*}
	R\coloneqq j_*+\left\lceil B\log_3(2+\Pi)\right\rceil\,.
\end{equation*}

\smallskip

There is a deterministic scale~$n_0$ such that
\begin{equation}
	R\leq n_0\leq R+\left\lceil C(d,\gamma,\eta_{\rm init})\log_3(2+\Pi)\right\rceil
	\qquad\text{and}\qquad
	\mathcal P_{\Id}(n_0;n_0)+D_{\Id,j_*}(n_0)\leq\eta_{\rm init}\,.
	\label{e.renormalization.entry}
\end{equation}

\smallskip

Moreover,
\begin{equation}
	\frac12\mathbf R\bfE^{-1}\mathbf R
	\leq\bfAhom_{n_0,\Id}\leq2\bfE
	\qand
	d_{\rm pr}\!\left([\Id],\left[\cmet(\bfAhom_{n_0,\Id})\right]\right)
	\leq C(d)\log(2+4\Pi)\,.
	\label{e.initial.geometry.bounds}
\end{equation}
\end{proposition}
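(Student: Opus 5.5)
We work throughout on the Euclidean grid~$\qq=\Id$. Here~$\mathcal Q(\Id)=\Id$ and~$\mathfrak e(\Id)=1$, and no geometry change occurs.

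\textbf{Initial bound at~$R$.} We first bound the error and drift at the scale~$R$ by a power of~$2+\Pi$. Taking expectations in~\eqref{e.source.adapted.bound} (standard cubes, no eccentricity factor) gives~$\bfAhom_{j,\Id}\leq 2\cdot3^{\gamma(j_*-j)_+}\bfE$. Combining this with~\eqref{e.matrix.annealed.sharp.order} and~\eqref{e.matrix.reference.comparison} gives, for~$j\geq j_*$,
\begin{equation*}
	(12\Pi)^{-1}\bfE\leq\tfrac12\mathbf R\bfE^{-1}\mathbf R\leq\bfAhom_{j,\Id}\leq2\bfE\,.
\end{equation*}
The lower bound comes from dualizing the upper bound, and this already yields the first half of~\eqref{e.initial.geometry.bounds} at every scale~$\geq j_*$. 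Several consequences follow:
\begin{itemize}
\item every normalized mean satisfies~$|P_{j,m}^{\Id}|\leq C\Pi$;
\item every normalized block~$\bfAhom_{m,\Id}^{-\nf12}\bfA(z+\cu_j)\bfAhom_{m,\Id}^{-\nf12}$ is bounded by~$C\Pi\RSZ3^{\gamma(j_*-j)_+}$;
\item since~$\|\RSZ\|_{L^Q}\leq2$ and~$\rho_{\max}Q-d>0$ pays for the maxima over cells, we get~$\mathcal P_{\Id}(R;R)+D_{\Id,j_*}(R)\leq C(2+\Pi)^{C}$;
\item the total loss satisfies~$\Delta_{j_*,\infty}^{\Id}\leq\log\det(24\Pi\,\Itwod)\leq C\log(2+\Pi)$.
\end{itemize}

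\textbf{Reduction to a threshold.} We do not have~$\mathcal P\leq1$ at~$R$, so we cannot invoke startup~\eqref{e.fixed.geometry.fixed.span.propagation}. Instead we start with~$n=R$ and first advance~$h$ scales by a crude bound. This is the proof of~\eqref{e.fixed.geometry.profile.startup} without the smallness assumption, using~\eqref{e.fixed.geometry.parent.child.powered} with~$e^{Q\Delta}\leq(C\Pi)^{C}$; it gives~$\mathcal P_{\Id}(R+h;R)+D\leq(2+\Pi)^{C}$.

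From there we apply the contraction~\eqref{e.fixed.geometry.synchronized.propagation} repeatedly with~$m=R+h+ih$. Set~$X_i\coloneqq\mathcal P_{\Id}(m_i;R)+D_{\Id,j_*}(m_i)$ and~$\delta_i\coloneqq\widehat\Delta_h^{\Id}(m_i)$, so that
\begin{equation*}
	X_{i+1}\leq\tfrac18e^{Q\delta_i}X_i+C(e^{Q\delta_i}-1)\,.
\end{equation*}
If~$\delta_i\leq\delta_0(d,\gamma,\eta_{\rm init})$ small and~$X_i>\eta'$, then~$X_{i+1}\leq\frac14X_i+C\delta_i$. This is at most~$\frac12X_i$ when~$C\delta_0\leq\frac14\eta'$, so~$\log X$ drops by~$\log 2$. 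On any step we always have~$\log X_{i+1}\leq\log X_i+Q\delta_i+\log(C')$, using~$X_i>\eta'$ to absorb the additive term.

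We classify indices as \emph{bad} if~$\delta_i>\delta_0$. By~\eqref{e.fixed.geometry.synchronized.multiplicity}, the number of bad indices is at most~$h\delta_0^{-1}C\log(2+\Pi)$, and~$\sum_i\delta_i\leq Ch\log(2+\Pi)$. While~$X_i>\eta'$, summing the increments gives
\begin{equation*}
	\log X_I\leq C\log(2+\Pi)+C\#\{\text{bad}\}+Q\textstyle\sum\delta_i-(\log2)\#\{\text{good}\}\,.
\end{equation*}
Hence within~$I\leq C(\eta')\log(2+\Pi)$ steps we reach some~$i$ with~$X_i\leq\eta'$.

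\textbf{Converting to a fresh error.} The contraction gives~$\mathcal P_{\Id}(m;R)$, whereas~\eqref{e.renormalization.entry} asks for~$\mathcal P_{\Id}(n_0;n_0)=\history_{\Id}(n_0)$. By~\eqref{e.fixed.geometry.profile.majorization} we have~$\history_{\Id}(m)\leq C\mathcal P_{\Id}(m;R)$, so choosing~$\eta'=\eta_{\rm init}/(2C)$ and~$n_0=m_i$ gives~\eqref{e.renormalization.entry}. The scale count is~$n_0-R\leq h(1+I)\leq C\log_3(2+\Pi)$.

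\textbf{Geometry bound.} For the second half of~\eqref{e.initial.geometry.bounds}, we use~\eqref{e.matrix.canonical.order} with~$F=\bfAhom_{n_0,\Id}$, which satisfies~$F_*\leq F$. This sandwiches~$M(F)$ between~$(12\Pi)^{-1}\bfE$ and~$2\bfE$, up to the dual. Thus~$M(F)$ is comparable to~$\bfE$ within a factor~$C\Pi$. Since~$\bfE$ is itself comparable to~$\mathbf R\bfE^{-1}\mathbf R$ within~$6\Pi$, the lower-right block satisfies~$\cmet(F)^{-1}\simeq\s_{*,0}^{-1}$ up to factors~$C\Pi$.

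The remaining task is to bound~$|\s_{*,0}|\,|\s_{*,0}^{-1}|$. Here~$\Pi\geq|\s_{*,0}^{-1}|\,|\s_0|$ and~$\s_{*,0}\leq\s_0$ give~$|\s_{*,0}|\,|\s_{*,0}^{-1}|\leq\Pi$. Hence~$d_{\rm pr}([\Id],[\cmet(F)])\leq C\log(2+4\Pi)$.

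\textbf{Main obstacle.} The delicate step is the potential-function count. One must check that the additive term~$C(e^{Q\delta}-1)$ is absorbed relative to the threshold~$\eta'$, so that growth on bad steps is charged to determinant loss, and that the crude first~$h$ scales cost only~$C\log(2+\Pi)$ in~$\log X$.
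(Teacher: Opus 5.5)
Your proposal is correct and follows the paper's proof. The shared steps are: a crude bound polynomial in $\Pi$ from the adapted cube bound and the reference comparison; a logarithmic potential that drops by a fixed amount under the synchronized contraction~\eqref{e.fixed.geometry.synchronized.propagation}, up to determinant loss; the multiplicity bound~\eqref{e.fixed.geometry.synchronized.multiplicity} together with the total Euclidean loss $O(\log(2+\Pi))$; conversion to the fresh error via~\eqref{e.fixed.geometry.profile.majorization}; and a lower-right block comparison with $\s_{*,0}$ for the geometry. The differences are cosmetic: the paper uses second argument $j_*$ rather than $R$ (so no separate crude startup is needed), the continuous potential $\log(1+\eta_0^{-1}(\mathcal P+D))$ instead of your good/bad split, and compares $\cmet(\bfAhom_{n_0,\Id})$ with $\cmet(\bfE)$ via monotonicity of the geometric mean. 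The one detail to tidy is that a single $\RSZ$ controls only cubes inside $\cu_{2j_*}$, so for scales above $2j_*$ you should obtain the moment bounds from subadditivity and stationarity, or rerun the construction at each scale as the paper does.
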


\begin{proof}
Since~$\mathcal Q(\Id)=\Id$, the fixed-geometry estimates apply on the Euclidean grid. The proof has four steps. We first bound the Euclidean error directly from~\eqref{e.coarse.ellipticity}. Step~2 derives a decrease for its logarithm whenever the error and the drift are not yet small. Step~3 sums this inequality and the determinant losses to select~$n_0$. Step~4 verifies the geometric conditions. The containing region for the subsequent iteration is chosen in the global argument. Throughout the proof,~$C$ may change from line to line and depends only on~$d$ and~$\gamma$. After choosing~$\eta_0$, we indicate every occurrence for which~$C$ also depends on~$\eta_0$.

\smallskip

\emph{Step 1: A bound for the initial error.} In this step, we prove
\begin{equation}
	\mathcal P_{\Id}(m;j_*)+D_{\Id,j_*}(m)\leq C(2+\Pi)^C
	\qquad(m\geq j_*)\,.
	\label{e.initial.crude.profile}
\end{equation}
We bound the normalized blocks first and then insert these bounds into the histories, drift, and error.

\smallskip

For each~$j\geq j_*$, repeat the construction~\eqref{e.source.multiplier} with~$j$ in place of~$j_*$ and use the standard-cube conclusion of~\eqref{e.source.adapted.bound} on~$\cu_j$. The resulting random variable has~$L^Q$ norm at most two, so~$\bfAhom_{j,\Id}\leq2\bfE$. The order~$\bfAhom_*(U)\leq\bfAhom(U)$ gives~$\bfAhom_{j,\Id}\geq\mathbf R\bfAhom_{j,\Id}^{-1}\mathbf R\geq\frac12\mathbf R\bfE^{-1}\mathbf R$. Together with the reference comparison~$\bfE\leq6\Pi\mathbf R\bfE^{-1}\mathbf R$, these estimates give, uniformly in~$j$,
\begin{equation}
	\frac12\mathbf R\bfE^{-1}\mathbf R\leq\bfAhom_{j,\Id}\leq2\bfE
	\qand
	\|V_j^{\Id}\|_{L^Q(S_Q)}\leq C\Pi\,.
	\label{e.initial.source.bounds}
\end{equation}
For~$j_*\leq j\leq m$, the aligned-subdivision estimate gives~$\bfAhom_{m,\Id}\leq\bfAhom_{j,\Id}$. Combining~\eqref{e.initial.source.bounds} with~\eqref{e.matrix.reference.comparison} at the two scales gives
\begin{equation}
\left\{
\begin{aligned}  
& \bfAhom_{m,\Id}\leq\bfAhom_{j,\Id}\leq24\Pi\bfAhom_{m,\Id}, \\ 
& \Itwod\leq P_{j,m}^{\Id}\leq24\Pi\Itwod, \\ 
& 0 \leq\Delta_{j,m}^{\Id}\leq2d\log(24\Pi)\leq C\log(2+\Pi)
,.
\end{aligned} 
\right. 
\label{e.initial.normalization.bounds}
\end{equation}
Changing the normalization from scale~$j$ to scale~$m$, using stationarity for the translated scale-$j$ cell, we get
\begin{equation*}
	\E\bigl[|V_{j,m}^{\Id}(z)|^Q\bigr]
	\leq e^{Q\Delta_{j,m}^{\Id}}\E\bigl[|V_j^{\Id}|_{S_Q}^Q\bigr]
	\qquad\bigl(z\in3^j\Zd\cap\cu_m\bigr)\,.
\end{equation*}
There are~$3^{d(m-j)}$ such cells. In the definition of~$\history_{\Id}^{\rm fluc}(m)$, we bound the supremum over the scale-$j$ cells by the corresponding sum. Using~\eqref{e.initial.source.bounds}--\eqref{e.initial.normalization.bounds}, we find
\begin{equation*}
	\history_{\Id}^{\rm fluc}(m)\leq\sum_{j=j_*}^{m}3^{-(Q\rho_{\max}-d)(m-j)}e^{Q\Delta_{j,m}^{\Id}}\E\bigl[|V_j^{\Id}|_{S_Q}^Q\bigr]\leq C(2+\Pi)^C\,.
\end{equation*}
The series is bounded uniformly in~$m$, since
\begin{equation*}
	Q\rho_{\max}-d=Q\gamma+\frac14(1-\gamma)>0\,.
\end{equation*}
The middle inequality in~\eqref{e.initial.normalization.bounds} also gives
\begin{equation*}
	\meanpenalty_Q(P_{j,m}^{\Id})\leq\bigl(1+2d(24\Pi-1)\bigr)^Q-1
	\qand
	\history_{\Id}^{\rm mean}(m;j_*)\leq C(2+\Pi)^C\,.
\end{equation*}

\smallskip

The increments in the definition of~$D_{\Id,j_*}(m)$ are positive semidefinite. Discarding their weights and telescoping gives
\begin{equation*}
	D_{\Id,j_*}(m)\leq\sum_{j=j_*+1}^{m}\tr\bigl(P_{j-1,m}^{\Id}-P_{j,m}^{\Id}\bigr)=\tr(P_{j_*,m}^{\Id}-\Itwod)\leq C\Pi\,.
\end{equation*}
The three terms in the definition of the error~$\mathcal P_{\Id}(m;j_*)$ now satisfy
\begin{align*}
	\mathcal P_{\Id}(m;j_*)
	&=3^{-\frac14(1-\gamma)(m-j_*)}
	\bigl(1+\meanpenalty_Q(P_{j_*,m}^{\Id})\bigr)\history_{\Id}(j_*)
	+\history_{\Id}^{\rm mean}(m;j_*)\\
	&\qquad +\sum_{j=j_*+1}^{m}3^{-\frac14(1-\gamma)(m-j)}e^{Q\Delta_{j,m}^{\Id}}
	\E\bigl[|V_j^{\Id}|_{S_Q}^Q\bigr]\\
	&\leq C(2+\Pi)^C\Biggl(3^{-\frac14(1-\gamma)(m-j_*)}
	+\sum_{j=j_*}^{m-1}3^{-\frac14(1-\gamma)(m-1-j)}
	+\sum_{j=j_*+1}^{m}3^{-\frac14(1-\gamma)(m-j)}\Biggr)\\
	&\leq C(2+\Pi)^C\,.
\end{align*}
Combining this with the preceding bound for~$D_{\Id,j_*}(m)$ proves~\eqref{e.initial.crude.profile}.

\smallskip

\emph{Step 2: Logarithmic decrease.} Let~$C$ be the constant in~\eqref{e.fixed.geometry.carried.majorization}, enlarged so that~$C\geq1$, and choose~$\eta_0\in(0,1]$ so that
\begin{equation*}
	C\eta_0\leq\eta_{\rm init}\,.
\end{equation*}
For~$\ell\in\Z_{\geq0}$, set
\begin{equation}
	T_\ell\coloneqq R+h+\ell h
	\qand
	W_\ell\coloneqq\log\!\left(1+\eta_0^{-1}\bigl(\mathcal P_{\Id}(T_\ell;j_*)+D_{\Id,j_*}(T_\ell)\bigr)\right)\,.
	\label{e.initial.synchronized.scales}
\end{equation}
The scales~$T_\ell$ are spaced for the synchronized propagation estimate. The logarithm converts contraction above the threshold~$\eta_0$ into a fixed decrease, with determinant variation as an additive error. A large determinant loss may increase~$W_\ell$, but the sum of these increases is controlled; no small-loss assumption is imposed on individual steps. Since~$T_\ell\geq j_*+h$, the synchronized propagation estimate~\eqref{e.fixed.geometry.synchronized.propagation} applies with the second argument of the error equal to~$j_*$.

\smallskip

In this step, we prove that, whenever
\begin{equation*}
	\mathcal P_{\Id}(T_\ell;j_*)+D_{\Id,j_*}(T_\ell)>\eta_0\,,
\end{equation*}
we have
\begin{equation}
	W_{\ell+1}\leq W_\ell-\log\frac{16}{9}+C\widehat\Delta_h^{\Id}(T_\ell)\,.
	\label{e.initial.log.decrement}
\end{equation}

\smallskip

We derive this from the propagation estimate by an elementary scalar calculation. Suppose that~$x\geq1$,~$y\geq0$, and
\begin{equation*}
	x'\leq\frac18e^{Qy}x+C\eta_0^{-1}(e^{Qy}-1)\,.
\end{equation*}
Since~$e^{Qy}-1\leq Qye^{Qy}$ and~$\nf{1+\nf{x}{8}}{1+x}\leq\nf{9}{16}$, we have
\begin{equation*}
	\frac{1+x'}{1+x}\leq e^{Qy}\left(\frac9{16}+\frac{CQ}{2\eta_0}y\right)\leq\frac9{16}\exp \left(\left(Q+\frac{8CQ}{9\eta_0}\right)y\right)\leq\frac9{16}e^{Cy}\,.
\end{equation*}
The second inequality follows from~$1+t\leq e^t$, and the last constant depends only on~$d$,~$\gamma$, and~$\eta_0$. In~\eqref{e.fixed.geometry.synchronized.propagation}, take~$x=\eta_0^{-1}\bigl(\mathcal P_{\Id}(T_\ell;j_*)+D_{\Id,j_*}(T_\ell)\bigr)$,~$x'=\eta_0^{-1}\bigl(\mathcal P_{\Id}(T_{\ell+1};j_*)+D_{\Id,j_*}(T_{\ell+1})\bigr)$, and~$y=\widehat\Delta_h^{\Id}(T_\ell)$. Taking logarithms proves~\eqref{e.initial.log.decrement}.

\smallskip

\emph{Step 3: Selection of the initial scale.} We use~\eqref{e.initial.log.decrement} to find~$n_0$ satisfying~\eqref{e.renormalization.entry}. If the sum of the error and the drift stayed above~$\eta_0$, the fixed decrease would eventually exceed the initial logarithm and all determinant errors. The bound~\eqref{e.initial.crude.profile} gives
\begin{equation}
	W_0\leq C(d,\gamma,\eta_0)\log_3(2+\Pi)\,.
	\label{e.initial.starting.log}
\end{equation}
Since~$T_0\geq j_*+h$, estimates~\eqref{e.fixed.geometry.synchronized.multiplicity} and~\eqref{e.initial.normalization.bounds} give, for every integer~$K\geq1$,
\begin{equation*}
	\sum_{\ell=0}^{K-1}\widehat\Delta_h^{\Id}(T_\ell)\leq h\Delta_{T_0+1-h,T_0+Kh}^{\Id}\leq C\log_3(2+\Pi)\,.
\end{equation*}
Here~$T_\ell=T_0+\ell h$,~$T_0+1-h=R+1\geq j_*$, and~$h=2Q$ depends only on~$d$ and~$\gamma$.

\smallskip

Together with~\eqref{e.initial.starting.log}, the preceding estimate gives
\begin{equation*}
	W_0+C\sum_{\ell=0}^{K-1}\widehat\Delta_h^{\Id}(T_\ell)
	\leq C(d,\gamma,\eta_{\rm init})\log_3(2+\Pi)\,.
\end{equation*}
If necessary, increase the constant in the preceding estimate and set
\begin{equation*}
	K\coloneqq\left\lceil C(d,\gamma,\eta_{\rm init})\log_3(2+\Pi)\right\rceil\,.
\end{equation*}
Then~$K\geq1$,~$K\leq C(d,\gamma,\eta_{\rm init})\log_3(2+\Pi)$, and
\begin{equation*}
	K\log\frac{16}{9}>W_0+C\sum_{\ell=0}^{K-1}\widehat\Delta_h^{\Id}(T_\ell)\,.
\end{equation*}
If~$\mathcal P_{\Id}(T_\ell;j_*)+D_{\Id,j_*}(T_\ell)>\eta_0$ for every~$0\leq\ell<K$, then summing~\eqref{e.initial.log.decrement} would give
\begin{equation*}
	0\leq W_K\leq W_0-K\log\frac{16}{9}+C\sum_{\ell=0}^{K-1}\widehat\Delta_h^{\Id}(T_\ell)<0\,.
\end{equation*}
This is impossible. Hence there is~$\ell<K$ such that
\begin{equation*}
	\mathcal P_{\Id}(T_\ell;j_*)+D_{\Id,j_*}(T_\ell)\leq\eta_0\,.
\end{equation*}
Set~$n_0\coloneqq T_\ell$. Since~$\log_3(2+\Pi)\geq1$, the initial displacement~$h$ and the length~$Kh$ are bounded by~$C(d,\gamma,\eta_{\rm init})\log_3(2+\Pi)$. Thus~$n_0$ has the range asserted in~\eqref{e.renormalization.entry}. Applying~\eqref{e.fixed.geometry.carried.majorization} at~$n_0$ gives
\begin{equation*}
	\mathcal P_{\Id}(n_0;n_0)+D_{\Id,j_*}(n_0)=\history_{\Id}(n_0)+D_{\Id,j_*}(n_0)\leq C\bigl(\mathcal P_{\Id}(n_0;j_*)+D_{\Id,j_*}(n_0)\bigr)\leq C\eta_0\leq\eta_{\rm init}\,.
\end{equation*}
The last inequality follows from the choice of~$\eta_0$. This proves the smallness assertion in~\eqref{e.renormalization.entry}. Passing to the fresh error~$\mathcal P_{\Id}(n_0;n_0)=\history_{\Id}(n_0)$ retains all errors from~$j_*$ to~$n_0$; it does not discard the earlier scales to obtain smallness.

\smallskip

\emph{Step 4: Geometry and the first application.} It remains to prove~\eqref{e.initial.geometry.bounds} and verify the input conditions for Proposition~\ref{p.scale.selection}. Put~$A_0=\bfAhom_{n_0,\Id}$. The first estimate in~\eqref{e.initial.source.bounds}, evaluated at~$n_0$, gives the two-sided block comparison in~\eqref{e.initial.geometry.bounds}. Together with~\eqref{e.matrix.reference.comparison}, it yields~$(12\Pi)^{-1}\bfE\leq A_0\leq2\bfE$. Monotonicity and homogeneity of the geometric mean, applied also to the inverse bounds, therefore give
\begin{equation*}
	(24\Pi)^{-\nf12}\cmet(\bfE)\leq \cmet(A_0)\leq(24\Pi)^{\nf12}\cmet(\bfE)\,.
\end{equation*}
The lower-right principal blocks in~\eqref{e.matrix.canonical.order} give~$\s_{*,0}\leq \cmet(\bfE)\leq\s_0$, so~$|\cmet(\bfE)|\,|\cmet(\bfE)^{-1}|\leq\Pi$. The triangle inequality for~$d_{\rm pr}$ now gives~$d_{\rm pr}([\Id],[\cmet(A_0)])\leq\frac12\log\Pi+\frac12\log(24\Pi)$, and hence
\begin{equation*}
	d_{\rm pr}\!\left([\Id],\left[\cmet(\bfAhom_{n_0,\Id})\right]\right)
	\leq C(d)\log(2+4\Pi)\,.
\end{equation*}
This proves~\eqref{e.initial.geometry.bounds}.

\smallskip

For the data
\begin{equation*}
	(\m,\qq,k,n)=(\Id,\Id,n_0,n_0)\,,
\end{equation*}
the lower bound for~$n_0$ and~\eqref{e.renormalization.entry} give
\begin{equation*}
	0=\frac12\log\bigl(|\Id|\,|\Id^{-1}|\bigr)
	\leq\frac{\varepsilon}{L}\left(n_0-j_*-\left\lceil B\log_3(2+\Pi)\right\rceil\right)
	\qand
	\mathcal P_{\Id}(n_0;n_0)+D_{\Id,j_*}(n_0)\leq1\,.
\end{equation*}
This supplies the grid eccentricity and error conditions for the first application of Proposition~\ref{p.scale.selection}. The global argument verifies~\eqref{e.renormalization.containment} separately.
\end{proof}

\subsection{Global selection}
\label{s.global.selection}

We iterate Proposition~\ref{p.scale.selection} until the metric defining the geometry is the canonical metric of a selected annealed block and the determinant changes little over a prescribed interval. Small errors alone do not ensure the first condition, and reaching a canonical metric does not ensure the second: the annealed block may still change appreciably at the next scales. The stopping rule below tests both conditions. To prove that it succeeds, we combine the logarithmic size of the errors with the distance to the canonical metric. Neither quantity must decrease separately; their combined change is controlled by determinant losses, whose sum is estimated across changes of geometry.

\begin{proposition}[Global selection]
\label{p.global.selection}
There exists~$\varepsilon(d,\gamma)\in(0,\varepsilon_0]$ with the following property. Fix an integer~$H\geq\max\{4,h\}$ and~$\sigma\in(0,\varepsilon]$, and choose~$L$,~$\eta$, and~$B_0$ as in Proposition~\ref{p.scale.selection}. There exists~$C(H,\sigma,d,\gamma)<\infty$ such that, for every~$B\geq B_0$ and every integer~$j_*$ satisfying
\begin{equation}
	j_*\geq\bigl\lceil C(B+1)\log_3(2+\Pi)+C_{\rm src}(d,\gamma)\log_3(2K_{\Psi_{\S}})\bigr\rceil\,,
	\label{e.global.selection.lower.scale}
\end{equation}
there are a deterministic positive block~$F$, a matrix~$\m=\cmet(F)$, the grid~$\qq=\mathcal Q(\m)$, and scales~$s<t$ satisfying
\begin{equation}
	t=s+H\,,\qquad s\geq j_*+\left\lceil B\log_3(2+\Pi)\right\rceil\,,\qquad t\leq j_*+\left\lceil(B+C)\log_3(2+\Pi)\right\rceil\,.
	\label{e.global.selection.scales}
\end{equation}
The selected adapted cube~$\cus_t^{\qq}$ is contained in~$\cu_{2j_*}$, and
\begin{equation}
	(1-\varepsilon^{\nf12}\sigma)F\leq\bfAhom_{s,\qq}\leq(1+\varepsilon^{\nf12}\sigma)F\,,\qquad d^{-1}\Delta_{s,t}^{\qq}<\sigma\,.
	\label{e.global.selection.calibration}
\end{equation}
Moreover,
\begin{equation}
	\max\bigl\{\mathcal P_{\qq}(s;s),\mathcal P_{\qq}(t;s),\mathcal P_{\qq}(t;t)\bigr\}+D_{\qq,j_*}(s)+D_{\qq,j_*}(t)\leq C(H,d,\gamma)\sigma^{\frac18(1-\gamma)}\,,
	\label{e.global.selection.profile}
\end{equation}
and
\begin{equation}
	\bigl(|\m|\,|\m^{-1}|\bigr)^{\nf12}\leq(2+\Pi)^C\,.
	\label{e.global.selection.eccentricity}
\end{equation}
\end{proposition}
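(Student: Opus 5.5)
We start from the Euclidean initialization of Proposition~\ref{p.initial.fixed.grid.scale}, applied with a tolerance $\eta_{\rm init}\leq1$. This gives data $(\m,\qq,k,n)=(\Id,\Id,n_0,n_0)$ that satisfy \eqref{e.renormalization.eccentricity} and \eqref{e.renormalization.input}. From there we iterate Proposition~\ref{p.scale.selection}. The lower bound \eqref{e.global.selection.lower.scale} on $j_*$ has two roles. It makes \eqref{e.source.lower.scale} hold. It also makes $\cu_{2j_*}$ large enough that the containment \eqref{e.renormalization.containment} holds at every step, provided the total number of scales used is at most $C\log_3(2+\Pi)$ beyond $j_*+\lceil B\log_3(2+\Pi)\rceil$. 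This works because each adapted cube has diameter at most $3^{m}\,\mathfrak e(\m)\leq3^m(2+\Pi)^C$, by the eccentricity bound \eqref{e.renormalization.eccentricity}.

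\emph{Stopping rule.} We stop at the first Case~3 step in which the current grid is already canonical, in the sense that the update target $\m_*=\cmet(\bfAhom_{n+2L,\qq})$ satisfies $d_{\rm pr}([\m],[\m_*])\leq\varepsilon$, so that $\theta=1$ and $\m_+=\m_*$. After the update we set $F\coloneqq\bfAhom_{n+2L,\qq}$ (the old-grid block), $\m\coloneqq\m_+=\cmet(F)$, and $\qq\coloneqq\qq_+$. Proposition~\ref{p.successful.short.bridge}, applied with tolerance $\varepsilon^{\nf12}\sigma$, gives the calibration $(1\pm\varepsilon^{\nf12}\sigma)F$ at scale $n+L$ on the new grid. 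Next we run a startup step followed by fixed-grid propagation with (iii) of Proposition~\ref{p.fixed.geometry.one.grid.propagation}. Starting from the transported bound \eqref{e.renormalization.transport.output}, this gives \eqref{e.global.selection.profile} on windows $[s,s+H]$ with $s=n+L$, with constants depending on $H$.

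One problem remains: we also need $d^{-1}\Delta^{\qq}_{s,s+H}<\sigma$ on the new grid. If this fails, we record a definite determinant loss and continue, since the stopping rule is then retried later. Failures of this kind are charged to the determinant budget. I would in fact alter the stopping test so that it directly requires the loss over $[s,s+H]$ to be small, treating failure as a Case~5-like step.

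\emph{Counting.} We define a potential
\[
\Phi\coloneqq \log\!\bigl(1+\eta^{-1}(\mathcal P_{\qq}(n;k)+D_{\qq,j_*}(n))\bigr)+A\,\varepsilon^{-1}d_{\rm pr}\bigl([\m],[\cmet(\bfAhom_{n,\qq})]\bigr)+A'\,\bigl(\log\det\bfAhom_{n,\qq}-\log\det\bfE_{*}\bigr),
\]
where $\bfE_*\coloneqq\mathbf R\bfE^{-1}\mathbf R$ and $A,A'$ are large constants. The aim is to show that every step of length $h$ or $2L$ either lowers $\Phi$ by a fixed amount or is a determinant-obstruction step, and that each obstruction step lowers the determinant term by at least $d\sigma$ or $d\varepsilon\sigma$. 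The individual cases behave as follows.
\begin{itemize}
\item Case~2 lowers the log term by $\log 2$, up to $C\widehat\Delta_h$, by \eqref{e.renormalization.service}. The overlapping losses are summed using \eqref{e.fixed.geometry.synchronized.multiplicity}.
\item Case~3 with $\theta<1$ lowers the geometric distance by $\varepsilon$. This pays for the bounded increase of the log term after transport and startup, since the log term is at most $\log(1+\eta^{-1})$ and $A$ is chosen large relative to that. The target itself moves by an amount controlled by the determinant loss, through the monotonicity of $\cmet$ and the comparison \eqref{e.successful.short.near}.
\item Cases~4 and~5 consume determinant budget.
\item Determinant jumps across grids are bounded by \eqref{e.two.grid.determinant}.
\end{itemize}
The initial potential is $O(\log(2+\Pi))$ by \eqref{e.initial.crude.profile} and \eqref{e.initial.geometry.bounds}, and the total determinant budget is $O(\log(2+\Pi))$ by the two-sided bounds $\bfAhom\lesssim\mathfrak e\,\bfE$ and $\bfAhom\gtrsim\mathfrak e^{-1}\bfE_*$, combined with \eqref{e.matrix.reference.comparison}. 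Hence the number of steps is $O(\log(2+\Pi))$, which gives the upper bound on $t$ in \eqref{e.global.selection.scales}. The eccentricity bound \eqref{e.global.selection.eccentricity} follows from \eqref{e.renormalization.eccentricity.output}, since the eccentricity grows by at most $\varepsilon$ per $L$ scales.

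\emph{Main obstacle.} The hardest part is the geometric term: controlling how far the canonical target $\cmet(\bfAhom_{n,\qq})$ drifts, both along a fixed grid and across a grid change, in terms of determinant loss with a constant small enough to be absorbed. I would first try to show that $d_{\rm pr}$ between canonical metrics of two ordered blocks $G\leq F\leq rG$ is at most $\log r$. This uses the monotonicity and homogeneity of $\#$, as in the proof of \eqref{e.initial.geometry.bounds}. Together with $r\leq e^{\Delta}$, it yields the needed Lipschitz bound in terms of the determinant.
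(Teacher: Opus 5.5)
Your plan has the same architecture as the paper's proof. It uses Euclidean initialization and iterates Proposition~\ref{p.scale.selection}, pairing each partial geometry change with its startup. It stops when an update reaches its canonical candidate and the length-$H$ test $d^{-1}\Delta_{s,s+H}^{\qq_+}<\sigma$ passes, charging a failed test as a determinant loss. It counts with a potential combining the logarithm of error plus drift and a weighted projective distance to the canonical target. Your proposed metric-loss bound also matches: the paper proves $d_{\rm pr}([\cmet(F)],[\cmet(G)])\leq\frac12\log(\det F/\det G)$ for $G\leq F$, and no smallness of this constant is needed.

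The step that does not close as you justify it is the determinant budget. You bound it through $\bfAhom\gtrsim\mathfrak e(\m)^{-1}\mathbf R\bfE^{-1}\mathbf R$. But the only control of $\log\mathfrak e(\m)$ available during the iteration, from \eqref{e.projective.eccentricity} or \eqref{e.renormalization.eccentricity}, grows linearly with the number of steps, which is exactly what you are trying to bound. Each geometry change may raise $\log\mathfrak e$ by $\varepsilon$ and so lower your floor for $\log\det\bfAhom$ by $2d\varepsilon$. That extra admissible loss can move the canonical target by $d\varepsilon$ in $d_{\rm pr}$, which is more than the $\varepsilon$ the change gains, so the count does not close. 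Fixing $\mathfrak e\leq(2+\Pi)^C$ in advance is circular, because that exponent is proportional to the step cap used to choose $j_*$.

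What is needed is the geometry-free floor used in the paper. Taking determinants in $\bfAhom_*(U)\leq\bfAhom(U)$ from \eqref{e.matrix.annealed.sharp.order} gives $\det\bfAhom(U)\geq1$ on every grid, while $\det\bfAhom_{n_0,\Id}\leq(24\Pi)^d$ at the Euclidean start. With this:
\begin{itemize}
\item your $\log\det$ term is nonnegative;
\item the fixed-grid losses total at most $d\log(24\Pi)$ plus the upward jumps $2d\log(1+\delta)$ at grid changes;
\item each jump is paid from the progress of its own change (the step $\varepsilon$, or the loss $d\sigma$ of a failed test), which is where $\delta=\varepsilon^{\nf12}\sigma\ll\min\{\varepsilon,\sigma\}$ and the choice of $\varepsilon(d,\gamma)$ enter.
\end{itemize}
The resulting step bound is then independent of $j_*$, $B$, and the cap, as it must be.

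Two smaller points. First, a Case~4 step need not lower $\log\det\bfAhom_{n,\qq}$ by $d\sigma$, because its test is on the synchronized loss $\widehat\Delta_h^{\qq}(n)$, which reaches back to scale $n+1-h$. Treat it like Case~2, since \eqref{e.fixed.geometry.synchronized.propagation} needs no smallness of the loss. The logarithmic term still drops by a fixed amount up to a multiple of $\widehat\Delta_h^{\qq}(n)$. These terms are summed with multiplicity $h$ by \eqref{e.fixed.geometry.synchronized.multiplicity}, using $n\geq k+h$ to keep them inside the current fixed-geometry interval.

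Second, the paper evaluates the target at the initialization scale $k_i$ rather than the current scale. The geometric term is then frozen during fixed-grid advances, and its drift is charged once, as $\frac12\Delta_{k_i,n_i+2L}^{\qq_i}$, when the grid is left. Your current-scale version also works once the floor is fixed, but it forces the weight on the determinant to exceed $\frac12A\varepsilon^{-1}$.
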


The metric is exactly canonical for~$F$, while the new-grid mean at~$s$ is close to~$F$. The determinant bound then keeps the means close throughout~$[s,t]$, and the three error terms retain the information needed at both endpoints. These estimates allow us to compare the primal and dual optimizer energies and thereby close the duality gap. That comparison is still to come; no small-contrast conclusion is asserted yet.

\begin{proof}
We first construct the sequence within a fixed containing cube. Step~2 defines the scalar~\eqref{e.global.selection.scalar} and fixes its constants. Step~3 estimates its change in each continuing alternative. Step~4 sums those estimates and the determinant losses to force a successful test, and Step~5 verifies the conclusions at the selected scales. The constants in the preliminary scalar estimates depend only on~$d$ and~$\gamma$.

\smallskip

\emph{Step 1: Construction and persistence.} We construct the continuing data and verify that every application of Proposition~\ref{p.scale.selection} is permitted until the test~\eqref{e.global.selection.test} succeeds or the fixed number of attempts is exhausted. We choose~$\varepsilon$ in Step~2. Fix~$H$,~$\sigma$,~$L$,~$\eta$, and~$B\geq B_0$. Allow at most~$J\coloneqq\lceil C_1(H,\sigma,d,\gamma)\log_3(2+\Pi)\rceil$ continuing steps, where~$C_1$ is fixed in Step~4. Apply Proposition~\ref{p.initial.fixed.grid.scale} with~$\eta_{\rm init}=1$ and denote its scale by~$n_0$. Each step described below advances the scale by at most~$2L+H+h$ and changes the metric by projective distance at most~$\varepsilon$. Starting from~$\Id$, every retained or tested metric therefore satisfies
\begin{equation*}
	\mathfrak e(\m_i)\leq e^{\varepsilon(J+1)}\qand |\mathcal Q(\m_i)|\leq2e^{\varepsilon(J+1)}\,.
\end{equation*}
The second inequality follows from~\eqref{e.rounded.grid.bounds}. The scale of any queried adapted cube, including the last length-$H$ test, is at most~$n_0+(2L+H+h)(J+2)$. To place all these cubes in~$\cu_{2j_*}$, it is therefore enough that
\begin{equation*}
	n_0+(2L+H+h)(J+2)+\biggl\lceil\log_3(2\sqrt d)+\frac{\varepsilon(J+1)}{\log3}\biggr\rceil\leq2j_*\,.
\end{equation*}
This follows from~\eqref{e.renormalization.entry} and~\eqref{e.global.selection.lower.scale} if~$C$ in the latter is sufficiently large in terms of~$H$,~$\sigma$,~$d$,~$\gamma$, and the chosen~$C_1$. These choices precede the iteration. Use the single random variable~$\RSZ$ defined in~\eqref{e.source.multiplier} to apply~\eqref{e.source.adapted.bound} throughout this region. In particular,~\eqref{e.renormalization.containment} holds at every attempted application; its verification uses only the bound~$J$ just imposed, not the later stopping argument.

\smallskip

Since~$\mathcal Q(\Id)=\Id$, apply Proposition~\ref{p.scale.selection} to~$(\Id,\Id,n_0,n_0)$ and begin the continuing sequence with its output
\begin{equation*}
	(\m_1,\qq_1,k_1,n_1)=(\Id,\Id,n_0,n_0+h)\,.
\end{equation*}
All subsequent continuing data will satisfy~$k_i<n_i$. In Cases~1,~2,~4, and~5, use the output of Proposition~\ref{p.scale.selection} as~$(\m_{i+1},\qq_{i+1},k_{i+1},n_{i+1})$.

\smallskip

On a change of geometry, form~$\m_+$ and~$\qq_+$ as in that proposition. If~$\m_+\ne \cmet(\bfAhom_{n_i+2L,\qq_i})$, apply the length-$h$ step for equal error arguments immediately after the change and retain
\begin{equation*}
	(\m_{i+1},\qq_{i+1},k_{i+1},n_{i+1})=(\m_+,\qq_+,n_i+L,n_i+L+h)\,.
\end{equation*}
If~$\m_+=\cmet(\bfAhom_{n_i+2L,\qq_i})$, test whether
\begin{equation}
	d^{-1}\Delta_{n_i+L,n_i+L+H}^{\qq_+}<\sigma\,.
	\label{e.global.selection.test}
\end{equation}
When the test succeeds, stop and retain~$F=\bfAhom_{n_i+2L,\qq_i}$,~$\m=\m_+$,~$\qq=\qq_+$,~$s=n_i+L$, and~$t=s+H$. Otherwise retain
\begin{equation*}
	(\m_{i+1},\qq_{i+1},k_{i+1},n_{i+1})=(\m_+,\qq_+,n_i+L,n_i+L+H)\,.
\end{equation*}
Thus one counted step contains either one application of Proposition~\ref{p.scale.selection}, or a change of geometry together with its following length-$h$ step or length-$H$ test. Pairing a partial geometry change with startup lets its geometric progress compensate for the possible startup increase. Upon reaching the candidate, the length-$H$ test serves a different purpose: it checks that the means on the new grid vary little enough to compare the optimizer energies at the two scales and close the duality gap. Failure records a definite determinant loss on the new grid, after which the iteration continues there.

\smallskip

The initial grid is Euclidean, so its logarithmic eccentricity is zero, and~\eqref{e.renormalization.entry} gives the initial input bound. Equations~\eqref{e.renormalization.eccentricity.output} and~\eqref{e.renormalization.output} preserve both conditions at every application of Proposition~\ref{p.scale.selection}. At a failed length-$H$ test the metric~$\m_+$ and the scale~$k_{i+1}$ have not changed, so~\eqref{e.renormalization.eccentricity} still holds; the remaining input condition is~$n_{i+1}\geq k_{i+1}+h$, which follows from~$H\geq h$. The length-$h$ step after a geometry change is applicable because the new error and the drift have sum at most one. Since all the required cubes lie inside~$\cu_{2j_*}$, as verified above, we can take every step up to~$J$ unless the stopping test succeeds earlier.

\smallskip

\emph{Step 2: The scalar and its constants.} A change of geometry can increase the error and the drift even as it improves the metric. We therefore combine their logarithmic size with the distance to the canonical metric. Taking a logarithm turns the exponential growth allowed by the fixed-grid estimates into a cost proportional to determinant loss. Summing the resulting estimates will bound both the obstructed steps and the further steps needed to reduce the error again. We will choose the constants so that the four estimates~\eqref{e.global.selection.h.step}--\eqref{e.global.selection.failed.test} hold; each has a fixed negative term and an error expressed by determinant losses. Let~$a\geq1$ be a constant to be chosen below. For every continuing index~$i\geq1$, define
\begin{equation}
	\Phi_i\coloneqq\eta\log\left(1+\frac{\mathcal P_{\qq_i}(n_i;k_i)+D_{\qq_i,j_*}(n_i)}{\eta}\right)+a\,d_{\rm pr}\left([\m_i],\left[\cmet(\bfAhom_{k_i,\qq_i})\right]\right)\,.
	\label{e.global.selection.scalar}
\end{equation}
The first term decreases, up to a determinant error, when the geometry stays fixed and the error plus drift exceeds~$\eta$. In the second term the comparison metric is evaluated at the initialization scale~$k_i$, not the current scale~$n_i$. This keeps that term constant during fixed-grid advances. When the geometry changes, we compare this retained target with the new candidate;~\eqref{e.global.selection.metric.loss} bounds their separation by the determinant loss accumulated since~$k_i$. A partial geometry change reduces the distance to its candidate by~$\varepsilon$, compensating for the possible increase of the first term. If the candidate is reached, we either stop or use the determinant loss from the failed test.

\smallskip

We scale the logarithm by~$\eta$ so that the coefficient of its determinant error is independent of~$\eta$. This allows us to choose~$\varepsilon$ independently of~$\sigma$. We give all parameter choices independently of~$\Pi$,~$B$, and~$j_*$. First record the two scalar estimates needed for the logarithmic term. For~$x\geq0$ and~$y\geq0$, differentiation gives
\begin{equation*}
	\frac{d}{dy}\left[\eta\log\left(1+\eta^{-1}\left(e^{Qy}\frac{x}{8}+C(e^{Qy}-1)\right)\right)\right]\leq Q\max\{1,C\}\,.
\end{equation*}
Indeed, the derivative is~$Q\eta e^{Qy}\nf{\nf{x}{8}+C}{\eta+e^{Qy}\nf{x}{8}+C(e^{Qy}-1)}$. When~$C\geq\eta$, this derivative is maximized at~$y=0$; otherwise it is bounded by~$Q\eta$. When~$x>\eta$,
\begin{equation*}
	\eta\log\frac{1+\frac{x}{8\eta}}{1+\frac{x}{\eta}}\leq-\eta\log\frac{16}{9}\,.
\end{equation*}
Consequently, if~$\mathcal P_{\qq}(m;n)+D_{\qq,j_*}(m)>\eta$, estimate~\eqref{e.fixed.geometry.synchronized.propagation} decreases the scaled logarithm of this sum by~$\eta\log(\nf{16}{9})$, up to a constant multiple of~$\widehat\Delta_h$.

\smallskip

For a fixed span~$\ell\geq1$, if~$0\leq x\leq1$ and~$x'\leq C\ell(x+e^{Qy}-1)$, then
\begin{equation}
	\eta\log\left(1+\frac{x'}{\eta}\right)\leq\log(1+C\ell)+Qy\,.
	\label{e.global.selection.scalar.span}
\end{equation}
To see this, use~$x+e^{Qy}-1\leq e^{Qy}$ and~$\eta\log(1+\nf{C\ell}{\eta})\leq\log(1+C\ell)$, valid for~$0<\eta\leq1$. This estimate applies to~\eqref{e.fixed.geometry.fixed.span.propagation}, with~$y$ equal to the determinant loss on the span. The coefficient of~$y$ is independent of~$\ell$ and~$\eta$.

\smallskip

We next bound the variation of the canonical metric, using the joint monotonicity and homogeneity of the matrix geometric mean proved after~\eqref{e.scale.selection.canonical.metric}. For positive blocks~$G\leq F$,
\begin{equation}
	d_{\rm pr}([\cmet(F)],[\cmet(G)])\leq\frac12\log\frac{\det F}{\det G}\,.
	\label{e.global.selection.metric.loss}
\end{equation}
Indeed, the~$2d$ eigenvalues of~$G^{-\nf12}FG^{-\nf12}$ are at least one and have product~$\nf{\det F}{\det G}$. Each is therefore at most this product, giving~$G\leq F\leq(\nf{\det F}{\det G})G$. Applying these bounds and their inverses to the two factors in the geometric mean bounds the geometric mean between~$(\nf{\det F}{\det G})^{-\nf12}$ and~$(\nf{\det F}{\det G})^{\nf12}$ times the mean for~$G$. Taking the lower-right principal block and then its inverse gives the same bounds for the generalized eigenvalues of~$\cmet(F)$ relative to~$\cmet(G)$. Their ratio is at most~$\nf{\det F}{\det G}$, which proves~\eqref{e.global.selection.metric.loss} by the definition of~$d_{\rm pr}$.

\smallskip

If instead~$(1-\delta)F\leq G\leq(1+\delta)F$, the two factors in the geometric mean are bounded in opposite directions, so the generalized eigenvalues of~$\cmet(G)$ relative to~$\cmet(F)$ lie between~$(\nf{1-\delta}{1+\delta})^{\nf12}$ and~$(\nf{1+\delta}{1-\delta})^{\nf12}$. Consequently,
\begin{equation}
	d_{\rm pr}([\cmet(F)],[\cmet(G)])\leq\frac12\log\frac{1+\delta}{1-\delta}\qquad(0\leq\delta<1)\,.
	\label{e.global.selection.metric.comparison}
\end{equation}

\smallskip

At each change of geometry, the verification in Subsection~\ref{s.scale.selection.proof} allows us to apply Proposition~\ref{p.successful.short.bridge} with comparison tolerance~$\delta\coloneqq\varepsilon^{\nf12}\sigma$. It gives
\begin{equation}
	(1-\delta)\bfAhom_{n_i+2L,\qq_i}\leq\bfAhom_{n_i+L,\qq_+}\leq(1+\delta)\bfAhom_{n_i+2L,\qq_i}\,.
	\label{e.global.selection.geometry.comparison}
\end{equation}
The logarithmic determinant can therefore increase by at most~$2d\log(1+\delta)$. Step~4 shows that each one-scale determinant loss is counted at most~$h+2$ times. We therefore retain a decrease of~$2aC(h+2)\log(1+\delta)$ at every geometry change, in addition to a fixed decrease, to cancel the determinant jumps between fixed-geometry intervals when we sum.

\smallskip

Fix~$C=C(d,\gamma)\geq d$ larger than every constant in the fixed-grid estimates above. Decrease~$\varepsilon\in(0,\varepsilon_0]$, depending only on~$d$ and~$\gamma$, so that, for every~$0<\sigma\leq\varepsilon$ and~$\delta=\varepsilon^{\nf12}\sigma$,
\begin{equation}
	\frac12\log\frac{1+\delta}{1-\delta}+2C(h+2)\log(1+\delta)\leq\min\left\{\frac\varepsilon2,\frac{C\sigma}{4}\right\}\,.
	\label{e.global.selection.comparison.choice}
\end{equation}
This is possible because~$\nf{\delta}{\sigma}=\varepsilon^{\nf12}$ and~$\nf{\delta}{\varepsilon}\leq\varepsilon^{\nf12}$. Now fix the constants~$L$,~$\eta$, and~$B_0$ supplied by Proposition~\ref{p.scale.selection}, and set~$c\coloneqq\frac12\eta\log(\nf{16}{9})$. Choose~$a\geq1$ with~$\nf{aC}{d}\geq4Q\max\{1,C\}$ and large enough that
\begin{equation}
	\frac{a\varepsilon}{2}\geq\log(1+Ch)+c\,,\qquad aC\varepsilon\sigma\geq4\bigl(\log(1+2CL)+\log(1+CH)+c\bigr)\,.
	\label{e.global.selection.weight.choice}
\end{equation}
The constants~$c$ and~$a$ depend only on~$H$,~$\sigma$,~$d$, and~$\gamma$.

\smallskip

\emph{Step 3: Decrease in each alternative.} In this step, we prove~\eqref{e.global.selection.h.step}--\eqref{e.global.selection.failed.test}. The constants are now fixed. On a length-$h$ step, the scale~$k_i$ and the metric remain fixed and~$\mathcal P_{\qq_i}(n_i;k_i)+D_{\qq_i,j_*}(n_i)>\eta$. The synchronized propagation estimate and the first scalar calculation give
\begin{equation}
	\Phi_{i+1}-\Phi_i\leq-c+\frac{aC}d\widehat\Delta_h^{\qq_i}(n_i)\,.
	\label{e.global.selection.h.step}
\end{equation}
This holds also when that step is a determinant obstruction. On a length-$2L$ obstruction the incoming sum is at most~$\eta$, so~\eqref{e.global.selection.scalar.span} applies. Since~$\Delta_{n_i,n_i+2L}^{\qq_i}>d\varepsilon\sigma$, the last condition in~\eqref{e.global.selection.weight.choice} absorbs~$\log(1+2CL)+c$ and gives
\begin{equation}
	\Phi_{i+1}-\Phi_i\leq-c+\frac{aC}d\Delta_{n_i,n_i+2L}^{\qq_i}\,.
	\label{e.global.selection.long.step}
\end{equation}

\smallskip

Consider a change of geometry which does not reach its canonical candidate. We prove
\begin{equation}
	\Phi_{i+1}-\Phi_i\leq-c-2aC(h+2)\log(1+\delta)+\frac{aC}d\left(\Delta_{k_i,n_i+2L}^{\qq_i}+\Delta_{k_{i+1},n_{i+1}}^{\qq_{i+1}}\right)\,.
	\label{e.global.selection.partial.change}
\end{equation}
By~\eqref{e.renormalization.geometry.update}, the update reduces its projective distance to that candidate by~$\varepsilon$. Apply~\eqref{e.global.selection.metric.loss} to the old-grid blocks at~$k_i$ and~$n_i+2L$, and~\eqref{e.global.selection.metric.comparison} to~\eqref{e.global.selection.geometry.comparison}. The triangle inequality gives
\begin{equation*}
	d_{\rm pr}\left([\m_{i+1}],\left[\cmet(\bfAhom_{k_{i+1},\qq_{i+1}})\right]\right)-d_{\rm pr}\left([\m_i],\left[\cmet(\bfAhom_{k_i,\qq_i})\right]\right)\leq-\varepsilon+\frac12\Delta_{k_i,n_i+2L}^{\qq_i}+\frac12\log\frac{1+\delta}{1-\delta}\,.
\end{equation*}
The new error and the drift have sum at most one. Applying~\eqref{e.global.selection.scalar.span} to the following length-$h$ step, and discarding the nonnegative incoming logarithm, bounds~$\Phi_{i+1}-\Phi_i$ by
\begin{equation*}
	\log(1+Ch)-a\varepsilon+\frac a2\log\frac{1+\delta}{1-\delta}+\frac a2\Delta_{k_i,n_i+2L}^{\qq_i}+Q\Delta_{k_{i+1},n_{i+1}}^{\qq_{i+1}}\,.
\end{equation*}
The parameter choices~\eqref{e.global.selection.comparison.choice}--\eqref{e.global.selection.weight.choice} now prove~\eqref{e.global.selection.partial.change}.

\smallskip

Suppose that the canonical candidate is reached but~\eqref{e.global.selection.test} fails. We prove
\begin{equation}
	\Phi_{i+1}-\Phi_i\leq-c-2aC(h+2)\log(1+\delta)+\frac{aC}d\Delta_{k_{i+1},n_{i+1}}^{\qq_{i+1}}\,.
	\label{e.global.selection.failed.test}
\end{equation}
The new projective distance is at most~$\frac12\log(\nf{1+\delta}{1-\delta})$. Applying~\eqref{e.global.selection.scalar.span} over the length-$H$ interval and discarding both nonnegative terms of~$\Phi_i$ gives
\begin{equation*}
	\Phi_{i+1}-\Phi_i\leq\log(1+CH)+\frac a2\log\frac{1+\delta}{1-\delta}+Q\Delta_{k_{i+1},n_{i+1}}^{\qq_{i+1}}\,.
\end{equation*}
The failed test gives~$\Delta_{k_{i+1},n_{i+1}}^{\qq_{i+1}}\geq d\sigma$. We use this lower bound to absorb the remaining terms. The comparison term and~$2aC(h+2)\log(1+\delta)$ together use at most~$\nf{aC\sigma}{4}$. The direct determinant term uses at most one quarter of~$(\nf{aC}{d})\Delta_{k_{i+1},n_{i+1}}^{\qq_{i+1}}$, and~$\log(1+CH)+c$ uses at most another quarter. These bounds prove~\eqref{e.global.selection.failed.test} using only the parameter choices already fixed.

\smallskip

\emph{Step 4: Determinant sum and the number of steps.} Fix any~$M$ continuing steps, indexed by~$1\leq i\leq M$, and let~$N$ be the number of changes of geometry among them. In this step, we prove
\begin{equation}
	cM\leq\Phi_1+aC(h+2)\log(24\Pi)\,.
	\label{e.global.selection.count}
\end{equation}
This will bound the number of continuing steps once we show that~$\Phi_1$ is logarithmic in~$2+\Pi$. It is not enough to count only the steps where a determinant test fails: small losses also occur during contraction, and both kinds can increase the logarithmic error. We sum every determinant contribution in the preceding inequalities. The only additional costs are jumps between grids and repeated appearances of the same loss, which we now bound.

\smallskip

The determinants telescope while the geometry is fixed. We must account for the jumps when it changes and for repeated appearances of each one-scale loss. Divide this finite sequence into maximal intervals with the same retained geometry, keeping two intervals distinct even if rounding produces the same grid. The old geometry ends at~$n_i+2L$ when it changes, and the new one begins at~$k_{i+1}=n_i+L$. The upper bound in~\eqref{e.global.selection.geometry.comparison} gives
\begin{equation*}
	\log\det\bfAhom_{k_{i+1},\qq_{i+1}}-\log\det\bfAhom_{n_i+2L,\qq_i}\leq2d\log(1+\delta)\,.
\end{equation*}
Within each fixed geometry the determinant decreases. Summing its losses from the beginning to the end of each such interval cancels all intermediate determinants except these comparison differences. The order~$\bfAhom_*(U)\leq\bfAhom(U)$ gives~$\det\bfAhom(U)\geq1$. At the initial Euclidean scale,~\eqref{e.initial.source.bounds} gives~$\bfAhom_{n_0,\Id}\leq2\bfE$. The order~$\mathbf R\bfE^{-1}\mathbf R\leq\bfE$ and the factor-six reference comparison give~$\det\bfE\leq(6\Pi)^d$, and hence
\begin{equation*}
	\det\bfAhom_{n_0,\Id}\leq2^{2d}\det\bfE\leq(24\Pi)^d\,.
\end{equation*}
Thus the sum of the fixed-geometry determinant losses, divided by~$d$, is at most
\begin{equation}
	\log(24\Pi)+2N\log(1+\delta)\,.
	\label{e.global.selection.determinant.telescope}
\end{equation}
The lower determinant bound follows by taking determinants in the inequality~$\bfAhom_*(U)\leq\bfAhom(U)$.

\smallskip

We count how often each one-scale loss occurs in~\eqref{e.global.selection.h.step}--\eqref{e.global.selection.failed.test}. Expanding a synchronized loss gives
\begin{equation*}
	\widehat\Delta_h^{\qq_i}(n_i)=\sum_{b=n_i+1}^{n_i+h}\sum_{r=b-h}^{b-1}\Delta_{r,r+1}^{\qq_i}\,.
\end{equation*}
The ranges of the outer index~$b$ in these expansions are disjoint within a fixed geometry. A given increment can occur only for~$r+1\leq b\leq r+h$, and hence at most~$h$ times. Also~$n_i\geq k_i+h$, so none of these terms reaches before the beginning of its fixed-geometry interval.

\smallskip

The initial startup contributes no separate advancing-interval term to this sum: its logarithmic increase is already included in~$\Phi_1$. Its increments which occur in the first synchronized loss are included in the multiplicity-$h$ bound, beginning at~$k_1=n_0$. The length-$2L$ obstruction intervals and the length-$h$ or length-$H$ intervals immediately after geometry changes are ordered and disjoint within their respective geometries; together they contribute at most one further copy. The full loss~$\Delta_{k_i,n_i+2L}^{\qq_i}$ in~\eqref{e.global.selection.partial.change} contributes at most one more copy, since it is used only when that geometry is left. Thus the total determinant contribution in the sum of~\eqref{e.global.selection.h.step}--\eqref{e.global.selection.failed.test} is at most
\begin{equation*}
	aC(h+2)\bigl(\log(24\Pi)+2N\log(1+\delta)\bigr)\,.
\end{equation*}

\smallskip

The decrease retained at each of the~$N$ geometry changes cancels the entire contribution~$2aC(h+2)N\log(1+\delta)$ from the determinant jumps. Since~$\Phi_{M+1}\geq0$, this proves~\eqref{e.global.selection.count}. Thus the determinant bound is not assumed to telescope unchanged through a geometry update: the comparison error is included and compensated by the decrease established in Step~3.

\smallskip

The initial error and the drift have sum at most one. Equations~\eqref{e.global.selection.scalar.span} and~\eqref{e.initial.geometry.bounds} give
\begin{equation*}
	\Phi_1\leq\log(1+Ch)+Q\Delta_{n_0,n_0+h}^{\Id}+aC(d)\log(2+4\Pi)\,.
\end{equation*}
The Euclidean comparison~\eqref{e.initial.normalization.bounds} bounds its determinant term by~$C(d,\gamma)\log(2+\Pi)$. It follows that~$\Phi_1\leq C(H,\sigma,d,\gamma)\log_3(2+\Pi)$. Substituting into~\eqref{e.global.selection.count} proves
\begin{equation*}
	M\leq C(H,\sigma,d,\gamma)\log_3(2+\Pi)\,.
\end{equation*}
The constant in this bound is independent of~$J$,~$j_*$, and~$B$: all applications of the adapted cube bound used only~$\|\RSZ\|_{L^Q}\leq2$. Choose~$C_1(H,\sigma,d,\gamma)$ larger than this constant and fix~$C$ in~\eqref{e.global.selection.lower.scale} as required in Step~1. If all~$J$ attempted steps continued, taking~$M=J$ would contradict the preceding estimate. Hence the test succeeds before~$J$ steps. This proves termination without increasing the region or changing~$\RSZ$ during the construction.

\smallskip

Each geometry change satisfies~$d_{\rm pr}([\m_i],[\m_{i+1}])\leq\varepsilon$ by~\eqref{e.projective.step}. Starting from~$\Id$, the triangle inequality therefore gives~\eqref{e.global.selection.eccentricity}. Each continuing step increases the current scale by at most~$2L+H+h$, and the last tested endpoint lies at most~$2L+H+h$ scales farther. Combining these observations with~\eqref{e.renormalization.entry} gives~\eqref{e.global.selection.scales}.

\smallskip

\emph{Step 5: Estimates at the selected scales.} We have proved the scale and grid eccentricity bounds. It remains to verify~\eqref{e.global.selection.calibration} and~\eqref{e.global.selection.profile}. By the stopping rule,~$F=\bfAhom_{n_i+2L,\qq_i}$ for the final continuing index~$i$,~$\m=\cmet(F)$, and~$t=s+H$. The comparison~\eqref{e.global.selection.geometry.comparison} and the successful test give~\eqref{e.global.selection.calibration}. The change-of-geometry output gives
\begin{equation*}
	\mathcal P_{\qq}(s;s)+D_{\qq,j_*}(s)\leq C(d,\gamma)\sigma^{\frac18(1-\gamma)}\,.
\end{equation*}
Its left side is also at most one by~\eqref{e.renormalization.output}, so~\eqref{e.fixed.geometry.fixed.span.propagation} applies from~$s$ to~$t$. Therefore
\begin{equation*}
	\mathcal P_{\qq}(t;s)+D_{\qq,j_*}(t)\leq C(H,d,\gamma)\left(\sigma^{\frac18(1-\gamma)}+e^{Qd\sigma}-1\right)\,.
\end{equation*}
Since~$0<\sigma\leq\varepsilon\leq1$, the second term in parentheses is at most~$C(d,\gamma)\sigma$, and hence at most~$C(d,\gamma)\sigma^{\frac18(1-\gamma)}$. Finally,~\eqref{e.fixed.geometry.profile.majorization} gives~$\mathcal P_{\qq}(t;t)=\history_{\qq}(t)\leq C(d,\gamma)\mathcal P_{\qq}(t;s)$. Combining the three estimates proves~\eqref{e.global.selection.profile}.
\end{proof}

\subsection{Duality gap and Euclidean transfer}
\label{s.response.transfer}
The selected geometry has small fluctuation and mean errors, but these measure randomness and scale variation, not the gap between the annealed coarse-grained matrix and its dual. To control that gap, we compare the primal and dual optimizers on the large cube with the corresponding optimizers on its smaller cubes, using a smooth cutoff to apply the equation without boundary terms. The small histories bound the cell averages of their gradients and fluxes after subtracting the annealed means, while the small change of the annealed matrix controls the energy cost of comparing optimizers at different scales. These bounds control the resulting errors and yield a small duality gap on the selected adapted cube. Two opposite Whitney comparisons then transfer this bound to a Euclidean cube.

\smallskip

Recall that the dual of a positive block~$F$ is~$F_*=\mathbf RF^{-1}\mathbf R$. We measure the gap by the smallest factor~$\mathfrak d(F)$ such that~$F\leq\mathfrak d(F)F_*$, namely
\begin{equation}
	\mathfrak d(F)\coloneqq\bigl|F_*^{-\nf12}FF_*^{-\nf12}\bigr|\,.
	\label{e.response.canonical.imbalance}
\end{equation}
For~$F=\bfAhom(\cu_m)$, making~$\mathfrak d(F)-1$ small is sufficient to make~$\Theta_m-1$ small, as the following calculation shows. If~$F_*\leq F$ and~$\kappa=\mathfrak d(F)$, then~$F\leq\kappa F_*$. Let~$S,S_*,K$ be the matrices in its block representation. Evaluate this inequality at~$\binom{x}{-K^tx}$ to obtain
\begin{equation*}
	S+(K+K^t)S_*^{-1}(K+K^t)\leq\kappa S_*\,.
\end{equation*}
Taking~$h=\nf{K-K^t}{2}$ and using~$S_*\leq S$ gives
\begin{equation*}
	1\leq
	\bigl|S_*^{-\nf12}\bigl(S+(K-h)^tS_*^{-1}(K-h)\bigr)S_*^{-\nf12}\bigr|
	\leq\kappa\,.
\end{equation*}
In particular, minimizing over~$h$ for~$F=\bfAhom(\cu_m)$ gives~$\Theta_m\leq\mathfrak d(F)$. We will use the weaker bound
\begin{equation}
	\Theta_m-1\leq3\bigl(\mathfrak d(\bfAhom(\cu_m))-1\bigr)\,.
	\label{e.response.euclidean.contrast}
\end{equation}

\smallskip

Fix~$\varepsilon(d,\gamma)$ as in Proposition~\ref{p.global.selection}.

\begin{proposition}[Duality gap on Euclidean cubes]
\label{p.response.transfer}
For every~$\delta\in(0,1]$, there exist an integer~$H=H(d,\gamma,\delta)\geq\max\{4,h\}$ and~$\sigma_0(d,\gamma,\delta)\in(0,\varepsilon]$ with the following property. For every~$\sigma\in(0,\sigma_0]$, there are constants~$B_{\rm resp}(d,\gamma,\delta,\sigma)\geq1$ and~$C(d,\gamma,\delta,\sigma)<\infty$ such that any output of Proposition~\ref{p.global.selection} with these parameters and~$B\geq\max\{B_0,B_{\rm resp}\}$ admits a deterministic Euclidean scale~$m_{\rm ent}>t$ satisfying
\begin{equation}
	m_{\rm ent}-t\leq\bigl\lceil C(d,\gamma,\delta,\sigma)\log_3(2+\Pi)\bigr\rceil\qquad\text{and}\qquad\Theta_{m_{\rm ent}}-1\leq\delta\,.
	\label{e.response.transfer.conclusion}
\end{equation}
All choices are independent of~$\bfE$, the coefficient law, and~$\Pi$.
\end{proposition}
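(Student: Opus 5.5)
The proof has two parts, both prepared in the discussion before this subsection. First we close the duality gap on the selected adapted cube \(\cus_t^{\qq}\); then we transfer the resulting bound to a Euclidean cube.

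For the first part, set \(\kappa_m\coloneqq\mathfrak d(\bfAhom_{m,\qq})\). The goal is an estimate
\[
\kappa_t-1\leq\zeta\,\kappa_s\,,
\]
where \(\zeta\) is small once \(H\) is large and \(\sigma\) is small. We begin with the primal and dual optimizers of the variational problem defining \(\bfA(\cus_t^{\qq})\), for a load \(P\in\R^{2d}\). We then test against the optimizers on the scale-\(s\) subcubes, and further against their descendants through the fluctuation history.

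The load is chosen using the canonical metric. Since \(\m=\cmet(F)\), the self-dual block \(M(F)\) satisfies \(\mathbf RF^{-1}\mathbf R\leq M(F)\leq F\) by \eqref{e.matrix.canonical.order}, and its lower-right block is \(\m^{-1}\). We place the primal and dual data in the directions where \(F\) and \(F_*\) are compared through \(M(F)\). The mismatch factor \(|\qq\m^{-\nf12}|\,|\m^{\nf12}\qq^{-1}|\leq3\) then makes the gradient and flux estimates balanced, uniformly in \(\Pi\).

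With a smooth cutoff \(\psi\) on \(\cus_t^{\qq}\), we integrate the equation against the difference of the two optimizers. This expresses the centered optimizer energy, which equals \(P\cdot(\bfAhom_{t,\qq}-\mathbf R\bfAhom_{t,\qq}^{-1}\mathbf R)P\) up to normalization, as a sum of three kinds of terms:
\begin{itemize}
\item pairings of spatial averages of centered gradients and fluxes over scale-\(s\) cells with \(\nabla\psi\);
\item energy differences between the large-cube optimizer and the small-cube optimizers, which are controlled by \(\Delta_{s,t}^{\qq}\) through the usual quadratic response of subadditive quantities;
\item boundary-layer terms of relative size \(3^{-(t-s)}=3^{-H}\).
\end{itemize}
The cell averages of gradients and fluxes are linear in the coarse blocks. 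Their centered parts are therefore bounded by \eqref{e.global.selection.profile}, namely by \(C(H)\sigma^{\frac18(1-\gamma)}\), times the energy scale \(\bfAhom_{s,\qq}\leq(1+\varepsilon^{\nf12}\sigma)F\), which is in turn at most \(\kappa_s\) times the dual. This produces the first power of \(\kappa_s\).

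Next we choose the parameters in order: first \(H\) so that \(3^{-cH}\leq\delta/100\), then \(\sigma_0\) so that \(C(H)\sigma^{\frac18(1-\gamma)}+C\sigma\leq\delta/100\). Together these give \(\zeta\leq\delta/10\). From \(d^{-1}\Delta_{s,t}^{\qq}<\sigma\) we get \(\bfAhom_{t,\qq}\leq\bfAhom_{s,\qq}\leq e^{d\sigma}\bfAhom_{t,\qq}\), and hence \(\kappa_s\leq e^{2d\sigma}\kappa_t\). Rearranging \(\kappa_t-1\leq\omega\kappa_t\) with \(\omega=\zeta e^{2d\sigma}<\tfrac12\) yields \(\kappa_t-1\leq2\omega\).

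The main obstacle is making this energy comparison rigorous with constants independent of the anisotropy. Two points need care. The cutoff argument must bound the centered energy by cell averages together with \(\Delta_{s,t}^{\qq}\), and it must not lose a factor of \(\kappa_s\) squared. The Whitney boundary cells below scale \(j_*\) must also be absorbed, as in Proposition~\ref{p.successful.short.bridge}, which requires \(B_{\rm resp}\) large. To handle the cutoff, I would use the quadratic response identity \(\E[\bfA(\cus_s)]-\bfA(\cus_t)\geq\) (centered energy) to convert the small determinant loss into an \(L^2\) proximity of the optimizers.

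For the second part, we transfer to a Euclidean cube, taking \(m_{\rm ent}=t+\lceil C\log_3(2+\Pi)\rceil\). We compare \(\cu_{m_{\rm ent}}\) with adapted cubes of scale \(t\), using Lemma~\ref{l.two.grid.whitney} in both directions with \(\qq'=\Id\). The distortion \(K(\qq,\Id)\) is at most \((2+\Pi)^C\) by \eqref{e.global.selection.eccentricity}. The upper comparison gives
\[
\bfAhom(\cu_{m_{\rm ent}})\leq(1+\delta')\,\bfAhom_{t,\qq}+\text{(boundary errors)}\,.
\]
For the lower bound, we apply the same construction to the dual quantities and use \eqref{e.matrix.annealed.sharp.order}. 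The boundary errors are of order \((2+\Pi)^C3^{-(m_{\rm ent}-t)}\) times \(\bfAhom_{t,\qq}\), and the contributions from scales below \(j_*\) are handled through \eqref{e.two.grid.source.normalization}. Choosing \(C\) large makes these errors at most \(\delta/20\). Together with \(\kappa_t-1\leq2\omega\) this gives \(\mathfrak d(\bfAhom(\cu_{m_{\rm ent}}))-1\leq\delta/3\), and \eqref{e.response.euclidean.contrast} then gives \(\Theta_{m_{\rm ent}}-1\leq\delta\).
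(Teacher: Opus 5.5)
The two-part structure is the paper's. It consists of a cutoff comparison of the terminal optimizer with its scale-$s$ cell optimizers, giving $\kappa_t-1\le\zeta\kappa_s$ and closed by $\kappa_s\le r^2\kappa_t$ and rearrangement, followed by a Whitney transfer to a Euclidean cube at a cost of $C\log_3(2+\Pi)$ scales. However, the central estimate is only asserted, and the two facts you offer for it are false as stated.

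\textbf{First gap: the centered energy is not a positive form.} The centered energy of one optimizer is not, up to normalization, $P\cdot(\bfAhom_{t,\qq}-\mathbf R\bfAhom_{t,\qq}^{-1}\mathbf R)P$. For $x=\binom{-p}{q}$, combine $\E[J]=\frac12x\cdot\bfAhom_{t,\qq}x-p\cdot q$ with the mean identity $Y=(\Itwod+\mathbf R\bfAhom_{t,\qq})x$. This gives
\[
\widetilde J(x)=\tfrac14\bigl(x\cdot\mathbf Rx-(\bfAhom_{t,\qq}x)\cdot\mathbf R(\bfAhom_{t,\qq}x)\bigr)\,,
\]
which is indefinite. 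For $K=0$ it equals $\frac12p\cdot(SS_*^{-1}-\Id)q$, which changes sign with $q$. What is missing is the specific choice of loads:
\begin{itemize}
\item take the loads from the terminal block, not from $M(F)$, and remove its skew part $h_t$;
\item use reciprocal loads $p=\cmet_t^{-\nf12}e$, $q=\cmet_t^{\nf12}e$, so that $p\cdot q=1$;
\item solve both the problem for $\a$ and the adjoint problem for $\a^t$, and sum over an orthonormal basis.
\end{itemize}
The terms odd in $r_t=\frac12(K+K^t)$ then cancel, leaving the exact nonnegative quantity $\tr(S_*^{-1}S-\Id)+2\tr(S_*^{-1}r_tS_*^{-1}r_t)$. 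This bounds $|S_*^{-\nf12}(S+r_tS_*^{-1}r_t)S_*^{-\nf12}|-1$. Then \eqref{e.matrix.block.comparison} gives $\kappa_t-1\le12d\sup_{|e|=1}(|\widetilde J^-(e)|+|\widetilde J^+(e)|)$.

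\textbf{Second gap: cell averages are not linear in the blocks.} The pairing involves averages of the terminal optimizer $v_t$ over subcells at every scale $k\le t$. These are not linear in the subcell coarse blocks; only each cell's own optimizer has that property. You must split each average through the cell optimizer:
\begin{itemize}
\item the block-difference part is bounded by the histories;
\item the field-difference part is bounded by the square root of the random energy drop, which is nonnegative by subadditivity and has expected trace $\tr(P_{k,t}^{\qq}-\Itwod)$.
\end{itemize}
Scales below $s$ are paid for by a decaying weight $3^{-\alpha H}$ together with an all-scale maximum of positive parts. For $k<j_*$, that maximum and the scale sum of mean blocks are controlled by the adapted cube bound, which is where $B_{\rm resp}$ enters. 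No Whitney cells arise on the fixed grid.

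Linearity in $\kappa_s$ is the obstacle you flagged but did not resolve. It comes from explicit bookkeeping, in the notation of the paper's proof:
\begin{itemize}
\item $\E[J_t^\pm]\le C\kappa_s^{\nf12}$ and $\tau^\pm\le C(r-1)\kappa_s^{\nf12}$;
\item $W^\pm\le C(3^{-\alpha H}+C_H\eta^{\nf1{2Q}})^2\kappa_s$ and $\mathcal L_s^\pm\le C\kappa_s^{\nf32}$;
\item the term $3^{-H}(\E[J_t^\pm]\mathcal L_s^\pm)^{\nf12}$ must be kept as a product.
\end{itemize}

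\textbf{The Euclidean step.} This part is sound, and leaner than the paper's. The upper Whitney comparison $F_{\rm ent}\coloneqq\bfAhom(\cu_{m_{\rm ent}})\le(1+\eta)\bfAhom_{t,\qq}$ alone gives, after inversion, $\mathbf R\bfAhom_{t,\qq}^{-1}\mathbf R\le(1+\eta)\mathbf RF_{\rm ent}^{-1}\mathbf R$. Hence $\mathfrak d(F_{\rm ent})\le(1+\eta)^2\kappa_t$. The paper's reverse comparison, via persistence of the determinant on the adapted grid, does not enter its final chain.

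So drop your lower bound. A Whitney argument for dual blocks would fail anyway, since $\mathbf R\bfA^{-1}\mathbf R$ is not superadditive. Two details remain:
\begin{itemize}
\item Lemma~\ref{l.two.grid.whitney} gives $C(d,K_0)$ with unquantified dependence on $K_0$. You need the face-strip bound with explicit relative boundary volume $C(d)\mathfrak e(\m)3^{i-m}$.
\item The pieces below $j_*$ may lie outside $\cu_{2j_*}$. They need stationarity or the tail of $\S$, not the single $\RSZ$.
\end{itemize}
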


\begin{proof}
Steps~1--2 fix the tolerances and collect the selected inputs. In Steps~3--4 we express the duality gap in terms of the centered energies of the primal and dual optimizers and bound their energies and mean gradients and fluxes. Step~5 bounds the averages of the centered gradients and fluxes on smaller cubes, and Step~6 uses these bounds and the equation to estimate the centered energies. The loads are chosen from the terminal block, while the grid remains fixed by the selected matrix~$F$. Once the gap is small on the adapted cube, Step~7 uses two opposite Whitney comparisons to transfer the bound to a Euclidean cube.

\smallskip

\emph{Step 1: Tolerances.} Fix~$\delta\in(0,1]$. Choose~$\delta_{\rm ad}\in(0,1]$ and~$\eta_+,\eta_-,\eta_{\rm iso}\in(0,1)$, depending only on~$d$ and~$\delta$, so that
\begin{equation}
	\eta_+\leq\eta_{\rm iso}\,,\qquad(1+\delta_{\rm ad})^{-d}-\eta_-\geq1-\eta_{\rm iso}\,,\qquad3\biggl(\frac{(1+\eta_{\rm iso})^3}{1-\eta_{\rm iso}}(1+\delta_{\rm ad})-1\biggr)\leq\delta\,.
	\label{e.response.transfer.tolerances}
\end{equation}
First choose~$\eta_{\rm iso}$ small, then~$\delta_{\rm ad}$ small, and finally~$\eta_+$ and~$\eta_-$. We will choose the scale separation~$H\geq\max\{4,h\}$ and~$\sigma_0$ at the end of Step~6. Until then, assume~$0<\sigma\leq\min\{\varepsilon,\frac12\}$.

\smallskip

\emph{Step 2: The selected inputs.} Put~$U_u=\cus_u^\qq$,~$E_u=\bfAhom_{u,\qq}$, and~$\kappa_u=\mathfrak d(E_u)$. Write~$\xi=\varepsilon^{\nf12}\sigma$ for the calibration error. The selection estimates give~$(1-\xi)F\leq E_s\leq(1+\xi)F$, while subadditivity, stationarity, and~\eqref{e.global.selection.calibration} give
\begin{equation*}
	E_t\leq E_s\leq rE_t\,,\qquad
	r=\frac{\det E_s}{\det E_t}<e^{d\sigma}\,,\qquad t=s+H\,.
\end{equation*}
The middle comparison follows by bounding each eigenvalue of~$E_t^{-\nf12}E_sE_t^{-\nf12}$ by their product. The order~$E_{*,t}\leq E_t$ and~$E_{*,s}\leq E_s$ holds by~\eqref{e.matrix.annealed.sharp.order}. In particular,
\begin{equation}
	1\leq\kappa_t\leq\kappa_s\leq r^2\kappa_t\,.
	\label{e.response.imbalance.comparison}
\end{equation}
Indeed, apply the orders above also to the inverses and compare each block with its dual.

\smallskip

Let~$\eta\in(0,\frac12)$ be a small number to be chosen after~$H$. For fixed~$H$, decreasing~$\sigma_0$ makes every error term and drift term in~\eqref{e.global.selection.profile} at most~$\eta$. In particular, the fluctuation and mean histories at~$s$ and~$t$ are each at most~$\eta$. Contributions below~$j_*$ are controlled by the same~$\RSZ$ as in the selection argument. The adapted cube bound and~\eqref{e.two.grid.source.normalization} give
\begin{equation}
	C(d,\gamma)\Pi\mathfrak e(\m)^2 3^{-\rho_{\max}(t-j_*)}\leq\eta^{\frac{1}{Q}}
	\qand
	C(d,\gamma)\Pi\mathfrak e(\m)^2 3^{-\frac32(s-j_*)}\leq1
	\label{e.response.source.smallness}
\end{equation}
once~$B_{\rm resp}$ is large enough. To justify this choice,~\eqref{e.global.selection.scales} and~\eqref{e.global.selection.eccentricity} bound the left sides by constants times~$(2+\Pi)^{1+2C(H,\sigma,d,\gamma)-\rho_{\max}B}$ and~$(2+\Pi)^{1+2C(H,\sigma,d,\gamma)-\nf{3B}{2}}$. The exponent contributed by the grid eccentricity is independent of~$B$. We choose~$B_{\rm resp}$ after~$H$ and~$\sigma$, using~$2+\Pi\geq3$. Taking expectations or~$L^Q$ norms costs only~$\|\RSZ\|_{L^Q}\leq2$.

\smallskip

\emph{Step 3: Choice of loads.} In Steps~3--6 we prove
\begin{equation}
	\mathfrak d(\bfAhom_{t,\qq})\leq1+\delta_{\rm ad}\,.
	\label{e.response.adapted.conclusion}
\end{equation}
All constants denoted by~$C$ in Steps~3--6 depend only on~$d$ and~$\gamma$;~$C_H$ may also depend on~$H$.

\smallskip

Let~$S,S_*,K$ be the matrices in the block representation of~$E_t$, and put
\begin{equation*}
	h_t=\tfrac12(K-K^t)\,,\qquad b_t=S+\tfrac14(K+K^t)S_*^{-1}(K+K^t)\,,\qquad
	\cmet_t=b_t\#S_*\,.
\end{equation*}
For each unit vector~$e$, set~$p=\cmet_t^{-\nf12}e$ and~$q=\cmet_t^{\nf12}e$. The reciprocal normalizations give~$p\cdot q=1$ and cancel inside the trace when we sum over an orthonormal basis. The metric~$\cmet_t$ used to choose these loads need not equal the canonical metric~$\cmet(F)$ used to choose the grid.

\smallskip

We first express the mismatch in terms of optimizer energies. For any coefficient field~$b$ and deterministic loads~$(p,q')$, let~$v=v(\cdot,U_t,p,q';b)$ maximize
\begin{equation*}
	J(U_t,p,q';b)=\sup_{v\in\mathcal A(U_t;b)}
	\fint_{U_t}\bigl(-\tfrac12\nabla v\cdot b\nabla v-p\cdot b\nabla v+q'\cdot\nabla v\bigr)\,,
\end{equation*}
where~$\mathcal A(U_t;b)$ is the space of finite-energy~$b$-harmonic functions, as in~\cite[(2.9)]{AK.HC}. Define the centered energy by
\begin{equation*}
	\widetilde J(U_t,p,q';b)
	=\E[J(U_t,p,q';b)]
	-\frac12\E[(\nabla v)_{U_t}]\cdot\E[(b\nabla v)_{U_t}]\,.
\end{equation*}
Denote the centered energies for~$(\a;p,q-h_tp)$ and~$(\a^t;p,q+h_tp)$ by~$\widetilde J^-(e)$ and~$\widetilde J^+(e)$, respectively. The two optimizers, and hence their two pairs of means, are computed separately.

\smallskip

The block formulas~\cite[(2.15), (2.32)]{AK.HC} give, for the primal load~$(p,q')$,
\begin{equation*}
	\E[J]=\tfrac12p\cdot Sp+\tfrac12(q'+Kp)\cdot S_*^{-1}(q'+Kp)-p\cdot q'\,,
\end{equation*}
and the two means are~$-p+S_*^{-1}(q'+Kp)$ and~$q'-K^tS_*^{-1}(q'+Kp)-Sp$. For the adjoint, replace~$K$ by~$-K$. Subtracting the products of these means and adding the resulting centered energies yields, with~$r_t=\nf{K+K^t}{2}$,
\begin{equation*}
	\widetilde J^-(e)+\widetilde J^+(e)
	=p\cdot(SS_*^{-1}-\Id)q+2r_tp\cdot S_*^{-1}r_tS_*^{-1}q\,.
\end{equation*}
Summing over a Euclidean orthonormal basis cancels the factors~$\cmet_t^{\pm\nf12}$ inside the trace and gives
\begin{equation*}
	\sum_{i=1}^d\bigl(\widetilde J^-(e_i)+\widetilde J^+(e_i)\bigr)
	=\tr(S_*^{-1}S-\Id)+2\tr(S_*^{-1}r_tS_*^{-1}r_t)\,.
\end{equation*}
Both trace terms are nonnegative. Since~$b_t=S+r_tS_*^{-1}r_t$ and~$S\geq S_*$, this bounds~$\theta-1$, where~$\theta=|S_*^{-\nf12}b_tS_*^{-\nf12}|$. Applying~\eqref{e.matrix.block.comparison} with~$h=h_t$ therefore gives
\begin{equation}
	\kappa_t-1\leq12d\sup_{|e|=1}\bigl(|\widetilde J^-(e)|+|\widetilde J^+(e)|\bigr)\,.
	\label{e.response.by.centered.energies}
\end{equation}
The same calculation used for~\eqref{e.response.euclidean.contrast} gives~$1\leq\theta\leq\kappa_t$.

\smallskip

\emph{Step 4: Energy and mean bounds on the selected grid.} The canonical mean~$M(F)$ is self-dual: the inverse and congruence identities for the geometric mean give~$\mathbf RM(F)^{-1}\mathbf R=M(F)$. The matrices in its block representation are therefore~$(\m,\m,g)$ with~$g^t=-g$. Define
\begin{equation*}
	G=\begin{pmatrix}\Id&0\\g&\Id\end{pmatrix}\,,\qquad
	M_0=\begin{pmatrix}\m&0\\0&\m^{-1}\end{pmatrix}\,,
	\qquad G^tM(F)G=M_0\,.
\end{equation*}
Recenter the primal coefficient by~$g$: set~$a_-=\a-g$ and~$a_+=\a^t+g$, and use the loads
\begin{equation*}
	q^-=q+(g-h_t)p\qand q^+=q+(h_t-g)p\,.
\end{equation*}
Constant-skew covariance preserves the centered energies, since it changes a flux mean~$Q$ by a skew multiple of its gradient mean~$P$ and~$P\cdot gP=0$. The block for~$a_-$ is~$\widehat E_u=G^tE_uG$; the block for~$a_+$ is~$\widehat E_u^+=D\widehat E_uD$, where~$D=\operatorname{diag}(\Id,-\Id)$. Write~$\widehat E_u^-=\widehat E_u$.

\smallskip

For a matrix~$E$ with~$E_*\leq E$, the Riccati equation~$M(E)E^{-1}M(E)=E_*$ shows, after conjugation by~$M(E)^{-\nf12}$, that
\begin{equation*}
	M(E)\leq E\leq\mathfrak d(E)^{\nf12}M(E)\,.
\end{equation*}
Indeed, in these coordinates the two blocks are~$X$ and~$X^{-1}$, and~$|X|^2=\mathfrak d(E)$. Calibration at~$s$, the comparison~$E_t\leq E_s\leq rE_t$, and monotonicity of the geometric mean now give
\begin{equation}
	C^{-1}M_0\leq\widehat E_t^\pm\leq C\kappa_t^{\nf12}M_0
	\qand \widehat E_s^\pm\leq C\kappa_s^{\nf12}M_0\,.
	\label{e.response.calibrated.blocks}
\end{equation}
For example, the comparison of~$G^tM(E_t)G$ with~$M_0$ costs at most~$\sqrt{\nf{r(1+\xi)}{1-\xi}}$, bounded in terms of~$d$ because~$\xi\leq\frac12$ and~$r<e^{\nf{d}{2}}$. The adjoint bounds follow since~$D$ commutes with~$M_0$.

\smallskip

Put~$x^\pm=\binom{-p}{q^\pm}$ and~$(L^\pm)^2=x^\pm\cdot\widehat E_t^\pm x^\pm$. The opposite signs in the loads give
\begin{equation*}
	(L^-)^2+(L^+)^2
	=2p\cdot b_tp+2q\cdot S_*^{-1}q
	=4e\cdot \cmet_t^{\nf12}S_*^{-1}\cmet_t^{\nf12}e
	\leq4\sqrt\theta\,.
\end{equation*}
Here~$\cmet_tS_*^{-1}\cmet_t=b_t$ and~$q=\cmet_tp$. Let~$v_t^\pm$ be the corresponding optimizers and denote~$J_t^\pm \coloneqq J(U_t,p,q^\pm;a_\pm)$. Since~$p\cdot q^\pm=1$, we have
\begin{equation}
	\begin{aligned}
	&
	0\leq\E[J_t^\pm]=\tfrac12(L^\pm)^2-1\leq C\kappa_s^{\nf12}\,,\\
&	0\leq\tau^\pm \coloneqq\E[J(U_s,p,q^\pm;a_\pm)-J_t^\pm]
	\leq C(r-1)\kappa_s^{\nf12}\,.
	\end{aligned}
	\label{e.response.energy.and.defect}
\end{equation}
The defect bound follows by applying~$0\leq E_s-E_t\leq(r-1)E_t$ to the same loads.

\smallskip

Let~$X_t^\pm=\binom{\nabla v_t^\pm}{a_\pm\nabla v_t^\pm}$ and define the separate annealed means~$Y^\pm=\E[(X_t^\pm)_{U_t}]=\binom{P^\pm}{Q^\pm}$. The mean identity~\cite[(2.32)]{AK.HC} reads~$Y^\pm=(\Itwod+\mathbf R\widehat E_t^\pm)x^\pm$. Since~$\mathbf RM_0\mathbf R=M_0^{-1}$,~\eqref{e.response.calibrated.blocks} gives
\begin{equation}
	|M_0^{\nf12}x^\pm|^2\leq C\kappa_s^{\nf12}
	\qand |M_0^{\nf12}Y^\pm|^2\leq C\kappa_s\,.
	\label{e.response.load.and.mean}
\end{equation}
For the second bound, use~$\widehat E_t^\pm M_0^{-1}\widehat E_t^\pm\leq C\kappa_t^{\nf12}\widehat E_t^\pm$ and the preceding bound on~$(L^\pm)^2$.

\smallskip

\emph{Step 5: The weak-norm estimate.} Put~$\alpha=\nf{1-\gamma}{4}$ and~$\rho=\nf{1+\gamma}{2}$. Then~$\rho_{\max}<\rho<1$ and~$\frac12-\frac\rho2=\alpha$. For a doubled field~$Z$ on~$U_t$, use the concrete scale-average seminorm
\begin{equation*}
	[Z]_{\Besov{-\frac{1}{2}}{2}{1}(U_t)}
	=\sum_{k=-\infty}^t3^{\frac{k}{2}}
	\biggl(\avsum_{z\in3^k\qq\Zd\cap U_t}|(Z)_{z+U_k}|^2\biggr)^{\nf12}\,,
\end{equation*}
and set
\begin{equation*}
	W^\pm=3^{-t}\E\Bigl[
	[M_0^{\nf12}(X_t^\pm-Y^\pm)]_{\Besov{-\frac{1}{2}}{2}{1}(U_t)}^2\Bigr]\,.
\end{equation*}
This is the seminorm in~\cite[(2.130)]{AK.HC}, on the selected grid, not the compact-test dual norm used in the theorem statements. We prove
\begin{equation}
	(W^\pm)^{\nf12}
	\leq C\bigl(3^{-\alpha H}+C_H\eta^{\frac{1}{2Q}}\bigr)\kappa_s^{\nf12}\,.
	\label{e.response.weak.estimate}
\end{equation}
The first term comes from scales preceding the selected interval. The second accounts for the recent cell fluctuations, the mean differences, and the correction from random to annealed centering. The histories retained throughout the iteration thus control the cell averages needed to compare the optimizer energies: small fluctuations at the terminal scale alone would not bound this seminorm.

\smallskip

Write~$A_k(z)=\bfA(z+U_k;\a)$. For a symmetric matrix, the subscript~$+$ denotes the spectral positive part. The all-scale maximum
\begin{equation*}
	\mathcal M=\sup_{k\leq t}3^{-\rho(t-k)}
	\max_{z\in3^k\qq\Zd\cap U_t}
	\bigl|\bigl(E_t^{-\nf12}A_k(z)E_t^{-\nf12}-\Itwod\bigr)_+\bigr|
\end{equation*}
satisfies~$\E[\mathcal M^Q]\leq C\eta$. For~$k\geq j_*$, decompose the normalized block into~$V_{k,t}^\qq(z)$ and~$P_{k,t}^\qq$. The fluctuation part is bounded by the maximum defining~$\history_\qq^{\rm fluc}(t)$, because~$\rho>\rho_{\max}$. The mean part telescopes into the nonnegative increments in~$D_{\qq,j_*}(t)$; its weight is bounded by the drift weight because~$\rho>\nf{1-\gamma}{8}$. For~$k<j_*$, the adapted cube bound controls the maximum by~$C\Pi\mathfrak e(\m)^2\RSZ3^{-\rho_{\max}(t-j_*)}$. Now apply~\eqref{e.response.source.smallness} and~$\|\RSZ\|_{L^Q}\leq2$. Fixed congruence leaves these normalized operator norms unchanged, so the same maximum controls both recentered coefficient fields.

\smallskip

We use the cell-average argument of~\cite[Lemma~2.16]{AK.HC} with exponent~$\frac12$, decay exponent~$\rho$, cutoff~$\delta=1$, and window~$H$. We first explain why this argument applies on the selected grid. Suppress the sign, and let
\begin{equation*}
	K_0=|M_0^{-\nf12}\widehat E_t^\pm M_0^{-\nf12}|^{\nf12}\leq C\kappa_s^{\frac{1}{4}}\,.
\end{equation*}
Write~$\widehat A(x)$ for the pointwise doubled block of~$a_\pm$ and~$\widehat A(V)$ for its coarse block on a cell~$V$. The variational bound on the mean of a gradient--solenoidal field~$X$ gives
\begin{equation*}
	|M_0^{\nf12}(X)_V|^2
	\leq K_0^2
	|(\widehat E_t^\pm)^{-\nf12}\widehat A(V)(\widehat E_t^\pm)^{-\nf12}|
	\fint_V X\cdot\widehat A X\,.
\end{equation*}
Indeed, the mean energy is bounded below by~$(X)_V\cdot\widehat A_*(V)(X)_V$; inversion and~$\mathbf RM_0\mathbf R=M_0^{-1}$ give the displayed inequality. This argument holds on every Lipschitz cell. On each recent cell, split~$(X_t)_V-(X_t)_{U_t}$ through the mean of that cell's optimizer. One difference compares the means of the cell and terminal optimizers, each taken on its own domain; the optimizer-mean identity bounds it by the corresponding coarse-matrix difference. The other is the average over the small cell of the difference between the two optimizer fields. Apply the displayed inequality to this field difference; the quadratic energy comparison bounds its total energy by four times the coarse-energy drop. Exact partition averaging now gives the matrix-difference sum and the square-root energy-drop sum in~\cite[Lemma~2.16]{AK.HC}. Summing the same cell bound over the older scales gives its remaining energy term. No metric--grid compatibility or stationarity is used in these pathwise calculations.

\smallskip

The recent cell differences, normalized by~$E_t$, split as~$V_{k,t}^\qq(z)-V_t^\qq+(P_{k,t}^\qq-\Itwod)$. For~$s\leq k\leq t$, their~$L^2$ cell-average norm is at most~$C_H\eta^{\nf{1}{Q}}$. Indeed, the fluctuation history bounds the first two terms, while the mean history gives~$\meanpenalty_Q(P_{k,t}^\qq)\leq3^{\alpha(t-1-k)}\eta$ for~$k<t$. For~$P\geq\Itwod$, the defining formula for~$\meanpenalty_Q$ gives~$|P-\Itwod|\leq\tr(P-\Itwod)\leq\meanpenalty_Q(P)^{\nf{1}{Q}}$. This bounds the matrix-difference sum. For the square-root term, the averaged cell difference is nonnegative by subadditivity, and its expected trace is~$\tr(P_{k,t}^\qq-\Itwod)$. Consequently, the~$L^2$ norm of its square root is at most~$C_H\eta^{\nf{1}{2Q}}$. Together the two finite sums contribute at most~$C_HK_0L^\pm\eta^{\nf{1}{2Q}}$.

\smallskip

The remaining term contains the optimizer energy~$\mathcal E_t^\pm=(2J_t^\pm)^{\nf12}$. Since~$p\cdot q^\pm=1$, the block energy formula gives~$(\mathcal E_t^\pm)^2\leq(1+\mathcal M)(L^\pm)^2$. On~$\{\mathcal M>1\}$, it follows that~$\mathcal M^{\nf12}\mathcal E_t^\pm\leq\sqrt2\,\mathcal M L^\pm$. Since~$Q\geq2$,
\begin{equation*}
	\E\bigl[\indc_{\{\mathcal M>1\}}\mathcal M^2\bigr]\leq\E[\mathcal M^Q]\leq C\eta\,.
\end{equation*}
On the complementary event, the energy is at most~$\sqrt2L^\pm$, and the discarded scales contribute~$3^{-\alpha H}$. These two terms cost at most~$CK_0L^\pm(\eta^{\nf12}+3^{-\alpha H})$.

\smallskip

The argument of~\cite{AK.HC} centers by the random cell average. The optimizer-average identity gives
\begin{equation*}
	\bigl\|M_0^{\nf12}\bigl((X_t^\pm)_{U_t}-Y^\pm\bigr)\bigr\|_{L^2}
	\leq K_0L^\pm\eta^{\frac{1}{Q}}\,.
\end{equation*}
A constant vector~$c$ has~$3^{-\nf{t}{2}}[c]_{\Besov{-\nf{1}{2}}{2}{1}(U_t)}=\nf{|c|}{1-3^{-\nf12}}$, so this correction costs the same order. Finally,~$K_0L^\pm\leq C\kappa_s^{\nf12}$ by~\eqref{e.response.calibrated.blocks} and the load bounds. This proves~\eqref{e.response.weak.estimate}.

\smallskip

\emph{Step 6: Comparing optimizer energies.} For each sign, write the two diagonal blocks of~$\E[\bfA(z+U_k;a_\pm)]$ as~$b_{k,z}^\pm$ and~$(S_{*,k,z}^\pm)^{-1}$. In particular,~$S_{*,k,z}^\pm$ is the inverse of the expected lower-right block, not the expectation of its inverse. Define
\begin{equation*}
	\mathcal L_s^\pm=\sum_{k=-\infty}^s3^{\frac32(k-s)}
	\avsum_{z\in3^k\qq\Zd\cap U_s}
	\bigl(|(b_{k,z}^\pm)^{\nf12}P^\pm|
	+|(S_{*,k,z}^\pm)^{-\nf12}Q^\pm|\bigr)^2\,.
\end{equation*}
Then~$\mathcal L_s^\pm\leq C\kappa_s^{\nf{3}{2}}$. For~$k\geq j_*$, stationarity removes~$z$, and the weighted sum of~$E_k$ is bounded by~$C(1+D_{\qq,j_*}(s))E_s$: telescope the mean increments and use~$\frac32>\nf{1-\gamma}{8}$. For~$k<j_*$, the adapted cube bound and the second inequality in~\eqref{e.response.source.smallness} bound the remaining sum by~$CE_s$. Apply these matrix bounds to the two coordinate projections of~$Y^\pm$, and then use~\eqref{e.response.calibrated.blocks} and~\eqref{e.response.load.and.mean}. The series converges because~$\gamma<\frac32$.

\smallskip

Choose a smooth cutoff~$\varphi$ in~$U_t$ with~$0\leq\varphi\leq2$,~$(\varphi)_{U_t}=1$, and derivatives at the scale~$3^t$ in~$\qq$-coordinates. The variational calculation in~\cite[(3.45)--(3.54)]{AK.HC}, before separating the products by Young's inequality, gives
\begin{equation}
	\big|\widetilde J^\pm(e)\big|
	\leq C\Bigl(\tau^\pm+(\tau^\pm\E[J_t^\pm])^{\nf12}
	+(\tau^\pm\mathcal L_s^\pm)^{\nf12}
	+3^{-H}\bigl(\E[J_t^\pm]+(\E[J_t^\pm]\mathcal L_s^\pm)^{\nf12}\bigr)
	+W^\pm\Bigr)\,.
	\label{e.response.cutoff.estimate}
\end{equation}
We compare the terminal optimizer with its scale-$s$ cell optimizers in the integrals weighted by~$\varphi$. The quadratic energy identity measures the mean energy of their difference by~$2\tau^\pm$. Insert~$P^\pm,Q^\pm$ in the cutoff decomposition~\cite[(3.46)]{AK.HC}; the term containing the difference between these means and the actual annealed means is zero. Integration by parts, the cutoff product estimate of~\cite[Lemma~A.3]{AK.HC} at order~$\nf12$, and the direct full-dual pairing in the proof of~\cite[Lemma~A.1, (A.4)]{AK.HC} give~$\E[|\fint_{U_t}\varphi(\nabla v_t^\pm-P^\pm)\cdot(a_\pm\nabla v_t^\pm-Q^\pm)|]\leq CW^\pm$. The comparison of the rounded and unrounded metrics costs only a dimensional constant: if~$c=|\m^{-1}|^{\nf12}$, the rounding error gives~$|\m^{-\nf12}\qq-c\Id|\leq \nf{c}{2}$, hence~$|\m^{-\nf12}\qq|\,|\qq^{-1}\m^{\nf12}|\leq3$. More explicitly, pullback by~$x=\qq y$ preserves normalized cell averages and their Besov sums. The two gradient factors in the Euclidean pairing acquire the norms~$|\qq\m^{-\nf12}|$ and~$|\m^{\nf12}\qq^{-1}|$, whose product is at most three; thus the constant is independent of the grid eccentricity.

\smallskip

Replacing the terminal optimizer by its cell optimizers bounds the cutoff energy error by~$C(\tau^\pm+(\tau^\pm\E[J_t^\pm])^{\nf12}+3^{-H}\E[J_t^\pm])$. For the two cutoff-mean terms, subtract the cell average of~$\varphi$. The constant cell means cancel after expectation, because the scale-$s$ translations are integral and~$(\varphi)_{U_t}=1$. The optimizer differences cost~$C(\tau^\pm\mathcal L_s^\pm)^{\nf12}$. Pairing the within-cell oscillations with the gradient and flux, then summing the descendants with weights~$3^{\nf{3(k-s)}{2}}$, gives~$C3^{-H}(\E[J_t^\pm]\mathcal L_s^\pm)^{\nf12}$. These are precisely the terms in~\eqref{e.response.cutoff.estimate}. Keeping this last product is essential: it is of order~$3^{-H}\kappa_s$, whereas replacing it by~$3^{-H}\mathcal L_s^\pm$ would lose a factor~$\kappa_s^{\nf12}$.

\smallskip

Substituting~\eqref{e.response.energy.and.defect},~\eqref{e.response.weak.estimate}, and~$\mathcal L_s^\pm\leq C\kappa_s^{\nf{3}{2}}$ in~\eqref{e.response.cutoff.estimate} gives
\begin{equation*}
	|\widetilde J^\pm(e)|
	\leq C\Bigl((r-1)+\sqrt{r-1}+3^{-H}
	+\bigl(3^{-\alpha H}+C_H\eta^{\frac{1}{2Q}}\bigr)^2\Bigr)\kappa_s\,.
\end{equation*}
The balanced geometry and reciprocal loads keep this bound uniform in~$e$ and linear in~$\kappa_s$, with a coefficient independent of the incoming mismatch. Hence the tolerances need not depend on its size. Combining the bound with~\eqref{e.response.by.centered.energies} and~\eqref{e.response.imbalance.comparison}, we obtain~$\kappa_t-1\leq\omega\kappa_t$, where
\begin{equation*}
	\omega\leq Cr^2\Bigl((r-1)+\sqrt{r-1}+3^{-H}
	+\bigl(3^{-\alpha H}+C_H\eta^{\frac{1}{2Q}}\bigr)^2\Bigr)\,.
\end{equation*}
Choose~$H=H(d,\gamma,\delta)$ large, then~$\eta=\eta(d,\gamma,\delta,H)$ small, and finally~$\sigma_0$ small enough that~$r<e^{d\sigma_0}$ and all the selected error and drift terms are at most~$\eta$. These choices make~$\omega\leq\nf{\delta_{\rm ad}}{1+\delta_{\rm ad}}$. The buffer~$B_{\rm resp}$ is then chosen as in Step~2. Rearranging~$\kappa_t-1\leq\omega\kappa_t$ proves~\eqref{e.response.adapted.conclusion}.

\smallskip

\emph{Step 7: Euclidean transfer.} We seek a scale~$m_{\rm ent}>t$ within the range in~\eqref{e.response.transfer.conclusion} such that
\begin{equation}
	(1-\eta_{\rm iso})\bfAhom_{t,\qq}\leq\bfAhom_{m_{\rm ent},\Id}\leq(1+\eta_{\rm iso})\bfAhom_{t,\qq}\,.
	\label{e.response.transfer.block.comparison}
\end{equation}
This last change of geometry need not preserve the error: the duality gap is already small on the adapted cube, and only an annealed matrix comparison remains. We can therefore make the direct transfer to Euclidean cubes, paying a logarithmic scale separation once. Small mismatch also leaves little determinant variation on the adapted grid, which provides the lower bound needed in the reverse comparison.

\smallskip

First, the adapted estimate persists on the fixed grid. For~$u\geq t$, partition~$U_u$ into scale-$t$ cells. Their translations are integral, so subadditivity and stationarity give~$E_u\leq E_t$. The order~$E_{*,u}\leq E_u$ gives~$\det E_u\geq1$. Taking determinants in~$E_t\leq\kappa_t E_{*,t}$ yields~$\det E_t\leq\kappa_t^d\leq(1+\delta_{\rm ad})^d$. Every eigenvalue of~$E_u^{-\nf12}E_tE_u^{-\nf12}$ is at least one, so its largest eigenvalue is at most their product. Therefore
\begin{equation*}
	E_u\leq E_t\leq(1+\delta_{\rm ad})^dE_u\qquad(u\geq t)\,.
\end{equation*}

\smallskip

We next compare the grids, following the two Whitney decompositions in~\cite[Lemma~2.15]{AK.HC}. The rounding bounds give~$|\qq|\leq2\mathfrak e(\m)$ and~$|\qq^{-1}|\leq2$. The same face-strip argument as in Lemma~\ref{l.two.grid.whitney} shows that the relative volume of the scale-$i$ boundary cells is at most~$C(d)\mathfrak e(\m)3^{i-n}$ when an adapted scale-$n$ cube is filled with Euclidean cubes, and at most~$C(d)\mathfrak e(\m)3^{i-m}$ for the reverse filling of a Euclidean scale-$m$ cube. Moreover,~$3^n\qq\Zd\subseteq\Zd$ for every~$n\geq j_*$. Thus the proof of the two comparisons of~\cite{AK.HC} applies at every later aligned scale, with constants independent of~$j_*$.

\smallskip

The comparison~\eqref{e.two.grid.source.normalization} gives~$\bfE\leq C(d,\gamma)\Pi\mathfrak e(\m)E_t$. Put~$\overline K=\max\{2,K_{\Psi_{\S}}\}\leq2K_{\Psi_{\S}}$. This remains an admissible growth constant in~\eqref{e.source.tail}. Enlarge~$C_{\rm src}$ in~\eqref{e.source.lower.scale} so that~$\overline K^2 3^{-j_*}\leq1$. The factors~$(1+\overline K^2 3^{-n})^\gamma$ coming from the tail of~$\S$ in the comparisons of~\cite{AK.HC} are then at most two for~$n\geq j_*$. For integers~$\ell,r\geq1$, set~$m_{\rm ent}=t+\ell$. The gap~$\ell$ will give the upper bound for the Euclidean block; the further gap~$r$, together with persistence on the adapted grid, will give the lower bound. Apply~\cite[(2.128)]{AK.HC} with~$k=t-1<n=t<m=m_{\rm ent}$, and~\cite[(2.127)]{AK.HC} with~$k=m_{\rm ent}<n=m_{\rm ent}+r<m=m_{\rm ent}+r+1$. After normalization, these give
\begin{align*}
	\bfAhom_{m_{\rm ent},\Id}
	&\leq E_t+C(d,\gamma)\Pi\mathfrak e(\m)^2 3^{-\ell}E_t\,,\\
	E_{m_{\rm ent}+r}
	&\leq\bfAhom_{m_{\rm ent},\Id}
	+C(d,\gamma)\Pi\mathfrak e(\m)^2 3^{-r}E_t\,.
\end{align*}
These are unconditional annealed comparisons, using the tail of~$\S$ and stationarity. They do not require enlarging the region on which~$\RSZ$ was constructed. Choose~$\ell$ so that the first relative error is at most~$\eta_+$, and then~$r$ so that the second is at most~$\eta_-$. The grid eccentricity bound~\eqref{e.global.selection.eccentricity} gives
\begin{equation}
	\ell+r\leq C(d,\gamma,\delta,\sigma)\log_3(2+\Pi)\,.
	\label{e.response.transfer.gaps}
\end{equation}

\smallskip

Combining the reverse comparison with persistence at scale~$m_{\rm ent}+r$, and using the forward comparison, gives
\begin{equation*}
	\bigl((1+\delta_{\rm ad})^{-d}-\eta_-\bigr)E_t
	\leq\bfAhom_{m_{\rm ent},\Id}\leq(1+\eta_+)E_t\,.
\end{equation*}
The choices in~\eqref{e.response.transfer.tolerances} prove~\eqref{e.response.transfer.block.comparison}. To transfer the imbalance, put~$F_{\rm ent}=\bfAhom_{m_{\rm ent},\Id}$. Inverting~$F_{\rm ent}\leq(1+\eta_{\rm iso})E_t$ gives~$E_{*,t}\leq(1+\eta_{\rm iso})F_{*,\rm ent}$, hence
\begin{equation*}
	F_{\rm ent}\leq(1+\eta_{\rm iso})E_t
	\leq(1+\eta_{\rm iso})\kappa_t E_{*,t}
	\leq(1+\eta_{\rm iso})^2\kappa_t F_{*,\rm ent}\,.
\end{equation*}
In particular,~\eqref{e.response.adapted.conclusion} implies
\begin{equation*}
	\mathfrak d(F_{\rm ent})\leq
	\frac{(1+\eta_{\rm iso})^3}{1-\eta_{\rm iso}}(1+\delta_{\rm ad})\,.
\end{equation*}
Now~\eqref{e.response.euclidean.contrast} and~\eqref{e.response.transfer.tolerances} imply~$\Theta_{m_{\rm ent}}-1\leq\delta$. Since~$m_{\rm ent}-t=\ell\leq\ell+r$, estimate~\eqref{e.response.transfer.gaps} proves the asserted scale bound.
\end{proof}

\subsection{Proof of polynomial entry}

\begin{proof}[Proof of Theorem~\ref{t.polynomial.entry}]
Fix~$\sigma\in(0,1]$. Apply Proposition~\ref{p.response.transfer} with~$\delta=\sigma$, fixing~$H$ and then a determinant tolerance~$\tau\in(0,\sigma_0]$. Choose~$L$,~$\eta$, and~$B_0$ as in Proposition~\ref{p.scale.selection} at determinant tolerance~$\tau$, and fix~$B\geq\max\{B_0,B_{\rm resp}\}$. Finally, choose~$j_*$ to be the smallest integer satisfying~\eqref{e.global.selection.lower.scale}. All constants have been fixed in terms of~$\sigma$,~$d$, and~$\gamma$, so
\begin{equation*}
	j_*\leq C(\sigma,d,\gamma)\log_3(2+\Pi K_{\Psi_{\S}})\,.
\end{equation*}
Proposition~\ref{p.global.selection} applies with this~$j_*$ and determinant tolerance~$\tau$, giving
\begin{equation*}
	t\leq j_*+\bigl\lceil(B+C(H,\tau,d,\gamma))\log_3(2+\Pi)\bigr\rceil\,.
\end{equation*}
Proposition~\ref{p.response.transfer} supplies~$m_{\rm ent}$ with~$\Theta_{m_{\rm ent}}\leq1+\sigma$ and at most~$C(\sigma,d,\gamma)\log_3(2+\Pi)$ further scales. Adding these bounds proves~\eqref{e.polynomial.entry} after enlarging~$C(\sigma,d,\gamma)$. 
\end{proof}

\section{Quantitative homogenization}
\label{s.convergence.homogenization}

In this section we prove algebraic convergence of the coarse-grained matrices and derive the homogenization and regularity results stated in the introduction. The previous sections make the gap between the annealed coarse-grained matrix and its dual small at one scale. Smallness allows us to improve this gap repeatedly: averaging reduces fluctuations, and the comparison of optimizer energies turns this control into a further decrease of the gap. The nonlinear errors are quadratic in the remaining mismatch and can therefore be absorbed once that mismatch is small. The resulting geometric decay in the scale index gives algebraic decay in length and identifies a limiting coefficient, proving Theorem~\ref{t.algebraic.convergence}.

\smallskip

An estimate for the annealed matrices does not by itself control solutions for an individual realization of the coefficient field. Concentration gives a random radius beyond which the required block estimates hold at every larger scale. Deterministic energy comparisons then turn these estimates into bounds on solutions and their fluxes. We carry this out first for uniformly elliptic fields in Theorem~\ref{t.uniform.homogenization}, and then under coarse ellipticity in Theorem~\ref{t.random.homogenization}, where the radius must also exceed the original random scale~$\S$. In the latter argument, the errors are summable over scales: this both controls the Dirichlet homogenization error and allows finite-volume correctors to converge as their domains grow.

\subsection{Algebraic convergence}
\label{ss.algebraic.convergence}

The small-contrast iteration couples two quantities, defined below: the mismatch~$a_j$ between the matrices~$S_j$ and~$S_{*,j}$ in the block representation of the annealed coarse-grained matrix, and the variance~$v_j$ of the normalized coarse-grained matrix. Averaging bounds~$v_k$ by an earlier squared mismatch~$a_l^2$ and an error that decays with the separation~$k-l$, as in~\eqref{e.algebraic.variance}. The optimizer comparison then bounds~$a_n$ by these variances and weighted mean decreases~$a_k-a_n$, as in~\eqref{e.algebraic.response.recursion}. Moving the negative copies of~$a_n$ to the left gives a strict contraction; the squared mismatch can be absorbed because polynomial entry has made it small. Before iterating, we choose a normalization and a starting scale at which the required moment bounds are independent of the original contrast.

\begin{proof}[Proof of Theorem~\ref{t.algebraic.convergence}]
Fix a tolerance~$\sigma=\sigma(d,\gamma)>0$, to be chosen below. Apply Theorem~\ref{t.polynomial.entry} with this tolerance and let~$m_{\rm ent}$ be the resulting scale. We use the small-contrast argument of~\cite[Section~4.3]{AK.HC}, giving the moment initialization and iteration explicitly.

\smallskip

Consider the law of~$\a(3^{m_{\rm ent}}\cdot)$. It is~$\Zd$-stationary, has dependence range at most~$3^{-m_{\rm ent}}\leq1$, and satisfies~\eqref{e.coarse.ellipticity} with the same~$\bfE$ and~$\gamma$ and random scale~$3^{-m_{\rm ent}}\S$. Indeed, dilation sends~$y+\cu_k$ to~$3^{m_{\rm ent}}y+\cu_{m_{\rm ent}+k}$, and~\eqref{e.coarse.ellipticity} holds at every integer scale. Moreover,
\begin{equation*}
	\P[3^{-m_{\rm ent}}\S>t]\leq\Psi_{\S}(3^{m_{\rm ent}}t)^{-1}\leq\Psi_{\S}(t)^{-1}\qquad(t>0)\,.
\end{equation*}
Thus the original tail function and its growth constant remain admissible. Until the final change of scale, all blocks below refer to this dilated law; its contrast at scale zero is at most~$1+\sigma$.

\smallskip

\emph{Step 1: A normalization with bounded moments.} Put~$F=\bfAhom(\cu_0)$, and let~$S$ be the Schur complement of its lower-right block. Choose~$\qq=\mathcal Q(S)$ with a fixed dimensional rounding scale~$j_*$ large enough for the separation used in Lemma~\ref{l.fixed.geometry.matrix.averaging}. We write~$F_j=\bfAhom(\cus_j^\qq)$. Constants~$C$ in this proof depend only on~$d,\gamma$. We use~$H\geq2$ for a deterministic bound satisfying
\begin{equation*}
	H\leq C(2+\Pi K_{\Psi_{\S}})^C\,.
\end{equation*}
The same bound can be enlarged a fixed number of times below.

\smallskip

The moments of~$\S$ and~\eqref{e.coarse.ellipticity}, applied inside the unit parent of a standard cube, give
\begin{equation}
	\bigl\|F^{-\nf12}\bfA(z+\cu_r)F^{-\nf12}\bigr\|_{L^Q(S_Q)}
	\leq H3^{-\gamma r}\qquad(r\leq0,\ z\in3^r\Zd)\,.
	\label{e.algebraic.unit.moments}
\end{equation}
Here~$Q$ is as in~\eqref{e.scale.selection.Q.choice}. To check the normalization, the adapted cube bound at the first nonnegative scale above~$\S(z)$ gives~$F\leq C K_{\Psi_{\S}}^C\bfE$. Inverting this inequality and using~\eqref{e.matrix.reference.comparison} and~$F_*\leq F$ gives~$\bfE\leq C\Pi K_{\Psi_{\S}}^C F$. These inequalities prove~\eqref{e.algebraic.unit.moments}; their Schur complements also bound~$|\qq|$ by~$H$.

\smallskip

Fill~$y+\cus_j^\qq$, for~$j\geq0$, by maximal standard cubes with scales at most zero. The full unit cubes have mean~$F$. Their centered sum, divided by~$|\cus_j^\qq|$, has~$L^Q(S_Q)$ norm at most~$CH3^{-\nf{dj}{2}}$ after normalization by~$F$: apply the coloring argument in the proof of Lemma~\ref{l.fixed.geometry.matrix.averaging}, using~$\det\qq\geq2^{-d}$. The smaller cubes occupy relative volume at most~$C3^{r-j}$ at scale~$r<0$, by the boundary-strip argument of Lemma~\ref{l.source.whitney}. Their contribution has norm at most
\begin{equation*}
	CH\sum_{r<0}3^{r-j}3^{-\gamma r}\leq CH3^{-j}\,.
\end{equation*}
Subadditivity therefore gives, uniformly in~$y\in\Rd$,
\begin{equation}
	\left\|\left(F^{-\nf12}\bfA(y+\cus_j^\qq)F^{-\nf12}-\Itwod\right)_+\right\|_{L^Q(S_Q)}
	\leq H3^{-j}\qquad(j\geq0)\,.
	\label{e.algebraic.positive.moments}
\end{equation}
The subscript~$+$ denotes the spectral positive part. Indeed, the positive part of a symmetric matrix bounded above by another has operator norm no larger than the norm of that upper bound; passage to~$S_Q$ costs only a dimensional factor. For~$j<0$, the same filling capped at~$j$ and~\eqref{e.algebraic.unit.moments} give the bound~$H3^{-\gamma j}$ for the full normalized block. Taking expectations in the capped-unit filling also gives
\begin{equation}
	(1+H3^{-j})^{-1}F_*\leq F_j\leq(1+H3^{-j})F\qquad(j\geq0)\,.
	\label{e.algebraic.mean.initialization}
\end{equation}
The lower bound follows from the upper bound by inversion and~$F_{*,j}\leq F_j$; no commutation of expectation and inversion is used.

\smallskip

Choose an integer~$b\geq j_*+5$ with~$b\leq C\log_3(\nf{H}{\sigma})$ sufficiently large. Since~$F\leq(1+6\sigma)F_*$ by~\eqref{e.matrix.block.comparison}, we have, for every~$j\geq b$,
\begin{equation}
	(1+C\sigma)^{-1}F\leq F_j\leq(1+C\sigma)F
	\qand
	\bigl\|F^{-\nf12}\bfA(\cus_j^\qq)F^{-\nf12}\bigr\|_{L^Q(S_Q)}\leq C\,.
	\label{e.algebraic.normalized.moments}
\end{equation}
Polynomial entry makes~$F$ close to~$F_*$, and the choice of~$b$ makes the boundary and fluctuation errors small in the~$F$-normalization. The iteration therefore starts with a bounded moment, independent of the original contrast.

\smallskip

\emph{Step 2: The weak-norm estimate.} Set
\begin{equation*}
	\rho=\frac{1+\gamma}{2}\,,\qquad
	\zeta=\rho-\frac dQ>\gamma\,,\qquad
	\beta=\frac{1-\rho}{2}>0\,.
\end{equation*}
For~$n\geq0$, define the positive-excess maximum
\begin{equation*}
	\mathcal M_n=
	\sup_{r\leq n}3^{-\rho(n-r)}
	\max_{z\in3^r\qq\Zd\cap\cus_n^\qq}
	\left|\left(F^{-\nf12}\bfA(z+\cus_r^\qq)F^{-\nf12}-\Itwod\right)_+\right|\,.
\end{equation*}
There are~$3^{d(n-r)}$ cells at scale~$r$. The maximum is bounded in~$L^Q$ by this number to the power~$\nf{1}{Q}$ times the uniform cell bound. Thus~\eqref{e.algebraic.positive.moments} and the bound for negative scales give
\begin{equation}
	\|\mathcal M_n\|_{L^Q}
	\leq H3^{-\zeta n}
	\left(\sum_{r=0}^n3^{-(1-\zeta)r}
	+\sum_{r<0}3^{(\zeta-\gamma)r}\right)
	\leq CH3^{-\zeta n}\,.
	\label{e.algebraic.maximum.decay}
\end{equation}
Both series converge since~$\gamma<\zeta<1$. Increasing~$b$ makes~$H3^{-\zeta b}\leq1$.

\smallskip

Write~$S_j,S_{*,j},K_j$ for the matrices in the block representation of~$F_j$, and put
\begin{equation*}
	a_j=\frac1d\tr(S_{*,j}^{-1}S_j)-1
	\qand
	v_j=\E\left[\left|F^{-\nf12}\bigl(\bfA(\cus_j^\qq)-F_j\bigr)F^{-\nf12}\right|^2\right]\,.
\end{equation*}
For~$j\geq b$, the sequence~$a_j$ is nonnegative and decreasing, and~\eqref{e.algebraic.normalized.moments} gives~$a_j\leq C\sigma$. The fluctuation estimate~\cite[Lemma~4.4]{AK.HC}, with terminal scale~$k$ and subdivision scale~$l$, followed by Lemma~\ref{l.fixed.geometry.matrix.averaging} with~$N=2$ and~\eqref{e.algebraic.normalized.moments}, yields
\begin{equation}
	v_k\leq C a_l^2+C3^{-d(k-l)}\qquad(b\leq l\leq k)\,.
	\label{e.algebraic.variance}
\end{equation}
All normalization changes between~$F$ and the annealed blocks cost only~$C$.

\smallskip

We record precisely the weak-norm input from the proof of~\cite[Lemma~4.5, (4.19)--(4.22)]{AK.HC}. At terminal scale~$n$, subtract~$\nf{K_n-K_n^t}{2}$ from the coefficient and use the primal and adjoint loads~$p=S_{*,n}^{-\nf12}e$,~$q=S_{*,n}^{\nf12}e$, with~$|e|=1$. The block formulas, small contrast and~\eqref{e.algebraic.normalized.moments} give the energy-normalized load bounds and the metric-pairing bound
\begin{equation*}
	\biggl|F^{\nf12}\binom{-p}{\pm q-\frac12(K_n-K_n^t)p}\biggr|\leq C\qand |\qq S_{*,n}^{-\nf12}|\,|S_{*,n}^{\nf12}\qq^{-1}|\leq C\,.
\end{equation*}
The two signs correspond to the recentered primal and adjoint normalizers. Apply~\cite[Lemma~2.16]{AK.HC} with the corresponding congruence transform of~$F$,~$s=\nf{1}{2}$, cutoff level~$1$, and~$h=n-b$. Its maximum is exactly~$\mathcal M_n$. The squared sum of matrix differences has weights~$3^{-\nf{n-k}{2}}$. For the square-root mean differences, Cauchy--Schwarz gives the weaker weights~$3^{-\beta(n-k)}$, which we use for both sums. The mean-difference estimate in Lemma~4.4 bounds these terms by~$C(a_k-a_n)$.

\smallskip

The last term of Lemma~2.16 contributes at most
\begin{equation*}
	C3^{-(1-\rho)(n-b)}
	+C\E\bigl[\mathcal M_n(1+\mathcal M_n)\indc_{\{\mathcal M_n>1\}}\bigr]
	\leq C3^{-(1-\rho)(n-b)}+CH^2 3^{-2\zeta n}\,.
\end{equation*}
Indeed, the optimizer energy is at most~$C(1+\mathcal M_n)$, and~$Q\geq2$. The mean terms in the primal and dual energy calculations contribute~$C(v_n+a_n^2)$, as in the last calculation of that proof. Comparing the scale-$n$ optimizer with its scale-$(n-4)$ cell optimizers in the integrals weighted by the smooth cutoff contributes~$C(a_{n-4}-a_n)$, already bounded by the weighted difference sum. Consequently, after absorbing~$Ca_n^2$ by choosing~$\sigma$ small, we obtain
\begin{equation}
	a_n\leq C\sum_{k=b}^n3^{-\beta(n-k)}\bigl(v_k+a_k-a_n\bigr)
	+C3^{-\beta(n-b)}\qquad(n\geq b+4)\,.
	\label{e.algebraic.response.recursion}
\end{equation}
The recursion separates block fluctuations, the accumulated decrease of the mean mismatch, and the old-scale tail. Only annealed smallness is used here; the original reference matrix~$\bfE$ need not have small contrast.

\smallskip

\emph{Step 3: Iteration and return to standard cubes.} Set~$u_j=a_{b+j}$ and, in~\eqref{e.algebraic.variance}, take~$k=b+i$,~$l=b+\lceil\nf{3i}{4}\rceil$. Since~$k-l\geq \nf{i}{4}-1$, summing the variance term in~\eqref{e.algebraic.response.recursion} gives constants~$A\geq1$ and~$\eta>0$, depending only on~$d,\gamma$, such that
\begin{equation}
	u_j\leq A\sum_{i=0}^j3^{-\beta(j-i)}
	\bigl(u_{\lceil\frac{3i}{4}\rceil}^2+u_i-u_j\bigr)+A3^{-\eta j}
	\qquad(j\geq1)\,.
	\label{e.algebraic.shifted.iteration}
\end{equation}
The first three values of~$j$ are included by enlarging~$A$. There is no contrast-dependent prefactor in this inequality.

\smallskip

The negative copies of~$u_j$ provide a strict contraction in the linear terms. The delayed square can then be absorbed using smallness and induction. Put~$q_0=3^{-\beta}$ and~$T_j=\sum_{h=1}^j q_0^h$. Moving the negative copies of~$u_j$ to the left in~\eqref{e.algebraic.shifted.iteration} gives
\begin{equation*}
	(1+AT_j)u_j
	\leq A\sum_{h=1}^j q_0^h u_{j-h}
	+A\sum_{i=0}^j q_0^{j-i}u_{\lceil\frac{3i}{4}\rceil}^2+A3^{-\eta j}\,.
\end{equation*}
Choose~$r\in(\max\{q_0,3^{-\eta}\},1)$ sufficiently close to one that
\begin{equation*}
	\sup_{j\geq1}
	\frac{A\sum_{h=1}^j q_0^h r^{-h}}{1+AT_j}<1\,.
\end{equation*}
Such a choice exists since, at~$r=1$, the supremum is at most~$\nf{AT_\infty}{1+AT_\infty}<1$. To prove~$u_j\leq Mr^j$ by induction, use~$u_i\leq C\sigma$ and
\begin{equation*}
	u_{\lceil\frac{3i}{4}\rceil}^2
	\leq\min\{(C\sigma)^2,M^2r^{2\lceil\frac{3i}{4}\rceil}\}
	\leq(C\sigma)^{\frac{2}{3}}M^{\frac{4}{3}}r^i\,.
\end{equation*}
For~$i\leq3$, where the delayed index can equal~$i$, use~$(C\sigma)^2$ directly; their total contribution is at most~$C\sigma^2r^j$. Choose~$M$ to absorb the fixed forcing term, then~$\sigma(d,\gamma)>0$ small enough to absorb the delayed-square sum in the strict contraction margin. This proves~$a_n\leq C3^{-c(n-b)}$ for~$n\geq b$, with~$c=c(d,\gamma)>0$.

\smallskip

Finally, the adapted-to-standard comparison~\cite[Lemma~2.15]{AK.HC} and~$\bfE\leq HF$ give
\begin{equation*}
	\Theta_m-1\leq C a_n+CH3^{-(m-n)}\qquad(m>n\geq b)\,.
\end{equation*}
For clarity, the small-contrast estimate used here is~$|S_{*,n}^{-\nf12}(K_n+K_n^t)S_{*,n}^{-\nf12}|\leq d a_n$, obtained by optimizing the inequality~$F_{*,n}\leq F_n$ as in the proof of~\cite[Lemma~2.6]{AK.HC}. It bounds the skew-corrected contrast by~$1+Ca_n$; the comparison error is measured in uniformly equivalent normalized blocks. For~$m\geq2b+2$, choose~$n=\lfloor \nf{m}{2}\rfloor+b$. Then~$n-b\geq \nf{m}{2}-1$ and~$m-n\geq\nf{m-2b}{2}$, so both errors are bounded by~$CH3^{-c'(m-2b)}$ for some~$c'=c'(d,\gamma)>0$. Enlarging the dilated starting scale by~$C\log_3 H$ absorbs this prefactor. Returning to the original law gives~\eqref{e.algebraic.contrast.decay} at a scale~$m_0$ with~$m_0-m_{\rm ent}\leq C\log_3(2+\Pi K_{\Psi_{\S}})$. Together with~\eqref{e.polynomial.entry}, this proves~\eqref{e.algebraic.entry}.

\smallskip

By subadditivity and the order~$\bfAhom_*(U)\leq\bfAhom(U)$, for every~$m\geq m_0$,
\begin{equation*}
	\mathbf R\bfAhom(\cu_{m_0})^{-1}\mathbf R\leq\bfAhom(\cu_m)\leq\bfAhom(\cu_{m_0})\,.
\end{equation*}
Thus~$\bfAhom(\cu_m)$ decreases to a positive definite matrix~$\bfAhom$. Apply~\eqref{e.matrix.block.comparison} and minimize over the skew matrix in~\eqref{e.Theta.m} to obtain
\begin{equation*}
	\bfAhom(\cu_m)\leq\bigl(1+6(\Theta_m-1)\bigr)\mathbf R\bfAhom(\cu_m)^{-1}\mathbf R\,.
\end{equation*}
Letting~$m\to\infty$ and using the reverse order~$\bfAhom\geq\mathbf R\bfAhom^{-1}\mathbf R$ from~\eqref{e.matrix.annealed.sharp.order} gives~$\bfAhom=\mathbf R\bfAhom^{-1}\mathbf R$. Comparing blocks using~\eqref{e.annealed.schur} yields~$\shom_* = \shom>0$ and~$\khom=-\khom^t$, so~$\bfAhom$ is the block associated with~$\ahom=\shom+\khom$. Finally, order reversal under inversion gives~$\mathbf R\bfAhom(\cu_m)^{-1}\mathbf R\leq\bfAhom$, and the preceding comparison yields
\begin{equation*}
	0\leq\bfAhom(\cu_m)-\bfAhom\leq6(\Theta_m-1)\bfAhom\,.
\end{equation*}
Substituting~\eqref{e.algebraic.contrast.decay} proves~\eqref{e.algebraic.block.decay}.
\end{proof}

\subsection{Uniformly elliptic fields}
\label{ss.uniform.homogenization}

\begin{proof}[Proof of Theorem~\ref{t.uniform.homogenization}]
It suffices to consider~$\delta\in(0,1]$. Constants denoted by~$C$ and~$c$ in the proof depend only on~$d$, unless another dependence is indicated, and may change from line to line. Uniform ellipticity allows us to take~$\S=0$. Concentration then turns Theorem~\ref{t.algebraic.convergence} into an estimate valid for an individual coefficient field on all the triadic subcubes needed below, beyond a random radius~$\X_0$ whose size is bounded polynomially in the ellipticity ratio, up to a random factor with stretched-exponential tails.

\smallskip

To pass from coarse matrices to solutions, divide the domain into cubes large enough for homogenization but small compared with the domain. On an interior cube~$Q$, replace~$u$ by the~$\a$-harmonic function~$v_Q$ with the same boundary data. The forcing contributes a small error because~$Q$ is small, while the coarse-matrix bound controls the difference between the average fluxes~$(\a\nabla v_Q)_Q$ and~$(\ahom\nabla v_Q)_Q$. A constant-coefficient adjoint problem converts these average-flux bounds into an estimate for~$u-\uhom$. Its regularity controls the variation within each cube and the uncovered boundary layer. Choosing the mesh size as a power of the radius balances these errors. Finally, the summable block errors give the large-scale regularity estimate of~\cite{AK.HC}, after enlarging the same random radius.

\smallskip

\emph{Step 1: The quenched block estimate.} Applying the second inequality in~\eqref{e.uniform.ellipticity} to~$\a e$ gives
\begin{equation*}
	\Lambda^{-1}|\a e|^2\leq e\cdot\a e\leq|e|\,|\a e|\,,
\end{equation*}
so~$|\a|\leq\Lambda$. Condition~\eqref{e.uniform.ellipticity} also gives~$\s^{-1}\leq\lambda^{-1}\Id$ and~$\s+\k^t\s^{-1}\k\leq\Lambda\Id$. By the variational formula for~$\bfA(U)$, using the zero competitor, the assumptions of Theorem~\ref{t.algebraic.convergence} hold with
\begin{equation*}
	\gamma=0\,,\qquad\S=0\,,\qquad\bfE=\begin{pmatrix}2\Lambda\Id&0\\0&2\lambda^{-1}\Id\end{pmatrix}\,,\qquad\Pi=4\frac{\Lambda}{\lambda}\,.
\end{equation*}
We may take~$\Psi_{\S}(t)=\exp(t^2)$ and~$K_{\Psi_{\S}}=3$. Applying the same variational bound separately to the two diagonal blocks and passing to the limit gives
\begin{equation}
	\lambda\Id\leq\shom\,,\qquad\shom+\khom^t\shom^{-1}\khom\leq\Lambda\Id\,,\qquad|\ahom|\leq\Lambda\,.
	\label{e.uniform.homogenized.bounds}
\end{equation}
In particular,~$B_r\subseteq E_r\subseteq B_{r(\nf{\Lambda}{\lambda})^{\nf12}}$.

\smallskip

Unit range gives~$(\beta,\nu)=(0,\nf{d}{2})$ and~$\Psi(t)=\max\{1,\frac12\exp(c(d)t^2)\}$ in the concentration assumption of~\cite{AK.HC}, denoted~(P3) there; see~\cite[discussion following~(P3)]{AK.HC}. The associated~$K_\Psi$ depends only on~$d$. We use the stopping construction in the proof of~\cite[Corollary~4.3]{AK.HC}, with~$\rho=\nf{1}{4}$ and the scale~$m_0$ of Theorem~\ref{t.algebraic.convergence}. Its algebraic input is~\eqref{e.algebraic.block.decay}. In that construction, let the tolerance decrease geometrically, slowly enough that the cutoff has an affine upper bound in the scale with slope at most~$\frac12$. Meeting the remaining scale conditions in the proof of~\cite[Corollary~4.3]{AK.HC} costs at most~$C\log_3(2+\Pi)$ additional scales: the concentration growth constant and the growth constant of the tail of~$\S$ depend only on~$d$, and the reference contrast is bounded by the reference ellipticity ratio~$\Pi$. Thus there exist~$\theta(d)>0$ and a random variable~$\X_0\geq1$ such that
\begin{equation}
	\P\bigl[\X_0\geq C3^{m_0}(2+\Pi)^C t\bigr]\leq\exp(-t^d)\qquad(t\geq1)\,,
	\label{e.uniform.quenched.tail}
\end{equation}
and, almost surely, for every~$m\in\N$ with~$3^m\geq\X_0$,
\begin{equation}
	\sum_{k=-\infty}^{m}3^{-\frac14(m-k)}\max_{z\in3^k\Zd\cap\cu_m}\bigl|\bigl(\bfAhom^{-\nf12}(\bfA(z+\cu_k)-\bfAhom)\bfAhom^{-\nf12}\bigr)_+\bigr|\leq C\left(\frac{3^m}{\X_0}\right)^{-\theta}\,.
	\label{e.uniform.quenched.row}
\end{equation}
In defining~$\X_0$, we multiply the supremum of the radii at which the estimate fails by~$3$, so that~\eqref{e.uniform.quenched.row} holds with the non-strict inequality~$3^m\geq\X_0$. The power~$d$ in~\eqref{e.uniform.quenched.tail} comes from evaluating the finite-range Gaussian concentration gauge at~$t^{\nf d2}$ and absorbing its prefactor into the dimensional scale constant. Since~$\S=0$, the events involving~$\S$ in the construction hold identically and contribute no probability term. The estimate~\eqref{e.uniform.quenched.row} retains every integer child scale.

\smallskip

Choose
\begin{equation}
	\kappa\coloneqq\frac{2\theta}{5}\qquad\text{and}\qquad\X\coloneqq C\delta^{-\nf5{2\theta}}\left(\frac{\Lambda}{\lambda}\right)^{\nf{15}{2\theta}}(2+\Pi)^{\nf2\theta}\X_0\,,
	\label{e.uniform.final.scale}
\end{equation}
where~$C$ is chosen large enough to obtain~\eqref{e.uniform.mesh.blocks} and, in Step 4, to satisfy the smallness condition in~\cite[Proposition~5.10]{AK.HC}. On the probability-one event in~\eqref{e.uniform.quenched.row}, fix any~$r\geq\X$ and set
\begin{equation}
	\tau\coloneqq c\delta^2\left(\frac{\lambda}{\Lambda}\right)^6\left(\frac{r}{\X}\right)^{-2\kappa}\,,
	\label{e.uniform.mesh.size}
\end{equation}
where~$c$ is small enough that~$\tau\leq(4\sqrt d)^{-1}$ and the coefficient in~\eqref{e.uniform.normalized.dirichlet} is at most~$\delta(\nf{r}{\X})^{-\kappa}$. Choose integers~$m,k$ such that~$E_r\subseteq\cu_m$,~$2r(\nf{\Lambda}{\lambda})^{\nf12}\leq3^m\leq6r(\nf{\Lambda}{\lambda})^{\nf12}$, and~$\nf{\tau r}{3}<3^k\leq\tau r$. Each triadic cube~$z+\cu_k\subseteq E_r$ then satisfies, by~\eqref{e.uniform.quenched.row},
\begin{equation}
	\bfA(z+\cu_k)\leq\left(1+C\left(\frac{\Lambda}{\lambda}\right)^{\nf18}\tau^{-\nf14}\left(\frac{r}{\X_0}\right)^{-\theta}\right)\bfAhom\leq(1+\tau)\bfAhom\,.
	\label{e.uniform.mesh.blocks}
\end{equation}
The last inequality follows from~\eqref{e.uniform.final.scale} and~\eqref{e.uniform.mesh.size}: since~$\frac52\kappa=\theta$, the powers of~$r$ cancel when comparing the block excess with~$\tau$.

\smallskip

\emph{Step 2: The average flux on an interior cube.} Rescale~$E_r$ to~$E_1$. In this step and the next, the symbols~$\a,u,\uhom,g,f$ denote the rescaled coefficient and functions; in particular, the rescaled forcing is~$r^2f(r\cdot)$. The cubes~$Q=r^{-1}(z+\cu_k)\subseteq E_1$ have side length~$\ell\coloneqq\nf{3^k}{r}\in(\nf{\tau}{3},\tau]$, and their coarse blocks satisfy~\eqref{e.uniform.mesh.blocks} by normalized change of variables. On each such cube let~$v_Q\in u+H^1_0(Q)$ be~$\a$-harmonic. Testing the equation for~$u-v_Q$ and using the cube Poincar\'e inequality gives
\begin{equation}
	\|\nabla(u-v_Q)\|_{L^2(Q)}\leq C\ell\lambda^{-1}\|f\|_{L^2(Q)}\,.
	\label{e.uniform.local.forcing}
\end{equation}
Since~$\bfAhom$ is the block associated with~$\ahom$, for every~$p\in\Rd$,
\begin{equation*}
	(-p,\ahom^tp)\cdot\bfAhom(-p,\ahom^tp)=2p\cdot\shom p\,.
\end{equation*}
The averaged-flux inequality~\cite[(2.33)]{AK.HC}, applied to~$v_Q$ with second vector~$\ahom^tp$, and~\eqref{e.uniform.mesh.blocks} therefore give
\begin{equation*}
	\bigl|p\cdot\bigl((\a-\ahom)\nabla v_Q\bigr)_Q\bigr|\leq\bigl(2\tau p\cdot\shom p\bigr)^{\nf12}\bigl(\fint_Q\nabla v_Q\cdot\s\nabla v_Q\bigr)^{\nf12}\,.
\end{equation*}
The triangle inequality gives
\begin{equation*}
	\|\s^{\nf12}\nabla v_Q\|_{L^2(Q)}\leq\Lambda^{\nf12}\bigl(\|\nabla u\|_{L^2(Q)}+\|\nabla(u-v_Q)\|_{L^2(Q)}\bigr)\,.
\end{equation*}
Taking the supremum over~$|p|=1$ in the flux estimate and using~\eqref{e.uniform.local.forcing},~$\shom\leq\Lambda\Id$, and~$|\a-\ahom|\leq2\Lambda$ gives
\begin{equation}
	\bigl|\bigl((\a-\ahom)\nabla u\bigr)_Q\bigr|\leq C\Lambda\sqrt\tau\bigl(\fint_Q|\nabla u|^2\bigr)^{\nf12}+C\frac{\Lambda}{\lambda}\ell\bigl(\fint_Q|f|^2\bigr)^{\nf12}\,.
	\label{e.uniform.cell.flux}
\end{equation}

\smallskip

\emph{Step 3: The forced Dirichlet estimate.} Let~$\psi\in H^1_0(E_1)$ solve
\begin{equation*}
	-\nabla\cdot\ahom^t\nabla\psi=u-\uhom\qquad\text{in }E_1\,.
\end{equation*}
The change of variables~$x=\overline\lambda^{-\nf12}\shom^{\nf12}y$ takes~$B_1$ onto~$E_1$ and reduces this equation to the Laplacian with coefficient~$\overline\lambda$. The~$H^2$ estimate on the ball~\cite[Theorem~0.3]{JeKe} and~$|\overline\lambda^{\nf12}\shom^{-\nf12}|\leq1$ give
\begin{equation}
	\|\psi\|_{H^2(E_1)}\leq C\lambda^{-1}\|u-\uhom\|_{L^2(E_1)}\,.
	\label{e.uniform.adjoint.regularity}
\end{equation}
For~$v\in H^1(B_1)$, the one-dimensional~$H^1$ estimate on radial segments gives
\begin{equation*}
	\int_{B_1\setminus B_{1-\delta}}|v|^2\leq C\delta\|v\|_{H^1(B_1)}^2\qquad(0<\delta<1)\,.
\end{equation*}
Apply this estimate to the components of the transformed gradient with~$\delta=\sqrt d\,\ell$. Returning to~$E_1$ and using~\eqref{e.uniform.adjoint.regularity}, we obtain
\begin{equation}
	\|\nabla\psi\|_{L^2(E_1\setminus E_{1-\sqrt d\,\ell})}\leq C\sqrt\ell\,\lambda^{-1}\|u-\uhom\|_{L^2(E_1)}\,.
	\label{e.uniform.adjoint.layer}
\end{equation}
The interior mesh cubes cover~$E_1$ except for a set at distance at most~$\sqrt d\,\ell$ from its boundary, which is contained in the shell in~\eqref{e.uniform.adjoint.layer}. We separate the cell-average flux error, the oscillation of the adjoint gradient inside each cell, and the uncovered boundary layer. Testing the two equations with~$\psi$, and the adjoint equation with~$u-\uhom$, yields
\begin{align}
	\|u-\uhom\|_{L^2(E_1)}^2
	&=-\sum_Q|Q|(\nabla\psi)_Q\cdot\bigl((\a-\ahom)\nabla u\bigr)_Q
	\notag\\ &\qquad-\sum_Q\int_Q\bigl(\nabla\psi-(\nabla\psi)_Q\bigr)\cdot(\a-\ahom)\nabla u
	\notag\\ &\qquad-\int_{E_1\setminus\bigcup_Q Q}\nabla\psi\cdot(\a-\ahom)\nabla u\,.
	\label{e.uniform.duality.split}
\end{align}
Use~\eqref{e.uniform.cell.flux} for the first sum, the cube Poincar\'e inequality for the second, and~\eqref{e.uniform.adjoint.layer} for the last integral. Cauchy--Schwarz,~$|\a-\ahom|\leq2\Lambda$,~\eqref{e.uniform.adjoint.regularity}, and~$\ell\leq\tau\leq1$ give, after cancelling~$\|u-\uhom\|_{L^2(E_1)}$ when it is nonzero,
\begin{equation}
	\|u-\uhom\|_{L^2(E_1)}\leq C\frac{\Lambda}{\lambda}\sqrt\tau\|\nabla u\|_{L^2(E_1)}+C\frac{\Lambda}{\lambda^2}\tau\|f\|_{L^2(E_1)}\,.
	\label{e.uniform.forced.energy.error}
\end{equation}
Testing the equation for~$u$ with~$u-g$, using~$\operatorname{diam}(E_1)\leq2(\nf{\Lambda}{\lambda})^{\nf12}$ in Poincar\'e's inequality, gives
\begin{equation*}
	\|\nabla u\|_{L^2(E_1)}\leq C\frac{\Lambda}{\lambda}\|\nabla g\|_{L^2(E_1)}+C\lambda^{-1}\left(\frac{\Lambda}{\lambda}\right)^{\nf12}\|f\|_{L^2(E_1)}\,.
\end{equation*}
Since~$\lambda^{-1}\leq\nf{\Lambda}{\lambda}$, substituting this bound into~\eqref{e.uniform.forced.energy.error} gives
\begin{equation}
	\|u-\uhom\|_{L^2(E_1)}\leq C\left(\frac{\Lambda}{\lambda}\right)^3\sqrt\tau\bigl(\|\nabla g\|_{L^2(E_1)}+\|f\|_{L^2(E_1)}\bigr)\,.
	\label{e.uniform.normalized.dirichlet}
\end{equation}
By~\eqref{e.uniform.mesh.size} and the choice of~$c$, the coefficient on the right is at most~$\delta(\nf{r}{\X})^{-\kappa}$. Returning to~$E_r$ gives~\eqref{e.uniform.dirichlet}, since the gradient datum scales by~$r$ and the forcing by~$r^2$. All tests above are valid for~$f\in L^2$ and~$g\in H^1$; no extra regularity of either datum was used.

\smallskip

\emph{Step 4: Large-scale energy and the common scale.} The radius already chosen makes the Dirichlet error small. Its constant will also ensure the summable-error threshold for large-scale regularity. Return to the original variables. For the rounded geometry~$\qq=\mathcal Q(\shom)$ of Section~\ref{s.scale.selection}, let~$\mathcal E_{\nf{1}{4},2}(\cus_m^\qq;\a,\ahom)$ denote the multiscale error of~\cite[Definition~5.1]{AK.HC}, formed on the corresponding adapted subcubes. The relation between the block excess and that error, followed by the boundary-layer subdivision of adapted cubes into Euclidean cubes, gives
\begin{equation}
	\mathcal E_{\frac{1}{4},2}(\cus_m^\qq;\a,\ahom)^2\leq C\Pi^{\nf34}\left(\frac{3^m}{\X_0}\right)^{-\theta}\qquad(3^m\geq\X_0)\,.
	\label{e.uniform.adapted.error}
\end{equation}
Here the Euclidean weights are~$3^{-\frac12(m-k)}\leq3^{-\frac14(m-k)}$, so~\eqref{e.uniform.quenched.row} applies. In the adapted-cube comparison, the boundary-layer weights~$3^{j-k}$ are summed against~$3^{-\frac12(m-k)}$; the resulting factor is~$C\Pi^{\nf34}$. This is the comparison in the proof of~\cite[Theorem~B]{AK.HC}, with~$s=\rho=\nf{1}{4}$. Taking square roots before summing yields
\begin{equation}
	\sum_{m=n}^\infty\mathcal E_{\frac{1}{4},2}(\cus_m^\qq;\a,\ahom)\leq C\Pi^{\nf38}\left(\frac{3^n}{\X_0}\right)^{-\nf\theta2}\qquad(3^n\geq\X_0)\,.
	\label{e.uniform.adapted.sum}
\end{equation}
Subtract~$\khom$ and make the affine change of variables generated by~$\qq$. After multiplication by~$\overline\lambda^{-1}$, the homogenized coefficient has ellipticity bounds depending only on~$d$, because~$\qq$ differs from~$\overline\lambda^{-\nf12}\shom^{\nf12}$ by a matrix of norm at most~$\nf{1}{2}$. Indeed,~\eqref{e.rounded.grid.bounds} gives
\begin{equation*}
	\bigl|\qq^{-1}\overline\lambda^{-\nf12}\shom^{\nf12}\bigr|\leq2\qand\bigl|\overline\lambda^{\nf12}\shom^{-\nf12}\qq\bigr|\leq\frac32\,.
\end{equation*}
Thus~\cite[Proposition~5.10]{AK.HC} applies whenever the right side of~\eqref{e.uniform.adapted.sum} is below a threshold depending only on~$d$. The scale~\eqref{e.uniform.final.scale}, with~$C$ sufficiently large, ensures this for every adapted scale comparable to a radius at least~$\X$. The two bounds in the preceding display show that adapted cubes and the ellipsoids~$E_r$ contain one another at radii differing by factors depending only on~$d$, with comparable volumes. If~$R\geq C(d)r$, choose nested adapted cubes, the smaller containing~$E_r$ and the larger contained in~$E_R$, with volumes comparable to those of~$E_r$ and~$E_R$, respectively. Proposition~5.10 applied to these cubes gives~\eqref{e.uniform.energy}. If~$R<C(d)r$, the estimate follows from~$E_r\subseteq E_R$ and~$\nf{|E_R|}{|E_r|}\leq C(d)$. The resulting constant~$C_0$ depends only on~$d$.

\smallskip

Finally,~\eqref{e.uniform.final.scale} and~$\Pi=4\nf{\Lambda}{\lambda}$ imply
\begin{equation*}
	\X\leq C\delta^{-\nf5{2\theta}}(2+\Pi)^C\X_0\,.
\end{equation*}
Theorem~\ref{t.algebraic.convergence}, with~$K_{\Psi_{\S}}$ fixed at~$3$, gives~$3^{m_0}\leq3(2+3\Pi)^{C(d)}$. Combining these two bounds with~\eqref{e.uniform.quenched.tail} proves~\eqref{e.uniform.scale.tail}. The deterministic scale-enlargement factors depend only on~$d$,~$\delta$, and~$\nf{\Lambda}{\lambda}$, and both conclusions hold above the same~$\X$.
\end{proof}

\subsection{The random scale and Dirichlet homogenization}
\label{ss.random.dirichlet}

\begin{proof}[Proof of the scale estimate and part~\emph{(a)} of Theorem~\ref{t.random.homogenization}]
We return to the coarse ellipticity assumption~\ref{a.coarse.ellipticity}, with its growth exponent~$\gamma\in[0,1)$ and random scale~$\S$. Recall that~$\P[\S>t]\leq\Psi_{\S}(t)^{-1}$. Theorem~\ref{t.algebraic.convergence} controls the annealed matrices beyond~$m_0$. We now need bounds that hold for an individual coefficient field, simultaneously over the subcubes and scales used in the deterministic homogenization estimates. The quenched construction of~\cite{AK.HC} supplies a random radius~$\X_0$ for the concentration estimate. The block bounds also require the scale to exceed~$\S$, so the final radius~$\X$ is a deterministic multiple of~$\max\{1,\S,\X_0\}$, as in~\eqref{e.random.final.scale}. This is why its tail contains both a concentration term and a contribution from~$\Psi_{\S}$.

\smallskip

We convert the block estimates into a bound on~$\mathcal E_{s_0,2}(\cus_m^\qq;\a,\ahom)$ that is summable in~$m$. This error controls the Dirichlet comparison and will also be used to construct correctors in the next subsection. Choosing one fixed order~$s_0$ makes the radius independent of the output Sobolev order~$s$; the error at every permitted~$s$ is controlled by the error at~$s_0$. We choose the fractional order~$s_0$ and the block-sum weight~$\rho$ so that the decay weights dominate the allowed small-scale growth:
\begin{equation}
	s_0\coloneqq\frac{1+\gamma}{4}\,,\qquad\rho\coloneqq\frac{1+3\gamma}{4}\,,\qquad\rho-\gamma=2s_0-\rho=\frac{1-\gamma}{4}\,.
	\label{e.random.orders}
\end{equation}
Constants denoted by~$C$ depend only on~$d$ and~$\gamma$, unless another dependence is indicated, and may change from line to line.

\smallskip

\emph{Step 1: A summable quenched error.} Apply the stopping construction in the proof of~\cite[Corollary~4.3]{AK.HC} with the weight~$\rho$ in~\eqref{e.random.orders} and concentration exponent~$\nf{d}{2}-\gamma$, retaining~$\S$ and~$\Psi_{\S}$. Meeting the remaining scale conditions in the proof of~\cite[Corollary~4.3]{AK.HC} costs at most~$C\log_3(2+\Pi K_{\Psi_{\S}})$ additional scales, since~$\rho-\gamma=2s_0-\rho=\nf{1-\gamma}{4}>0$. With the endpoint enlarged by one scale, it gives a measurable~$\X_0\geq1$ such that
\begin{equation}
	\P\bigl[\X_0\geq C3^{m_0}(2+\Pi K_{\Psi_{\S}})^C t\bigr]\leq\exp\bigl(-c(d)t^{d-2\gamma}\bigr)\qquad(t\geq1)\,,
	\label{e.random.quenched.tail}
\end{equation}
and, for some~$\kappa(d,\gamma)>0$, almost surely at every integer scale with~$3^m\geq\max\{1,\S,\X_0\}$,
\begin{equation}
	\sum_{k=-\infty}^m3^{-\rho(m-k)}\max_{z\in3^k\Zd\cap\cu_m}\bigl|\bigl(\bfAhom^{-\nf12}(\bfA(z+\cu_k)-\bfAhom)\bfAhom^{-\nf12}\bigr)_+\bigr|\leq C\biggl(\frac{3^m}{\max\{1,\S,\X_0\}}\biggr)^{-2\kappa}\,.
	\label{e.random.quenched.row}
\end{equation}
The sum in~\eqref{e.random.quenched.row} runs over every integer child scale~$k\leq m$, including negative scales. The same concentration gauge evaluated at~$t^{\nf d2-\gamma}$ gives the power in~\eqref{e.random.quenched.tail}, after absorbing its prefactor into the scale constant~$C(d,\gamma)$. The random scale~$\S$ appears in the condition~$3^m\geq\max\{1,\S,\X_0\}$ and in the ratio on the right side of~\eqref{e.random.quenched.row}. The tail of~$\X_0$ itself satisfies the concentration bound~\eqref{e.random.quenched.tail}.

\smallskip

Let~$\qq=\mathcal Q(\shom)$ and use the adapted error~$\mathcal E_{s,2}(\cus_m^\qq;\a,\ahom)$ of Step 4 of the preceding proof. The reference-block comparison and the limit in Theorem~\ref{t.algebraic.convergence} give~$|\shom|\,|\shom^{-1}|\leq\Pi$. The block-to-error comparison and the adapted-cube subdivision in the proof of~\cite[Theorem~B]{AK.HC} therefore yield
\begin{equation}
	\mathcal E_{s_0,2}(\cus_m^\qq;\a,\ahom)^2\leq C\Pi^{\nf12(1+2s_0)}\biggl(\frac{3^m}{\max\{1,\S,\X_0\}}\biggr)^{-2\kappa}\qquad\bigl(3^m\geq\max\{1,\S,\X_0\}\bigr)\,.
	\label{e.random.adapted.square}
\end{equation}
Indeed, the Euclidean weights~$3^{-2s_0(m-k)}$ are bounded by the row weights~$3^{-\rho(m-k)}$. The adapted boundary-layer weights~$3^{j-k}$ are summable against them because~$1-2s_0=\nf{1-\gamma}{2}>0$. Taking square roots before summing gives
\begin{equation}
	\sum_{m=n}^{\infty}\mathcal E_{s_0,2}(\cus_m^\qq;\a,\ahom)\leq C\Pi^{\nf14(1+2s_0)}\biggl(\frac{3^n}{\max\{1,\S,\X_0\}}\biggr)^{-\kappa}\qquad\bigl(3^n\geq\max\{1,\S,\X_0\}\bigr)\,.
	\label{e.random.adapted.sum}
\end{equation}
For every output order~$s\in[s_0,\nf{1}{2})$, the definition of the error gives the uniform comparison
\begin{equation}
	\mathcal E_{s,2}(\cus_m^\qq;\a,\ahom)\leq(1-3^{-\nf12})^{-\nf12}\mathcal E_{s_0,2}(\cus_m^\qq;\a,\ahom)\,.
	\label{e.random.order.comparison}
\end{equation}
Here~$3^{-2s(m-k)}\leq3^{-2s_0(m-k)}$ and~$\nf{1-3^{-2s}}{1-3^{-2s_0}}\leq(1-3^{-\nf12})^{-1}$. Thus only~$s_0$, not the output order, enters the choice of scale.

\smallskip

Define
\begin{equation}
	\X\coloneqq C(2+\Pi)^C\max\{1,\S,\X_0\}\,.
	\label{e.random.final.scale}
\end{equation}
We choose the exponent~$C(d,\gamma)$ so that~$C\kappa\geq\nf{d+3}{2}$, and the multiplicative constant sufficiently large for the estimates below and the fixed-order thresholds in part~\emph{(b)}. In particular,~$\X\geq\S$. The radius is fixed using~$s_0$; the output orders, domains, data, and approximation exponents will enter only the deterministic arguments above this radius. Theorem~\ref{t.algebraic.convergence} gives~$3^{m_0}\leq3(2+\Pi K_{\Psi_{\S}})^{C(d,\gamma)}$. For any~$t\geq1$,
\begin{equation*}
	\P\bigl[\max\{1,\S,\X_0\}\geq2t\bigr]\leq\P[\X_0>t]+\P[\S>t]\,.
\end{equation*}
Apply this inequality at the polynomial normalizer in~\eqref{e.random.quenched.tail}, use the tail bound~\eqref{e.source.tail} for~$\S$, and weaken its argument by monotonicity. Together with~\eqref{e.random.final.scale}, this proves~\eqref{e.random.scale.tail}, after changing~$C$. The argument uses no independence between~$\S$ and~$\X_0$ and retains the tail gauge~$\Psi_{\S}$ evaluated at~$c_{\rm src}(d,\gamma)t$.

\smallskip

\emph{Step 2: The intrinsic Dirichlet estimate.} Fix~$s,U,\varepsilon,g$ as in part~\emph{(a)}. Choose~$m\in\N$ with~$9\leq\varepsilon3^m<27$ and put~$V\coloneqq(\varepsilon3^m\qq)^{-1}U$. The rounding bounds give
\begin{equation}
	\bigl|\qq^{-1}\overline\lambda^{-\nf12}\shom^{\nf12}\bigr|\leq2\qand\bigl|\overline\lambda^{\nf12}\shom^{-\nf12}\qq\bigr|\leq\frac32\,.
	\label{e.random.intrinsic.bounds}
\end{equation}
Consequently,~$V\subseteq B_{\nf{2}{9}}\subseteq\cu_0$. Define the transformed coefficient and functions by
\begin{equation*}
	\widetilde\a(y)\coloneqq\overline\lambda^{-1}\qq^{-1}\bigl(\a(3^m\qq y)-\khom\bigr)\qq^{-1}\,,\qquad\widetilde u(y)\coloneqq u^\varepsilon(\varepsilon3^m\qq y)\,,
\end{equation*}
and~$\widetilde h(y)\coloneqq\uhom(\varepsilon3^m\qq y)$,~$\widetilde g(y)\coloneqq g(\varepsilon3^m\qq y)$. Subtracting the constant anti-symmetric matrix does not change the weak equations. The symmetric reference coefficient~$\overline\lambda^{-1}\qq^{-1}\shom\qq^{-1}$ lies between~$\frac49\Id$ and~$4\Id$ by~\eqref{e.random.intrinsic.bounds}. Affine covariance and invariance under this subtraction give
\begin{equation}
	\mathcal E_{s,2}\bigl(\cu_0;\widetilde\a,\overline\lambda^{-1}\qq^{-1}\shom\qq^{-1}\bigr)=\mathcal E_{s,2}(\cus_m^\qq;\a,\ahom)\leq1\,.
	\label{e.random.transformed.error}
\end{equation}
The last inequality follows from~\eqref{e.random.adapted.sum}--\eqref{e.random.final.scale}, since~$3^m\geq9\varepsilon^{-1}\geq9\X$.

\smallskip

Apply both clauses of~\cite[Proposition~5.3]{AK.HC} on~$V$, at the output order~$s$.\footnote{In the general-domain localization proof, the Whitney cubes are selected as maximal disjoint triadic cubes, with the interior buffer and parent boundary contact. This corrects the printed selection rule; the disjoint signed decomposition and the subsequent estimates then apply.} In the second clause, the initial~$h$ in the printed statement means the boundary datum~$g$. Its attainability clause and the weighted Dirichlet theory give existence and uniqueness for the stated boundary data. The general-domain embedding in~\cite{AK.HC} gives~$\widetilde u\in H^{1-s}(V)$. The weighted zero-boundary approximants to~$\widetilde u-\widetilde g$ converge in~$W^{1,1}(V)$, by Cauchy--Schwarz and local integrability of the inverse symmetric coefficient. \par
We verify the zero-boundary requirement~$\widetilde u-\widetilde h\in H^{1-s}_0(V)$ in the error clause of~\cite[Proposition~5.3]{AK.HC}. Put~$w=\widetilde u-\widetilde g$ and~$r=1-s\in(\nf{1}{2},1)$. Localize~$w$ by a smooth partition of unity and flatten each boundary chart to the half-space. Even reflection and mollification give common smooth approximants in~$W^{1,1}$ and~$H^r$. Their boundary restrictions converge to both traces, so the zero~$W^{1,1}$ trace is also the~$H^r$ trace; see~\cite[Theorem~2.3]{Mikh11} for the latter trace map. The zero-trace characterization~\cite[Theorem~2.9]{Mikh11}, applied on the half-space, puts each localized function in~$H^r_0$. Multiply the half-space approximants by a fixed smooth cutoff supported in the flattened chart and equal to one near the localized function's support, then pull them back and mollify inside~$V$. Bi-Lipschitz changes preserve~$H^r$ for~$0<r<1$, so these approximants converge in~$H^r(V)$. Summing over the partition proves~$w\in H^r_0(V)$, with no topological condition on~$\partial V$. Since~$\widetilde h-\widetilde g\in H^1_0(V)$, it follows that~$\widetilde u-\widetilde h\in H^{1-s}_0(V)$.

\smallskip

We may therefore apply the error clause. Its hatted negative norm tests against all smooth vector fields on~$V$, so it bounds the compact-test dual used here. On~$V$, replacing its positive norm and pairing by~\eqref{e.physical.fractional.norm} and the averaged pairing costs only a constant depending on~$V,s,d$. The constant-coefficient Dirichlet estimate~\cite[Theorem~0.5(b)]{JeKe} also gives~$\|\nabla\widetilde h\|_{\underline H^s(V)}\leq C(V,s,d)\|\nabla\widetilde g\|_{\underline H^s(V)}$. Since the factor~$1+\mathcal E_{s,2}$ is at most two, the two clauses give
\begin{equation}
	\|\nabla\widetilde u-\nabla\widetilde h\|_{\underline H^{-s}(V)}+\|\widetilde\a\nabla\widetilde u-\overline\lambda^{-1}\qq^{-1}\shom\qq^{-1}\nabla\widetilde h\|_{\underline H^{-s}(V)}\leq C(V,s,d)\mathcal E_{s,2}(\cus_m^\qq;\a,\ahom)\|\nabla\widetilde g\|_{\underline H^s(V)}\,.
	\label{e.random.intrinsic.dirichlet}
\end{equation}

\smallskip

For completeness, the affine Sobolev losses can be bounded using only~$\Pi$. The smallest singular value of~$\varepsilon3^m\qq$ is at least~$\nf{9}{2}$, and its largest is at most~$54\Pi^{\nf12}$. Changing variables in~\eqref{e.physical.fractional.norm}, and then in the averaged dual pairing, gives the following bounds, with~$C$ depending only on~$d$:
\begin{equation}
	\|F(\varepsilon3^m\qq\,\cdot)\|_{\underline H^s(V)}\leq C\Pi^{\nf14(d+1)}\|F\|_{\underline H^s(U)}\,,\quad\|F\|_{\underline H^{-s}(U)}\leq C\Pi^{\nf14(d+1)}\|F(\varepsilon3^m\qq\,\cdot)\|_{\underline H^{-s}(V)}\,.
	\label{e.random.affine.norms}
\end{equation}
In the squared positive norm, the mass term has factor~$\det(\varepsilon3^m\qq)^{\nf2d s}$, while the double integral has factor at most~$\nf{|\varepsilon3^m\qq|^{d+2s}}{\det(\varepsilon3^m\qq)}$. Both are bounded by~$C(d)\Pi^{\nf12(d+1)}$. The determinant cancels exactly in the averaged pairing, proving the second inequality from the first.

\smallskip

The physical gradient and centered flux, pulled back to~$V$, are respectively~$\nf{\overline\lambda^{\nf12}}{\varepsilon3^m}$ times the transformed gradient error multiplied by~$\overline\lambda^{-\nf12}\shom^{\nf12}\qq^{-1}$, and the transformed flux error multiplied by~$\overline\lambda^{\nf12}\shom^{-\nf12}\qq$. These matrices have norms at most two and~$\nf{3}{2}$. The boundary gradient satisfies
\begin{equation*}
	\nabla\widetilde g=\frac{\varepsilon3^m}{\overline\lambda^{\nf12}}\qq\overline\lambda^{\nf12}\shom^{-\nf12}(\shom^{\nf12}\nabla g)(\varepsilon3^m\qq\,\cdot)\,.
\end{equation*}
The boundary-gradient factor is the reciprocal of the scalar factor in the pulled-back errors, so they cancel. Only the bounded matrix factors and the affine Sobolev losses remain. \mbox{Applying}~\eqref{e.random.affine.norms}~to~the~boundary~datum and to the dual tests gives
\begin{multline}
	\|\shom^{\nf12}(\nabla u^\varepsilon-\nabla\uhom)\|_{\underline H^{-s}(U)}+\|\shom^{-\nf12}((\a^\varepsilon-\khom)\nabla u^\varepsilon-\shom\nabla\uhom)\|_{\underline H^{-s}(U)}
	\\\leq C_0(U,s,d)\Pi^{\nf12(d+1)}\mathcal E_{s,2}(\cus_m^\qq;\a,\ahom)\|\shom^{\nf12}\nabla g\|_{\underline H^s(U)}\,.
	\label{e.random.dirichlet.before.scale}
\end{multline}
By~\eqref{e.random.intrinsic.bounds}, the domain~$V$ is obtained from~$\overline\lambda^{\nf12}\shom^{-\nf12}U$ by a linear map whose norm and inverse norm are bounded by constants depending only on~$d$. The factor~$\varepsilon3^m$ lies in~$[9,27)$, so all domain constants above have exactly the dependence asserted for~$C_0$.

\smallskip

Finally,~\eqref{e.random.adapted.sum} and~\eqref{e.random.order.comparison}, with~$\nf{1+2s_0}{4}\leq\nf{1}{2}$, imply
\begin{equation*}
	\Pi^{\nf12(d+1)}\mathcal E_{s,2}(\cus_m^\qq;\a,\ahom)\leq C\Pi^{\nf12(d+2)}\bigl(\varepsilon\max\{1,\S,\X_0\}\bigr)^\kappa\leq(\varepsilon\X)^\kappa\,.
\end{equation*}
The last inequality is ensured by~\eqref{e.random.final.scale}, after increasing~$C$. This proves~\eqref{e.random.dirichlet}. The event and radius were chosen before~$s,U,\varepsilon,g$; all these parameters enter only the deterministic argument.
\end{proof}

\subsection{Correctors, Liouville, and large-scale regularity}
\label{ss.random.correctors}

\begin{proof}[Proof of part~\emph{(b)} of Theorem~\ref{t.random.homogenization}]
We use the fixed order~$s_0=\nf{1+\gamma}{4}$ and the same radius~$\X$ as in part~\emph{(a)}. The key input is the summability of~$\mathcal E_{s_0,2}(\cus_j^\qq;\a,\ahom)$ over~$j$. Consider solutions on growing cubes with affine boundary data~$e\cdot x$. Their gradient differences are controlled by the errors at the intervening scales; summability therefore makes the gradients converge locally to~$e+\nabla\phi_e$. The same comparison gives quantitative bounds on the corrector and its flux. For a general harmonic function, we compare approximations by these corrected affine functions at neighboring scales. Summing the differences of their slopes gives one slope valid at every intermediate radius, yielding large-scale regularity. For entire solutions with the growth allowed in the theorem, the Liouville argument makes the corrected affine representation exact. Uniqueness for a prescribed slope then identifies the translated correctors.

\smallskip

To apply the deterministic results of~\cite{AK.HC}, we work in coordinates adapted to the homogenized matrix. The change of variables~$x=\qq y$, subtraction of~$\khom$, and multiplication of the coefficient by~$\overline\lambda^{-1}$ give the reference coefficient~$\overline\lambda^{-1}\qq^{-1}\shom\qq^{-1}$. Its ellipticity bounds are~$\nf{4}{9}$ and~$4$ by~\eqref{e.random.intrinsic.bounds}, so the constants in those results depend only on~$d,\gamma$, and, when specified, the approximation exponent. We keep track of the changes in the norms when returning to the original coordinates.

\smallskip

\emph{Step 1: The summable error and the energy estimate.} Choose the constant in~\eqref{e.random.final.scale} large enough that~\eqref{e.random.adapted.sum} gives
\begin{equation}
	\sum_{j=n}^{\infty}\mathcal E_{s_0,2}(\cus_j^\qq;\a,\ahom)\leq c(d,\gamma)\Pi^{-\nf12}\left(\frac{3^n}{\X}\right)^{-\kappa}\qquad(3^n\geq\X)\,,
	\label{e.random.good.tail}
\end{equation}
where~$c(d,\gamma)>0$ is below the thresholds of~\cite[Propositions~5.10 and~5.12]{AK.HC} at approximation exponent~$\nf{1}{2}$. This is possible because the prefactor in~\eqref{e.random.adapted.sum} is at most~$C\Pi^{\nf12}$ and the exponent in~\eqref{e.random.final.scale} satisfies~$C\kappa\geq\nf{d+3}{2}$. This single choice of~$\X$ is independent of the later exponent~$\theta$.

\smallskip

The summable-error clause of~\cite[Proposition~5.10]{AK.HC} gives, for all~$m\geq n$ with~$3^n\geq\X$ and all finite-energy weak solutions on~$\cus_m^\qq$,
\begin{equation}
	\|\s^{\nf12}\nabla u\|_{\underline L^2(\cus_n^\qq)}\leq C\|\s^{\nf12}\nabla u\|_{\underline L^2(\cus_m^\qq)}\,.
	\label{e.random.cube.energy}
\end{equation}
Here and below finite energy means membership in~$H^1_\s$; this is the solution class in the cited proposition. The normalized energy under the affine change is exactly~$\overline\lambda^{-1}$ times the physical normalized energy, so no power of the reference ellipticity ratio enters~\eqref{e.random.cube.energy}. The bounds~\eqref{e.random.intrinsic.bounds} show that an ellipsoid~$E_r$ and the adapted cubes containing or contained in it have comparable radii and volumes, with constants depending only on~$d$. If~$\nf{R}{r}$ exceeds a sufficiently large constant depending only on~$d$, insert such cubes between~$E_r$ and~$E_R$ and apply~\eqref{e.random.cube.energy}. For the remaining ratios, use inclusion and the volume ratio. This proves~\eqref{e.random.energy}, in fact for every finite-energy weak solution in~$H^1_\s(E_R)$. We will use this stronger intermediate form for differences involving correctors.

\smallskip

\emph{Step 2: The corrector family and the endpoint norm.} For~$m\in\N$ and~$e\in\Rd$, let~$w(\cdot,\cus_m^\qq,e)$ be the weak solution with affine boundary data~$e\cdot x$. This is the finite-volume Dirichlet corrector of~\cite{AK.HC}, on the adapted cube. It is linear in~$e$. The telescope in the proof of~\cite[Proposition~5.12]{AK.HC}, using~\eqref{e.random.good.tail}, gives convergence of these gradients in~$L^2_{\s,\mathrm{loc}}$. Subtracting the average on one fixed cube before passing to the limit fixes the additive constants. Define~$\phi_e$, modulo constants, by
\begin{equation}
	e+\nabla\phi_e\coloneqq\lim_{m\to\infty}\nabla w(\cdot,\cus_m^\qq,e)\qquad\text{in }L^2_{\s,\mathrm{loc}}(\Rd)\,.
	\label{e.random.corrector.limit}
\end{equation}
The coarse-grained Poincar\'e estimate for the harmonic differences supplies the corresponding potentials in~$H^1_{\s,\mathrm{loc}}$. Passing to the weak equation is valid by~\eqref{e.qualitative.ellipticity} and Cauchy--Schwarz. The family of gradients is linear in~$e$. More precisely, the same telescope, with two scales reserved for the interior estimates, gives
\begin{equation}
	\|\s^{\nf12}\nabla(e\cdot x+\phi_e-w(\cdot,\cus_{n+2}^\qq,e))\|_{\underline L^2(\cus_n^\qq)}\leq C|\shom^{\nf12}e|\sum_{j=n+2}^{\infty}\mathcal E_{s_0,2}(\cus_j^\qq;\a,\ahom)\,.
	\label{e.random.corrector.telescope}
\end{equation}
All estimates in this step hold for~$3^n\geq\X$. By~\eqref{e.random.good.tail}, the total error is below the fixed threshold in the corrector energy estimate of that proposition. It yields
\begin{equation}
	C^{-1}|\shom^{\nf12}e|\leq\|\s^{\nf12}(e+\nabla\phi_e)\|_{\underline L^2(\cus_n^\qq)}\leq C|\shom^{\nf12}e|\,.
	\label{e.random.corrector.energy}
\end{equation}
The finite-volume~$L^2$ comparison with affine data, together with~\eqref{e.random.corrector.telescope} and the coarse-grained Poincar\'e inequality applied to the difference of the two harmonic functions, also gives
\begin{equation}
	\overline\lambda^{\nf12}3^{-n}\|\phi_e-(\phi_e)_{\cus_n^\qq}\|_{\underline L^2(\cus_n^\qq)}\leq C|\shom^{\nf12}e|\sum_{j=n}^{\infty}\mathcal E_{s_0,2}(\cus_j^\qq;\a,\ahom)\,.
	\label{e.random.corrector.oscillation}
\end{equation}
In this comparison the finite-volume function is taken on~$\cus_{n+2}^\qq$, its~$L^2$ error is restricted to~$\cus_n^\qq$, and its additive constant is removed by subtracting the average. Thus~\eqref{e.random.corrector.oscillation} does not require a boundary estimate for the infinite-volume corrector.

\smallskip

To obtain the stated endpoint norm, we first compare positive test norms on the unit cube. The fundamental theorem of calculus on line segments and integration in the displacement give
\begin{equation}
	\|\psi\|_{\underline H^{s_0}(\cu_0)}^2\leq\|\psi\|_{L^2(\cu_0)}^2+\frac{C(d)}{1-s_0}\|\nabla\psi\|_{L^2(\cu_0)}^2\leq C(d)\|\psi\|_{\underline H^1(\cu_0)}^2\,.
	\label{e.random.endpoint.tests}
\end{equation}
Indeed, the squared translation difference is bounded by~$|h|^2$ times the integral of the squared gradient along the segment; its radial integral is~$\int_0^{\sqrt d}r^{1-2s_0}\,dr$. Since~$s_0<\nf{1}{2}$, the last constant in~\eqref{e.random.endpoint.tests} depends only on~$d$. Applying this inequality to compactly supported tests, and then dilating, gives
\begin{equation}
	3^{-n}\|F\|_{\underline H^{-1}(\cu_n)}\leq C(d)3^{-ns_0}\|F\|_{\underline H^{-s_0}(\cu_n)}\,.
	\label{e.random.endpoint.scaling}
\end{equation}
The two factors are exact: the normalized positive test norms scale as~$3^{-n}$ and~$3^{-ns_0}$, while the averaged pairings are invariant. Apply the weak-norm clause of~\cite[Proposition~5.12]{AK.HC} after dilating the normalized coefficient to the unit cube, where the positive norms and pairings have no volume factors. Only this unit-cube version of the unaveraged hatted estimate of~\cite{AK.HC} is used; its all-test norm bounds our compact-test norm there. The rescaled corrector is~$3^{-n}\phi_e(3^n\qq\,\cdot)$ for slope~$\qq e$: shifting the finite-volume sequence in~\eqref{e.random.corrector.limit} gives precisely this gradient limit. Thus the family has not changed, and~\eqref{e.random.endpoint.scaling} applies to its gradient and centered flux errors.

\smallskip

Returning through~$\qq$ costs at most~$C\Pi^{\nf12}$ in this endpoint dual norm. In fact, change of variables in~\eqref{e.physical.endpoint.norm} gives
\begin{equation*}
	\|\psi(\qq\,\cdot)\|_{\underline H^1(\cu_n)}\leq C(d)\Pi^{\nf12}\|\psi\|_{\underline H^1(\cus_n^\qq)}\,;
\end{equation*}
the squared mass and gradient factors are~$(\det\qq)^{\nf2d}$ and at most~$|\qq|^2$, respectively, and both are bounded by~$C\Pi$. The averaged pairing again cancels the determinant. The matrix factors for the gradient and centered flux are those in Step 2 of the preceding proof, without the scalar dilation, and the slope transforms to~$\qq e$. Since~$\overline\lambda^{\nf12}|\qq e|\leq\frac32|\shom^{\nf12}e|$, the absolute coefficient factors cancel. We obtain
\begin{multline}
	3^{-n}\|\shom^{\nf12}\nabla\phi_e\|_{\underline H^{-1}(\cus_n^\qq)}+3^{-n}\|\shom^{-\nf12}((\a-\khom)(e+\nabla\phi_e)-\shom e)\|_{\underline H^{-1}(\cus_n^\qq)}
	\\\leq C\Pi^{\nf12}|\shom^{\nf12}e|\sum_{j=n}^{\infty}\mathcal E_{s_0,2}(\cus_j^\qq;\a,\ahom)\,.
	\label{e.random.corrector.cube.endpoint}
\end{multline}
For~$r\geq\X$, choose~$n$ with~$4r\leq3^n<12r$, so~$E_r\subseteq\cus_n^\qq$ and the volumes are comparable with constants depending only on~$d$. A compactly supported~$H^1$ test on~$E_r$ extends by zero to the cube with a comparable normalized norm. Restriction of~\eqref{e.random.corrector.cube.endpoint} to~$E_r$, followed by~\eqref{e.random.good.tail}, proves~\eqref{e.random.corrector}. In particular, both inverse-radius factors in the theorem come from~\eqref{e.random.endpoint.scaling}.

\smallskip

\emph{Step 3: One slope for all intermediate radii.} Fix~$\theta\in(0,1)$. Constants in this step may also depend on~$\theta$. Set
\begin{equation*}
	\eta\coloneqq\frac{1+\max\{\theta,\frac{1}{2}\}}2\in\left(\max\left\{\theta,\frac{1}{2}\right\},1\right)\,.
\end{equation*}
Choose~$A_\theta\geq1$, depending only on~$d,\gamma,\theta$, so that~$c(d,\gamma)A_\theta^{-\kappa}$ is below the~$\eta$-dependent thresholds in~\cite[Propositions~5.10 and~5.12]{AK.HC}. By~\eqref{e.random.good.tail}, these propositions then apply on every scale~$3^n\geq A_\theta\X$. The finite-volume sequence defining the correctors is still~\eqref{e.random.corrector.limit}; changing~$\eta$ does not change its limit.

\smallskip

We first show that, for~$3^n\geq A_\theta\X$,~$m\geq n$, and a finite-energy weak solution~$u$ on~$\cus_m^\qq$, there is one~$e\in\Rd$ such that
\begin{equation}
	\|\s^{\nf12}(\nabla u-e-\nabla\phi_e)\|_{\underline L^2(\cus_k^\qq)}\leq C(d,\gamma,\theta)3^{-\eta(m-k)}\|\s^{\nf12}\nabla u\|_{\underline L^2(\cus_m^\qq)}\qquad(n\leq k\leq m)\,.
	\label{e.random.intermediate.slope}
\end{equation}
We choose slopes separately at each scale, then compare neighboring slopes and telescope their differences to retain one slope for all intermediate radii. For each~$j\in[n,m]\cap\Z$, the corrector approximation clause of~\cite[Proposition~5.12]{AK.HC} gives a slope~$e_j$ whose error on~$\cus_j^\qq$ is at most~$C3^{-\eta(m-j)}\|\s^{\nf12}\nabla u\|_{\underline L^2(\cus_m^\qq)}$. The gap is~$m-j$; the corresponding index in~\cite{AK.HC} is misprinted. By linearity of the corrector gradients, the lower bound in~\eqref{e.random.corrector.energy}, the triangle inequality, and restriction from~$\cus_{j+1}^\qq$ to~$\cus_j^\qq$,
\begin{equation*}
	|\shom^{\nf12}(e_j-e_{j+1})|\leq C3^{-\eta(m-j)}\|\s^{\nf12}\nabla u\|_{\underline L^2(\cus_m^\qq)}\qquad(n\leq j<m)\,.
\end{equation*}
Sum these increments from~$j=n$ to~$k-1$ and use the upper bound in~\eqref{e.random.corrector.energy} on~$\cus_k^\qq$. The geometric sum gives
\begin{equation*}
	\|\s^{\nf12}(e_n+\nabla\phi_{e_n}-e_k-\nabla\phi_{e_k})\|_{\underline L^2(\cus_k^\qq)}\leq C3^{-\eta(m-k)}\|\s^{\nf12}\nabla u\|_{\underline L^2(\cus_m^\qq)}\,.
\end{equation*}
Combining this with the error bound for~$e_k$ proves~\eqref{e.random.intermediate.slope} with~$e=e_n$ for every~$k\in[n,m]$.

\smallskip

Increase~$A_\theta$ by a factor depending only on~$d$ to pass between cubes and ellipsoids. The same inclusions as in Step 1 give a single slope~$e$ such that
\begin{equation}
	\|\s^{\nf12}(\nabla u-e-\nabla\phi_e)\|_{\underline L^2(E_r)}\leq C(d,\gamma,\theta)\left(\frac{r}{R}\right)^\eta\|\s^{\nf12}\nabla u\|_{\underline L^2(E_R)}\qquad(A_\theta\X\leq r\leq R)\,.
	\label{e.random.regularity.above.cutoff}
\end{equation}
When the outer radius is too close to~$A_\theta\X$ to insert two different adapted scales, use~$e=0$ and~\eqref{e.random.energy}; the bounded ratio is absorbed into~$C(d,\gamma,\theta)$. For the other outer radii, the chosen slope also satisfies~$|\shom^{\nf12}e|\leq C(d,\gamma,\theta)\|\s^{\nf12}\nabla u\|_{\underline L^2(E_R)}$, by the corrector energy comparison and~\eqref{e.random.intermediate.slope}. This controls the radii comparable to~$R$ by the energy estimate and inclusion.

\smallskip

It remains to reach the original radius~$\X$. If~$R\geq A_\theta\X$ and~$\X\leq r<A_\theta\X$, apply the finite-energy form of~\eqref{e.random.energy} to~$u-e\cdot x-\phi_e$, then use~\eqref{e.random.regularity.above.cutoff} at~$A_\theta\X$. This gives
\begin{align}
	\|\s^{\nf12}(\nabla u-e-\nabla\phi_e)\|_{\underline L^2(E_r)}&\leq C(d,\gamma,\theta)\left(\frac{A_\theta\X}{R}\right)^\eta\|\s^{\nf12}\nabla u\|_{\underline L^2(E_R)}
	\notag\\&\leq C(d,\gamma,\theta)A_\theta^\theta\left(\frac{r}{R}\right)^\theta\|\s^{\nf12}\nabla u\|_{\underline L^2(E_R)}\,.
	\label{e.random.regularity.bridge}
\end{align}
The second inequality uses~$\eta>\theta$,~$A_\theta\X\leq R$, and~$r\geq\X$. If~$R<A_\theta\X$, take~$e=0$ and use~\eqref{e.random.energy}, since~$(\nf{r}{R})^\theta\geq A_\theta^{-\theta}$. Absorbing~$A_\theta^\theta$ into the constant proves~\eqref{e.random.regularity} with the asserted order~$\forall\theta\,\exists e\,\forall r$ and the original radius~$\X$.

\smallskip

\emph{Step 4: Liouville and uniqueness of the normalized slope.} Let~$v$ satisfy~\eqref{e.random.liouville.growth}. The coarse-grained Caccioppoli estimate on successive adapted cubes, with~\eqref{e.random.good.tail}, gives
\begin{equation}
	\|\s^{\nf12}\nabla v\|_{\underline L^2(\cus_n^\qq)}\leq C\overline\lambda^{\nf12}3^{-n}\|v-(v)_{\cus_{n+1}^\qq}\|_{\underline L^2(\cus_{n+1}^\qq)}=o(3^{\theta n})\,.
	\label{e.random.liouville.gradient}
\end{equation}
The last equality follows by enclosing~$\cus_{n+1}^\qq$ in a Euclidean ball of radius~$C|\qq|3^n$; the fixed volume ratio does not affect the growth exponent. Since~$\eta>\theta$, the gradient-growth Liouville clause of~\cite[Proposition~5.12]{AK.HC}, applied above~$A_\theta\X$, identifies~$\nabla v$ with~$e+\nabla\phi_e$ for some~$e$. The construction is the same one in~\eqref{e.random.corrector.limit}. Equality of gradients on~$\Rd$ gives~$v=e\cdot x+\phi_e+c$ almost everywhere.

\smallskip

Conversely, let~$v=e\cdot x+\phi_e+c$. By~\eqref{e.random.corrector.energy} and the coarse-grained Poincar\'e inequality,
\begin{equation*}
	\|v-(v)_{\cus_n^\qq}\|_{\underline L^2(\cus_n^\qq)}\leq C\overline\lambda^{-\nf12}3^n|\shom^{\nf12}e|\qquad(3^n\geq\X)\,.
\end{equation*}
The difference of the averages on~$\cus_n^\qq$ and~$\cus_{n+1}^\qq$ is bounded by the same right side with~$n+1$ in place of~$n$. Summing these bounds from one fixed scale yields~$\|v\|_{\underline L^2(\cus_n^\qq)}=O(3^n)$, with a constant depending on the fixed solution and coefficient field. Enclosing~$B_r$ in an adapted cube of comparable side length gives~$\|v\|_{\underline L^2(B_r)}=O(r)$. This is~$o(r^{1+\theta})$ for every~$\theta>0$, proving the reverse inclusion.

\smallskip

We also record the uniqueness needed for translations. By~\eqref{e.random.corrector.oscillation}, the deviation of~$e\cdot x+\phi_e$ from its affine slope has~$o(3^n)$ oscillation on~$\cus_n^\qq$. If an entire harmonic function with the same sublinear deviation from slope~$e$ is represented by Liouville as~$e'\cdot x+\phi_{e'}+c$, then
\begin{equation*}
	\frac1{\sqrt{12}}|\qq(e-e')|=3^{-n}\|(e-e')\cdot x-((e-e')\cdot x)_{\cus_n^\qq}\|_{\underline L^2(\cus_n^\qq)}\longrightarrow0\,.
\end{equation*}
Thus~$e'=e$, and the two functions differ by a constant. This proves uniqueness of the gradient with a prescribed normalized slope; no condition on its expectation is used.

\smallskip

\emph{Step 5: The stationary family and the common event.} Intersect the full-probability event used above with all its integer translates. This is a countable intersection and is invariant under~$\Zd$-translations. On it construct~\eqref{e.random.corrector.limit} first for the coordinate slopes and extend linearly. For each~$z\in\Zd$, the construction for~$\a(\cdot+z)$ uses its own starting radius; no supremum over these radii is taken. For fixed~$z\in\Zd$, translation of~$e\cdot x+\phi_e$ is harmonic for~$\a(\cdot+z)$, has~$O(r)$ growth by Step 4, and has sublinear deviation from the same slope~$e$ by~\eqref{e.random.corrector.oscillation}. The last assertion follows by enclosing the translated adapted cube in one concentric cube of comparable side length and then letting its side length tend to infinity. Applying the normalized-slope uniqueness on the translated coefficient gives, with the coefficient dependence displayed,
\begin{equation}
	\nabla\phi_e(\a(\cdot+z),x)=\nabla\phi_e(\a,x+z)\qquad(z\in\Zd)\,.
	\label{e.random.corrector.stationarity}
\end{equation}
Define the gradients to be zero on the invariant null complement. Their linearity and the translation identity are then consistent there as well. The estimates at the origin use the single radius~$\X$ in~\eqref{e.random.final.scale}; neither this event nor the family is chosen anew for a Sobolev order, an approximation exponent, a solution, or a radius. All remaining quantifiers in Theorem~\ref{t.random.homogenization} are deterministic consequences on this event. This completes the proof.
\end{proof}

\begingroup
\small

\subsubsection*{\bf Acknowledgments}
S.A. and T.K. acknowledge support from the European Research Council (ERC) under the European Union's Horizon Europe research and innovation programme, grant agreement number 101200828. T.K. was supported by the Academy of Finland. A.L. was supported by the Fondation Sciences Math\'ematiques de Paris.

\bibliographystyle{alpha}
\bibliography{refs}

\begin{thebibliography}{CGLL90}

\bibitem[AK25]{AK.HC}
Scott Armstrong and Tuomo Kuusi.
\newblock Renormalization group and elliptic homogenization in high contrast.
\newblock {\em Invent. Math.}, 242(3):895--1086, 2025.

\bibitem[AK26]{AK.ICM}
Scott Armstrong and Tuomo Kuusi.
\newblock A coarse-graining theory for elliptic operators and homogenization in
  high contrast.
\newblock In {\em Proceedings of the International Congress of Mathematicians
  2026}, volume~5, pages 193--212. Society for Industrial and Applied
  Mathematics, 2026.

\bibitem[AKM17]{AKM1}
Scott Armstrong, Tuomo Kuusi, and Jean-Christophe Mourrat.
\newblock The additive structure of elliptic homogenization.
\newblock {\em Invent. Math.}, 208(3):999--1154, 2017.

\bibitem[AKM19]{AKMBook}
Scott Armstrong, Tuomo Kuusi, and Jean-Christophe Mourrat.
\newblock {\em Quantitative Stochastic Homogenization and Large-Scale
  Regularity}, volume 352 of {\em Grundlehren der mathematischen
  Wissenschaften}.
\newblock Springer, Cham, 2019.

\bibitem[AS16]{AS}
Scott~N. Armstrong and Charles~K. Smart.
\newblock Quantitative stochastic homogenization of convex integral
  functionals.
\newblock {\em Ann. Sci. \'{E}c. Norm. Sup\'{e}r.}, 49(2):423--481, 2016.

\bibitem[CGLL90]{Clerc}
J.~P. Clerc, G.~Giraud, J.~M. Laugier, and J.~M. Luck.
\newblock The electrical conductivity of binary disordered systems, percolation
  clusters, fractals and related models.
\newblock {\em Adv. Phys.}, 39(3):191--309, 1990.

\bibitem[GO11]{GO1}
Antoine Gloria and Felix Otto.
\newblock An optimal variance estimate in stochastic homogenization of discrete
  elliptic equations.
\newblock {\em Ann. Probab.}, 39(3):779--856, 2011.

\bibitem[GO12]{GO2}
Antoine Gloria and Felix Otto.
\newblock An optimal error estimate in stochastic homogenization of discrete
  elliptic equations.
\newblock {\em Ann. Appl. Probab.}, 22(1):1--28, 2012.

\bibitem[HSCB86]{HongEtAl1986}
D.~C. Hong, H.~E. Stanley, A.~Coniglio, and A.~Bunde.
\newblock Random-walk approach to the two-component random-conductor mixture:
  Perturbing away from the perfect random resistor network and random
  superconducting-network limits.
\newblock {\em Phys. Rev. B}, 33(7):4564--4573, 1986.

\bibitem[JK95]{JeKe}
David Jerison and Carlos~E. Kenig.
\newblock The inhomogeneous {Dirichlet} problem in {Lipschitz} domains.
\newblock {\em J. Funct. Anal.}, 130(1):161--219, 1995.

\bibitem[Mik11]{Mikh11}
Sergey~E. Mikhailov.
\newblock Traces, extensions and co-normal derivatives for elliptic systems on
  {Lipschitz} domains.
\newblock {\em J. Math. Anal. Appl.}, 378(1):324--342, 2011.

\bibitem[Str76]{Straley1976}
J.~P. Straley.
\newblock Critical phenomena in resistor networks.
\newblock {\em J. Phys. C: Solid State Phys.}, 9(5):783--795, 1976.

\bibitem[TLW90]{TobochnikLaingWilson1990}
Jan Tobochnik, David Laing, and Gary Wilson.
\newblock Random-walk calculation of conductivity in continuum percolation.
\newblock {\em Phys. Rev. A}, 41(6):3052--3058, 1990.

\end{thebibliography}

\end{document}